\newcommand{\map}[1]{\xrightarrow{#1}}
\newcommand{\mil}{\varprojlim}
\newcommand{\iso}{\cong}
\newcommand{\define}{\stackrel{\mathrm{def}}{=}}
\newcommand{\Hom}{\mathrm{Hom}}
\newcommand{\Aut}{\mathrm{Aut}}
\newcommand{\End}{\mathrm{End}}
\newcommand{\Spec}{\mathrm{Spec}}
\newcommand{\Spf}{\mathrm{Spf}}
\newcommand{\Q}{\mathbb Q}
\newcommand{\Z}{\mathbb Z}
\newcommand{\R}{\mathbb R}
\newcommand{\C}{\mathbb C}
\newcommand{\F}{\mathbb F}
\newcommand{\A}{\mathbb A}
\newcommand{\co}{\mathcal O}
\newcommand{\alg}{\mathrm{alg}}
\newcommand{\ord}{\mathrm{ord}}
\newcommand{\SL}{\mathrm{SL}}
\newcommand{\GL}{\mathrm{GL}}
\newcommand{\Sp}{\mathrm{Sp}}
\newcommand{\Sym}{\mathrm{Sym}}
\newcommand{\length}{\mathrm{length}}
\newcommand{\horizontal}{\mathrm{hor}}
\newcommand{\vertical}{\mathrm{ver}}
\newcommand{\chow}{\mathrm{CH}}
\begin{document}

\author{Benjamin Howard}
\title{Intersection theory on Shimura surfaces}

\email{howardbe@bc.edu}
\address{Department of Mathematics, Boston College, Chestnut Hill, MA, 02467}
\thanks{This research was supported in part by NSF grant DMS-0556174, and by a Sloan Foundation Research Fellowship.}

\begin{abstract}
Kudla has proposed a general program to relate arithmetic intersection multiplicities of special cycles on Shimura varieties to Fourier coefficients of Eisenstein series.  The lowest dimensional case, in which one intersects two codimension one cycles on the integral model of a  Shimura curve, has been completed by Kudla-Rapoport-Yang.  In the present paper we prove results in a higher dimensional setting.   On the integral model  of a Shimura surface we consider the intersection of a Shimura curve with a codimension two  cycle  of complex multiplication points, and relate the intersection to certain cycles classes constructed by Kudla-Rapoport-Yang.  As a corollary we deduce that our intersection multiplicities appear as Fourier coefficients of a Hilbert modular form of half-integral weight.
\end{abstract}


\maketitle

\theoremstyle{plain}
\newtheorem{Thm}{Theorem}[subsection]
\newtheorem{Prop}[Thm]{Proposition}
\newtheorem{Lem}[Thm]{Lemma}
\newtheorem{Cor}[Thm]{Corollary}
\newtheorem{Conj}[Thm]{Conjecture}
\newtheorem{BigThm}{Theorem}

\theoremstyle{definition}
\newtheorem{Def}[Thm]{Definition}
\newtheorem{Hyp}[BigThm]{Hypothesis}
\newtheorem{BigCor}[BigThm]{Corollary}

\theoremstyle{remark}
\newtheorem{Rem}[Thm]{Remark}

\renewcommand{\labelenumi}{(\alph{enumi})}
\renewcommand{\theHyp}{\Alph{Hyp}}
\renewcommand{\theBigThm}{\Alph{BigThm}}

\section{Introduction}

Suppose that $B_0$ is an indefinite quaternion division algebra over $\Q$.  Let $G_0$ be the algebraic group over $\Q$ whose functor of points is $G_0(A)=(B_0\otimes_\Q A)^\times$ for any $\Q$-algebra $A$, fix a maximal order $\co_{B_0}\subset B_0$, and  set  $U_0^\mathrm{max}=\widehat{\co}_{B_0}^\times$ and  
$$
\Gamma_0^\mathrm{max} = G_0(\Q)\cap U_0^\mathrm{max}.
$$
 After fixing an isomorphism $G_0(\R)\iso \GL_2(\R)$ the group $G_0(\R)$ acts on the complex manifold $X_0=\C\smallsetminus\R$, and the quotient
$$
\mathcal{M}_0(\C)= \Gamma_0^\mathrm{max} \backslash X_0
$$
is isomorphic to the complex points of a Shimura curve over $\Q$ which parametrizes abelian surfaces over $\Q$-schemes with an action of $\co_{B_0}$.  Extending the moduli problem over $\Spec(\Z)$ one obtains an algebraic (Deligne-Mumford) stack $\mathcal{M}_0$.     Let $\Sym_2(\Z)$ denote the $\Z$-module of symmetric $2\times 2$  matrices with entries in $\Z$ and set 
$$
\Sym_2(\Z)^\vee=\left\{  \left( \begin{matrix} a& \frac{b}{2} \\ \frac{b}{2} & c  \end{matrix} \right) \Big| \ a,b,c\in\Z   \right\}.
$$
 For every $T\in\Sym_2(\Z)^\vee$ Kudla-Rapoport-Yang \cite{KRY} have constructed an arithmetic cycle class
$$
\widehat{\mathcal{Z}}(T,\mathbf{v})\in  \widehat{\chow}^2_\R (\mathcal{M}_0)
$$
in the Gillet-Soul\'{e} arithmetic Chow group (with real coefficients  and modified for stacks, as in \cite[Chapter 2]{KRY}) of the arithmetic surface $\mathcal{M}_0$.  Here $\mathbf{v}\in M_2(\R)$ is  a symmetric positive definite parameter.   Letting $\tau=\mathbf{u}+i\mathbf{v} \in\mathfrak{h}_2$ denote the variable on the Siegel half-space of genus two, these classes have the remarkable property  that the generating series
$$
\widehat{\phi}_2(\tau)= \sum_{T\in\Sym_2(\Z)^\vee } \widehat{\deg}\ \widehat{\mathcal{Z}}(T,\mathbf{v}) \cdot q^T
$$
is a non-holomorphic Siegel modular form of weight $3/2$.  Here the isomorphism
$$
\widehat{\deg} :  \widehat{\chow}^2_\R(\mathcal{M}_0) \map{} \R
$$
is  the \emph{arithmetic degree}  of \cite[(2.4.10)]{KRY}. Pulling $\widehat{\phi}_2$ back to the product of two upper half-planes via the diagonal embedding $\mathfrak{h}_1\times\mathfrak{h}_1\map{}\mathfrak{h}_2$ results in a modular form of parallel weight $3/2$  for a congruence subgroup of $\SL_2(\Z)\times\SL_2(\Z)$, and Kudla-Rapoport-Yang express the Fourier coefficients of the pullback in terms of the arithmetic intersections of classes in $\widehat{\chow}_\R^1(\mathcal{M}_0)$.  More precisely, for each $t\in \Z$ and $v\in \R^+$ they define a class
$$
\widehat{\mathcal{Z}}(t,v)\in \widehat{\chow}_\R^1(\mathcal{M}_0)
$$
and prove that the pullback of $\widehat{\phi}_2$ by the diagonal embedding has Fourier expansion
\begin{equation}\label{KRY expansion}
\widehat{\phi}_2(\tau_1,\tau_2) = \sum_{t_1,t_2\in\Z}  \left\langle   \widehat{\mathcal{Z}}(t_1,v_1),  \widehat{\mathcal{Z}}(t_2,v_2)\right\rangle       q_1^{t_1}q_2^{t_2}
\end{equation}
where $\tau_j=u_j+iv_j$ is a variable in the upper half plane $\mathfrak{h}_1$,  $q_j=e^{2\pi i \tau_j}$, and the pairing $\langle\ ,\ \rangle$ is the Gillet-Soul\'{e} intersection pairing 
$$
\widehat{\chow}^1_\R(\mathcal{M}_0) \times \widehat{\chow}^1_\R(\mathcal{M}_0) \map{} \widehat{\chow}^2_\R(\mathcal{M}_0) \map{\widehat{\deg}} \R.
$$
The proof of (\ref{KRY expansion}) amounts to proving the decomposition \cite[Theorem C]{KRY}
\begin{equation}\label{KRY decomposition}
\left\langle   \widehat{\mathcal{Z}}(t_1,v_1),  \widehat{\mathcal{Z}}(t_2,v_2)\right\rangle   
= \sum_T \widehat{\deg}\ \widehat{\mathcal{Z}}(T,\mathbf{v}) 
\end{equation}
in which the sum is over all $T\in\Sym_2(\Z)^\vee$ of the form $T=\left(\begin{matrix} t_1 & * \\ * & t_2 \end{matrix}\right)$, and $\mathbf{v}$ is the diagonal matrix with diagonal entries $v_1,v_2$.

The question which motivates this article  is the following.  Given a real quadratic field $F\subset\R$ and a $\Z$-basis $\{\varpi_1,\varpi_2\}$ of $\co_F$  (which we now fix once and for all) one can define (\ref{siegel embedding}) a twisted embedding $\mathfrak{h}_1\times\mathfrak{h}_1\map{}\mathfrak{h}_2$ in such a way that the pullback of $\widehat{\phi}_2$ to $\mathfrak{h}_1\times\mathfrak{h}_1$ is a half-integral weight Hilbert modular form.  Can one interpret the Fourier coefficients of the twisted pullback  as arithmetic  intersection multiplicities of cycles on a Shimura variety?  To answer this, let
 $$
 B=B_0\otimes_\Q F \hspace{1cm} \co_B=\co_{B_0}\otimes_\Z\co_F.
 $$  
  Throughout the paper we assume the following equivalent conditions on $F$ and $B_0$:
\begin{Hyp}\label{Hyp:discriminant}\
\begin{enumerate}
\item $\mathrm{disc}(B_0)\cdot \co_F=\mathrm{disc}(B)$,
\item
$\co_B$ is a maximal order of $B$,
\item
every prime divisor of $\mathrm{disc}(B_0)$ splits in $F$.
\end{enumerate}
\end{Hyp}

 Let $G$ be the algebraic group over $\Q$  whose functor of points  is 
$$
G(A) = \{ g\in (B\otimes_\Q A)^\times    \mid \mathrm{Nm}(g)\in A^\times\}
$$
for any $\Q$-algebra $A$, where $\mathrm{Nm}$ denotes the reduced norm on $B$.  Define a maximal compact open subgroup $U^\mathrm{max}\subset G(\A_f)$ by
$$
U^{\mathrm{max}} =\{g\in \widehat{\co}_B^\times \mid \mathrm{Nm}(g)\in \widehat{\Z}^\times\}
$$
 and set
$$
\Gamma^\mathrm{max} = G(\Q)\cap U^\mathrm{max}.
$$
Let $\sigma$ be the nontrivial Galois automorphism of $F/\Q$ and identify 
\begin{equation}\label{F split}
F\otimes_\Q\R\iso \R\times\R
\end{equation}
using the map $x\otimes 1\mapsto (x,x^\sigma)$.   The earlier choice of isomorphism $G_0(\R)\iso \GL_2(\R)$ together with (\ref{F split}) determines an isomorphism
 $$
 G(\R)\iso \{ (x,y)\in \GL_2(\R)\times\GL_2(\R) \mid \det(x)=\det(y) \}
 $$
  in such a way that the subgroup $G_0(\R)\subset G(\R)$ is identified with the diagonal.  Let  $X\subset X_0\times X_0$ be subset of pairs whose imaginary parts are either both positive or both negative.  The compact complex manifold
  $$
  \mathcal{M}(\C)= \Gamma^\mathrm{max} \backslash X
  $$
is then identified with  the complex points of a Shimura surface  which parametrizes abelian fourfolds over $\Q$-schemes with an action of $\co_B$ (and some additional polarization data which we ignore in this introduction).  The obvious embedding $\mathcal{M}_0(\C)\map{}\mathcal{M}(\C)$ induced by the inclusion of $X_0$ into $X$ has a moduli theoretic meaning: an abelian surface $A_0$ with $\co_{B_0}$ action is taken to the abelian fourfold $A=A_0\otimes\co_F$ (Serre's tensor construction \cite[\S 7]{conrad04}) with its induced action of $\co_B=\co_{B_0}\otimes_\Z\co_F$.  Extending the moduli problems over $\Spec(\Z)$ we obtain a closed immersion $\mathcal{M}_0\map{}\mathcal{M}$ of an arithmetic surface into an arithmetic threefold.

On the other hand, the complex surface $\mathcal{M}(\C)$ comes equipped with a natural family of  cycles of dimension zero.  For each totally positive $\alpha\in\co_F$ we consider the finite set  $\mathcal{Y}(\alpha)(\C)$ of isomorphism classes of abelian fourfolds over $\C$ equipped with commuting actions of  $\co_B$ and $\co_F[\sqrt{-\alpha}]$.    There is an evident function $$\mathcal{Y}(\alpha)(\C)\map{} \mathcal{M}(\C)$$ which forgets the $\co_F[\sqrt{-\alpha}]$-action.  Extending the moduli problem across $\Spec(\Z)$ one obtains a finite morphism $\mathcal{Y}(\alpha)\map{}\mathcal{M}$ in which $\mathcal{Y}(\alpha)$ is an algebraic stack over $\Z$ of dimension at most two, and which has dimension one after restricting to  $\Z[\mathrm{disc}(B_0)^{-1}]$.  At a prime $p$ dividing $\mathrm{disc}(B_0)$ the stack $\mathcal{Y}(\alpha)_{/\Z_p}$ may (depending on $\alpha$) have vertical components of dimension two.   Our central object of study is a certain class
$$
\widehat{\mathcal{Y}}(\alpha,v) \in \widehat{\chow}^2(\mathcal{M})
$$
in the codimension two Gillet-Soul\'{e} arithmetic Chow group of $\mathcal{M}$ (with rational coefficients) whose construction is based on the moduli problem $\mathcal{Y}(\alpha)$.  Here $v\in F\otimes_\Q\R$ is an auxiliary totally positive parameter.  This class is obtained by first modifying the vertical components of $\mathcal{Y}(\alpha)$ at the prime divisors of $\mathrm{disc}(B_0)$ to obtain a cycle class in the codimension two Chow group $\chow^2_{\mathcal{Y}(\alpha)}(\mathcal{M})$ with support on $\mathcal{Y}(\alpha)$, and then augmenting this cycle with a Green current of the type constructed by Kudla \cite{kudla97} and Kudla-Rapoport-Yang \cite{KRY}.     It is the definition of the cycle class, and especially of the construction of the vertical components at primes dividing $\mathrm{disc}(B_0)$ carried out in \S \ref{S:bad components}, which is the primary original contribution of this work.

The closed immersion $\mathcal{M}_0\map{}\mathcal{M}$ induces  $\Q$-linear functional
\begin{equation*}
\widehat{\deg}_{\mathcal{M}_0} :  \widehat{\chow}^2(\mathcal{M})  \map{} \R
\end{equation*}
called the \emph{arithmetic degree along $\mathcal{M}_0$}.   Our main result, which appears in the  text as Theorem \ref{Thm:arithmetic decomposition}, is  a twisted form of the decomposition (\ref{KRY decomposition}).

\begin{BigThm}\label{BT}
Suppose that $\alpha\in \co_F$ and $v\in F\otimes_\Q\R$ are totally positive and that $F(\sqrt{-\alpha})/\Q$ is not biquadratic.  Then
\begin{equation*}
\widehat{\deg}_{\mathcal{M}_0} \widehat{\mathcal{Y}}(\alpha,v) =\sum_{T\in\Sigma(\alpha)} \widehat{\deg}\ \widehat{\mathcal{Z}}(T,\mathbf{v})
\end{equation*}
where $v=(v_1,v_2)\in \R\times \R$ and $\mathbf{v}$ are related by (\ref{bold v}).  The set $\Sigma(\alpha)$ appearing in the sum is defined as 
\begin{equation}\label{Sigma set}
\Sigma(\alpha)=\left\{  \left( \begin{matrix} a & \frac{b}{2} \\ \frac{b}{2} & c  \end{matrix}  \right) \in  \Sym_2(\Z)^\vee  \ \Big|  \alpha= a\varpi_1^2+b\varpi_1\varpi_2 +c\varpi_2^2   \right\}
\end{equation}
where $\{\varpi_1,\varpi_2\}$ is the $\Z$-basis of $\co_F$ fixed above, and used in the definition of the embedding $\mathfrak{h}_1 \times\mathfrak{h}_1 \map{}\mathfrak{h}_2$ of (\ref{siegel embedding}).
\end{BigThm}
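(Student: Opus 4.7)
The strategy is to decompose both sides as sums of local contributions, one for each place of $\Q$, and match them termwise. Since $\widehat{\mathcal{Y}}(\alpha,v)$ is built out of a codimension-two cycle class on $\mathcal{M}$ modified at primes dividing $\mathrm{disc}(B_0)$ together with a Kudla-style Green current (and each $\widehat{\mathcal{Z}}(T,\mathbf{v})$ is built the same way on $\mathcal{M}_0$), the arithmetic degree on either side splits as an archimedean term plus a sum of finite terms indexed by the primes of $\Q$.

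The moduli-theoretic engine is Serre's tensor construction. Given an $S$-point $(A_0,\iota_0)$ of $\mathcal{M}_0$, the corresponding $\mathcal{M}$-point is the abelian fourfold $A_0\otimes_\Z\co_F$ with its induced $\co_B$-action, and there is a canonical isomorphism
\begin{equation*}
\End_{\co_B}(A_0\otimes\co_F)\iso \End_{\co_{B_0}}(A_0)\otimes_\Z\co_F.
\end{equation*}
A point of $\mathcal{Y}(\alpha)\times_\mathcal{M}\mathcal{M}_0$ lying over $(A_0,\iota_0)$ is an element $j$ of the left-hand side with $j^2=-\alpha$. Writing $j=x_1\otimes\varpi_1+x_2\otimes\varpi_2$ via the right-hand side, the reduced trace and norm on the quaternion algebra $\End_{\co_{B_0}}(A_0)\otimes\Q$ force $x_1,x_2$ to be trace-zero endomorphisms with Gram matrix $T(x_1,x_2)\in\Sigma(\alpha)$ (expanded using $\alpha=a\varpi_1^2+b\varpi_1\varpi_2+c\varpi_2^2$). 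This is precisely the moduli description of $\mathcal{Z}(T)$, so one obtains an identification
\begin{equation*}
\mathcal{Y}(\alpha)\times_{\mathcal{M}}\mathcal{M}_0 \ \iso\ \bigsqcup_{T\in\Sigma(\alpha)} \mathcal{Z}(T)
\end{equation*}
as stacks over $\Spec\Z[\mathrm{disc}(B_0)^{-1}]$. With this identification in hand, the archimedean contributions should match by pulling back Kudla's Schwartz form for $\mathcal{Y}(\alpha)$ along the embedding $\mathfrak{h}_1\times\mathfrak{h}_1\hookrightarrow\mathfrak{h}_2\times\mathfrak{h}_2$ coming from $\mathcal{M}_0\hookrightarrow\mathcal{M}$; its Fourier decomposition with respect to the basis $\{\varpi_1,\varpi_2\}$ of $\co_F$ realizes the sum over $T\in\Sigma(\alpha)$ of the KRY Green currents, once $v$ and $\mathbf{v}$ are aligned by (\ref{bold v}).

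The main obstacle is the finite contribution at a prime $p\mid\mathrm{disc}(B_0)$. At such a prime $\mathcal{Y}(\alpha)_{/\Z_p}$ may acquire vertical components of dimension two coming from supersingular points admitting an extra $\co_F[\sqrt{-\alpha}]$-action, so the fiber product with $\mathcal{M}_0$ is no longer of the expected dimension and there is no transverse intersection to exploit. The whole purpose of defining $\widehat{\mathcal{Y}}(\alpha,v)$ via the vertical modification of \S\ref{S:bad components} is to absorb exactly the right excess so that its pullback to $\mathcal{M}_0$ agrees in arithmetic degree with the local contribution of $\sum_{T\in\Sigma(\alpha)}\widehat{\mathcal{Z}}(T,\mathbf{v})$ at $p$. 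I would carry this out over $\Spf\Z_p^\mathrm{ur}$, using the Cherednik-Drinfeld $p$-adic uniformization of the supersingular loci of $\mathcal{M}_0$ and $\mathcal{M}$ to translate the comparison into a problem about special cycles on the Drinfeld upper half-plane times an auxiliary quasi-canonical deformation space, and then comparing local intersection multiplicities against the Gross-Keating type formulas used in \cite{KRY}. The hypothesis that $F(\sqrt{-\alpha})/\Q$ is not biquadratic enters precisely here: it guarantees that the pair $(x_1,x_2)$ generates a nondegenerate quaternion subalgebra rather than collapsing into a CM subalgebra, ruling out degenerate excess contributions and making the parameterization by $\Sigma(\alpha)$ well-behaved.
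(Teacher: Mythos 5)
Your proposal follows essentially the same route as the paper: the place-by-place decomposition of $\widehat{\deg}_{\mathcal{M}_0}$, the Serre-tensor identification $\mathcal{Y}_0(\alpha)\iso\bigsqcup_{T\in\Sigma(\alpha)}\mathcal{Z}(T)$ of (\ref{moduli decomposition}), the archimedean matching of star products of Kudla's Green functions, and the Cerednik--Drinfeld uniformization combined with the Kudla--Rapoport local computations at $p\mid\mathrm{disc}(B_0)$ are precisely the ingredients of Lemmas \ref{Lem:proper archimedean}, \ref{Lem:proper I} and \ref{Lem:proper II}. The two points you gloss over are the good-reduction primes, where one must still show that $\co_Y$ is Cohen--Macaulay of dimension one along $Y_0$ (Corollary \ref{Cor:local complete intersection}) so that the Serre intersection number collapses to a naive length, and the fact that the non-biquadratic hypothesis is needed at every place rather than only at the ramified ones, its real content (Lemma \ref{Lem:biquadratic}) being that $\mathcal{Y}_0(\alpha)_{/\Q}=\emptyset$ so that the pullback formula (\ref{pullback degree}) applies at all.
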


The assumption that $F(\sqrt{-\alpha})/\Q$ is not biquadratic in the theorem is made to ensure that 
$$
(\mathcal{Y}(\alpha)\times_\mathcal{M} \mathcal{M}_0)_{/\Q}=\emptyset.
$$
I.e.~that $\mathcal{Y}(\alpha)$ and $\mathcal{M}_0$ do not meet in the generic fiber.  The theorem should remain true without this assumption; see  \cite{howardC} for results in this direction.  As a corollary of the theorem we deduce in \S \ref{SS:automorphic} the following partial generalization of (\ref{KRY expansion}).

\begin{BigCor}
The pullback of $\widehat{\phi}_2$ via the twisted embedding $\mathfrak{h}_1\times\mathfrak{h}_1\map{} \mathfrak{h}_2$  has a Fourier expansion of the form
$$
\widehat{\phi}_2(\tau_1,\tau_2) =\sum_{\alpha\in\co_F} c(\alpha,v) \cdot q^\alpha
$$
where $v=(v_1,v_2)$ is the imaginary part of $(\tau_1,\tau_2)$ and $q^\alpha=e^{2\pi i \tau_1\alpha} e^{2\pi i \tau_2\alpha^\sigma}$.   If $\alpha$ is totally positive and $F(\sqrt{-\alpha})/\Q$ is not biquadratic then  
$$
c(\alpha,v) =\widehat{\deg}_{\mathcal{M}_0}  \widehat{\mathcal{Y}}(\alpha,v).
$$ 
\end{BigCor}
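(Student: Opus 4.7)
The corollary is essentially a bookkeeping exercise that combines the defining property of $\widehat{\phi}_2$ as a Siegel modular form with the decomposition supplied by Theorem BT. My plan has three steps.

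First, I would start from the tautological Fourier expansion
\[
\widehat{\phi}_2(\tau)=\sum_{T\in\Sym_2(\Z)^\vee} \widehat{\deg}\,\widehat{\mathcal{Z}}(T,\mathbf{v})\, q^T
\]
and compute its restriction along the twisted diagonal $\mathfrak{h}_1\times\mathfrak{h}_1\map{}\mathfrak{h}_2$ of (\ref{siegel embedding}). The defining feature of that embedding is that it is constructed from the basis $\{\varpi_1,\varpi_2\}$ of $\co_F$ so that
\[
q^T\big|_{(\tau_1,\tau_2)} = e^{2\pi i\alpha\tau_1}\, e^{2\pi i\alpha^\sigma\tau_2} = q^\alpha
\]
for $T=\left(\begin{smallmatrix} a & b/2 \\ b/2 & c\end{smallmatrix}\right)$ and $\alpha=a\varpi_1^2+b\varpi_1\varpi_2+c\varpi_2^2 \in\co_F$. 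This identity is the content of the twisted embedding, and I would simply verify it by writing out $\mathrm{tr}(T\tau)$ where $\tau$ is the image of $(\tau_1,\tau_2)$: the image is designed precisely so that the bilinear form $T\mapsto \mathrm{tr}(T\tau)$ on symmetric matrices is transported to $T\mapsto \alpha\tau_1+\alpha^\sigma\tau_2$ under $T\mapsto \alpha$.

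Second, I would collect terms in the Fourier expansion according to the value of $\alpha$. Since every $T\in\Sym_2(\Z)^\vee$ determines a unique $\alpha(T)\in\co_F$ by the quadratic form above, the preimage of a given $\alpha$ is exactly the finite set $\Sigma(\alpha)$ defined in (\ref{Sigma set}). Regrouping therefore yields
\[
\widehat{\phi}_2(\tau_1,\tau_2)=\sum_{\alpha\in\co_F}\left(\sum_{T\in\Sigma(\alpha)}\widehat{\deg}\,\widehat{\mathcal{Z}}(T,\mathbf{v})\right) q^\alpha,
\]
where on the right $\mathbf{v}$ is the diagonal matrix with entries $v_1,v_2$, matched to $v\in F\otimes_\Q \R$ via the relation (\ref{bold v}). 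This establishes the existence of the Fourier expansion $\widehat{\phi}_2=\sum_\alpha c(\alpha,v) q^\alpha$ and gives the formula $c(\alpha,v)=\sum_{T\in\Sigma(\alpha)}\widehat{\deg}\,\widehat{\mathcal{Z}}(T,\mathbf{v})$ for \emph{every} $\alpha\in\co_F$.

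Third, for totally positive $\alpha$ with $F(\sqrt{-\alpha})/\Q$ not biquadratic, I invoke Theorem BT to rewrite the inner sum as $\widehat{\deg}_{\mathcal{M}_0}\widehat{\mathcal{Y}}(\alpha,v)$, concluding the proof. There is essentially no obstacle in this last step because Theorem BT is precisely the needed identity. The only thing requiring care is to ensure that the parameters match: that the $v\in F\otimes_\Q\R$ appearing in $\widehat{\mathcal{Y}}(\alpha,v)$ and the $\mathbf{v}=\mathrm{diag}(v_1,v_2)$ in $\widehat{\mathcal{Z}}(T,\mathbf{v})$ are related exactly as in (\ref{bold v}), so that Theorem BT applies verbatim with the $(\tau_1,\tau_2)$ coming from the twisted pullback. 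This is the only part of the argument where one must be careful, but it is a direct matching of the conventions set up in the introduction.
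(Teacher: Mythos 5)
Your proposal is correct and follows essentially the same route as the paper: the paper's Lemma \ref{Lem:Siegel pullback} is exactly your first two steps (computing $\mathrm{Tr}(T\tau)=\alpha\tau_1+\alpha^\sigma\tau_2$ for the image $\tau$ of $(\tau_1,\tau_2)$ under (\ref{siegel embedding}) and regrouping $\Sym_2(\Z)^\vee=\bigcup_{\alpha\in\co_F}\Sigma(\alpha)$), and the corollary is then immediate from Theorem \ref{Thm:arithmetic decomposition}, as in your third step. One small correction: $\mathbf{v}$ is not the diagonal matrix $\mathrm{diag}(v_1,v_2)$ but rather $R\,\mathrm{diag}(v_1,v_2)\,{}^tR$ as in (\ref{bold v}), i.e.\ the imaginary part of the image of $(\tau_1,\tau_2)$ in $\mathfrak{h}_2$ — which is what your own appeal to (\ref{bold v}) requires, so this is only a slip of wording, not of substance.
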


Allowing ourselves the luxury of speculation, we  now explain how these results fit into the program of generalized Gross-Zagier style theorems proposed by Kudla \cite{kudla04b}.  Suppose first that one can extend the definition of the arithmetic class $\widehat{\mathcal{Y}}(\alpha,v)$ to all $\alpha\in\co_F$, as opposed to restricting only to totally positive $\alpha$,   in such a way that (passing  to arithmetic Chow groups with real coefficients)  the generating series
$$
\widehat{\theta}(\tau_1,\tau_2)=\sum_{\alpha\in\co_F} \widehat{\mathcal{Y}}(\alpha,v) \cdot q^\alpha \in \widehat{\chow}^2_\R(\mathcal{M}) [[q]]
$$
 is a nonholomorphic vector-valued Hilbert modular form of parallel weight $3/2$ for the congruence subgroup (\ref{congruence subgroup}).    If Theorem \ref{BT} were  extended to every $\alpha\in\co_F$ then (after extending the arithmetic degree along $\mathcal{M}_0$ to arithmetic Chow groups with real coefficients) we would have the equality of $q$-expansions
$$
\widehat{\deg}_{\mathcal{M}_0} \widehat{\theta}(\tau_1,\tau_2) = \widehat{\phi}_2(\tau_1,\tau_2).
$$
 Given another  Hilbert modular form $f$ of parallel weight $3/2$ for (\ref{congruence subgroup}), now with real coefficients, one could then define the \emph{arithmetic theta lift} of $f$ as the Petersson inner product 
$$
\widehat{\Theta}(f)\define \langle f(\tau_1,\tau_2), \widehat{\theta} (\tau_1,\tau_2)\rangle^{\mathrm{Pet}} \in \widehat{\chow}^2_\R(\mathcal{M}),
$$
and then
$$
\widehat{\deg}_{\mathcal{M}_0} \widehat{\Theta}(f) =  \langle f(\tau_1,\tau_2), \widehat{\deg}_{\mathcal{M}_0} \widehat{\theta} (\tau_1,\tau_2)\rangle^{\mathrm{Pet}} = \langle f(\tau_1,\tau_2), \widehat{\phi}_2(\tau_1,\tau_2)\rangle^\mathrm{Pet}.
$$
On the automorphic side, one may attach to $f$ and $B_0$ a Rankin $L$-function obtained by integrating $f$ against the twisted pullback of the genus two Eisenstein series  $\mathcal{E}_2(\tau,s,B_0)$ studied by Kudla-Rapoport-Yang (see \S \ref{SS:automorphic})
$$
L(f,s,B_0) = \langle f(\tau_1,\tau_2), \mathcal{E}_2(\tau_1,\tau_2,s,B_0)\rangle^\mathrm{Pet}.
$$
The Eisenstein series vanishes at $s=0$, and a fundamental result of Kudla-Rapoport-Yang \cite[Theorem B]{KRY}  is that
$$
\mathcal{E}'_2(\tau,0,B_0) = \widehat{\phi}_2(\tau).
$$
Pulling back this equality to $\mathfrak{h}_1\times\mathfrak{h}_1$, it follows that
$$
\langle f(\tau_1,\tau_2), \widehat{\phi}_2(\tau_1,\tau_2)\rangle^\mathrm{Pet} = L'(f,0,B_0) .
$$
Thus Theorem \ref{BT} may be viewed as a first step toward the Gross-Zagier stye result
$$
\widehat{\deg}_{\mathcal{M}_0} \widehat{\Theta}(f) =  L'(f,0,B_0).
$$
The $L$-function $L(f,s,B_0)$ is somewhat mysterious.  One would like to know, for example, if it admits an Euler product, and if so what form the Euler factors take.  We hope to address  this question in a subsequent article.

The author thanks Steve Kudla for suggesting this problem, and for several helpful conversations.

\subsection{Notation}

Throughout all of the paper the symbols  $F$, $\sigma$,  $G$, and $G_0$ have the same meanings as above,  $\mathfrak{D}$ denotes the different of $F/\Q$, Hypothesis \ref{Hyp:discriminant} is assumed, and the $\Z$-basis $\{\varpi_1,\varpi_2\}$ of $\co_F$ is fixed.    We choose the maximal order $\co_{B_0}\subset B_0$ to be stable under the main involution and, as on \cite[p.~127]{boutot-carayol}, choose an $s\in  \co_B$ such that $s^2=-\mathrm{disc}(B_0)$.  Define a positive involution of $B_0$
$$
b\mapsto b^*= s^{-1} \cdot b^\iota \cdot  s.
$$
 Extend $b\mapsto b^*$ to a positive involution of $B$, trivial on $F$.    Let $\A_f$ denote the ring of finite adeles of $\Q$. 

\medskip


\section{Arithmetic intersection theory}
\label{AIT}


Our basic references for stacks are  \cite{gillet84}, \cite{moret}, and \cite{vistoli}. 
By an \emph{algebraic stack} we always mean a Deligne-Mumford stack in the sense of \cite{moret}.   If $S$ is a scheme and $\mathcal{X}$ is an algebraic stack then we denote by  $\mathcal{X}(S)$ the fiber of $\mathcal{X}$ over $S$.  The goal of \S \ref{AIT} is to develop a rudimentary (and somewhat ad-hoc) extension to stacks of the Gillet-Soul\'e arithmetic intersection theory  \cite{gillet-soule90,soule92}.


\subsection{Chow groups}
\label{SS:chow}


Throughout all of \S \ref{SS:chow} we fix an algebraic stack  $\mathcal{M}$ separated and of finite type over a regular Noetherian scheme $S$.  We describe below the basics of the theory of Chow groups of such a stack.  If we assume that  $\mathcal{M}$ is a scheme then the theory is described in \cite{fulton} and \cite{soule92}.   If instead we assume that $S$ is the spectrum of a field then the theory is described in  \cite{gillet84} and \cite{vistoli}, but  for the very rudimentary results described below the methods extend easily to the case of $S$ regular and Noetherian.

An \emph{irreducible cycle} on $\mathcal{M}$ is a nonempty integral closed substack of $\mathcal{M}$.  
For any nonnegative $k\in\Z$ let $Z^k(\mathcal{M})$ be the free $\Q$-module generated by  the codimension $k$ irreducible cycles on $\mathcal{M}$.  Elements of $Z^k(\mathcal{M})$ are called \emph{cycles} of codimension $k$.   Define the group of \emph{rational equivalences} in codimension $k$
\begin{equation}\label{rational equivalence}
R^k(\mathcal{M})=\bigoplus_{\mathcal{D}} k(\mathcal{D})^\times
\end{equation}
where the direct sum is over all irreducible cycles $\mathcal{D}$ of codimension $k-1$ on $\mathcal{M}$, and $k(\mathcal{D})$ is the field of rational functions on $\mathcal{D}$ (i.e. the quotient field of  \cite[Definition 1.14]{vistoli}).   There is a $\Z$-module map 
$$
\partial: R^k(\mathcal{M})\map{} Z^k(\mathcal{M})
$$
which takes $f\in k(\mathcal{D})$ to  its Weil divisor viewed as a cycle on $\mathcal{M}$  (defined for schemes in \cite{fulton} or  \cite{soule92} and extended to algebraic stacks as in \cite[\S 4.4]{gillet84}).    Define the \emph{codimension $k$ Chow group} (with rational coefficients)
$$
\chow^k(\mathcal{M})=\mathrm{coker}\big(  R^k(\mathcal{M}) \otimes_\Z\Q \map{\partial} Z^k(\mathcal{M}) \big).
$$

There is also a notion of Chow group with support along a closed subscheme, or, slightly more generally, with support along a proper map.  Suppose we are given an algebraic stack $\mathcal{Y}$ over $S$ and a proper map $\phi :\mathcal{Y}\map{}\mathcal{M}$.   Let $Z^k_\mathcal{Y}(\mathcal{M})\subset Z^k(\mathcal{M})$ be the subspace generated by the irreducible cycles supported on the image of  $\mathcal{Y}$. Similarly let $R^k_\mathcal{Y}(\mathcal{M})\subset R^k(\mathcal{M})$ be defined exactly as in (\ref{rational equivalence}), but where the direct sum is over only those irreducible cycles $\mathcal{D}$ supported on the image of $\mathcal{Y}$.  The map $\partial$ defined above restricts to a map $\partial: R^k_\mathcal{Y}(\mathcal{M})\map{} Z^k_\mathcal{Y}(\mathcal{M})$, and the \emph{codimension $k$ Chow group with support along $\mathcal{Y}$} (again, with rational coefficients) is defined as
$$
\chow_\mathcal{Y}^k(\mathcal{M})=\mathrm{coker}\big( R_\mathcal{Y}^k(\mathcal{M})\otimes_\Z\Q \map{\partial} Z_\mathcal{Y}^k(\mathcal{M})  \big).
$$
Suppose that $\mathcal{Y}$ and $\mathcal{M}_0$ are algebraic stacks and that there  is a finite type \emph{flat} morphism $f:\mathcal{M}_0\map{}\mathcal{M}$ and a proper morphism $\mathcal{Y}\map{}\mathcal{M}$.   As in  \cite[Proposition 4.6(i)]{gillet84} or \cite[Proposition 3.7]{vistoli}  there is an induced  \emph{flat pullback} 
on Chow groups
$$
f^*:\chow^k_\mathcal{Y}(\mathcal{M})\map{} \chow^k_{\mathcal{Y}_0}(\mathcal{M}_0)
$$
in which  $\mathcal{Y}_0=\mathcal{Y}\times_\mathcal{M} \mathcal{M}_0$.

\begin{Def}\label{Def:T uniformization}
Let $T$ be a regular Noetherian scheme over $S$.  A \emph{$T$-uniformization} of $\mathcal{M}$ is an isomorphism $\mathcal{M}_{/T}\iso [H\backslash M]$ of stacks in which $M$ is a scheme over $T$ and $H$ is a finite group of automorphisms of $M$.
\end{Def}

\begin{Lem}\label{Lem:Galois descent}
Suppose we have a $T$-uniformization $\mathcal{M}_{/T}\iso [H\backslash M]$   and let $f:M\map{}\mathcal{M}_{/T}$ be the canonical morphism.    If  $\mathcal{Y}\map{}\mathcal{M}$ is any  proper morphism of algebraic stacks and $Y=\mathcal{Y}\times_{\mathcal{M}}  M$ 
then the flat pullback
\begin{equation}\label{galois iso}
f^*:\chow^k_{\mathcal{Y}_{/T}}(\mathcal{M}_{/T}) \map{} \chow^k_{Y}(M)^H
\end{equation}
is an isomorphism.
\end{Lem}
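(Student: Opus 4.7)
The plan is to prove this by finite Galois descent for the $H$-action on $M$, using crucially that we work with $\Q$-coefficients so that the functor of $H$-invariants is exact on $\Q[H]$-modules (via the averaging projector $\frac{1}{|H|}\sum_{h\in H} h$).

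First I would match up cycles. The canonical map $f : M \map{} [H\backslash M]$ sets up a bijection between irreducible closed substacks $\mathcal{D}\subset \mathcal{M}_{/T}$ supported on $\mathcal{Y}_{/T}$ and $H$-orbits of irreducible closed subschemes of $M$ supported on $Y$: if $D\subset M$ is an irreducible component of $f^{-1}(\mathcal{D})$ with scheme-theoretic stabilizer $H_D\subset H$, then $\mathcal{D}\iso [H_D\backslash D]$, and $f^{-1}(\mathcal{D})= H\cdot D = D_1\cup\cdots\cup D_r$ is its $H$-orbit, with $r=[H:H_D]$. By the definition of flat pullback one has $f^*[\mathcal{D}]=[D_1]+\cdots+[D_r]$, manifestly $H$-invariant; this assignment extends to a $\Q$-linear isomorphism
$$
f^*: Z^k_{\mathcal{Y}_{/T}}(\mathcal{M}_{/T}) \iso Z^k_Y(M)^H,
$$
whose inverse sends an $H$-orbit sum $[D_1]+\cdots+[D_r]$ to $[H_D\backslash D_1]$.

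Second, a parallel descent works for rational equivalences. For an irreducible substack $\mathcal{D}\iso [H_D\backslash D]$ of codimension $k-1$, the function field satisfies $k(\mathcal{D})=k(D)^{H_D}$, and grouping summands of (\ref{rational equivalence}) by $H$-orbit yields a natural identification
$$
R^k_{\mathcal{Y}_{/T}}(\mathcal{M}_{/T})\otimes_\Z\Q \iso \bigl(R^k_Y(M)\otimes_\Z\Q\bigr)^H.
$$
The flat pullback $f^*$ commutes with the Weil divisor map $\partial$ (as in \cite[\S 4.4]{gillet84} and \cite{vistoli}), so the identifications of cycles and rational equivalences fit into a commutative square with $\partial$ on both sides. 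Passing to cokernels and using the exactness of $H$-invariants on $\Q[H]$-modules, I conclude that $f^*$ descends to an isomorphism $\chow^k_{\mathcal{Y}_{/T}}(\mathcal{M}_{/T})\iso \chow^k_Y(M)^H$, as claimed.

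The main technical obstacle is tracking multiplicities and stabilizers correctly in the flat pullback formula for the quotient stack $[H\backslash M]$, especially when elements of $H$ fix components of $f^{-1}(\mathcal{D})$ pointwise but act nontrivially through inertia. The rational-coefficients hypothesis (built into the Chow groups of \S \ref{SS:chow}) is precisely what renders these automorphism contributions invisible after averaging, and it is what makes the descent clean. Everything else is formal bookkeeping on irreducible subschemes of $M$, where the classical Chow theory of \cite{fulton,soule92} applies.
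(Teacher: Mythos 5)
Your proof is correct, but it is not the argument the paper gives. The paper's proof is a two-line formal argument: it invokes the proper push-forward $f_*:\chow^k_Y(M)\map{}\chow^k_{\mathcal{Y}_{/T}}(\mathcal{M}_{/T})$ of \cite[Proposition 4.6(iii)]{gillet84} or \cite[Proposition 3.7]{vistoli} together with the two composition formulas $(f_*\circ f^*)(\mathcal{C})=|H|\cdot\mathcal{C}$ and $(f^*\circ f_*)(C)=\sum_{h\in H}h\cdot C$, and concludes at once that $|H|^{-1}f_*$ is a two-sided inverse of $f^*$ on $H$-invariants. That route never needs to open up the structure of individual substacks: all the orbit/stabilizer/inertia bookkeeping is absorbed into the already-established properties of $f_*$ and $f^*$. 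Your route instead descends explicitly, level by level: the $H$-torsor $M\map{}[H\backslash M]$ is finite \'etale, so $f^*[\mathcal{D}]$ is the reduced orbit sum $[D_1]+\cdots+[D_r]$, giving $Z^k_{\mathcal{Y}_{/T}}(\mathcal{M}_{/T})\iso Z^k_Y(M)^H$; the identification $k(\mathcal{D})=k(D)^{H_D}$ (unaffected by generic inertia, which acts trivially on $D$) together with $(A\otimes_\Z\Q)^H=A^H\otimes_\Z\Q$ handles $R^k$; and semisimplicity of $\Q[H]$ lets you commute $(-)^H$ with the cokernel defining the Chow group. What your argument buys is the sharper statement that $f^*$ is already an isomorphism onto invariants at the level of cycles and of rational equivalences, not merely after passing to Chow groups; what the paper's argument buys is brevity and robustness, since it needs nothing about stabilizers or function fields beyond what the cited push-forward formalism supplies. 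Both arguments use the rational coefficients in an essential way (yours via Maschke, the paper's via dividing by $|H|$), and both correctly reduce the only delicate point — compatibility of $f^*$ with $\partial$ — to the cited results of Gillet and Vistoli.
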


\begin{proof}
As in \cite[Proposition 4.6(iii)]{gillet84} or \cite[Proposition 3.7]{vistoli}  there is  a push-forward  homomorphism  $f_*:\chow^k_Y(M)\map{}\chow^k_{\mathcal{Y}_{/T}}(\mathcal{M}_{/T})$   which satisfies   $$(f_*\circ f^*)(\mathcal{C})=|H|\cdot\mathcal{C}$$ and  $$(f^*\circ f_*)(C)=\sum_{h\in H}h\cdot C.$$   Therefore the flat pullback (\ref{galois iso}) has inverse $|H|^{-1}\cdot f_*$. 
\end{proof}

Suppose $S=\Spec(R)$ with $R$ a discrete valuation ring.  Let $\eta$ and $s$ be the generic point and closed point of $S$, respectively, and assume that $\mathcal{M}$ is proper and flat over $S$.    An irreducible cycle $\mathcal{C}$ on $\mathcal{M}$ is \emph{horizontal} if it is flat over $S$, and is \emph{vertical} if it is supported on the special fiber $\mathcal{M}_{/k(s)}$.   Let 
$$
\chow^k_\vertical (\mathcal{M}) \define \chow^k_{\mathcal{M}_{/k(s)}}(\mathcal{M})
$$
denote the Chow group  with support along the special fiber  $\mathcal{M}_{/k(s)}\map{}\mathcal{M}$, and define $Z^k_\vertical(\mathcal{M})$ in the same way.  If  $\mathcal{Y}\map{}\mathcal{M}$ is any morphism of algebraic stacks there is a canonical homomorphism
$$
Z^k_{\mathcal{Y}} (\mathcal{M} )\map{}Z^k_\vertical (\mathcal{M} )\oplus Z^k_{\mathcal{Y}_{/k(\eta)}}(\mathcal{M}_{/k(\eta)})
$$
which takes an irreducible cycle $\mathcal{C}$ to $(\mathcal{C},0)$ if $\mathcal{C}$ is vertical, and to $(0,\mathcal{C}_{/k(\eta)})$ if $\mathcal{C}$ is horizontal.  If the image of $\mathcal{Y}_{/k(\eta)}\map{}\mathcal{M}_{/k(\eta)}$ has codimension at least $k$  (including possibly $\mathcal{Y}_{/k(\eta)}=\emptyset$) then  this homomorphism descends to a map 
\begin{equation}\label{support map}
\chow^k_{\mathcal{Y} } (\mathcal{M}   )\map{} \chow^k_\vertical(\mathcal{M}   )\oplus Z^k_{\mathcal{Y}_{/k(\eta)}}( \mathcal{M}_{/k(\eta)} )
\end{equation}
 as in \cite[Remark III.2.1]{soule92}.


\subsection{A little K-theory}
\label{SS:K theory}


For any algebraic stack $\mathcal{M}$ we let $\mathbf{K}_0(\mathcal{M})$ denote the Grothendieck group of the category of coherent   $\co_\mathcal{M}$-modules (denoted by $K'_0(\mathcal{M})$ in \cite{gillet-soule87} and \cite{soule92}).  That is, the free abelian group generated by coherent $\co_\mathcal{M}$-modules, modulo the subgroup generated by the relations $\mathcal{F}= \mathcal{F}_1+ \mathcal{F}_2$ whenever there is an exact sequence $0\map{}\mathcal{F}_1\map{}\mathcal{F}\map{}\mathcal{F}_2\map{}0$.  The class in $\mathbf{K}_0(\mathcal{M})$ of a coherent sheaf $\mathcal{F}$ will be denoted $[\mathcal{F}]$.  If  $\phi: \mathcal{Y}\map{}\mathcal{M}$ is a proper morphism of algebraic stacks we define  $\mathbf{K}_0^{\mathcal{Y}}(\mathcal{M})$ to be the Grothendieck group of   the category of coherent $\co_\mathcal{M}$-modules  supported on the image of $\mathcal{Y}$.   There is an obvious homomorphism $\mathbf{K}_0^{\mathcal{Y}}(\mathcal{M})\map{} \mathbf{K}_0(\mathcal{M})$ (which is typically  not  injective).   There is a \emph{higher direct image} map
\begin{equation}
\label{derived push-forward}
R\phi_*:\mathbf{K}_0(\mathcal{Y})\map{}\mathbf{K}_0^\mathcal{Y}(\mathcal{M})
\end{equation}
defined by 
$$
R\phi_*[\mathcal{F}] = \sum_{ k\ge 0 } (-1)^k [R^k \phi_*\mathcal{F}].
$$
If $\phi$ is a closed immersion or a finite map, so that $\phi_*$ is an exact functor from coherent $\co_\mathcal{Y}$-modules to coherent $\co_\mathcal{M}$-modules, then  $R\phi_*[\mathcal{F}] = [ \phi_*\mathcal{F}]$. 
If $\mathcal{Z}\map{}\mathcal{M}$ is a proper map with image contained in the image of $\mathcal{Y}\map{}\mathcal{M}$ then there is an evident change of support map $\mathbf{K}_0^\mathcal{Z}(\mathcal{M}) \map{}\mathbf{K}_0^\mathcal{Y}(\mathcal{M})$.  Define a decreasing filtration on $\mathbf{K}_0^\mathcal{Y}(\mathcal{M})$
$$
F^k \mathbf{K}_0^\mathcal{Y}(\mathcal{M})=\bigcup_\mathcal{Z} \mathrm{image}\big( \mathbf{K}_0^\mathcal{Z}(\mathcal{M})  \map{} \mathbf{K}_0^\mathcal{Y}(\mathcal{M}) \big)
$$
where the union is over the closed substacks $\mathcal{Z} \map{} \mathcal{M}$ of codimension $\ge k$ which are contained in the image of $\mathcal{Y}$.

Let $M$ be a regular scheme which is separated and of finite type over a regular Noetherian base $S$, and let   $Y\map{}M$ be a proper map of schemes.  Define $K_0^Y(M)$ to be the free group generated by quasi-isomorphism classes of finite complexes $\mathcal{P}^k\map{}\cdots\map{}\mathcal{P}^0$ of coherent locally free $\co_M$-modules whose homology sheaves $H^i(\mathcal{P}^\bullet)$ are supported on  the image of $Y$, modulo the subgroup generated by the relations
$\mathcal{P}^\bullet = \mathcal{Q}_1^\bullet + \mathcal{Q}_2^\bullet$ if there is an exact sequence  $0\map{}\mathcal{Q}_1^\bullet\map{}\mathcal{P}^\bullet\map{}\mathcal{Q}_2^\bullet\map{}0$.
The class in $K_0^Y(M)$ of a complex $\mathcal{P}^\bullet$ is denoted $[\mathcal{P}^\bullet]$.   If $\mathcal{F}$ is a coherent  sheaf on $M$ supported on the image of  $Y$ then by \cite[\S 1.9]{gillet-soule87}  there is a finite resolution  
$$
\mathcal{P}^\bullet\map{} \mathcal{F}\map{}0
$$ 
of $\mathcal{F}$ by coherent locally free $\co_M$-modules.  The rule  $[\mathcal{F}]\mapsto [\mathcal{P}^\bullet]$ then defines an isomorphism $\mathbf{K}_0^Y(M)\iso K_0^Y(M)$ with inverse $[\mathcal{P}^\bullet]\mapsto \sum_i (-1)^i H^i(\mathcal{P}^\bullet)$.   In particular the group $K_0^Y(M)$ inherits a filtration $F^k K_0^Y(M)$ from the filtration on $\mathbf{K}_0^Y(M)$ defined above.   Suppose that $i:M_0\map{}M$ is a finite type morphism of schemes with $M_0$ regular and separated over $S$, and set $Y_0=Y\times_M M_0$.  If  $i$ is flat then the functor $i^*$ from coherent $\co_M$-modules to coherent $\co_{M_0}$-modules is exact, and so the rule $[\mathcal{F}]\mapsto [i^*\mathcal{F}]$ is a well-defined homomorphism $i^*:\mathbf{K}_0^Y(M)\map{}\mathbf{K}_0^{Y_0}(M_0)$.  The groups $K_0^Y(M)$ admit pullback maps even when $i$ is not flat:  according to \cite[Theorem I.3(iii)]{soule92} the rule  $[\mathcal{P}^\bullet]\mapsto [i^*\mathcal{P}^\bullet]$ defines a homomorphism 
$$
i^*: F^k K_0^Y(M)\map{}F^k K_0^{Y_0}(M_0).
$$

Keeping $M$ and $Y$ as in the preceeding paragraph, if $Z\map{}M$ is an irreducible cycle of codimension $k$ supported on  $Y$ then we may define $\alpha(Z)$ to be the image of $[\co_Z]$ under 
$$
\mathbf{K}_0(Z)\map{}\mathbf{K}_0^Z(M)\map{} F^k \mathbf{K}_0^Y(M)\iso F^k K_0^Y(M).
$$
By a theorem of Gillet-Soul\'e, see \cite[Theorem 8.2]{gillet-soule87} or  \cite[\S 3.3]{soule92}, the rule $Z\mapsto \alpha(Z)$ defines an isomorphism
\begin{equation}\label{GS iso}
\chow^k_Y(M)\iso Gr^k K_0^Y(M)  \otimes_\Z\Q,
\end{equation}
where $Gr^k K_0^Y(M)=F^k K_0^Y(M)/F^{k+1}K_0^Y(M)$.  Combining this with the pullback on $K$-theory constructed above we obtain a pullback homomorphism on Chow groups
\begin{equation}
\label{basic local pullback}
i^*:\chow^k_Y(M)\map{}\chow^k_{Y_0}(M_0).
\end{equation}
If $i$ is flat then this homomorphism is the flat pullback constructed in \S \ref{SS:chow}.

\begin{Lem}\label{Lem:component decomp}
Let $S$ be a Noetherian scheme and let $\eta\in S$ be the generic point of an irreducible component $D$ of $S$.   We give $D$ its reduced subscheme structure and view $\co_D$ as a quotient sheaf of $\co_S$.  For any  coherent sheaf $\mathcal{F}$ on $S$ there is a closed subscheme $Z\map{}S$ not containing  $D$ such that
$$
[\mathcal{F}] - \length_{\co_{S,\eta}} (\mathcal{F}_\eta) \cdot [\co_D] 
 \in \mathrm{Im}\big(\mathbf{K}_0(Z)\map{}\mathbf{K}_0(S)\big).
 $$
\end{Lem}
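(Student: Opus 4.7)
The plan is to invoke the standard prime filtration for coherent sheaves on a Noetherian scheme: any coherent $\co_S$-module $\mathcal{F}$ admits a finite filtration $0=\mathcal{F}_0 \subset \mathcal{F}_1 \subset \cdots \subset \mathcal{F}_n=\mathcal{F}$ whose successive quotients are each isomorphic (as quotient sheaves of $\co_S$) to $\co_{W_i}$ for some integral closed subscheme $W_i\subset S$. Summing the resulting short exact sequences in the Grothendieck group gives
\[
[\mathcal{F}] = \sum_{i=1}^n [\co_{W_i}] \in \mathbf{K}_0(S).
\]

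Next I would separate the indices by whether $W_i$ contains $D$. Since $D$ is an irreducible component of $S$ and each $W_i$ is irreducible, the inclusion $W_i\supseteq D$ forces $W_i=D$. Let $m$ denote the number of indices with $W_i=D$, and set $Z=\bigcup_{W_i\ne D}W_i$, a closed subscheme of $S$ which, being a finite union of proper closed subsets none containing $D$, does not contain $D$. Each class $[\co_{W_i}]$ with $W_i\ne D$ lies in the image of $\mathbf{K}_0(Z)\to \mathbf{K}_0(S)$, so
\[
[\mathcal{F}] - m\cdot [\co_D] \in \mathrm{Im}\bigl(\mathbf{K}_0(Z)\map{} \mathbf{K}_0(S)\bigr).
\]

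It then remains to identify $m$ with $\length_{\co_{S,\eta}}(\mathcal{F}_\eta)$. Because $\eta$ is the generic point of an irreducible component of the Noetherian scheme $S$, its image in any affine open is a minimal prime; hence $\co_{S,\eta}$ is a zero-dimensional Noetherian local ring, i.e.\ Artinian, with residue field $k(\eta)$ of length one over itself. Localising the filtration at $\eta$ and using additivity of length on short exact sequences yields
\[
\length_{\co_{S,\eta}}(\mathcal{F}_\eta) = \sum_{i=1}^n \length_{\co_{S,\eta}}\bigl((\co_{W_i})_\eta\bigr).
\]
For $W_i\ne D$ we have $\eta\notin W_i$, so $(\co_{W_i})_\eta=0$; for $W_i=D$ we have $(\co_D)_\eta=k(\eta)$, contributing length one. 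Thus the right-hand side equals $m$, completing the proof. The argument is essentially formal once dévissage is available; the only point requiring care is the Artinianness of $\co_{S,\eta}$, which is what forces the count of copies of $D$ in the filtration to agree with the multiplicity $\length_{\co_{S,\eta}}(\mathcal{F}_\eta)$.
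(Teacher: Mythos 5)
Your overall strategy---d\'evissage plus localization at $\eta$ to count the multiplicity of $[\co_D]$---is reasonable, but the very first step is false in the generality in which the lemma is stated and used. For a non-affine Noetherian scheme $S$, a coherent sheaf need not admit a finite filtration whose successive quotients are isomorphic to structure sheaves $\co_{W_i}$ of integral closed subschemes: for example $\co(-1)$ on $\mathbb{P}^1_k$ has no nonzero subsheaf isomorphic to any $\co_W$ (a copy of $\co_{\mathbb{P}^1}$ would give a nonzero global section of $\co(-1)$, and a skyscraper subsheaf would be torsion inside a torsion-free sheaf), so no such filtration exists. The correct general filtration lemma only yields quotients of the form $(W_i\map{}S)_*\mathcal{I}_i$ with $\mathcal{I}_i\subset \co_{W_i}$ a nonzero ideal sheaf; and indeed in $\mathbf{K}_0(\mathbb{P}^1_k)$ one has $[\co(-1)]=[\co]-[\co_{x}]$ for a rational point $x$, which is not a nonnegative combination of classes $[\co_W]$, so even the displayed identity $[\mathcal{F}]=\sum_i[\co_{W_i}]$ with actual subquotients fails. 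Since the paper applies this lemma to quasi-projective $\Z_p$-schemes such as $Y$ and $Y_0$, not to affine schemes, this is a genuine gap: the prime filtration you invoke is only available affine-locally.

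The remainder of your argument is sound (only $W_i=D$ can contain $\eta$, the union of the other $W_i$ is a closed subscheme not containing $D$, and localizing at the Artinian ring $\co_{S,\eta}$ computes the multiplicity), and it can be repaired in two ways. The paper works on an affine open neighborhood $U$ of $\eta$: a result on associated primes shows any coherent sheaf with nonzero stalk at $\eta$ contains a copy of $\co_D|_U$, so induction on $\length_{\co_{S,\eta}}(\mathcal{F}_\eta)$ gives $[\mathcal{F}|_U]=\length_{\co_{S,\eta}}(\mathcal{F}_\eta)\cdot[\co_D|_U]+[\mathcal{G}]$ with $\mathcal{G}_\eta=0$; then $\mathcal{G}$ vanishes on a smaller neighborhood $V$ of $\eta$, and the localization exact sequence $\mathbf{K}_0(S\smallsetminus V)\map{}\mathbf{K}_0(S)\map{}\mathbf{K}_0(V)\map{}0$ produces the required $Z=S\smallsetminus V$, which misses $\eta$ and hence does not contain $D$. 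Alternatively, you could keep a global argument by using the correct d\'evissage (ideal-sheaf quotients) together with Noetherian induction to write $[\mathcal{F}]=n\cdot[\co_D]+\gamma$ with $\gamma$ in the image of $\mathbf{K}_0(Z)$ for some closed $Z\not\supseteq D$, and then identify $n=\length_{\co_{S,\eta}}(\mathcal{F}_\eta)$ by localizing at $\eta$ exactly as you do. As written, however, the filtration your proof starts from does not exist.
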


\begin{proof}
Let $U$ be an open affine neighborhood of $\eta$.  By  \cite[Theorem 6.5(iii)]{matsumura} any coherent $\co_U$-module $\mathcal{A}$ which satisfies  $\mathcal{A}_\eta\not=0$ contains a subsheaf isomorphic to $\co_D|_U$.   Taking $\mathcal{A}=\mathcal{F}|_U$ and using induction on $\length_{\co_{S,\eta}} (\mathcal{F}_\eta)$ we find a coherent $\co_U$-module $\mathcal{G}$ with trivial stalk at $\eta$ and satisfying
 $$
 [\mathcal{F}|_U] = \length_{\co_{S,\eta}} (\mathcal{F}_\eta) \cdot [\co_D|_U] + [\mathcal{G}]
 $$ 
 in $\mathbf{K}_0(U)$.  But  $\mathcal{G}|_V=0$ for some  open neighborhood $V\subset U$ of $\eta$.  Thus $[\mathcal{F}]-\length_{\co_{S,\eta}} (\mathcal{F}_\eta) \cdot [\co_D] $ lies in the  kernel of restriction 
$
\mathbf{K}_0(S)\map{}\mathbf{K}_0(V),
$
and hence is contained in the image of $\mathbf{K}_0(S\smallsetminus V)\map{}\mathbf{K}_0(S)$ by the exact sequence of  \cite[Lemma I.3.2]{soule92}.
\end{proof}

\begin{Lem}
\label{Lem:sheaf decomp}
Let $S$ be a Noetherian scheme and let $\Omega$ denote any subset of the set of irreducible components of $S$.   There is an exact sequence
$$
\bigoplus_{Z  }  \mathbf{K}_0(Z)\map{}\mathbf{K}_0(S) \map{}\bigoplus_{ D \in \Omega} \mathbf{K}_0(\Spec(\co_{S,\eta}) ) \map{}0.
$$
in which the first sum is over all  closed subschemes  of $S$ having no components contained in $\Omega$,  and in the second sum $\eta$ is the generic point of the component $D$.
\end{Lem}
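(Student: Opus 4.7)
The plan is to identify the second arrow with the family of localizations at generic points, use Lemma \ref{Lem:component decomp} as a single-step descent, and iterate via induction on $|\Omega|$ (finite since $S$ is Noetherian).

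First I would define the map $\mathbf{K}_0(S) \to \bigoplus_{D \in \Omega} \mathbf{K}_0(\Spec(\co_{S,\eta_D}))$ by $[\mathcal{F}] \mapsto ([\mathcal{F}_{\eta_D}])_D$; this is a well-defined homomorphism because each stalk functor is exact. Since $S$ is Noetherian, each $\co_{S,\eta_D}$ is an Artinian local ring, so $\mathbf{K}_0(\Spec(\co_{S,\eta_D})) \cong \Z$ via the length function. Surjectivity of this map is then immediate: the class $[\co_D]$ of the reduced structure sheaf of $D$ has length one at $\eta_D$ and length zero at $\eta_{D'}$ for any other component $D' \neq D$, so $\sum_D n_D [\co_D]$ realizes any prescribed tuple $(n_D)_{D \in \Omega}$. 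That the image of the first arrow lies in the kernel of the second is also immediate: a closed subscheme $Z \subseteq S$ with no component in $\Omega$ satisfies $\eta_D \notin Z$ for each $D \in \Omega$, so the pushforward along $Z \hookrightarrow S$ of any coherent $\co_Z$-module has zero stalk at every such $\eta_D$.

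The substantive point is the reverse inclusion, which I would prove by induction on $|\Omega|$. The case $\Omega = \emptyset$ is trivial, since one may take $Z = S$. For the inductive step, fix $D_1 \in \Omega$ and write a class $[\alpha] \in \mathbf{K}_0(S)$ in the kernel as $[\alpha] = [\mathcal{F}] - [\mathcal{G}]$. Because the stalk lengths of $\mathcal{F}$ and $\mathcal{G}$ at $\eta_{D_1}$ agree, two applications of Lemma \ref{Lem:component decomp} (to $\mathcal{F}$ and to $\mathcal{G}$, with $D = D_1$) yield a closed subscheme $Z_1 \subseteq S$ with $D_1 \not\subseteq Z_1$ and a class $[\beta] \in \mathbf{K}_0(Z_1)$ whose pushforward to $\mathbf{K}_0(S)$ equals $[\alpha]$. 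Set $\Omega' = \{ D \in \Omega \setminus \{D_1\} : D \subseteq Z_1 \}$; each such $D$ is still an irreducible component of $Z_1$, and $|\Omega'| < |\Omega|$. The stalk of $[\beta]$ at $\eta_D$ on $Z_1$ coincides with the stalk of $[\alpha]$ at $\eta_D$ on $S$ via the surjection $\co_{S,\eta_D} \twoheadrightarrow \co_{Z_1,\eta_D}$ (which preserves lengths of finite-length modules), and so vanishes. The inductive hypothesis applied to $(Z_1, \Omega')$ then expresses $[\beta]$ as a sum of pushforwards from closed subschemes $Z' \subseteq Z_1$ having no component in $\Omega'$; pushing these forward along $Z_1 \hookrightarrow S$ gives the desired expression for $[\alpha]$.

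The main obstacle to watch is the book-keeping on components in the inductive step: one must check that the $Z' \subseteq Z_1$ produced by the inductive hypothesis have no component in the full $\Omega$, not merely in $\Omega'$. For this, $D_1 \not\subseteq Z'$ because $Z' \subseteq Z_1$ and $D_1 \not\subseteq Z_1$; and any $D_j \in \Omega \setminus \{D_1\}$ with $D_j \subseteq Z'$ would satisfy $D_j \subseteq Z_1$, placing $D_j$ in $\Omega'$ and contradicting the inductive output. The components $D_j \in \Omega \setminus \{D_1\}$ that are not contained in $Z_1$ need no further attention because their generic points fall outside every $Z' \subseteq Z_1$. Once this verification is in place, Lemma \ref{Lem:component decomp} supplies exactly the single-index descent required, and the induction closes.
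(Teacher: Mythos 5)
Your proof is correct and follows essentially the same route as the paper: induction on the number of components in $\Omega$, using Lemma \ref{Lem:component decomp} to descend to a closed subscheme missing the chosen component, together with the length isomorphism $\mathbf{K}_0(\Spec(\co_{S,\eta}))\iso\Z$ for the Artinian local rings at generic points. The paper states this argument very tersely; your write-up simply supplies the bookkeeping (the passage from $\Omega$ to $\Omega'$ on $Z_1$) that the paper leaves implicit.
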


\begin{proof}
By induction it suffices to treat the case in which $\Omega$ consists of a single irreducible component of $S$ with generic point $\eta$.  As $\co_{S,\eta}$ is Artinian there is a canonical isomorphism 
$$\mathbf{K}_0(\Spec(\co_{S,\eta}))\iso \Z$$ given by $[\mathcal{F}_\eta]\mapsto \length_{\co_{S,\eta}}(\mathcal{F}_\eta)$, and so the claim follows easily from  Lemma \ref{Lem:component decomp}.
\end{proof}


\subsection{Arithmetic Chow groups}
\label{SS:chow2}


\begin{Def}
\label{Def:arithmetic stack}
An \emph{arithmetic stack} is an algebraic stack  $\mathcal{M}$ over $\Z$ satisfying the following properties:
\begin{enumerate}
\item
$\mathcal{M}$ is regular,
\item
the structure map $\mathcal{M}\map{}\Spec(\Z)$ is flat and  projective,
\item
for every prime $\ell$ one can find  a positive integer $N$ which is not divisible by $\ell$ and  a $\Z[1/N]$-uniformization $\mathcal{M}_{/\Z[1/N]}\iso [H\backslash M]$ in the sense of Definition \ref{Def:T uniformization}.
\end{enumerate}
\end{Def}

Throughout all of \S \ref{SS:chow2} we work with a fixed arithmetic stack $\mathcal{M}$, equidimensional  of dimension $d=\mathrm{dim}(\mathcal{M}),$
and choose a $\Q$-uniformization  $\mathcal{M}_{/\Q}\iso [H_\Q\backslash M_\Q]$.  Thus $M_\Q$ is a smooth projective variety over $\Q$ of dimension $d-1$.  There is a notion of a Green current \cite[\S III.1]{soule92} for a cycle in $Z^k(M_\Q)$, and  as a consequence of \cite[\S 4]{gillet84} there are canonical isomorphisms 
$$
Z^k(\mathcal{M}_{/\Q})\iso Z^k(M_\Q)^{H_\Q} \hspace{1cm} R^k(\mathcal{M}_{/\Q})\iso R^k(M_\Q)^{H_\Q}.
$$  
Hence we may define a  Green current for a cycle $\mathcal{C}_{\Q}\in Z^k(\mathcal{M}_{/\Q})$ to be an $H_\Q$-invariant Green current for the corresponding cycle  $C_\Q\in Z^k(M_\Q)^{H_\Q}$.  This definition does not depend of the choice of $\Q$-uniformization: if $\mathcal{M}_{/\Q}\iso [H_\Q'\backslash M_\Q']$ is another $\Q$-uniformization then one may form a third $\Q$-uniformization 
$$
\mathcal{M}_{/\Q}\iso [(H_\Q\times H_\Q') \backslash (M_\Q \times_{\mathcal{M}_{/\Q}} M_\Q') ]
$$
which allows one to identify $H_\Q$-invariant currents on $M_\Q$ with $H'_\Q$-invariant currents on $M_\Q'$, as both are identified with $H_\Q\times H'_\Q$-invariant currents on $M_\Q \times_{\mathcal{M}_{/\Q}} M'_\Q$.   Using the notation of \cite[\S III.1]{soule92}, if $\mathcal{C}\in Z^k(\mathcal{M})$ and $\Xi , \Xi' \in D^{k-1,k-1}(M_\Q)$ are two Green currents for $\mathcal{C}_{/\Q}$, we will say that $\Xi$ and $\Xi'$ are \emph{equivalent}  if there are $H_\Q$-invariant currents $u\in D^{k-2,k-1}(M_\Q)$ and $v\in D^{k-1,k-2}(M_\Q)$ such that $\Xi-\Xi'=\partial(u)  +\overline{\partial} (v)$.

Let $\widehat{Z}^k(\mathcal{M})$ be the $\Q$-vector space of  pairs $(\mathcal{C},\Xi)$ in which $\mathcal{C}\in Z^k(\mathcal{M})$ and $\Xi$ is an equivalence class of  Green currents for $\mathcal{C}_{/\Q}$.   For each irreducible cycle $D\in Z^{k-1}(M_\Q)$ and $f\in k(D)^\times$ there is an associated Green current,  constructed in  \cite[\S III.1]{soule92} and denoted $[-\log |f|^2]$, for the Weil divisor $\mathrm{div}(f)\in Z^k(M_\Q)$.   Thus every element of $R^k(M_\Q)$ has a canonically associated Green current.  Taking $H_\Q$-invariants yields a $\Z$-module map
$$
\widehat{\partial}:R^k(\mathcal{M}) \map{}\widehat{Z}^k(\mathcal{M}),
$$
and we define the \emph{codimension $k$ arithmetic Chow group} (with rational coefficients) by 
$$
\widehat{\chow}^k(\mathcal{M})=\mathrm{coker}\big( R^k(\mathcal{M})\otimes_\Z\Q\map{\widehat{ \partial}} \widehat{Z}^k(\mathcal{M}) \big).
$$
For every prime $p$ there is a homomorphism
\begin{equation}\label{vertical embedding}
\chow_\vertical^k(\mathcal{M}_{/\Z_p})\map{} \widehat{\chow}^k(\mathcal{M})
\end{equation}
defined by endowing  a cycle on $\mathcal{M}_{/\F_p}$  with the trivial Green current.

Fix a prime $p$.  For any irreducible cycle $\mathcal{C}$ on $\mathcal{M}_{/\Z_p}$ of codimension $d$ (necessarily vertical) define
$$
\deg_p(\mathcal{C}) =  \sum_{x\in \mathcal{C}(\F_p^\alg)} \frac{1}{|\Aut_{\mathcal{M}}(x)|} 
$$
where the sum is over all isomorphism classes of objects in the category $\mathcal{C}(\F_p^\alg)$.  By $\Aut_\mathcal{M}(x)$ we mean the automorphism group of $x$ in the category $\mathcal{M}(\F_p^\alg)$.  Extending the  definition of $\deg_p$ linearly to all of  $Z^d_\vertical(\mathcal{M}_{/\Z_p})$ there is an induced homomorphism 
$$
\deg_p: \chow^d_\vertical(\mathcal{M}_{/\Z_p}) \map{}\Q.
$$
Suppose  $(\mathcal{C},\Xi)\in\widehat{Z}^d(\mathcal{M})$.  By definition $\Xi$ is a $(d-1,d-1)$-current (up to equivalence) on the  compact complex manifold $M_\Q(\C)$ of dimension $d-1$.  Thus we may define 
$$
\deg_\infty(\Xi) = \frac{1}{2\cdot |H|} \int_{M_\Q(\C)} \Xi
$$
where the integral means evaluation of the current $\Xi$ at the constant function $1$.  The function on $\widehat{Z}^d(\mathcal{M})$ defined by
$$
\widehat{\deg} (\mathcal{C},\Xi)=  \deg_\infty(\Xi)  +  \sum_p \deg_p(\mathcal{C}_{/\Z_p}) \cdot  \log (p)  $$
then descends to a $\Q$-linear map,  the \emph{arithmetic degree},
\begin{equation*}
\widehat{\deg}:\widehat{\chow}^d(\mathcal{M})\map{}\R.
\end{equation*}
Indeed, it suffices to check that if $\mathcal{W}$ is an irreducible cycle of dimension one on $\mathcal{M}$ and $f$ is a rational function on $\mathcal{W}$ then the arithmetic degree of the pair $(\mathrm{div}(f), [-\log|f|^2])$ is $0$.  If $\mathcal{W}$ is a horizontal cycle then this is precisely the calculation of \cite[(2.1.11)]{KRY}.  If $\mathcal{W}$ is a vertical cycle supported in characteristic $p$ then one chooses a $\Z_p$-uniformization $[H\backslash M] \iso \mathcal{M}_{/\Z_p}$, sets $W=M\times_{\mathcal{M}} \mathcal{W}$, and uses the fact that principal Weil divisors on $W$ have degree $0$.

Now suppose $\mathcal{M}_0$ is another arithmetic stack, equidimensional of dimension $d_0=\mathrm{dim}(\mathcal{M}_0)$, and that  $i:\mathcal{M}_0\map{}\mathcal{M}$ is a closed immersion.  We will define a $\Q$-linear functional, the \emph{arithmetic degree along $\mathcal{M}_0$}
$$
\widehat{\mathrm{\deg}}_{\mathcal{M}_0} : \widehat{\chow}^{d_0}(\mathcal{M}) \map{}\R.
$$
Morally speaking, the arithmetic degree along $\mathcal{M}_0$ should be the composition 
$$
\widehat{\chow}^{d_0}(\mathcal{M})\map{i^*}\widehat{\chow}^{d_0}(\mathcal{M}_0)\map{\widehat{\deg}}\R
$$
of the pullback $i^*$ as constructed in  \cite[\S 4.4]{gillet-soule90} and \cite[\S III.3]{soule92} with the arithmetic degree defined above. This requires extending the theory of pullbacks in \cite{gillet-soule90} from schemes to arithemetic stacks, and is further complicated by a error in the construction of $i^*$ identified and corrected by Gubler \cite{gubler}.  Presumably this extension can be successfully done, but we will avoid the issue by borrowing a trick from Bruinier-Burgos-K\"uhn \cite{bruinier-burgos-kuhn}.  Suppose we have an integer $N$ and a $\Z[1/N]$-uniformization $\mathcal{M}_{/\Z[1/N]} \iso [H\backslash M]$.  Set $M_0=\mathcal{M}_0\times_{\mathcal{M}} M$.  There is an obvious homomorphism
$$
\delta_{M/\mathcal{M}}:\widehat{\chow}^{d_0}(\mathcal{M}) \map{} \widehat{\chow}^{d_0}(M)
$$
induced by pullback of cycles through the composition $M\map{}\mathcal{M}_{/\Z[1/N]}\map{}\mathcal{M}$, and the full force of the theories of  \cite{gillet-soule90} and \cite{BKK} (both of which include arithmetic Chow groups of schemes over $\Z[1/N]$) may be applied to the arithmetic Chow group $\widehat{\chow}^{d_0}(M)$.  In particular there is a homomorphism of $\Q$-vector spaces
$$
\Delta_N:\widehat{\chow}^{d_0}(\mathcal{M}) \map{}\widehat{\chow}^1(\Z[1/N])
$$
obtained as the composition
$$
\widehat{\chow}^{d_0}(\mathcal{M}) \map{\delta_{M/\mathcal{M}}} \widehat{\chow}^{d_0} (M)
\map{|H|^{-1}}\widehat{\chow}^{d_0} (M) \map{}\widehat{\chow}^{d_0}(M_0)\map{} \widehat{\chow}^1(\Z[1/N])
$$
in which the third arrow is the pullback of \cite[\S 4.4]{gillet-soule90} and the final arrow is the push-forward of  \cite[\S 3.6]{gillet-soule90}.  The map $\Delta_N$ depends on $N$ but not on the choice of $\Z[1/N]$-uniformization $M$.   If we define $\R_N$ to be the quotient of $\R$ be the additive subgroup of all rational linear combinations of $\{\log(p)\}$, as $p$ ranges over the prime divisors of $N$, then there is a canonical $\Q$-linear map
$$
\widehat{\deg}_{\Z[1/N]}: \widehat{\chow}^1(\Z[1/N]) \map{} \R_N
$$
obtained by imitating the construction of  the arithmetic degree $\widehat{\chow}^1(\Z) \map{} \R$.   As in \cite[\S 6.3]{bruinier-burgos-kuhn} there is a canonical isomorphism
$$
\R \iso \mil_{N\in\Z^+} \R_N,
$$
and our definition of an arithmetic stack guarantees that one can find a coterminal family $\mathcal{N}\subset\Z^+$ of this inverse system such that for every $N\in\mathcal{N}$ the stack $\mathcal{M}$ admits a $\Z[1/N]$-uniformization.  This allows us to define 
$$
\widehat{\deg}_{\mathcal{M}_0} : \widehat{\chow}^{d_0} (\mathcal{M}) \map{} \R 
$$
by
$$
\widehat{\deg}_{\mathcal{M}_0} = \mil_{N\in\mathcal{N}} \widehat{\deg}_{\Z[1/N]}\circ \Delta_N.
$$

Now suppose $\mathcal{Y}\map{}\mathcal{M}$ is a proper morphism of algebraic stacks such that  the image of  $\mathcal{Y}_{/\Q}\map{}\mathcal{M}_{/\Q}$ has codimension $k$.  If $p$ is a prime, by  a  \emph{local cycle datum at $p$} of codimension $k$ with support on $\mathcal{Y}$ we mean a triple $(\mathcal{C}_\Q, \Xi,\mathcal{C}_p)$ in which 
$$
\mathcal{C}_\Q\in Z^{k}_{\mathcal{Y}_{/\Q}} (\mathcal{M}_{/\Q}),
$$
$\Xi$ is an equivalence class of Green currents for  $\mathcal{C}_\Q$, and 
$$
\mathcal{C}_p\in  \chow^{k}_{\mathcal{Y}_{/\Z_p}}(\mathcal{M}_{/ \Z_p })
$$
is a cycle class which  maps to  $\mathcal{C}_\Q\times_\Q\Q_p$ under the homomorphism 
\begin{equation}
\label{generic fiber}
  \chow^{k}_{\mathcal{Y}_{/\Z_p}}(\mathcal{M}_{/ \Z_p }) \map{}Z^{k}_{\mathcal{Y}_{/\Q_p}} (\mathcal{M}_{/\Q_p})
\end{equation}
deduced from (\ref{support map}).  The image of $\mathcal{C}_p$ under
$$
  \chow^{k}_{\mathcal{Y}_{/\Z_p}}(\mathcal{M}_{/ \Z_p }) \map{}   \chow^{k}_\vertical(\mathcal{M}_{/ \Z_p }),
$$
denoted $\mathcal{C}_p^\vertical$, is the \emph{vertical component} of the local cycle datum.
By a \emph{global cycle datum}  of codimension $k$ with support on $\mathcal{Y}$ we mean a triple $(\mathcal{C}_\Q, \Xi,\mathcal{C}_\bullet)$ in which  $\mathcal{C}_\Q$ and $\Xi$ are as above and 
$\mathcal{C}_\bullet=\{\mathcal{C}_p\}$ 
is a collection indexed by the primes such that each $(\mathcal{C}_\Q, \Xi,\mathcal{C}_p)$ is a local cycle datum at $p$ of codimension $k$ with support along $\mathcal{Y}$.  We further require that $\mathcal{C}_p^\vertical=0$ for all but finitely many $p$.
Any global cycle datum $(\mathcal{C}_\Q, \Xi,\mathcal{C}_\bullet)$  determines an arithmetic cycle class 
\begin{equation}\label{datum to cycle}
 \widehat{\mathcal{C}}^\horizontal+\sum_p \widehat{\mathcal{C}}_p^\vertical
\in \widehat{\chow}^{k}(\mathcal{M})
\end{equation}
in which $\widehat{\mathcal{C}}_p^\vertical$ is the image of $\mathcal{C}_p^\vertical$ under the map (\ref{vertical embedding}) and $ \widehat{\mathcal{C}}^\horizontal$ is the arithmetic cycle consisting of the Zariski closure of $\mathcal{C}_\Q$ in $\mathcal{M}$ with its Green current $\Xi$.

 Set $\mathcal{Y}_0=\mathcal{Y}\times_\mathcal{M}\mathcal{M}_0$ and assume that the image of  $\mathcal{Y}_{0/\Q} \map{}\mathcal{M}_{0/\Q}$ again has codimension $k$. There is a pullback global cycle datum $ (i^*\mathcal{C}_\Q,i^*\Xi,i^*\mathcal{C}_\bullet)$ of codimension $k$ with support along $\mathcal{Y}_0$ defined as follows.  For every prime $p$  choose a $\Z_p$-uniformization $\mathcal{M}_{/\Z_p} \iso [H\backslash M]$ and define $\Z_p$-schemes
\begin{eqnarray}
Y &=& \mathcal{Y} \times_{\mathcal{M}} M  \label{cycle uniformization} \\
M_0 &=& \mathcal{M}_{0}\times_{\mathcal{M}} M  \nonumber \\
Y_0 &=& \mathcal{Y}_{0}\times_{\mathcal{M}_{0}} M_0. \nonumber
\end{eqnarray}
There is a pullback map
\begin{equation*}
i^*: \chow^{k}_{\mathcal{Y}_{/\Z_p}} (\mathcal{M}_{/\Z_p})\map{}\chow^{k}_{\mathcal{Y}_{0/\Z_p}}(\mathcal{M}_{0/\Z_p})
\end{equation*}
defined as the composition
$$
\chow^k_{\mathcal{Y}_{/\Z_p}} (\mathcal{M}_{/\Z_p}) \iso \chow^k_{Y} (M)^{H} \map{} \chow^k_{Y_0}(M_0)^{H}  \iso \chow^k_{\mathcal{Y}_{0/\Z_p}}(\mathcal{M}_{0/\Z_p})
$$
in which the middle arrow is (\ref{basic local pullback}).  In the exact same way one defines a pullback
$$
i^*: Z^k_{\mathcal{Y}_{/\Q}}(\mathcal{M}_{/\Q})  \iso \chow^k_{\mathcal{Y}_{/\Q}} (\mathcal{M}_{/\Q})\map{}\chow^k_{\mathcal{Y}_{0/\Q}}(\mathcal{M}_{0/\Q})\iso Z^k_{\mathcal{Y}_{0/\Q}}(\mathcal{M}_{0/\Q})
$$
using the hypotheses on the codimensions of $\mathcal{Y}_{/\Q}$ and $\mathcal{Y}_{0/\Q}$ for the isomorphisms.  This defines $i^*\mathcal{C}_\Q$ and $i^*\mathcal{C}_\bullet$. The  current $i^*\Xi$ is defined as in \cite[\S II.3.2 and \S II.3.3]{soule92}:  after replacing $\Xi$ by an equivalent Green current  we may assume that $\Xi$ is a Green form of logarithmic type on $M_\Q$, and the pullback current $i^*\Xi$ is then just the usual pullback of $\Xi$ to $M_{0\Q}=\mathcal{M}_{0}\times_{\mathcal{M}} M_\Q $ in the sense of differential forms.

As before let $\mathcal{M}_0$ be an arithmetic stack, equidimensional of dimension $d_0$, and suppose we have a closed immersion $i:\mathcal{M}_0\map{}\mathcal{M}$.  Let $\mathcal{Y}\map{}\mathcal{M}$ be a proper morphism of algebraic stacks such that the image of $\mathcal{Y}_{/\Q}\map{}\mathcal{M}_{/\Q}$ has codimension $d_0$ and assume that $\mathcal{Y}_{0/\Q}=\emptyset$, where we again set $\mathcal{Y}_0=\mathcal{Y}\times_{\mathcal{M}}\mathcal{M}_0$.  In particular the image of $\mathcal{Y}_{0/\Q}\map{}\mathcal{M}_{0/\Q}$ has codimension $d_0$.   If $(\mathcal{C}_\Q,\Xi,\mathcal{C}_\bullet)$ is a global cycle datum on $\mathcal{M}$ of codimension $d_0$ supported on $\mathcal{Y}$ then we have  defined above a pullback global cycle datum $(i^*\mathcal{C}_\Q,i^*\Xi,i^*\mathcal{C}_\bullet)$ on $\mathcal{M}_0$ of codimension $d_0$ supported on $\mathcal{Y}_0$.  Denoting by 
$$
\widehat{\mathcal{C}} \in \widehat{\chow}^{d_0}(\mathcal{M}) \hspace{1cm}
 \widehat{\mathcal{C}}_0 \in \widehat{\chow}^{d_0}(\mathcal{M}_0)
$$
the arithmetic cycle classes corresponding to these cycle data one may easily check that
$$
\widehat{\mathrm{deg}}_{\mathcal{M}_0} \widehat{\mathcal{C}} = \widehat{\mathrm{deg}}\ \widehat{\mathcal{C}}_0.
$$
In other words
\begin{equation}\label{pullback degree}
\widehat{\mathrm{deg}}_{\mathcal{M}_0} (\mathcal{C}_\Q,\Xi,\mathcal{C}_\bullet) = \widehat{\mathrm{deg}}\ (i^*\mathcal{C}_\Q,i^*\Xi,i^*\mathcal{C}_\bullet) .
\end{equation}


\section{A Shimura surface and its special cycles}
\label{S:Hilbert-Blumenthal} 


We take   \cite[Chapter 6]{mumford65} as our basic reference for abelian schemes.  We will be dealing with integral models of the Shimura varieties associated to the algebraic groups $G_0$ and $G$ of the introduction.  These have been dealt with thoroughly in the literature: for the Shimura curve associated to $G_0$ we point out the articles of Buzzard \cite{buzzard} and Boutot-Carayol \cite{boutot-carayol}, and for the Shimura surface associated to $G$ we point  out the volume \cite{breen-labesse}, as well as the articles of Milne \cite{milne79}, Kottwitz \cite{kottwitz92}, Rapoport-Zink \cite{rapoport-zink82,rapoport96}, Boutot-Zink \cite{boutot-zink}, Kudla-Rapoport \cite{kudla99}, and Hida \cite{hida04}.  These Shimura surfaces are closely related to classical Hilbert modular surfaces, whose integral models are dealt with in work of Rapoport \cite{rapoport78}, Deligne-Pappas \cite{deligne-pappas}, Pappas \cite{pappas95}, Stamm \cite{stamm}, and Vollaard \cite{vollaard}.  General references on Hilbert modular varieties include the books of Goren \cite{goren}, Hida \cite{hida06}, and van der Geer \cite{vanderGeer}.   The theory of Shimura varieties, from Deligne's points of view, can be found in works of Milne \cite{milne90, milne05}.


\subsection{Moduli problems}
\label{SS:moduli}


 Let $S$ be a scheme.  By a \emph{QM abelian fourfold} over $S$ we mean a pair $\mathbf{A}=(A,i)$ in which $A$ is an abelian scheme over $S$ of relative dimension four, and $i:\co_{B}\map{}\End(A)$ is a ring homomorphism taking $1\mapsto 1$ and satisfying the    \emph{Kottwitz condition} (see for example \cite[\S 5]{kottwitz92}, \cite[\S 7.1]{hida04}, or \cite{vollaard}), everywhere locally on $S$.  This last condition means that for every  $s\in S$  there is an open affine neighborhood $U$  over which $\mathrm{Lie}(A)$ is free $\co_U$-module satisfying  the equality of polynomials in $\co_U[x]$
  $$
 \mathrm{char}_{\co_U}(i(b);\mathrm{Lie}(A)) = f_b(x) \cdot f_b(x)^\sigma
 $$
for every $b\in\co_B$.  Here $$f_b(x)=(x-b)(x-b^\iota)\in\co_F[x]$$ is the reduced characteristic polynomial of $b$.   Define  $\End(\mathbf{A})$ to be the $\co_F$-algebra of endomorphisms of $A$ which commute with the action of $\co_B$, and set $\End^0(\mathbf{A})=\End(\mathbf{A})\otimes_{\co_F} F$.   For a good theory of moduli of QM abelian fourfolds we must introduce polarization data as well.  By a $\mathfrak{D}^{-1}$-\emph{polarized} QM abelian fourfold we mean a pair $(\mathbf{A},\lambda)$ in which $\mathbf{A}=(A,i)$ is a QM abelian fourfold and $\lambda:A\map{}A^\vee$ is a polarization satisfying
   \begin{enumerate}
 \item $ \lambda \circ  i(b^*) = i(b)^\vee\circ \lambda$ for every $b\in\co_B$
 \item the kernel of $\lambda$ is $A[\mathfrak{D}]$.
 \end{enumerate}
  Define $\mathcal{M}$ to be the category, fibered in groupoids over the category of schemes, whose objects are $\mathfrak{D}^{-1}$-polarized QM abelian fourfolds. A morphism from $(\mathbf{A}',\lambda')$ to $(\mathbf{A},\lambda)$, defined over schemes $S'$ and $S$, respectively, in the category $\mathcal{M}$ is a commutative diagram
\begin{equation*}
\xymatrix{
{ A' } \ar[r]\ar[d] & { A  }\ar[d] \\
S'\ar[r] & S
}
\end{equation*}
such that the induced map $A'\map{}A\times_S S'$ is an isomorphism of abelian schemes over $S$ respecting the action of $\co_{B}$ and identifying the polarizations $\lambda$ and $\lambda'$.  The category $\mathcal{M}$ is an arithmetic stack (in the sense of Definition \ref{Def:arithmetic stack}) of dimension three with geometrically connected fibers.   Furthermore $\mathcal{M}$  is smooth over $\Z[1/D]$, where $D=\mathrm{disc}(F)\cdot \mathrm{disc}(B_0)$.   The existence of a smooth projective models for  $\mathcal{M}$ over $\Z_p$ for primes  $p\nmid D$ is explained in \cite{kottwitz92,milne79}, and in much greater detail in Chapters 6-7 of \cite{hida04}.  When $p\mid \mathrm{disc}(B_0)$ the regularity of the integral model follows from the Cerednik-Drinfeld uniformization  described in \cite{boutot-carayol} and \cite{rapoport96}; see \S \ref{SS:CD}.  The higher dimensional case needed here is dealt with in \cite{boutot-zink} and \cite{rapoport96}.  Finally, for primes dividing $\mathrm{disc}(F)$ one proves the existence of regular projective models using the methods of Deligne-Pappas \cite{deligne-pappas}, who treat the case of classical Hilbert-Blumenthal moduli (e.g., the case $B\iso M_2(F)$, which we exclude).  Deligne and Pappas use a different type of polarization data in the statement of their moduli problem, and do not impose the Kottwitz condition.  The equivalence of moduli problems of the type defined above and those considered by Deligne and Pappas is explained in  work of Vollaard \cite{vollaard}.

By a \emph{QM abelian surface} over a scheme $S$ we mean a pair $\mathbf{A}_0=(A_0,i_0)$ in which $A_0$ is an abelian scheme over $S$ of relative dimension two and $i_0:\co_{B_0}\map{}\End(A_0)$ is a ring homomorphism satisfying $1\mapsto 1$.  Again we require that $\mathbf{A}_0$ satisfies the Kottwitz  condition every locally on $S$: for any $b_0\in\co_{B_0}$ the equality of polynomials in $\co_U[x]$ 
$$
\mathrm{char}_{\co_U}(i(b_0),\mathrm{Lie}(A_{0})) = (x-b_0)(x-b_0^\iota)
$$  
holds over any sufficiently small open affine neighborhood  $U$ of any point $s\in S$.  Define $\End(\mathbf{A}_0)$ and $\End^0(\mathbf{A}_0)$ as above.  By a \emph{principally polarized} QM abelian surface we mean a pair $(\mathbf{A}_0,\lambda_0)$ in which $\mathbf{A}_0$ is a QM abelian surface and $\lambda_0:A_0\map{}A_0^\vee$ is a  polarization satisfying  
\begin{enumerate}
\item  $\lambda \circ  i_0(b^*) = i_0(b)^\vee\circ \lambda_0$ for every $b\in \co_{B_0}$.
\item $\lambda_0$ is an isomorphism.
\end{enumerate}
We remark that in fact every QM abelian surface admits a unique such $\lambda_0$.  This follows by combining the argument  used in \cite{buzzard} to prove the claim when $\mathrm{disc}(B_0)$ is invertible on the base, with \cite[Proposition III.3.3]{boutot-carayol}.  Let $\mathcal{M}_0$ be the category, fibered in groupoids over the category of schemes, whose objects are  principally polarized QM abelian surfaces $(\mathbf{A}_{0},\lambda_0)$ over schemes.  Morphisms are defined exactly as in  $\mathcal{M}$.  The category $\mathcal{M}_0$ is then an arithmetic stack of dimension two with geometrically connected fibers, and is smooth over $\Z[\mathrm{disc}(B_0)^{-1}]$.  See \cite{boutot-carayol,buzzard} and the references therein for details.

Let $\mathbf{A}_0=(A_0,i_0)$ be a QM abelian surface over a scheme $S$ with a polarization $\lambda_0$ as above.  The abelian fourfold $A_0\otimes \co_F\iso \Hom_\Z(\mathfrak{D}^{-1},A_0)$ (see \cite[\S 7]{conrad04} for background on Serre's tensor construction) is then equipped with  commuting actions of $\co_{B_0}$ and $\co_F$, and hence with an action $i:\co_B\map{}\End(A_0\otimes \co_F)$.  To be very concrete, we may define $A_0\otimes\co_F=A_0\times A_0$ with $\co_{B_0}$ acting diagonally and $\co_F$ acting through the  ring homomorphism $\kappa:\co_F\map{}M_2(\Z)$ determined by the $\Z$-basis $\{\varpi_1,\varpi_2\}$ of $\co_F$.   We obtain a QM abelian fourfold
$$
\mathbf{A}_0\otimes\co_F= (A_0\otimes \co_F,i)
$$
and an isomorphism of $\co_F$-algebras
\begin{equation}\label{endo base change}
\End(\mathbf{A}_0\otimes\co_F) \iso \End(\mathbf{A}_0)\otimes_\Z \co_F.
\end{equation}
The identification $A_0\otimes\co_F=A_0\times A_0$ determines an isomorphism   
$$(A_0\otimes\co_F)^\vee \iso A_0^\vee\times A_0^\vee$$ in which the action of  $\co_{B_0}$ on the right hand side  is again diagonal, but the action of $\co_F$ is through the \emph{transpose} of the homomorphism $\kappa$.  This transpose ${}^t\kappa:\co_F\map{}M_2(\Z)$ is the embedding determined by the action of $\co_F$ on $\mathfrak{D}^{-1}$ with respect to the dual basis $\{\varpi_1^\vee,\varpi_2^\vee\}$ relative to the trace form on $F$, and allows us to identify 
$$
(A_0\otimes\co_F)^\vee\iso  A_0^\vee\times A_0^\vee\iso A_0^\vee \otimes\mathfrak{D}^{-1}.
$$
If denote by   $\lambda_0\otimes\co_F$  the isogeny
$$
A_0\otimes \co_F\map{\lambda_0\otimes 1 }  A_0^\vee\otimes \mathfrak{D}^{-1} \iso  (A_0\otimes \co_F)^\vee
$$
then the pair $(\mathbf{A}_0\otimes\co_F,\lambda_0\otimes\co_F)$ is  a $\mathfrak{D}^{-1}$-polarized QM abelian fourfold.  We  check  that $\lambda_0\otimes\co_F$ is a polarization: 
let $\Delta$ be the matrix of the trace form on $\co_F$ relative to the basis $\{\varpi_1,\varpi_2\}$.  Identifying $A_0\otimes\co_F \iso A_0\times A_0$ and $(A_0\otimes\co_F)^\vee \iso A_0^\vee \times A_0^\vee$ as above, the isogeny $\lambda_0\otimes\co_F$  is identified with 
\begin{equation}\label{polarized}
 A_0 \times  A_0  \map{\lambda_0\times\lambda_0}  A_0^\vee\times A_0^\vee \map{\Delta}  A_0^\vee  \times A_0^\vee .
\end{equation}
As $\Delta$ is positive definite there is some positive integer multiple $m\Delta={}^tE \Delta' E$ with $\Delta',E \in M_2(\Z)$ and $\Delta'$ diagonal with diagonal entries $d_1,d_2>0$.  The pullback of the polarization
$$
 A_0\times A_0 \map{d_1\lambda_0\times d_2\lambda_0} A_0^\vee\times A_0^\vee
$$
by the isogeny
$$
A_0  \times A_0  \map{E} A_0 \times A_0
$$
is then $m$ times the isogeny (\ref{polarized}).  As $m$ times the isogeny (\ref{polarized}) is a polarization, so is  (\ref{polarized}).    We now have a functor  $$i:\mathcal{M}_0\map{}\mathcal{M}$$ defined by 
$$
(\mathbf{A}_0,\lambda_0)\mapsto  (\mathbf{A}_0 ,\lambda_0 ) \otimes \co_F = (\mathbf{A}_0\otimes\co_F,\lambda_0\otimes \co_F).
$$ 
This functor induces a proper morphism of algebraic stacks.   Combining the properness with the following lemma (which implies that the morphism $i$ is injective on geometric points) shows that $i:\mathcal{M}_0\map{}\mathcal{M}$ is a closed immersion.

\begin{Lem}
For any principally polarized QM abelian surfaces $(\mathbf{A}_0,\lambda_0)$ and $(\mathbf{A}_0',\lambda_0')$ over a common base scheme $S$ the natural function
$$
\mathrm{Isom}_{ \mathcal{M}_0(S) }\big( (\mathbf{A}_0,\lambda_0), (\mathbf{A}'_0,\lambda'_0) \big) \map{}
\mathrm{Isom}_{\mathcal{M}(S)} \big( (\mathbf{A}_0,\lambda_0) \otimes\co_F , (\mathbf{A}'_0,\lambda'_0) \otimes\co_F \big)
$$
is a bijection. 
\end{Lem}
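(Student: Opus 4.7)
The plan is to prove the injectivity and surjectivity of the indicated map separately.

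Injectivity is formal: via the decomposition $A_0 \otimes \co_F \cong A_0 \times A_0$ coming from the basis $\{\varpi_1,\varpi_2\}$, the tensored morphism $f_0 \otimes \mathrm{id}_{\co_F}$ becomes the map $(a_1, a_2) \mapsto (f_0(a_1), f_0(a_2))$, so $f_0$ is recovered by restricting to either $A_0$-summand of the source and projecting to the corresponding summand of the target.

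For surjectivity, suppose $F : \mathbf{A}_0 \otimes \co_F \to \mathbf{A}_0' \otimes \co_F$ is an isomorphism in $\mathcal{M}(S)$. The universal property of Serre's tensor construction provides a natural identification
\[
\Hom_{\co_B}(\mathbf{A}_0 \otimes \co_F,\, \mathbf{A}_0' \otimes \co_F) \;\cong\; \Hom_{\co_{B_0}}(\mathbf{A}_0, \mathbf{A}_0') \otimes_{\Z} \co_F.
\]
Choosing a $\Z$-basis $\{1, \varpi\}$ of $\co_F$ with $\varpi^2 = t\varpi - n$ (so that $d_F = t^2 - 4n \geq 5$), I write $F = f_1 \otimes 1 + f_2 \otimes \varpi$ with $f_1, f_2 \in \Hom_{\co_{B_0}}(\mathbf{A}_0, \mathbf{A}_0')$. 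The image of the map to be shown surjective consists precisely of those $F$ with $f_2 = 0$, so the task reduces to showing that polarization-compatibility of $F$ forces $f_2 = 0$; the first equation below will then give that $f_1$ preserves polarizations.

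Expanding the identity $F^\vee \circ (\lambda_0' \otimes \co_F) \circ F = \lambda_0 \otimes \co_F$ using the matrix description $\lambda_0 \otimes \co_F = \Delta \cdot (\lambda_0 \times \lambda_0)$ together with the commutation $\kappa(\alpha)^t \Delta = \Delta \kappa(\alpha)$ (self-adjointness of multiplication by $\alpha$ for the trace form) and $\varpi^2 = t\varpi - n$, and separating the $I$- and $\kappa(\varpi)$-components, produces
\begin{align*}
f_1^\vee \lambda_0' f_1 - n \, f_2^\vee \lambda_0' f_2 &= \lambda_0, \\
f_1^\vee \lambda_0' f_2 + f_2^\vee \lambda_0' f_1 + t \, f_2^\vee \lambda_0' f_2 &= 0.
\end{align*}
Setting $f^* = \lambda_0^{-1} f^\vee \lambda_0'$ and considering the positive-definite symmetric bilinear form $B(x,y) = \mathrm{Trd}(x^* y)$ on $\Hom_{\co_{B_0}}^0(\mathbf{A}_0, \mathbf{A}_0')$ (positivity from the Rosati involution), taking reduced traces of the two equations and applying Cauchy--Schwarz yields
\[
B(f_2, f_2) \;\leq\; \frac{4\, \mathrm{Trd}(\mathrm{id})}{d_F} \;\leq\; \frac{8}{d_F}.
\]

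The remaining obstacle is extracting $f_2 = 0$ from this bound. The value $B(f_2, f_2)$ is a non-negative integer for integral $f_2$, and is in fact an even integer whenever $\End_{\co_{B_0}}^0(\mathbf{A}_0)$ is a quadratic field or a quaternion algebra (in both cases $B(f_2, f_2)$ is twice an integral norm). For $d_F \geq 9$ the bound is strictly less than $1$, forcing $B(f_2, f_2) = 0$ at once; for the two real quadratic fields with $d_F \in \{5, 8\}$ a short case analysis (according to whether $\End_{\co_{B_0}}^0(\mathbf{A}_0)$ is $\Q$, an order in an imaginary quadratic field, or an order in a quaternion algebra over $\Q$) forces $B(f_2, f_2) = 0$ again, either by a strictly tighter bound when $\mathrm{Trd}(\mathrm{id}) = 1$ or by the parity constraint when $\mathrm{Trd}(\mathrm{id}) = 2$. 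With $f_2 = 0$ established, the first displayed equation reads $f_1^\vee \lambda_0' f_1 = \lambda_0$, so that $f_1$ is the sought iso in $\mathcal{M}_0(S)$. Arbitrary base schemes $S$ are handled by reducing the vanishing of $f_2$ to geometric fibers via the rigidity lemma for morphisms of abelian schemes.
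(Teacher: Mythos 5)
Your proof is correct, but your surjectivity argument takes a genuinely different route from the paper's. After the common starting point --- the identification $\Hom(\mathbf{A}_0\otimes\co_F,\mathbf{A}_0'\otimes\co_F)\iso\Hom(\mathbf{A}_0,\mathbf{A}_0')\otimes_\Z\co_F$ and the two component equations of the polarization identity, which you have stated correctly --- the paper conjugates $\kappa(F)$ by a suitable $\rho\in\GL_2(\Q)$ into the shape $\left(\begin{smallmatrix} a & bD \\ b & a \end{smallmatrix}\right)$ with $D>1$ squarefree, compares a single matrix entry of the conjugated identity, and invokes the fact that the $\Z$-module of symmetric $*$-compatible homomorphisms $\mathbf{A}_0\map{}\mathbf{A}_0^\vee$ is free of rank one generated by $\lambda_0$, with polarizations the positive multiples; this produces an equation such as $4=\deg(2f_1+f_2)+D\deg(f_2)$ (for $D\equiv 1\pmod 4$), so $f_2=0$ drops out with no inequality subtler than $D\ge 5$ (resp.\ $D\ge 2$ in the other congruence class) and no case analysis on small discriminants. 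You instead keep the intrinsic decomposition $F=f_1\otimes 1+f_2\otimes\varpi$, take reduced traces against the Rosati form $B$, and use Cauchy--Schwarz to get $B(f_2,f_2)\le 4\,\mathrm{Trd}(\mathrm{id})/d_F$, closing with integrality and, for $d_F\in\{5,8\}$, parity; note that your integrality/parity input (that $f_2^*f_2$ is a non-negative rational integer, so $B(f_2,f_2)$ is $\mathrm{Trd}(\mathrm{id})$ times an integer) is exactly the paper's rank-one statement in disguise, since both amount to the Rosati-fixed part of $\End(\mathbf{A}_0)$ being $\Z$, which rests on the paper's lemma classifying $\End^0(\mathbf{A}_0)$ together with positivity of the Rosati involution. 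What the paper's normal form buys is the one-line conclusion without small-discriminant cases; what your version buys is that it avoids choosing $\rho$ and splitting on the congruence class of $D$, at the cost of the Cauchy--Schwarz bookkeeping and the $d_F\in\{5,8\}$ analysis. Two points you gloss over but should record: strict positivity of $B$ on nonzero integral homomorphisms uses that a nonzero $\co_{B_0}$-linear map of QM abelian surfaces is automatically an isogeny (the indefinite division algebra $B_0$ cannot act on an elliptic curve), and integrality of $f_2^*=\lambda_0^{-1}\circ f_2^\vee\circ\lambda_0'$ uses that $\lambda_0$ is principal; finally, your reduction of the vanishing of $f_2$ to a geometric fiber via the injectivity of specialization of homomorphisms (\cite[Corollary 6.2]{mumford65}) is the right way to handle a general base scheme, and is also where the classification of endomorphism algebras is legitimately applied.
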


\begin{proof}
Abbreviate $\mathbf{A}=\mathbf{A}_0\otimes\co_F$ and $\lambda=\lambda_0\otimes\co_F$ and similarly for $\mathbf{A}_0'$ and $\lambda_0'$. The isomorphism of $\co_F$-modules
$$
\Hom (\mathbf{A}_0 , \mathbf{A}'_0) \otimes\co_F \iso 
\Hom (\mathbf{A} , \mathbf{A}' )
$$
proves the injectivity of the function in question.  For surjectivity, suppose we start with an isomorphism $f:  (\mathbf{A},\lambda)  \iso (\mathbf{A}',\lambda') $  in $\mathcal{M}(S)$.  The condition that $f$ identifies $\lambda$ with $\lambda'$ is equivalent to the commutativity of the diagram
$$
\xymatrix{
{ \mathbf{A}_0\times\mathbf{A}_0  } \ar[r] \ar[d]_{  \Delta\circ \lambda_0  } & \mathbf{A} \ar[r]^f \ar[d]^{\lambda} & \mathbf{A}' \ar[r] \ar[d]^{\lambda'} & \mathbf{A}_0'\times \mathbf{A}_0' \ar[d]^{  \Delta\circ\lambda_0' } \\
 { \mathbf{A}_0^\vee \times\mathbf{A}_0^\vee  }  & \mathbf{A}^\vee  \ar[l] & \mathbf{A}'^\vee \ar[l]^{f^\vee}  & \mathbf{A}_0'^\vee \times \mathbf{A}_0'^\vee \ar[l]
}
$$
in which all horizontal arrows are isomorphisms.  The composition of arrows along the top row is given by some matrix 
$$
\Phi\in \Hom(\mathbf{A}_0,\mathbf{A}_0') \otimes M_2(\Z)
$$ 
which lies in the $\Z$-submodule $ \Hom(\mathbf{A}_0,\mathbf{A}_0') \otimes  \kappa(\co_F).$ The composition of arrows in the bottom row, from right to left, is then given by the transpose dual
$$
{}^t\Phi^* \in \Hom(\mathbf{A}_0',\mathbf{A}_0) \otimes M_2(\Z)
$$ 
where  $\Phi^*$  denotes the entry-by-entry dual of the matrix $\Phi$.  Using the relation $\Delta \cdot \kappa(x)={}^t\kappa(x) \cdot \Delta$ for all $x\in\co_F$ the commutativity of the above diagram is equivalent to
\begin{equation}\label{polaris}
 \left(\begin{matrix} \lambda_0 & \\ & \lambda_0 \end{matrix}\right) = \Phi^* \cdot  \left(\begin{matrix} \lambda_0' & \\ & \lambda_0' \end{matrix}\right) \cdot  \Phi.
\end{equation}

We now choose a $\rho\in \GL_2(\Q)$ such that 
$$
\rho\kappa(F)\rho^{-1} = \left\{ \left( \begin{matrix} a & bD \\  b & a \end{matrix} \right) \mid a,b\in\Q
\right\}
$$
with $D>1$ a squarefree integer.  Suppose first that $D\equiv 1\pmod{4}$.  Then there are $f_1,f_2\in \Hom(\mathbf{A}_0,\mathbf{A}_0')$ for which
$$
\rho \Phi \rho^{-1} = \frac{1}{2} \left(\begin{matrix} 2f_1+f_2 & f_2D \\ f_2 & 2f_1+f_2  \end{matrix}\right).
$$
Conjugating both sides of  (\ref{polaris}) by $\rho$ and comparing the upper left entries on each side yields
\begin{eqnarray*}
4\lambda_0 &=& (2f_1+f_2)^\vee\circ \lambda_0' \circ (2f_1+f_2) + f_2^\vee\circ \lambda_0' \circ f_2 D \\
&=& \deg(2f_1+f_2) \lambda_0 + \deg(f_2)D\lambda_0
\end{eqnarray*}
where the second equality uses the fact that the $\Z$-module of symmetric homomorphisms $\mathbf{A}_0\map{}\mathbf{A}_0^\vee$ satisfying $\lambda_0\circ i_0(b^*) =i_0(b)^\vee\circ \lambda_0$ for all $b\in\co_{B_0}$ is free of rank one and generated by $\lambda_0$, with the polarizations corresponding to positive multiples of $\lambda_0$.  From this equality it is clear that $f_2=0$, and hence 
$$
\Phi =  \left(\begin{matrix} f_1  &   \\  & f_1  \end{matrix}\right)
$$
as desired.  The case $D \equiv 2,3 \pmod{4}$ is similar.
\end{proof}

\begin{Lem}
\label{Lem:endo algebra}
 If $\mathbf{A}_0$ is a  QM abelian surface over a connected scheme $S$ then $\End^0(\mathbf{A}_0)$ is either $\Q$,  a quadratic imaginary field, or a definite quaternion algebra over $\Q$. If $\mathbf{A}$ is a  QM abelian fourfold over $S$ then $\End^0(\mathbf{A})$ is either $F$,  a totally imaginary quadratic extension of $F$, or a totally definite quaternion algebra over $F$.
 \end{Lem}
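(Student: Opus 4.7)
The plan is to reduce to a geometric point and then apply Albert's classification to the Rosati-stable centralizer of $i_0(B_0)$ (respectively $i(B)$). For the reduction, suppose $S$ is connected with geometric point $\bar s$. Rigidity of homomorphisms of abelian schemes gives an injection $\End(\mathbf{A}_0)\hookrightarrow \End(\mathbf{A}_{0,\bar s})$, and every unital semisimple $\Q$-subalgebra of one of $\Q$, an imaginary quadratic field, or a definite quaternion algebra over $\Q$ is again of one of these types (note in particular that every quadratic subfield of a definite quaternion algebra is imaginary, since its $\R$-tensor must embed into the Hamilton quaternions). The analogous statement over $F$ handles the fourfold case. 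So we may assume $S=\Spec(k)$ with $k$ algebraically closed.

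Over such a $k$, equip $\mathbf{A}_0$ with its unique principal polarization $\lambda_0$. The associated Rosati involution $\dagger$ on $\End^0(A_0)$ is positive, and the compatibility $\lambda_0\circ i_0(b^*)=i_0(b)^\vee\circ \lambda_0$ shows that $i_0(B_0)$ is $\dagger$-stable; its centralizer $\End^0(\mathbf{A}_0)$ is therefore $\dagger$-stable and inherits a positive involution. Choosing a prime $\ell\ne \mathrm{char}(k)$ at which $B_0$ is split, $V_\ell(A_0)$ is a free rank-one module over $B_0\otimes_\Q\Q_\ell \iso M_2(\Q_\ell)$, so its commutant is again a copy of $M_2(\Q_\ell)$. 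Hence $\End^0(\mathbf{A}_0)\otimes_\Q\Q_\ell$ embeds into $M_2(\Q_\ell)$, and $\End^0(\mathbf{A}_0)$ is a semisimple $\Q$-algebra of $\Q$-dimension at most four carrying a positive involution.

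Now I would invoke Albert's classification of such algebras: the remaining candidates are $\Q$, a real or imaginary quadratic field, and a (definite or indefinite) quaternion algebra over $\Q$. A totally real quadratic field carries only the trivial involution, and the positive definiteness of the Rosati trace form forces every nontrivial element of a quadratic subfield stable under $\dagger$ to satisfy $x^\dagger\ne x$, ruling out the real quadratic case. An indefinite quaternion centralizer would by dimension count equal $B_0^{\mathrm{op}}$, whence $\End^0(A_0)\supseteq B_0\otimes_\Q B_0^{\mathrm{op}}\iso M_4(\Q)$; this forces $A_0\sim E^2$ for an ordinary elliptic curve $E/k$ with $\End^0(E)=\Q$, whose double commutant inside $\End^0(A_0)=M_2(\Q)$ is $M_2(\Q)$ rather than a quaternion algebra, a contradiction. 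What survives is exactly $\Q$, an imaginary quadratic field, or a definite quaternion algebra over $\Q$. The fourfold case is formally identical, tensored up by $F$: for $\ell$ at which $B$ is split the Tate module $V_\ell(A)$ is free of rank one over $B\otimes_\Q\Q_\ell$, so $\End^0(\mathbf{A})\otimes_F F_v \hookrightarrow M_2(F_v)$ for an appropriate place $v$ of $F$, giving $\dim_F\End^0(\mathbf{A})\le 4$, and Albert's classification over the totally real field $F$ converts ``imaginary quadratic'' into ``totally imaginary quadratic extension of $F$'' and ``definite quaternion over $\Q$'' into ``totally definite quaternion over $F$''.

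The hardest step will be carrying out the case analysis cleanly at primes $p\mid \mathrm{disc}(B_0)$, since the supersingular locus there produces the definite-quaternion examples that must survive the argument: one has to verify that a split auxiliary $\ell\ne p$ can always be chosen (which it can, since only finitely many primes divide $\mathrm{disc}(B_0)$), and that the positivity constraint really excludes both the real quadratic and indefinite quaternion possibilities uniformly across all residue characteristics without collapsing any of the three advertised types.
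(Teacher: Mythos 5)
The fatal step is your exclusion of the totally real quadratic case. You assert that ``the positive definiteness of the Rosati trace form forces every nontrivial element of a quadratic subfield stable under $\dagger$ to satisfy $x^\dagger\ne x$.'' This is false: a totally real field $K$ fixed pointwise by a positive involution is Albert's Type I, and the trace form $x\mapsto \mathrm{Tr}(xx^\dagger)=\sum_\sigma\sigma(x)^2$ is manifestly positive definite there. For the same reason positivity alone cannot exclude indefinite quaternion algebras (Albert's Type II: $B_0\otimes_\Q\R\iso M_2(\R)$ carries the positive involution ``transpose''). So the two cases your argument must kill are exactly the two that survive every purely involution-theoretic test; they can only be excluded by geometric input. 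In characteristic zero the input is the complex structure: $\mathrm{Lie}(A_0)$ is free of rank one over $B_0\otimes_\Q\R\iso M_2(\R)$, the complex structure $J$ lies in the commutant $M_2(\R)$, and $\End^0(\mathbf{A}_0)\otimes_\Q\R$ must centralize $J$ inside that commutant, hence embeds into $\C$ --- which immediately leaves only $\Q$ and imaginary quadratic fields in characteristic zero. In characteristic $p$ the paper instead reduces (via N\'eron models and injectivity of reduction on endomorphism algebras) to a finite field and invokes Milne's dichotomy/Honda--Tate: either $A_0$ is isogenous to the square of a supersingular elliptic curve, in which case the centralizer of $B_0$ in $M_2(\End^0(E))$ is computed in the Brauer group to be a \emph{definite} quaternion algebra (whose quadratic subfields are automatically imaginary), or $A_0$ has no supersingular factor and the centralizer is an imaginary quadratic field. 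Your proposal contains no substitute for either of these arguments.

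Two smaller remarks. First, your exclusion of the indefinite quaternion case is garbled: if the centralizer were $D\iso B_0^{\mathrm{op}}$ then $B_0\otimes_\Q B_0^{\mathrm{op}}\iso M_4(\Q)$ would act on $A_0$ up to isogeny, forcing $A_0\sim C^4$ with $\dim C=1/2$ --- that is the clean contradiction, not the ``double commutant of an ordinary elliptic curve inside $M_2(\Q)$'' computation, and one must also treat $D\not\iso B_0^{\mathrm{op}}$, where $B_0\otimes D\iso M_2(B'')$ forces $B''$ to be the endomorphism algebra of an elliptic curve and hence definite. Second, the parts of your argument that do work --- the reduction to a geometric point, the bound $\End^0(\mathbf{A}_0)\otimes\Q_\ell\hookrightarrow M_2(\Q_\ell)$ cutting the dimension to four and commutative subalgebras to dimension two --- are sound and would shorten a corrected proof, but they are not where the content of the lemma lies.
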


\begin{proof}
 Fix a geometric point $s\map{} S$, and let  $\mathbf{A}=(A,i)$ be either a QM abelian  fourfold over $S$. By \cite[Corollary 6.2]{mumford65} the reduction map
$$
\End^0(\mathbf{A})\map{}\End^0(\mathbf{A}_{/k(s)})
$$
is injective, and so it suffices to treat the case in which $S=\Spec(k)$ with $k$ a field.  One then further reduces to the case of $k$ of finite transcendence degree over its prime subfield, and by considering the N\'eron model of $A$ over a valuation ring inside of $k$ and again invoking the injectivity of the reduction map on endomorphism algebras, one is reduced to the case of $k$ finite over its prime subfield.  If $\mathrm{char}(k)=0$ then $A(\C)$ is isogenous to  $(B\otimes_\Q\R)/\co_{B}$ for some complex structure on $B\otimes_\Q\R$, and from this one can show that $\End(\mathbf{A})$ is either $F$ or a quadratic imaginary extension of $F$.  If $\mathrm{char}(k)\not=0$ then \cite[Proposition 5.2]{milne79} shows that either $A$ is isogenous to the fourth power of a supersingular elliptic curve, or $A$ has no isogeny factor isomorphic to a supersingular elliptic curve.
In the first case  $\End^0(\mathbf{A})$ is isomorphic to the quaternion algebra $\overline{B}$ over $F$ satisfying
$$
[B]+[\overline{B}]= [H\otimes_\Q F]
$$
in the Brauer group of $F$, where $H$ is the rational quaternion algebra of discriminant $p$.  In the second case $\End^0(\mathbf{A})$ is a quadratic imaginary extension of $F$.  Note that in the statement of \cite[Proposition 5.2]{milne79} it is assumed that $\mathrm{char}(k)$ is prime to both $\mathrm{disc}(F)$ and to $\mathrm{disc}(B_0)$.  This hypothesis is not used in the proof until the sentence ``It splits $B$ because..." and so has no bearing on the results cited above.  The case of a QM abelian surface is identical (in this case one may also invoke \cite[Proposition III.2]{boutot-carayol} for primes dividing $\mathrm{disc}(B_0)$).
\end{proof}

Fix a finite set of rational primes $\Sigma$ and let $S$ be a scheme such that 
$$
\ell\not\in\Sigma\implies \ell^{-1}\in\co_S.
$$ 
Let $U_0\subset U_0^\mathrm{max}$ be a compact open subgroup which factors as $U_0=\prod_\ell U_{0,\ell}$ with $U_{0,\ell}=U_0^\mathrm{max}$ for every $\ell\in\Sigma$.  Write
$$
U_0^{\mathrm{max},\Sigma}=\prod_{\ell\not\in\Sigma} U_{0,\ell}^\mathrm{max}
\hspace{1cm}
U_0^\Sigma = \prod_{\ell \not\in\Sigma} U_{0,\ell}.
$$
Let $\Lambda_0=\co_{B_0}$ viewed as a left $\co_{B_0}$-module, and  define a perfect alternating  bilinear form  $\psi_0:   \Lambda_0\times \Lambda_0 \map{}\Z$ as on \cite[p.~130]{boutot-carayol} by 
$$
\psi_0(x,y)=\frac{1}{\mathrm{disc}(B_0)} \mathrm{Tr}(xsy^*)
$$  
where $\mathrm{Tr}$ is the reduced trace on $B_0$ and $s\in \co_{B_0}$ is the element fixed in the introduction.  Extend $\psi_0$ to an alternating form 
\begin{equation}
\label{polarization pairing}
\psi_0:\widehat{\Lambda}_0^\Sigma\times\widehat{\Lambda}_0^\Sigma\map{} \widehat{\Z}^\Sigma
\end{equation}
on the restricted topological product
$$
\widehat{\Lambda}_0^\Sigma= \prod_{\ell\not\in\Sigma} ( \Lambda_0 \otimes_\Z\Z_\ell ).
$$
We now define the notion of a $U_0$ level structure on a  principally polarized QM abelian surface $(\mathbf{A}_0,\lambda_0)$ defined over $S$.  Fix a geometric point $s\map{}S$ and let 
$$
\mathrm{Ta}^\Sigma(A_0) = \prod_{\ell\not\in\Sigma} \mathrm{Ta}_\ell(A_{0/k(s) })
$$
be the prime-to-$\Sigma$ adelic Tate module of $A_0$, equipped with the action of $(\widehat{\co}_{B_0})^\Sigma$ defined by $i_0$ as well as the action of the \'etale fundamental group $\pi_1(S,s)$.    A $U_0$ level structure on $\mathbf{A}_0$ is then a $U_0$ equivalence class of  $(\widehat{\co}_{B_0})^\Sigma$-module isomorphisms
$$
\nu_0:\widehat{\Lambda}_0^\Sigma \map{} \mathrm{Ta}^\Sigma(A_0)
$$
(here $\nu_0$ and $\nu_0'$ are $U_0$ equivalent if there is a $u\in U_0$ such that $\nu_0(x)=\nu_0'(xu)$ for all $x\in\widehat{\Lambda}_0^\Sigma$) which  satisfy
\begin{enumerate}
\item 
$\nu_0$ identifies  the Weil pairing  
$$
 \mathrm{Ta}^\Sigma(A_0) \times  \mathrm{Ta}^\Sigma(A_0)  \map{} \widehat{\Z}^\Sigma(1)
$$
induced by $\lambda_0$ with the pairing (\ref{polarization pairing}) up to a $(\widehat{\Z}^\Sigma)^\times$-multiple
\item for any $\sigma\in\pi_1(S,s)$ there is a $u\in U_0$ such that $\sigma\cdot \nu_0(x)=\nu_0(x u)$; that is to say, the equivalence class of $\nu_0$ is defined over $S$.
\end{enumerate}

Set $\Lambda=\Lambda_0\otimes_\Z\co_F$.  For a compact open subgroup $U\subset U^\mathrm{max}$ (factorizable and maximal at primes in  $\Sigma$) one defines the notion of $U$ level structure on a $\mathfrak{D}^{-1}$-polarized QM abelian fourfold $(\mathbf{A},\lambda)$ in the same way, replacing $\widehat{\Lambda}_0^\Sigma$ by $\widehat{\Lambda}^\Sigma=\widehat{\Lambda}^\Sigma_0\otimes_\Z\co_F$ and replacing $\psi_0$ with the alternating form  $$\psi:\Lambda \times\Lambda\map{}\Z$$
 defined by extending $\psi_0$  to a $\co_F$-bilinear form $\psi_0:\Lambda \times\Lambda\map{}\co_F$ and setting $\psi=\mathrm{Tr}_{F/\Q}\circ \psi_0$.

If $S$ is a scheme and $(\mathbf{A}_0,\lambda_0)$ is an object of $\mathcal{M}_0(S)$ then the polarization $\lambda_0$ induces a Rosati involution $\tau\mapsto \tau^\dagger$ on  $\End^0(\mathbf{A}_0)$.  The \emph{trace} of an endomorphism $\tau\in\End^0(\mathbf{A}_0)$ is defined to be $\mathrm{Tr}(\tau)=\tau+\tau^\dagger$, and we define a bilinear form on the trace zero elements of $\End^0(\mathbf{A}_0)$  (with values in the commutative subalgebra of Rosati-fixed elements)   by
$$
[\tau_1,\tau_2]=-\mathrm{Tr}(\tau_1\tau_2).
$$
The associated quadratic form is   denoted $Q_0(\tau) =  -\tau^2 .$
Similar remarks and definitions hold for any object $(\mathbf{A},\lambda)$  of $\mathcal{M}(S)$, and  on the trace zero elements of $\End^0(\mathbf{A})$ we have the quadratic form $Q(\tau)= -\tau^2.$

For every $\alpha\in \co_F$ define a category $\mathcal{Y}(\alpha)$, fibered in groupoids over the category of schemes,  whose objects are triples $(\mathbf{A},\lambda,t_\alpha)$ in which  $(\mathbf{A},\lambda)$ is a $\mathfrak{D}^{-1}$-polarized QM abelian fourfold over a scheme and $t_\alpha$ is a trace zero element of $\End(\mathbf{A})$  satisfying $Q(t_\alpha)=\alpha$.  There is an obvious functor
\begin{equation*}
\phi: \mathcal{Y}(\alpha)\map{}\mathcal{M}
\end{equation*}
which forgets the endomorphism $t_\alpha$.   Using the methods of \cite[Chapters 6-7]{hida04} to prove the relative representability of $\phi$ one can show that the category $\mathcal{Y}(\alpha)$ is an algebraic stack over $\Z$, finite over $\mathcal{M}$.

\begin{Lem}\label{generic etale}
The stack $\mathcal{Y}(\alpha)_{/\Q}$ is \'etale over $\Spec(\Q)$.  In particular the generic fiber of $\mathcal{Y}(\alpha)$ is reduced of dimension zero.
\end{Lem}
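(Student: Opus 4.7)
The plan is to reduce étaleness to a tangent space computation at each geometric point of $\mathcal{Y}(\alpha)_{/\Q}$, and then to invoke the rigidity of CM abelian varieties in characteristic zero. Since the forgetful morphism $\phi:\mathcal{Y}(\alpha)\to\mathcal{M}$ is finite, $\mathcal{Y}(\alpha)_{/\Q}$ is of finite type over $\Q$, and it suffices to show that its tangent space at every geometric point vanishes; this will simultaneously yield the stated consequence that the generic fiber is reduced of dimension zero.

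Fix a geometric point $y=(\mathbf{A},\lambda,t_\alpha)$ of $\mathcal{Y}(\alpha)_{/\Q}$ over an algebraically closed field $\bar{k}$ of characteristic zero. Because the involution $b\mapsto b^*$ is trivial on $F$, the Rosati involution induced by $\lambda$ restricts to the identity on $F$, so a trace-zero element of $F$ must vanish; assuming $\alpha\neq 0$, the relation $t_\alpha^2=-\alpha$ forces $t_\alpha\notin F$. By Lemma \ref{Lem:endo algebra}, $\End^0(\mathbf{A})$ is then either a totally imaginary quadratic extension of $F$ or a totally definite quaternion algebra over $F$, and in either case the subalgebra $E:=F[t_\alpha]\cong F(\sqrt{-\alpha})$ is a totally imaginary quadratic extension of $F$. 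In particular, if $\alpha\neq 0$ is not totally positive then $E$ cannot embed in such an algebra, so $\mathcal{Y}(\alpha)_{/\Q}=\emptyset$ and the lemma is vacuous in that case.

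The tangent space $T_y\mathcal{Y}(\alpha)_{/\Q}$ is the subspace of $T_{\phi(y)}\mathcal{M}_{/\Q}$ consisting of first-order deformations of $(\mathbf{A},\lambda)$ to which the endomorphism $t_\alpha$ lifts. Via Grothendieck-Messing and the Hodge-theoretic description of infinitesimal deformations of polarized abelian varieties, $T_{\phi(y)}\mathcal{M}_{/\Q}$ is identified with the space of $\co_B\otimes\bar{k}$-linear homomorphisms from $\omega_{\mathbf{A}}$ to $\mathrm{Lie}(\mathbf{A})$ that are symmetric with respect to the polarization, and the lifting condition imposes further commutation with the action of $t_\alpha$ on $H^1_{dR}(\mathbf{A}/\bar{k})$. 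Since $E$ has $\Q$-degree $4=\dim\mathbf{A}$ and its action commutes with $\co_B$, the triple $(\mathbf{A},\co_B\text{-action},t_\alpha)$ exhibits $\mathbf{A}$ as a CM abelian fourfold in the sense of Shimura--Taniyama, and the Kottwitz condition pins down the CM type on $\omega_{\mathbf{A}}$; the $E$-characters appearing on $\omega_{\mathbf{A}}$ are exactly the complex conjugates of those appearing on $\mathrm{Lie}(\mathbf{A})$, so no nonzero $E$-linear homomorphism $\omega_{\mathbf{A}}\to\mathrm{Lie}(\mathbf{A})$ exists, and hence $T_y\mathcal{Y}(\alpha)_{/\Q}=0$.

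The main obstacle is the last step: carefully unpacking the Kottwitz condition together with Lemma \ref{Lem:endo algebra} to verify that $E$ operates via disjoint (conjugate) characters on $\omega_{\mathbf{A}}$ and on $\mathrm{Lie}(\mathbf{A})$. A cleaner substitute, which avoids an explicit character computation, is to appeal directly to the classical rigidity theorem for CM abelian varieties in characteristic zero: such an abelian variety admits no nontrivial formal deformation among abelian varieties preserving its full endomorphism action, which yields $T_y\mathcal{Y}(\alpha)_{/\Q}=0$ at once and thereby completes the proof.
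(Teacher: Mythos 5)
Your proof is correct in substance but takes a genuinely different route from the paper. The paper never works with the characteristic-zero tangent space directly: it chooses an auxiliary prime $\ell\nmid\mathrm{disc}(B_0)$ at which $\co_F[\sqrt{-\alpha}]\otimes_\Z\Z_\ell\iso\Z_\ell^4$, identifies the completed strictly Henselian local ring of $\mathcal{Y}(\alpha)$ at a point in characteristic $\ell$ with a deformation space of $\ell$-divisible groups via Serre--Tate, uses the idempotents of $M_2(\Z_\ell^4)$ and the Kottwitz condition to reduce to an ordinary situation $(\Q_\ell/\Z_\ell)^2\times\mu_{\ell^\infty}^2$, and concludes from Serre--Tate coordinates that the local ring is $W(\F_\ell^\alg)$ itself; \'etaleness of the generic fiber then follows by passing to the fraction field. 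Your argument instead stays over $\Q$ and computes the Kodaira--Spencer tangent space, showing it vanishes because the $E=F(\sqrt{-\alpha})$-eigencharacters on $\omega_{\mathbf{A}}$ and $\mathrm{Lie}(\mathbf{A})$ are disjoint. This does work: after splitting $H^1_{dR}$ by the idempotents of $F\otimes\bar k$ and Morita-reducing the $M_2(\bar k)$-action coming from $B\otimes_{F,\sigma_j}\bar k$, each piece is a two-dimensional space on which $t_\alpha$ acts with the two distinct eigenvalues $\pm i\sqrt{\alpha_j}$, each with multiplicity one (because $H_1(\mathbf{A},\Q)$ is free of rank two over $E$), so the Hodge line and the Lie quotient are the two distinct eigenlines and no nonzero $E$-linear map exists between them. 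One caveat: your assertion that $E$ alone exhibits $\mathbf{A}$ as CM ``in the sense of Shimura--Taniyama'' because $[E:\Q]=4=\dim\mathbf{A}$ is off by the usual factor of two (CM requires a commutative semisimple algebra of degree $2\dim\mathbf{A}=8$); what is true is that the algebra generated by $B$ and $t_\alpha$ is $B\otimes_F E\iso M_2(E)$, which contains such a degree-$8$ \'etale subalgebra, so your fallback appeal to CM rigidity is legitimate once this is said. The trade-off between the two approaches: yours is shorter and purely characteristic-zero, while the paper's mod-$\ell$ argument is of a piece with the integral deformation-theoretic analysis it needs elsewhere (Propositions \ref{Prop:smooth local ring}--\ref{Prop:supersingular lci}) and in fact shows the stronger statement that the local rings of $\mathcal{Y}(\alpha)$ over $W(\F_\ell^\alg)$ at such primes are unramified.
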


\begin{proof}
 Choose a prime $\ell$ which does not divide $\mathrm{disc}(B_0)$ and with the property that  $$\co_F[\sqrt{-\alpha}]\otimes_\Z \Z_\ell\iso \Z_\ell^4.$$   We abbreviate $W=W(\F_\ell^\alg)$ for the ring of Witt vectors of $\F_\ell^\alg$ and $M=\mathrm{Frac}(W)$ for the fraction field of $W$.  Suppose  $z:\Spec(\F_\ell^\alg) \map{}\mathcal{Y}(\alpha)$ is a geometric point corresponding to a triple $(\mathbf{A},\lambda,t_\alpha)$ and let  $\mathfrak{g}$ denote the $\ell$-Barsotti-Tate group of $\mathbf{A}$.   The completion $R$ of the strictly Henselian local ring of $\mathcal{Y}(\alpha)$ at $z$  classifies deformations of $(\mathbf{A},\lambda,t_\alpha)$ to Artinian local $W$-algebras with residue field $\F_\ell^\alg$,   and by the Serre-Tate theorem (and \cite[p.~51]{breen-labesse} or \cite{vollaard} to see that the polarization $\lambda$ deforms uniquely through Artinian thickenings of $\F_\ell^\alg$) the $W$-algebra $R$ is identified with the deformation space of the $\ell$-Barsotti-Tate group $\mathfrak{g}$ with its action of 
 $$
( \co_B\otimes_{\co_F} \co_F[\sqrt{-\alpha}]) \otimes_{\Z}\Z_\ell \iso M_2(\Z_\ell^4).
 $$
 Using the usual idempotents on the right hand side we may decompose $\mathfrak{g}\iso \mathfrak{g}_0\times\mathfrak{g}_0$ in which $\mathfrak{g}_0$ is an $\ell$-Barsotti-Tate group of height four and dimension two with an action of $\Z_\ell^4$, and then $R$ classifies deformations of $\mathfrak{g}_0$ with its action of $\Z_\ell^4$. The Kottwitz condition on $\mathbf{A}$ implies that the usual idempotents in $\Z_\ell^4$ induce a decomposition $\mathfrak{g}_0\iso (\Q_\ell/\Z_\ell)^2\times\mu_{\ell^\infty}^2$, and the theory of Serre-Tate coordinates as in \cite[Theorem 4.2]{goren} or \cite[Theorem 8.9]{hida04} then implies that $R\iso W$.
 
 If $Y\map{}\mathcal{Y}(\alpha)_{/W}$ is a finite \'etale morphism with $Y$ a $W$-scheme then, by the previous paragraph, the local ring of $Y$ at any closed point is isomorphic to $W$.  It follows that $Y$ itself is simply a disjoint union of copies of $\Spec(W)$, and hence that $Y_{/M}$ is a disjoint union of copies of $\Spec(M)$.  We deduce that $\mathcal{Y}(\alpha)_{/M}$ is \'etale over $\Spec(M)$, and it follows that $\mathcal{Y}(\alpha)_{/\Q}$ is \'etale over $\Spec(\Q)$.
   \end{proof}

\begin{Rem}
By Lemma \ref{Lem:endo algebra}, for any object $(\mathbf{A},\lambda)$ of $\mathcal{M}$ the $F$-valued quadratic form $Q$ is totally positive on the trace zero elements of $\End^0(\mathbf{A})$.  Thus 
for any $\alpha\in\co_F$
$$
\mathcal{Y}(\alpha)\not=\emptyset \implies
\alpha \mathrm{\ totally\ positive.} 
$$
 \end{Rem}

 For any $T\in\Sym_2(\Z)^\vee$ let $\mathcal{Z}(T)$ be the category, fibered in groupoids over schemes, whose objects are quadruples $(\mathbf{A}_0,\lambda_0,s_1,s_2)$ in which $(\mathbf{A}_0,\lambda_0)$ is an object of $\mathcal{M}_0$ and $s_1,s_2\in\End(\mathbf{A}_0)$ are trace zero endomorphisms of $\mathbf{A}_0$ which satisfy
\begin{equation}
\label{fundamental form}
T=\frac{1}{2}\left(\begin{matrix}
[s_1,s_1]& [s_1,s_2] \\
[s_2,s_1]& [s_2,s_2]
\end{matrix}\right).
\end{equation}
  We also define
$$
\mathcal{Y}_0(\alpha)= \mathcal{Y}(\alpha)\times_{\mathcal{M}}\mathcal{M}_0
$$
so that an object of   $\mathcal{Y}_0(\alpha)$ is a triple  $( \mathbf{A}_0,\lambda_0, t_\alpha)$ in which $(\mathbf{A}_0,\lambda_0) $ is an object of  $\mathcal{M}_0$ and  $t_\alpha$ is a trace zero  element of $\End(\mathbf{A}_0\otimes \co_F)$   satisfying $Q(t_\alpha)= \alpha$.  Fixing such a triple over a connected scheme $S$, (\ref{endo base change})  implies that $t_\alpha$ has the form
\begin{equation*}
t_\alpha=  s_1\varpi_1+s_2\varpi_2
\end{equation*}
for some trace zero $s_1,s_2\in \End(\mathbf{A}_0)$.  The condition $Q(t_\alpha)=\alpha$ is equivalent to the relation
\begin{equation*} 
\alpha = -s_1^2\varpi_1^2-(s_1s_2+s_2s_1)\varpi_1\varpi_2 - s_2^2\varpi_2^2.
\end{equation*}
Recalling (\ref{Sigma set}), this last relation is  equivalent to
$$
\frac{1}{2}\left(\begin{matrix}
[s_1,s_1]& [s_1,s_2] \\
[s_2,s_1]& [s_2,s_2]
\end{matrix}\right)
\in \Sigma(\alpha).
$$
It follows easily  that for each $\alpha\in\co_F$ the rule $(\mathbf{A}_0,\lambda_0,t_\alpha)\mapsto (\mathbf{A}_0, \lambda_0,s_1,s_2)$ identifies 
\begin{equation}\label{moduli decomposition}
\mathcal{Y}_0(\alpha)\iso \bigsqcup_{ T\in\Sigma(\alpha) }
\mathcal{Z} (T).
\end{equation}

\begin{Lem}
\label{Lem:biquadratic}
Suppose $\alpha \in\co_F$ is totally positive and let $T$ be an element of $\Sigma(\alpha)$.
\begin{enumerate}
\item
If $\mathcal{Z}(T)\not=\emptyset$ then $T$ is positive semi-definite.
\item
If $\mathcal{Z}(T)_{/\Q}\not=\emptyset$ then $\det(T)=0$.
\item
If    $F(\sqrt{-\alpha})$ is  not a biquadratic extension of $\Q$ then $\det(T)\not=0$.
\end{enumerate}
In particular if  $\mathcal{Y}_0(\alpha)_{/\Q}\not=\emptyset$  then $F(\sqrt{-\alpha}) /\Q$ is biquadratic.
\end{Lem}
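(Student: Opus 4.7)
The plan is to deduce all three parts (and then the final assertion) by working with the explicit quadratic form identity
$$
ax^2 + bxy + cy^2 = Q_0(xs_1 + ys_2)
$$
which follows by expanding $(xs_1+ys_2)^2$ using $s_j^\dagger = -s_j$ and reading off $a = Q_0(s_1)$, $c = Q_0(s_2)$, $b = [s_1,s_2] = -(s_1s_2+s_2s_1)$ from (\ref{fundamental form}).

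For part (a), I would note that Lemma \ref{Lem:endo algebra} restricts $\End^0(\mathbf{A}_0)$, on any connected component of $S$, to $\Q$, an imaginary quadratic field, or a definite quaternion algebra; in each case the Rosati involution acts as $-1$ on the trace-zero subspace and so $Q_0(\tau) = -\tau^2$ coincides with the (positive-semidefinite) reduced norm. Thus $Q_T(x,y) = Q_0(xs_1+ys_2) \ge 0$, giving positive semi-definiteness.

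For part (b), I would use that in characteristic zero the proof of Lemma \ref{Lem:endo algebra} rules out the definite quaternion case, so $\End^0(\mathbf{A}_0)$ is either $\Q$ or imaginary quadratic; its trace-zero subspace is therefore at most one-dimensional over $\Q$. Hence $s_1$ and $s_2$ are $\Q$-linearly dependent in $\End^0(\mathbf{A}_0)$, the map $(x,y) \mapsto xs_1+ys_2$ has nontrivial kernel, and the equality $Q_T(x,y) = Q_0(xs_1+ys_2)$ forces $Q_T$ to be degenerate, i.e.\ $\det(T)=0$.

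For part (c), which I expect to be the main computational step, I would assume $\det(T)=0$ and show that $F(\sqrt{-\alpha})/\Q$ is biquadratic. Since $\alpha$ is totally positive we have $T\ne 0$, so the binary form factors over $\Q$ as $Q_T(x,y) = \gamma\,\ell(x,y)^2$ for some $\gamma\in\Q^\times$ and some nonzero linear form $\ell$ (concretely, if $a\ne 0$ then $\gamma=1/(4a)$ and $\ell=2ax+by$). Setting $\beta=\ell(\varpi_1,\varpi_2)\in F^\times$ yields $\alpha = \gamma\beta^2$; total positivity of $\alpha$ and of $\beta^2$ forces $\gamma>0$, so $F(\sqrt{-\alpha}) = F(\sqrt{-\gamma})$. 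Writing $\gamma = m^{-2}n$ with $n\in\Z_{>0}$ squarefree identifies this field with $F\cdot\Q(\sqrt{-n})$, the compositum of the real quadratic field $F$ and the imaginary quadratic field $\Q(\sqrt{-n})$; these are linearly disjoint over $\Q$ because one is totally real and the other totally imaginary, so $F(\sqrt{-\alpha})/\Q$ is Galois with group $(\Z/2\Z)^2$, i.e.\ biquadratic. The contrapositive gives (c).

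Finally, for the concluding assertion, I would invoke the decomposition (\ref{moduli decomposition}): if $\mathcal{Y}_0(\alpha)_{/\Q}\ne\emptyset$ then $\mathcal{Z}(T)_{/\Q}\ne\emptyset$ for some $T\in\Sigma(\alpha)$, so $\det(T)=0$ by (b), and then $F(\sqrt{-\alpha})/\Q$ is biquadratic by (c). The only delicate point in the plan is tracking positivity in (c) to ensure $\gamma>0$ and hence the imaginary quadratic factor — but this is straightforward once the factorization of the degenerate binary form is written down explicitly.
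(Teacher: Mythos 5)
Your proof is correct, and for parts (a), (c), and the concluding assertion it follows the paper's own argument: (a) rests on the positivity of $Q_0$ on trace-zero endomorphisms supplied by Lemma \ref{Lem:endo algebra}, and your factorization $Q_T=\gamma\,\ell^2$ with $\alpha=\gamma\beta^2$ and $\gamma\in\Q_{>0}$ is the same computation the paper performs by representing a singular $T$ on a one-dimensional quadratic space (the paper concludes $\alpha\in\Q^\times\cdot(F^\times)^2$ and stops; your verification that the resulting field is the linearly disjoint compositum of $F$ with an imaginary quadratic field just makes the final step explicit). The one genuine divergence is part (b): the paper simply cites \cite[Theorem 3.6.1]{KRY}, whereas you derive it directly from the characteristic-zero case of Lemma \ref{Lem:endo algebra}, which forces the trace-zero part of $\End^0(\mathbf{A}_0)$ to be at most one-dimensional over $\Q$ and hence forces $s_1,s_2$ to be linearly dependent. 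This is a legitimate, self-contained replacement for the citation --- it is essentially the mechanism underlying the quoted theorem --- though note that it leans on the \emph{proof} rather than the statement of Lemma \ref{Lem:endo algebra}, since the statement alone does not exclude the definite quaternion case over a $\Q$-scheme.
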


\begin{proof}
For (a), if $\mathcal{Z}(T)\not=\emptyset$ then there is some object  $(\mathbf{A}_0,\lambda_0,s_1,s_2)$ of $\mathcal{Z}(T)$ defined over a connected scheme, and Lemma \ref{Lem:endo algebra} implies that  the quadratic form represented by (\ref{fundamental form}) is positive definite.  Part (b) follows from \cite[Theorem 3.6.1]{KRY}. For (c) suppose that $\det(T)=0$.  Then there is a one dimensional $\Q$-vector space $V_0$ equipped with a  $\Q$-valued symmetric bilinear form $[\cdot,\cdot]$ and vectors $s_1,s_2\in V_0$ for which the relation  (\ref{fundamental form}) holds.  Let $V=V_0\otimes_\Q F$ and extend $[\cdot,\cdot]$ to an  $F$-valued  symmetric bilinear form on $V$.  The relation (\ref{fundamental form}) then implies that the vector $t_\alpha=s_1\varpi_1+s_2\varpi_2$ satisfies $[t_\alpha,t_\alpha]= 2 \alpha$.  As $V_0$ is one dimensional we may write $s_1=n_1 s$, $s_2=n_2 s$ as $\Q$-multiples of a common vector $s\in V_0$, and then  
$$
2 \alpha=[t_\alpha,t_\alpha]=[s(n_1\varpi_1+n_2\varpi_2),s(n_1\varpi_1+n_2\varpi_2)]= (n_1\varpi_1+n_2\varpi_2)^2\cdot [s,s]
$$
shows that $\alpha\in  \Q^\times\cdot (F^\times)^2$.  Hence $F(\sqrt{-\alpha})/\Q$ is biquadratic.

The final claim follows by combining  (b) and (c) with the decomposition (\ref{moduli decomposition}).
\end{proof}

For the remainder of \S \ref{S:Hilbert-Blumenthal}  we fix  a totally positive $\alpha\in\co_F$  and abbreviate $\mathcal{Y}=\mathcal{Y}(\alpha)$ and $\mathcal{Y}_0=\mathcal{Y}_0(\alpha)$ for the algebraic stacks over $\Z$ constructed above.  Define $j$ and $\phi_0$ by the  requirement that the diagram
$$
\xymatrix{
{\mathcal{Y}_0}\ar[r]^{\phi_0}  \ar[d]_j   &  {\mathcal{M}_0 } \ar[d]^i  \\
{\mathcal{Y} } \ar[r]_\phi & {\mathcal{M} }
}
$$
is cartesian.


\subsection{Green currents}
\label{SS:Green}


Let $\mathcal{M}_{/\Q}\iso [H\backslash M]$ be a $\Q$-uniformization of $\mathcal{M}$, define $\Q$-schemes
$$
Y=\mathcal{Y} \times_{\mathcal{M}} M
\hspace{1cm}
M_0=\mathcal{M}_{0}\times_{\mathcal{M}} M
$$  
and let $\phi:Y\map{}M$ and $i:M_0\map{}M$ be the projections to the second factors.  By Lemma \ref{generic etale} the scheme $Y$ is a disjoint union of  reduced zero dimensional components, and we define
\begin{equation}
\label{0-cycle}
C_\Q=\sum_{y\in Y} \phi(y) \in Z^2_Y(M).
\end{equation}
As $C_\Q$ is $H$-invariant it may be identified, using \cite[Lemma 4.3]{gillet84},  with a cycle $\mathcal{C}_\Q\in Z^2_{\mathcal{Y}_{/\Q}}(\mathcal{M}_{/\Q})$ which  is independent of the choice of $\Q$-uniformization of $\mathcal{M}$.

The remainder of  \S \ref{SS:Green} is devoted to the construction of a Green current for  $\mathcal{C}_\Q$.  In order to construct such a current we will use the $\Q$-uniformizations of $\mathcal{M}$ which come from the theory of canonical models of Shimura varieties.  Let  $\mathrm{Nm}:B^\times\map{}F^\times$ denote the reduced norm on $B$.  Define a real algebraic group  $\mathbb{S}=\mathrm{Res}_{\C/\R}\mathbb{G}_m$ and  pick a $z\in G_0(\R)$ with $z^2=-1$.  The point $z$ determines a map of $\R$-algebras $\C\map{}B_0\otimes_\Q\R$ by $a+bi\mapsto a+bz$, which in turn determines a map of real algebraic groups $h_z:\mathbb{S}\map{}G_{0/\R}.$  Let $X_0$ denote the $G_0(\R)$-conjugacy class of any  such map (all are conjugate by the Noether-Skolem theorem).  We may also view $h_z$ as map $\mathbb{S}\map{}G_{/\R}$ and let $X$ be the $G(\R)$-conjugacy class of $h_z$.   The set $X_0$ is naturally identified with the subset of $X$ consisting of those maps $\mathbb{S}\map{} G_{/\R}$ which factor through the subgroup  $G_{0/\R}$.  The isomorphism of $\R$-algebras (\ref{F split}) determines an isomorphism 
\begin{equation}\label{real splitting}
B\otimes_\Q\R \iso (B_0\otimes_\Q\R) \times (B_0\otimes_\Q\R)
\end{equation}
which in turn determines an injection $G(\R)\map{} G_0(\R)\times G_0(\R)$ and an injection $X\map{}X_0\times X_0$ taking the subset $X_0\subset X$ bijectively onto the diagonal.  If we fix an isomorphism $B_0\otimes_\Q\R\iso M_2(\R)$ then we may identify the set $X_0$ with the complex manifold $\C\smallsetminus \R$ in the usual way \cite[Example 5.6]{milne05}.  The resulting complex manifold structure on $X_0$ is independent of the choice of isomorphism $B_0\otimes_\Q\R\iso M_2(\R)$.   The topological space $X_0$ has two connected components, say $X_0=X_0^+\cup X_0^-$, and the injection $X\map{}X_0\times X_0$ identifies
\begin{equation}\label{components}
X\iso (X_0^+\times X_0^+) \cup (X_0^-\times X_0^-).
\end{equation}
The groups $G_0(\R)$ and $G(\R)$ act on $X_0$ and $X$, respectively, through conjugation.  In particular the complex manifold $X\times G(\A_f)$ admits a left action by $G(\Q)$ (acting on both factors) and a right action (on the second factor) by the maximal compact open subgroup $U^\mathrm{max}\subset G(\A_f)$.

Fix a normal compact open subgroup $U\subset U^\mathrm{max}$ small enough that the algebraic stack of $\mathfrak{D}^{-1}$-polarized QM abelian fourfolds over $\Q$ with level $U$ structure is a scheme, $M$, so that there is a $\Q$-uniformization of $\mathcal{M}$
$$
\mathcal{M}_{/\Q}\iso [H\backslash M]
$$
where $H=U^\mathrm{max}/U$.  By Shimura's theory \cite{milne79,milne05} there is an isomorphism of complex manifolds
\begin{equation}
\label{shimura uniformization}
M(\C)\iso G(\Q)\backslash X\times G(\A_f)/U.
\end{equation}
To (briefly) make the isomorphism (\ref{shimura uniformization}) explicit  fix a point $(x,g)\in X\times G(\A_f)$ and use strong approximation to factor $g=\gamma u$ with $\gamma\in G(\Q)$ and $u\in U^\mathrm{max}$.  The point $\gamma^{-1} x\in X$ determines (in fact, is) a map of real algebraic groups $\mathbb{S}\map{}G_{/\R}$.  Setting $\Lambda=\co_B$, the group $G(\R)$ acts by right multiplication on $\Lambda \otimes_\Z\R$, and the induced Hodge structure 
$$
\mathbb{S}\map{}\Aut_\R(\Lambda \otimes_\Z\R)
$$ 
determines a complex structure on the real vector space $\Lambda \otimes_\Z\R$.  Recall the alternating pairing $\psi:\Lambda \times\Lambda\map{}\Z$ of \S \ref{SS:moduli}.   It follows from \cite[Lemma 1.1]{milne79} that either $\psi$ or $-\psi$, depending on the connected component of $X$ containing $\gamma^{-1}x$, determines a $\mathfrak{D}^{-1}$-polarization $\lambda$ of the complex torus $A=(\Lambda \otimes_\Z\R)/\Lambda$ with its left $\co_B$-action, and, in the notation of \S \ref{SS:moduli}, the isomorphism of left $\widehat{\co}_B$-modules
$$
\nu: \widehat{\Lambda}^\emptyset \map{\cdot u} \widehat{\Lambda}^\emptyset\iso \mathrm{Ta}^\emptyset(A)
$$
is a $U$ level structure on $\mathbf{A}=(A,i)$.  The isomorphism (\ref{shimura uniformization}) then identifies the double coset of $(x,g)$ with the isomorphism class of the triple $(\mathbf{A},\lambda,\nu)$. Similarly one can show that
$$
M_0(\C)\iso G_0(\Q)\backslash X_0\times G_0(\A_f)U^\mathrm{max}/U.
$$

 Let $V_0$ and $V$ denote the trace zero elements of $B_0$ and $B$, respectively, with $G_0(\Q)$ and $G(\Q)$ acting on $V_0$ and $V$ by conjugation .   Then $V$ is endowed with the $G(\Q)$-invariant $F$-valued quadratic form $Q(\tau)=-\tau^2$, and $V_0$ is endowed  with the $G_0(\Q)$-invariant  $\Q$-valued quadratic form $Q_0$ defined by the same formula.   Each nonzero $\tau \in V_0\otimes_\Q\R$, viewed as an element of $G_0(\R)$, acts by conjugation on $X_0$ with fixed point set
$$
X_0(\tau)=\{ x\in X_0\mid \tau \cdot x=x \}
$$
 consisting of two points if $Q_0(\tau)$ is positive and no points otherwise. Assuming that $Q_0(\tau)$ is positive, let $x_0^+(\tau)\in X_0^+$ and $x_0^-(\tau)\in X_0^-$ be the two points in $X_0(\tau)$.   Identify $V\otimes_\Q\R\iso (V_0\otimes_\Q\R)^2$ using (\ref{real splitting}).  If  $\tau=(\tau_1,\tau_2)\in V\otimes_\Q\R$ with $Q(\tau)$ totally positive then we define points $x^+(\tau), x^-(\tau)\in X$ by
$$
x^\pm(\tau)=(x_0^\pm(\tau_1),x_0^\pm(\tau_2))
$$
using the isomorphism of (\ref{components}), and set $X(\tau)=\{ x^+(\tau),x^-(\tau) \}$.  If $Q(\tau)$ is not totally positive then set $X(\tau)=\emptyset$.

  Given a positive parameter $u\in\R$ Kudla has defined (see Proposition 7.3.1 and (7.3.42) of \cite{KRY}; these functions were first constructed in \cite{kudla97})  a symmetric $G_0(\R)$-invariant Green function $g_u^0(z_1,z_2)$ for the diagonal  $X_0\subset X_0\times X_0$, identically zero  off of the subset $X\subset X_0\times X_0$.   For each $\tau_0 \in V_0\otimes_\Q\R$ with $Q_0(\tau_0)>0$ define a  function $\xi_0(\tau_0)$ on $X_0$ by
$$
\xi_0(\tau_0)(z)= \sum_{x\in X_0(\tau_0)} g^0_{ Q_0(\tau_0) }(x,z).
$$
The function $\xi_0(\tau_0)$ is a Green function for the $0$-cycle $X_0(\tau_0)$ on $X_0$.  Now suppose $\tau\in V\otimes_\Q\R$ with $Q(\tau)$ totally positive and write $(\tau_1,\tau_2)$ for the corresponding element of $(V_0\otimes_\Q\R)^2$.  Define functions $\xi^1(\tau), \xi^2(\tau)$ in the variables $(z_1,z_2)\in X_0\times X_0$ by
$$
\xi^1(\tau)(z_1,z_2) = \xi_0(\tau_1)(z_1)
\hspace{1cm}
\xi^2(\tau)(z_1,z_2) = \xi_0(\tau_2)(z_2).
$$
These functions are Green functions for the divisors $X_0(\tau_1) \times X_0$ and $X_0\times X_0(\tau_2)$ (respectively) on $X_0\times X_0$.  It follows that the star product  \cite[II.3]{soule92} of Green functions on $X_0\times X_0$
$$
\xi(\tau)=\xi^1(\tau)*\xi^2(\tau)
$$
is a Green current for the $0$-cycle $X_0(\tau_1)\times X_0(\tau_2)\subset X_0\times X_0$, and restricting to a current on $X$ yields a Green current for the $0$-cycle $X(\tau)$.

 Now fix a totally positive $v\in F\otimes_\Q\R$ and write $v^{1/2}$ for the totally positive square root of $v$.  Define an  $\co_F$-lattice in $V$ by $L=V\cap \co_B $  and  $(1,1)$-current on $X$
$$
\Xi_v(\alpha) =  \sum_{  \substack{ \tau \in L \\ Q(\tau)=\alpha} } \xi (v^{1/2}\tau).
$$
For simplicity we abbreviate $\Xi_v=\Xi_v(\alpha)$.  It follows from the $G_0(\R)$-invariance of the Green function $g_u^0(z_1,z_2)$ that
$$
\gamma^*\xi(\tau) = \xi(\gamma\cdot \tau)
$$ 
for any $\gamma\in G(\R)$  and $\tau\in V$. From this and the stability of $L$ under $\Gamma^\mathrm{max}$ one sees that $\Xi_v$ is $\Gamma^\mathrm{max}$-invariant.  Viewing $\Xi_v$ as a current on the subset $X\times\{1\}\subset X\times G(\A_f)$ and using the factorization $G(\A_f)=G(\Q)U^\mathrm{max}$ of strong approximation we see that $\Xi_v$ extends uniquely to a left $G(\Q)$-invariant and right $U^\mathrm{max}$-invariant current on $X\times G(\A_f)$, which then descends to an $H$-invariant current on $M(\C)$.    If for each $\tau\in V$ we define 
$$
\Omega(\tau) = \{ g\in G(\A_f) \mid \tau\in g\widehat{\co}_Bg^{-1} \}
$$
then there is a bijection of zero dimensional complex manifolds
\begin{equation}
\label{zero dimensional uniformization}
Y(\C) \iso G(\Q)\backslash \bigsqcup_{  \substack{ \tau\in V \\ Q(\tau)=\alpha  }  }\big(  X(\tau)\times \Omega(\tau) \big)/U
\end{equation}
for which  the map $\phi:Y\map{}M$ is induced by the evident map 
$$
X(\tau)\times\Omega(\tau)\map{}X\times G(\A_f).
$$  
Here the action of $G(\Q)$ permutes the summands in the disjoint union through the conjugation action of $G(\Q)$ on $V$.  Indeed, suppose we are given a triple $(\tau, x,g)$ with $\tau\in V$, $Q(\tau)=\alpha$, and $(x,g)\in X(\tau)\times \Omega(\tau)$.  We again factor $g=\gamma u$ with $\gamma\in G(\Q)$ and $u\in U^\mathrm{max}$, so that   $(\tau,x,g)$ and $(\gamma^{-1} \tau,\gamma^{-1}x,u)$ have the same image in the right hand side of  (\ref{zero dimensional uniformization}).  To $(\tau,x,g)$ we associate the quadruple $(\mathbf{A},\lambda,\nu, t_\alpha)$ in which $(\mathbf{A},\lambda,\nu)$ is the $\mathfrak{D}^{-1}$-polarized QM abelian fourfold over $\C$ with $U$ level structure attached to $(x,g)$ as above, and $t_\alpha$ is the trace zero endomorphism of $\mathbf{A}$
$$
t_\alpha: (\Lambda\otimes_\Z\R)/\Lambda  \map{ } (\Lambda\otimes_\Z\R)/\Lambda
 $$
defined by $t_\alpha(b)=b\cdot (\gamma^{-1} \tau)$.  Note that the $\cdot $ is multiplication in $B$.

 If we define an infinite formal sum on $X$
$$
\mathcal{D} = \sum_{ \substack{\tau\in L \\  Q(\tau)=\alpha}  } \sum_{x\in X(\tau)} x
$$
then $\mathcal{D}$, viewed as a formal sum on $X\times \{1\}$, extends uniquely to a left $G(\Q)$-invariant and right $U^\mathrm{max}$-invariant formal sum on $X\times G(\A_f)$ whose associated $0$-cycle on $M(\C)$ is (\ref{0-cycle}).  Comparing $\mathcal{D}$ with the definition of $\Xi_v$ and using the fact that $\xi(v^{1/2}\tau )$ is a Green current for the $0$-cycle $X(\tau)$  shows that $\Xi_v$ is a Green current for $\mathcal{C}_\Q\in Z^2(\mathcal{M}_{/\Q})$.

The quadratic form $Q_0$ determines a bilinear form $[s_1,s_2]=-\mathrm{Tr}(s_1s_2)$ on $V_0$, where $\mathrm{Tr}$ is the reduced trace on $B_0$.  For any $T\in\Sigma(\alpha)$ let $V_0(T)\subset V_0\times V_0$ be the set of pairs $(s_1,s_2)$ for which (\ref{fundamental form}) holds.   Given any  $(s_1,s_2)\in V_0(T)$ set 
$$
\tau=s_1\varpi_1+s_2\varpi_2  \in V
$$ 
and let $\tau_1,\tau_2\in V_0\otimes_\Q\R$ be defined as above.    Define $L_0=V_0\cap \co_{B_0}$ and $L_0(T)=(L_0\times L_0)\cap V_0(T)$.
Let $v=(v_1,v_2)$  under the isomorphism (\ref{F split}) and let $T\in\Sigma(\alpha)$ be nonsingular.    Then $\{s_1,s_2\}$ is a linearly independent set,  as is  $\{\tau_1,\tau_2\}$ which implies  that the  $0$-cycles $X_0(\tau_1)$ and  $X_0(\tau_2)$  on $X_0$ have no common components.  Hence the star product   $\xi_0( v^{1/2}_1\tau_1) * \xi_0( v_2^{1/2} \tau_2) $ is a Green current for the empty cycle on $X_0$.

\begin{Prop}
\label{Prop:archimedean decomp}
Assume that $F(\sqrt{-\alpha})/\Q$ is not biquadratic, so that, by Lemma \ref{Lem:biquadratic}, $\Sigma(\alpha)$ contains no singular matrices and $\mathcal{Y}_{0/\Q}=\emptyset$.  Then 
$$
\deg_\infty( i^* \Xi_v ) = \frac{ 1}{2 }  \sum_{  \substack{ T\in \Sigma(\alpha) \\  (s_1,s_2) \in \Gamma_0^\mathrm{max} \backslash L_0(T)  }   } \frac{1}{ |\mathrm{stab}(s_1,s_2)| } \int_{X_0} \xi_0(v_1^{1/2} \tau_1 ) * \xi_0(v_2^{1/2}\tau_2 )
$$
where $\mathrm{stab}(s_1,s_2)$ is the stabilizer of $(s_1,s_2)$ in $\Gamma_0^\mathrm{max}$.
\end{Prop}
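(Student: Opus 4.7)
The idea is to pull $\Xi_v$ back to the diagonal $X_0\subset X_0\times X_0$, identify the restriction as an explicit sum of star products of Green currents on $X_0$, and then unfold the resulting lattice sum against the conjugation action of $\Gamma_0^\mathrm{max}$ on $L_0 \oplus L_0$.

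First, using the complex uniformization $\mathcal{M}_0(\C)=\Gamma_0^\mathrm{max}\backslash X_0$ (as a stack), rewrite $\deg_\infty(i^*\Xi_v)$ as $\tfrac{1}{2}\int_{[\Gamma_0^\mathrm{max}\backslash X_0]} i^*\Xi_v$, the stack integral carrying the usual $1/|\mathrm{stab}|$ weighting at each point and the factor $1/2$ coming from the definition of $\deg_\infty$ in \S \ref{SS:chow2}.  Under the identification $X\subset X_0\times X_0$ the closed immersion $i:\mathcal{M}_0 \to \mathcal{M}$ corresponds to the diagonal embedding of $X_0$, so $i^*\Xi_v$ is obtained by restricting each summand $\xi(v^{1/2}\tau)=\xi^1(v^{1/2}\tau)*\xi^2(v^{1/2}\tau)$ to the diagonal.

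Second, compute this restriction.  Writing $\tau\in L$ in the form $\tau=s_1\varpi_1+s_2\varpi_2$ with $(s_1,s_2)\in L_0\oplus L_0$---valid because the basis $\{\varpi_1,\varpi_2\}$ identifies $L$ with $L_0\otimes_\Z\co_F$---the functions $\xi^1(v^{1/2}\tau)$ and $\xi^2(v^{1/2}\tau)$ restrict along the diagonal to $\xi_0(v_1^{1/2}\tau_1)$ and $\xi_0(v_2^{1/2}\tau_2)$ respectively on $X_0$.  The desired identity
\[
\xi(v^{1/2}\tau)|_{X_0}=\xi_0(v_1^{1/2}\tau_1)*\xi_0(v_2^{1/2}\tau_2)
\]
then follows provided the divisors $X_0(\tau_1)$ and $X_0(\tau_2)$ are disjoint in $X_0$, so that the star product on the right is a Green current for the empty cycle.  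Under the non-biquadratic hypothesis, Lemma \ref{Lem:biquadratic} forces every $T\in\Sigma(\alpha)$ to be nonsingular; a direct calculation using $(\tau_1,\tau_2)=(s_1,s_2)\bigl(\begin{smallmatrix}\varpi_1&\varpi_1^\sigma\\ \varpi_2&\varpi_2^\sigma\end{smallmatrix}\bigr)$ shows that nonsingularity of $T$ is equivalent to the linear independence of $\tau_1,\tau_2$ in $V_0\otimes\R$; since the conjugation stabilizer in $V_0\otimes\R$ of any $z\in X_0$ is one-dimensional (being the trace-zero part of the commutant $\C\subset B_0\otimes\R$ of the complex structure associated with $z$), linear independence of $\tau_1,\tau_2$ implies $X_0(\tau_1)\cap X_0(\tau_2)=\emptyset$.

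Third, using the decomposition $\{\tau\in L\mid Q(\tau)=\alpha\}=\bigsqcup_{T\in\Sigma(\alpha)} L_0(T)$ established in the derivation of (\ref{moduli decomposition}), combine with the previous step to obtain
\[
i^*\Xi_v=\sum_{T\in\Sigma(\alpha)}\sum_{(s_1,s_2)\in L_0(T)} \xi_0(v_1^{1/2}\tau_1)*\xi_0(v_2^{1/2}\tau_2).
\]
Then unfold the stack integral via the conjugation action of $\Gamma_0^\mathrm{max}$ on $L_0\oplus L_0$---which preserves each subset $L_0(T)$, and under which each inner lattice sum is invariant by virtue of the equivariance $\xi_0(\gamma\tau_0)(z)=\xi_0(\tau_0)(\gamma^{-1}z)$---to arrive at the asserted identity.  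The main obstacle is the compatibility asserted in Step 2: star products are defined only up to the current equivalence of \S \ref{SS:chow2}, so one must choose representatives of logarithmic type as in \cite[\S II.3]{soule92} whose restrictions to the diagonal form a legitimate star product on $X_0$; the transversality provided by the non-biquadratic hypothesis is precisely what makes this work.
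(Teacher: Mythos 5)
Your skeleton matches the paper's argument: restrict to the diagonal $X_0\subset X_0\times X_0$, identify each summand of $\Xi_v$ with the star product $\xi_0(v_1^{1/2}\tau_1)*\xi_0(v_2^{1/2}\tau_2)$ on $X_0$ (using nonsingularity of $T$ to get disjointness of $X_0(\tau_1)$ and $X_0(\tau_2)$), and unfold via the $\Gamma_0^{\mathrm{max}}$-equivariant bijection $\bigcup_{T\in\Sigma(\alpha)}L_0(T)\to\{\tau\in L\mid Q(\tau)=\alpha\}$. The unfolding step and the disjointness argument are fine and agree with the paper.

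However, the step you yourself flag as the ``main obstacle'' is where the real content of the paper's proof lies, and your proposed fix is exactly the route the paper points out is unavailable. The pullback $i^*$ of an arithmetic-Chow-theoretic Green current is defined (\S \ref{SS:chow2}) by first replacing the current by an equivalent Green \emph{form} of logarithmic type; the existence result \cite[II.3.3]{soule92} that you invoke applies to the current $\Xi_v$ on the \emph{compact} quotient, but not to the individual summands $\xi^1(v^{1/2}\tau)*\xi^2(v^{1/2}\tau)$, which live on the noncompact space $X$ --- so one literally has no definition of $i^*\big(\xi^1(v^{1/2}\tau)*\xi^2(v^{1/2}\tau)\big)$ as a current on $X_0$, and disjointness of $X_0(\tau_1)$ and $X_0(\tau_2)$ does nothing to repair this. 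The paper circumvents the problem by a regularization in the style of \cite[\S 2.1.5]{gillet-soule90}: it builds smooth $(1,1)$-forms $\omega_\epsilon$ supported in an $\epsilon$-tube around the diagonal with $[\omega_\epsilon]\to\delta_{X_0}$, interprets $\int_{\Gamma_0\backslash X_0}i^*\Xi_v$ as $\lim_{\epsilon\to 0}\Xi_v(\Omega_\epsilon)$ with $\Omega_\epsilon=\sum_{\gamma\in\Gamma_0\backslash\Gamma}\gamma^*\omega_\epsilon$ on the compact quotient, and thereby justifies the term-by-term restriction; it then verifies directly that for log-type Green functions $a_0,b_0$ of disjoint divisors on $X_0$ and $a=\pi_1^*a_0$, $b=\pi_2^*b_0$ one has $\int_X i^*(a*b)=\int_{X_0}a_0*b_0$. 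Some argument of this kind (or another concrete construction of a log-type representative adapted to the noncompact cover) is needed to make your second step legitimate; as written, the proposal asserts the identity $\xi(v^{1/2}\tau)|_{X_0}=\xi_0(v_1^{1/2}\tau_1)*\xi_0(v_2^{1/2}\tau_2)$ without a valid mechanism, so there is a genuine gap at precisely this point.
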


\begin{proof}
Formally one has
\begin{eqnarray} \label{current slight}
\deg_\infty( i^* \Xi_v )  
&=& 
\frac{1}{ 2\cdot   [\Gamma_0^\mathrm{max}:\Gamma_0] } \int_{\Gamma_0\backslash X_0 } i^* \Xi_v \\
&=&  \nonumber
\frac{ 1}{ 2\cdot  [\Gamma_0^\mathrm{max}:\Gamma_0] }\sum_{  \substack{ \tau\in   \Gamma_0\backslash L \\ Q(\tau)=\alpha }  }  
\int_{X_0} i^*\big( \xi^1(v^{1/2} \tau ) * \xi^2(v^{1/2}\tau ) \big)  \\
& =&  \nonumber
\frac{ 1}{ 2\cdot  [\Gamma_0^\mathrm{max}:\Gamma_0] }\sum_{  \substack{ \tau\in   \Gamma_0\backslash L \\ Q(\tau)=\alpha }  }   \int_{X_0}( i^* \xi^1(v^{1/2} \tau ) ) *  ( i^* \xi^2(v^{1/2} \tau ) )  \\
& = & \nonumber
 \frac{ 1}{ 2\cdot  [\Gamma_0^\mathrm{max}:\Gamma_0] }\sum_{  \substack{ \tau\in   \Gamma_0\backslash L \\ Q(\tau)=\alpha }  }   \int_{  X_0}    \xi_0(v_1^{1/2} \tau_1  )* \xi_0(v_2^{1/2}\tau_2 )
\end{eqnarray}
where $\Gamma_0=\Gamma\cap \Gamma_0^\mathrm{max}$ for any sufficiently small subgroup $\Gamma\subset\Gamma^\mathrm{max}$ of finite index.  These calculations require some caution, as the definition of pullback of currents is slightly subtle.  Recall from \S \ref{SS:chow2} that to define $i^*\Xi_v$ we  first replace the Green current $\Xi_v$ on $\Gamma \backslash X$ by an equivalent  Green form of logarithmic type along (the pullback to $\Gamma\backslash X$ of) the $0$-cycle $\mathcal{Y}(\alpha)(\C)$.  Such a form exists by \cite[II.3.3]{soule92}, but the argument given there makes use of the compactness of $\Gamma_0\backslash X_0$.  As we cannot apply this argument to currents on $X$, we in fact do not even have a definition of the current $i^*\big( \xi^1(v_1^{1/2} \tau_1 ) * \xi^2(v_2^{1/2}\tau_2 ) \big)$ on $X_0$.  We avoid this issue by making use of the methods of  \cite[\S 2.1.5]{gillet-soule90}.  For each $\epsilon>0$ one can construct from Kudla's Green function $g^0_u$ a smooth $\Gamma_0$-invariant $(1,1)$-form $\omega_\epsilon$ on $X$ which is supported on the hyperbolic tube of radius $\epsilon$ around $X_0$, satisfies $\lim_{\epsilon\to 0} [\omega_\epsilon] =\delta_{X_0}$ as currents on $X$, and such that for each $P\in X_0$ the restriction of $\omega_\epsilon$ to $X_0\times\{P\}$ (as well as to $\{P\}\times X_0$) converges to the current $\delta_P$ on $X_0$.  The integral in the second line of (\ref{current slight})  is then interpreted as the limit as $\epsilon\to 0$ of the current $\xi^1(v_1^{1/2} \tau_1 ) * \xi^2(v_2^{1/2}\tau_2 )$ evaluated at the smooth form $\omega_\epsilon$. If we now define a smooth $(1,1)$-form 
$$
\Omega_\epsilon = \sum_{\gamma\in \Gamma_0\backslash \Gamma} \gamma^* \omega_\epsilon
$$
on $\Gamma\backslash X$ then $\lim_{\epsilon\to 0}[\Omega_\epsilon] = \delta_{\Gamma_0\backslash X_0}$ as currents on $\Gamma\backslash X$, and the integral in the first line of (\ref{current slight}) is equal to the limit as $\epsilon\to 0$ of the value of the current $\Xi_v$ on the form $\Omega_\epsilon$.  From this the second equality of (\ref{current slight}) is clear.     It is now an easy exercise in the definition of the star product to verify that if we are given Green functions of logarithmic type $a_0$ and $b_0$ for disjoint divisors on $X_0$ and define $a=\pi_1^*a_0$ and $b=\pi_2^*b_0$ (in which $\pi_1, \pi_2:X\map{}X_0$ are the two projections) then 
$$
\int_X i^*(a * b) = \int_X (i^* a) * (i^* b) = \int_{X_0} a_0*b_0 
$$
(compare with \cite[p.~159]{gillet-soule90}).  Taking 
$$
a_0 = \xi_0(v_1^{1/2}\tau_1) \hspace{1cm} b_0=\xi_0(v_2^{1/2}\tau_2)
$$
yields the final two equalities of (\ref{current slight}). 

We have now established 
$$
\deg_\infty( i^* \Xi_v ) =  \frac{ 1}{ 2\cdot  [\Gamma_0^\mathrm{max}:\Gamma_0] }\sum_{  \substack{ \tau\in   \Gamma_0\backslash L \\ Q(\tau)=\alpha }  }   \int_{  X_0}    \xi_0(v_1^{1/2} \tau_1  )* \xi_0(v_2^{1/2}\tau_2 ).
$$
The $\Gamma_0$-invariant function $$L_0\times L_0\map{}L_0\otimes_\Z\co_F\iso L$$ defined by $(s_1,s_2)\mapsto s_1\varpi_1+s_2\varpi_2$ restricts to a $\Gamma_0$-invariant bijection
$$
\bigcup_{T\in\Sigma(\alpha)} L_0(T) \map{} \{ \tau\in L\mid Q(\tau)=\alpha \},
$$
and the proposition follows easily.

\end{proof}


\subsection{Cycle classes at primes  of good reduction}


 Let $p$ be a prime at which $B_0$ is unramified.  Fix  a ${\Z_p}$-uniformization $\mathcal{M}_{/{\Z_p}}\iso [H\backslash M]$  of $\mathcal{M}$ and define $\Z_p$-schemes $Y$, $M_0$, and $Y_0$ by (\ref{cycle uniformization}).  Let $\phi:Y\map{}M$ be the projection.

 \begin{Prop}
  \label{Prop:good dimension} 
 The scheme $Y$ is at most one-dimensional.  In particular  
 $$
 \mathbf{K}_0^Y(M)= F^2\mathbf{K}_0^Y(M).
 $$
\end{Prop}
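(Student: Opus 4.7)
The second statement follows immediately from the first: if $Y$ has absolute dimension at most one inside the three-dimensional regular scheme $M$, then every irreducible closed subscheme of $Y$ has codimension at least two in $M$, so $\mathbf{K}_0^Y(M)=F^2\mathbf{K}_0^Y(M)$ tautologically from the definition of the coniveau filtration. So my attention will be entirely on proving $\dim Y\le 1$.

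Since the generic fiber $Y_{/\Q_p}$ is zero-dimensional by Lemma \ref{generic etale}, it suffices to show that the special fiber $Y_{/\F_p}$ has dimension at most one. The projection $\phi:Y\to M$ is finite (being a base change of the finite morphism $\mathcal{Y}\to\mathcal{M}$), so $\dim Y_{/\F_p}$ equals the dimension of the image of $Y_{/\F_p}$ in the smooth surface $M_{/\F_p}$. My task thus becomes to bound the dimension of the locus of points $x\in M_{/\bar\F_p}$ whose associated $\mathfrak{D}^{-1}$-polarized QM abelian fourfold $\mathbf{A}_x$ admits a trace-zero endomorphism $t_\alpha$ with $Q(t_\alpha)=\alpha$.

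The natural approach, in the spirit of the proof of Lemma \ref{generic etale}, is to bound this dimension pointwise using Serre-Tate deformation theory. At a geometric closed point $z\in Y_{/\bar\F_p}$ corresponding to $(\mathbf{A},\lambda,t_\alpha)$, let $W=W(\bar\F_p)$ and write $R$ for the completion of the strictly Henselian local ring of $Y$ at $z$. By the Serre-Tate theorem and the unique deformability of $\lambda$ through Artinian thickenings of $\bar\F_p$, the ring $R$ classifies deformations of the $\ell=p$ Barsotti-Tate group $\mathfrak{g}$ of $\mathbf{A}$ together with its action of $\co_B\otimes_\Z\Z_p$ and the endomorphism $t_\alpha$. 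Because $p\nmid\mathrm{disc}(B_0)$, the Morita equivalence $\co_B\otimes_\Z\Z_p\iso M_2(\co_F\otimes_\Z\Z_p)$ permits one to decompose $\mathfrak{g}\iso \mathfrak{g}_0\oplus\mathfrak{g}_0$, and the problem reduces to bounding the dimension of the formal scheme of deformations of a height four, two-dimensional $p$-divisible group $\mathfrak{g}_0$ equipped with its action of the rank-four $\Z$-algebra $\co_F[\sqrt{-\alpha}]\otimes_\Z\Z_p$. Using Grothendieck-Messing to translate into linear algebra on the Hodge filtration of the Dieudonn\'e module, and using the Kottwitz condition to pin down the $\co_F$-module structure on $\mathrm{Lie}(\mathfrak{g}_0)$, I expect to show that these deformations form an at most one-dimensional formal scheme over $\bar\F_p$.

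The main obstacle will be to carry through this dimension count uniformly in the $p$-adic splitting behavior of both $F$ and $F(\sqrt{-\alpha})$; the hardest cases are those in which $p$ ramifies in one or both of these extensions, or in which $\mathbf{A}$ is supersingular, as then a naive count of free parameters may overestimate the actual deformation space and the Kottwitz condition is essential to exclude an extra deformation direction. An alternative, less computational route would be to observe, via Lemma \ref{Lem:endo algebra}, that the image of $\mathcal{Y}_{/\bar\F_p}$ lies inside the union of the CM locus of $\mathcal{M}_{/\bar\F_p}$ (which is zero-dimensional) and the basic locus, the latter being known to have dimension at most one in the Shimura surface at primes of good reduction; this would give the dimension bound in one stroke, at the cost of invoking the theory of Newton stratifications.
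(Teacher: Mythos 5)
Your reductions (second statement from the first; horizontal part handled by Lemma \ref{generic etale}; finiteness of $\phi$) are fine and agree with the paper, but the heart of the matter is the bound $\dim Y_{/\F_p^\alg}\le 1$, and here your main route has a genuine gap. You propose to bound the dimension pointwise by showing that the equal-characteristic deformation space of $(\mathfrak{g}_0,\ \co_F[\sqrt{-\alpha}]\otimes_\Z\Z_p\text{-action})$ is at most one-dimensional, but this is precisely the statement you do not prove: you say you ``expect'' a Grothendieck--Messing computation to give it, and you yourself flag the supersingular and ramified cases as the ones where a naive parameter count fails. Those cases are not a technicality — they are the whole content. Indeed, at primes dividing $\mathrm{disc}(B_0)$ the analogous locus genuinely acquires two-dimensional vertical components (this is why \S\ref{S:bad components} exists), so any purely local deformation argument must use $p\nmid\mathrm{disc}(B_0)$ in an essential, computed way; ``linear algebra on the Hodge filtration'' does not by itself rule out the endomorphism deforming over a two-parameter equal-characteristic family. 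Note also that the paper's later local analysis only shows the deformation locus is cut out by two equations in $W[[s_1,s_2]]$, which gives a lower bound on its dimension; the upper bound is obtained globally and is then fed back in, so your plan would have to supply exactly the input the paper could not get locally. (A small further caveat: at $p\mid\mathrm{disc}(F)$ the special fiber of $M$ is not smooth, and the Rapoport condition can fail at points not in the image of $M_0$, so even the smooth two-dimensional deformation space you start from is not available at all points.)

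The paper avoids the pointwise bound altogether: it shows the ordinary points of $Y(\F_p^\alg)$ are finite, because the Serre--Tate canonical lift (which carries all endomorphisms) injects them into $Y(W(\F_p^\alg))$, a finite set since the generic fiber is finite; and it shows that the nonordinary locus of all of $M_{/\F_p^\alg}$ — with no reference to $t_\alpha$ — is one-dimensional, being cut out locally by the determinant of the Hasse--Witt matrix, which is checked to be nonzero in the deformation rings case by case ($p$ split, inert, ramified in $F$) via Serre--Tate coordinates, Zink's displays, and the Hilbert--Blumenthal computations of Goren et al. Your ``alternative route'' is in fact much closer to this: the non-basic points of $Y$ are CM points by Lemma \ref{Lem:endo algebra}, and the basic locus at good $p$ is one-dimensional. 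But as stated it still asserts rather than proves its two inputs, and the claim that the whole non-basic CM locus is zero-dimensional is stronger than what the canonical-lift argument gives (that argument covers ordinary points; for the intermediate Newton strata the paper instead uses that those strata of $M$ are themselves one-dimensional, which suffices). So the proposal identifies a plausible plan but does not close the case that carries all the difficulty.
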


\begin{proof}
The horizontal components of $Y$ are all Zariski closures of points of the (zero-dimensional) generic fiber $Y_{/\Q_p}$, and so are of dimension one.   We must therefore show that $Y_{/\F_p^\alg}$ has dimension at most one.   One can easily show that the  Serre-Tate canonical lift gives an injection from the set of ordinary points of $Y(\F_p^\alg)$ to the finite set $Y(W(\F_p^\alg))$, and so it suffices to show that the locus of nonordinary points of  $Y_{/\F_p^\alg}$ is of dimension at most one.  As the map $Y\map{}M$ is finite, we are now reduced to proving that the nonordinary locus of $M_{/\F_p^\alg}$ is of dimension one.   We give only a sketch of the proof;   the idea is to reduce the question to the setting of  Hilbert-Blumenthal surface  where it follows from calculations of Goren and others \cite{goren03, goren04, goren05, goren00}.  Briefly, an abelian variety $A$ over $\F_p^\alg$  is ordinary if and only if the Verschiebung $\mathrm{Ver}:\mathrm{Lie}(A^{(p)})\map{}\mathrm{Lie}(A)$ is an isomorphism of $\F_p^\alg$-modules.  Thus  the nonordinary locus of $M_{/\F_p^\alg}$ is defined locally by a single equation, the determinant of the \emph{Hasse-Witt matrix} encoding the action of Verschiebung on Lie algebras.  

Fix a nonordinary point  $x\in M(\F_p^\alg)$ and denote by $(\mathbf{A},\lambda)$ the  $\mathfrak{D}^{-1}$-polarized QM abelian fourfold determined by $x$.  Let $R$ be the completion of the local ring of $M_{/\F_p^\alg}$ at $x$.  By the Serre-Tate theory the functor of deformations of the QM-abelian surface $(\mathbf{A},\lambda)$  to local Artinian $\F_p^\alg$-algebras with residue field $\F_p^\alg$ is isomorphic to the functor of deformations of the associated polarized $p$-Barsotti-Tate group $(\mathbf{A}_p,\lambda_p)$ of height eight and dimension four with  its action of $\co_B\otimes_{\co_F}\co_{F,p}\iso M_2(\co_{F,p})$, and this functor is pro-represented by $R$.   Here we have set  $\co_{F,p}=\co_F\otimes_\Z\Z_p$.   Using the action of the idempotents in $ M_2(\co_{F,p})$ to decompose  $\mathbf{A}_p$  one can identify this deformation functor with the deformation functor of a quasi-polarized $p$-Barsotti-Tate group $(\mathbf{B}_p,\eta_p)$ of height four and dimension two equipped with an action of $\co_{F,p}$.  This is exactly the  type of deformation problem considered by Goren et.~al.~ in their study of local rings of Hilbert-Blumenthal surfaces.

 When $p$ is inert in $F$ then contemplation of the possible slope sequences shows that  $\mathbf{B}_p$ is supersingular.  One can apply Zink's theory of displays \cite{zink02} of connected $p$-Barsotti-Tate groups exactly as in \cite[Chapter 6]{goren} or \cite{goren00}  to show that $R$ is a power series ring in two variables over $\F_p^\alg$, while the determinant $r\in R$ of the Hasse-Witt matrix of the universal deformation is nonzero.  The quotient ring $R/(r)$ is therefore of dimension one as desired.  When $p$ is ramified in $F$ then again $\mathbf{B}_p$ is supersingular.  In this case the $\F_p^\alg$-algebra $R$ may not be smooth, but one can  deduce from calculations of \cite{goren03}  that the quotient of $R$ by the determinant of the Hasse-Witt matrix has dimension one.  When $p$ splits in $F$ then the idempotents in $\co_{F,p}\iso \Z_p\times\Z_p$ further split $(\mathbf{B}_p,\eta_p)$ into a product of two polarized $p$-Barsotti-Tate groups $H_1\times H_2$ each of height two and dimension one.   The formal deformation ring $R$ splits as the completed tensor product $R\iso R_1 \widehat{\otimes}_{\F_p^\alg} R_2$  of the formal deformation rings of $H_1$ and $H_2$, and the determinant of the Hasse-Witt matrix of the universal deformation  is a pure tensor $r_1\otimes r_2$. If $H_i\iso (\Q_p/\Z_p)\times\mu_{p^\infty}$ is ordinary then $R_i\iso \F_p^\alg[[x_i]]$ by the theory of Serre-Tate coordinates, and $r_i\in R_i^\times$.  If $H_i$ is the unique connected $p$-Barsotti-Tate group of dimension one and height two over $\F_p^\alg$ then again Zink's theory of displays shows that $R_i\iso \F_p^\alg[[x_i]]$ and $r_i$ is a uniformizing parameter of $R_i$.  From this we deduce that the quotient of $R$ by the determinant of the Hasse-Witt matrix has dimension one.  
 \end{proof}

 Suppose $\mathcal{F}$ is a coherent $\co_Y$-module.  We denote by
 $
 \mathrm{cl}(\mathcal{F})  \in \chow^2_Y(M)
 $
 the class determined by $R\phi_*[\mathcal{F}]\in F^2\mathbf{K}_0^Y(M)$ under the Gillet-Soul\'e isomorphism (\ref{GS iso}).  Note that the finiteness of   $\phi:Y\map{}M$ implies $R^k\phi_*\mathcal{F}=0$ for all $k >0$, and so we have simply $R\phi_*[\mathcal{F}]=[\phi_*\mathcal{F}]$.  As $\mathrm{cl}(\co_Y)$ is $H$-invariant  Lemma \ref{Lem:Galois descent}  shows that  $\mathrm{cl}(\co_Y)$ arises as the flat pullback of a class
  \begin{equation}
 \label{the good cycle}
 \mathcal{C}_p \in \chow^2_{\mathcal{Y}_{/\Z_p}}(\mathcal{M}_{/\Z_p}).
 \end{equation}

\begin{Lem}\label{Lem:good generic fiber}
The homomorphism (\ref{generic fiber})  takes $\mathcal{C}_p$ to the cycle $\mathcal{C}_\Q\times_\Q\Q_p$ of (\ref{0-cycle}).
  \end{Lem}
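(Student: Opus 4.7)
The plan is to fix a $\Z_p$-uniformization $\mathcal{M}_{/\Z_p}\iso [H\backslash M]$ and reduce, via Lemma \ref{Lem:Galois descent}, to the analogous claim at the level of $H$-invariant scheme cycles on $M$: that the image of $\mathrm{cl}(\co_Y)\in\chow^2_Y(M)$ under (\ref{support map}) has trivial vertical component and horizontal component equal to the push-forward $0$-cycle $\phi_*[Y_{/\Q_p}]\in Z^2_{Y_{/\Q_p}}(M_{/\Q_p})$.  By Proposition \ref{Prop:good dimension} we have $R\phi_*[\co_Y]=[\phi_*\co_Y]\in F^2\mathbf{K}_0^Y(M)$, and $\mathrm{cl}(\co_Y)$ is its image under the Gillet--Soul\'e isomorphism (\ref{GS iso}).

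To identify its horizontal part I would apply Lemma \ref{Lem:sheaf decomp} to $Y$ with $\Omega$ equal to the set of horizontal irreducible components of $Y$.  Lemma \ref{generic etale} implies that $Y_{/\Q_p}$ is reduced and zero-dimensional, so for every horizontal component $D\subset Y$ the local ring $\co_{Y,\eta_D}$ is a field and $\length_{\co_{Y,\eta_D}}(\co_Y)_{\eta_D}=1$; thus
\[
[\co_Y]=\sum_D[\co_D]+\beta
\]
in $\mathbf{K}_0(Y)$, with $\beta$ supported on a closed subscheme contained in $Y_{/\F_p}$.  A second application of Lemma \ref{Lem:component decomp} to each $\phi_*\co_D$ along the irreducible one-dimensional horizontal cycle $E_D=\phi(D)\subset M$, using the fact that any proper closed subscheme of $E_D$ is zero-dimensional and hence vertical, gives
\[
[\phi_*\co_Y]=\sum_D[k(D):k(E_D)]\cdot[\co_{E_D}]+\gamma
\]
in $F^2\mathbf{K}_0^Y(M)$, with $\gamma$ supported on $M_{/\F_p}$.

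Transporting this decomposition via (\ref{GS iso}) and applying (\ref{support map}) kills the class of $\gamma$ and sends each horizontal $[E_D]$ to its generic fibre $[(E_D)_{/\Q_p}]$, a closed point of $M_{/\Q_p}$ with residue field $k(E_D)$.  The horizontal image of $\mathrm{cl}(\co_Y)$ is therefore $\sum_D[k(D):k(E_D)]\cdot[(E_D)_{/\Q_p}]$.  Since the closed points of $Y_{/\Q_p}$ are precisely the $D_{/\Q_p}$ with residue fields $k(D)$, the proper push-forward $\phi_*[Y_{/\Q_p}]$ equals the same expression by definition, and the two match.  The only subtlety is that the residue-field degrees $[k(D):k(E_D)]$ must appear identically on both sides; on the $K$-theory side they arise as lengths of $\phi_*\co_D$ at the generic points of the $E_D$, and on the $0$-cycle side as the standard multiplicities in the push-forward of a reduced $0$-cycle, so the matching is forced by a single generic-point computation.
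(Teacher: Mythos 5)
Your argument is correct in substance, but it takes a genuinely different route from the paper. The paper's proof is a one-step reduction: the construction $\mathcal{F}\mapsto\mathrm{cl}(\mathcal{F})$ is compatible with flat base change, so the generic-fiber component of $\mathcal{C}_p$ can be computed by redoing the construction over $\Q_p$, where $Y_{/\Q_p}$ is a disjoint union of field spectra and the composition $\mathbf{K}_0(Y_{/\Q_p})\map{R\phi_*}F^2\mathbf{K}_0^{Y_{/\Q_p}}(M_{/\Q_p})\map{}Z^2_{Y_{/\Q_p}}(M_{/\Q_p})$ is an elementary skyscraper computation. You instead stay over $\Z_p$ and decompose $[\co_Y]$, via Lemma \ref{Lem:component decomp} and Lemma \ref{Lem:sheaf decomp}, into the horizontal components $[\co_D]$ plus a class supported on the special fiber, push forward, and only then pass to the generic fiber through (\ref{support map}). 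Both arguments rest on the same standard compatibilities (the Gillet--Soul\'e isomorphism with change of support, and flat pullback/base change); the paper's is shorter, while yours makes the horizontal Zariski closures $E_D$ and their multiplicities explicit, and in fact your bookkeeping of the residue degrees $[k(D):k(E_D)]$ is more careful than the formula displayed in the paper's own proof, and matches the pushforward reading of the cycle (\ref{0-cycle}).

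One slip to correct in the reduction step: you assert that the image of $\mathrm{cl}(\co_Y)$ under (\ref{support map}) has trivial vertical component. That is neither needed nor true in general: at a good prime $Y$ may well have one-dimensional vertical components (e.g.\ in the supersingular locus), and these contribute a nonzero class in $\chow^2_\vertical(M)$ --- this is exactly why the construction of $\widehat{\mathcal{Y}}(\alpha,v)$ in \S \ref{SS:construction} retains the vertical classes $\widehat{\mathcal{Y}}(\alpha)^\vertical_p$. The lemma concerns only the projection to $Z^2_{\mathcal{Y}_{/\Q_p}}(\mathcal{M}_{/\Q_p})$, and since the body of your argument only identifies that component (your class $\gamma$, and the vertical part it represents, is simply killed by the generic-fiber projection rather than shown to vanish), the misstatement is harmless; just delete the claim about the vertical component.
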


\begin{proof}
The construction $\mathcal{F}\mapsto \mathrm{cl}(\mathcal{F})$ from coherent sheaves to cycle classes is compatible with flat base change, so we may compute the image of $\mathcal{C}_p$ in 
$$
\chow^2_{Y_{/\Q_p}}(M_{/\Q_p}) \iso Z^2_{Y_{/\Q_p}}(M_{/\Q_p})
$$
 by repeating the construction of $\mathcal{C}_p$  with $\Z_p$ replaced by $\Q_p$.   As $Y_{/\Q_p}$ is zero dimensional and reduced it is a disjoint union of field spectra. One easily checks that the composition
$$
\mathbf{K}_0(Y_{/\Q_p})\map{R\phi_*}F^2\mathbf{K}_0^{Y_{/\Q_p}}(M_{/\Q_p}) \map{} \chow^2_{Y_{/\Q_p}}(M_{/\Q_p}) \iso Z^2_{Y_{/\Q_p}}(M_{/\Q_p})
$$
takes 
$$
[\mathcal{F}]\mapsto \sum_{y\in Y_{/\Q_p}} \length_{k(y)}(\mathcal{F}_y)\cdot \phi(y)
$$ 
and the claim follows.
\end{proof}

\begin{Def}
A coherent $\co_Y$-module $\mathcal{F}$ is \emph{skyscraper-free} at a closed point $y\in Y$ if  $$\Hom_{\co_{Y,y}}(k(y),\mathcal{F}_y)=0.$$  That is to say, $y$ is neither an embedded point of $\mathcal{F}$ nor an irreducible component  of $\mathrm{sppt}(\mathcal{F})$.
\end{Def}

\begin{Lem}
\label{Lem:cohen-macaulay}
A coherent $\co_Y$-module $\mathcal{F}$ is skyscraper-free at $y$ if and only if  the stalk $\mathcal{F}_y$ is either trivial or a one-dimensional Cohen-Macaulay $\co_{Y,y}$-module.
\end{Lem}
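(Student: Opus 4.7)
The plan is to reduce this to standard commutative algebra facts about depth and dimension, using the one-dimensionality statement from Proposition \ref{Prop:good dimension}. Set $R = \co_{Y,y}$, $\mathfrak{m}$ its maximal ideal, $k = k(y) = R/\mathfrak{m}$, and $M = \mathcal{F}_y$. Since $Y$ is at most one-dimensional and $y$ is a closed point, $R$ is a Noetherian local ring with $\dim R \leq 1$, and $M$ is a finitely generated $R$-module.

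First I would invoke the standard characterization of depth via Hom: for a nonzero finitely generated module $M$ over a Noetherian local ring, $\mathrm{depth}_R(M) \geq 1$ if and only if $\mathfrak{m}$ is not an associated prime of $M$, if and only if $\Hom_R(k, M) = 0$. (See for example \cite{matsumura}.) Hence $\mathcal{F}$ is skyscraper-free at $y$ precisely when either $M = 0$ or $\mathrm{depth}_R(M) \geq 1$.

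Next I would use the inequalities $\mathrm{depth}_R(M) \leq \dim_R(M) \leq \dim R \leq 1$, valid whenever $M$ is nonzero and finitely generated. So if $M \neq 0$ and $\mathrm{depth}_R(M) \geq 1$, then all three inequalities become equalities: $\dim_R(M) = 1$ and $\mathrm{depth}_R(M) = \dim_R(M)$, which is exactly the assertion that $M$ is a one-dimensional Cohen--Macaulay $R$-module. Conversely, if $M$ is one-dimensional and Cohen--Macaulay, then $\mathrm{depth}_R(M) = 1 \geq 1$, so $\Hom_R(k, M) = 0$. Combining both directions, together with the trivial case $M = 0$, yields the stated equivalence.

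There is no real obstacle; the only substantive input beyond standard commutative algebra is the bound $\dim \co_{Y,y} \leq 1$, which is exactly Proposition \ref{Prop:good dimension}. The proof is essentially a one-liner once that bound is invoked.
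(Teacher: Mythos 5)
Your proof is correct and follows essentially the same route as the paper: both reduce the statement to the standard facts that $\mathrm{depth}_R(M)\ge 1$ is equivalent to $\Hom_R(k(y),\mathcal{F}_y)=0$, and that under the bound $\dim\le 1$ from Proposition \ref{Prop:good dimension} the condition $\mathrm{depth}\ge 1$ forces $\mathrm{depth}=\dim=1$, i.e.\ Cohen--Macaulayness of dimension one. No gaps.
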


\begin{proof}
Fix a closed point $y\in Y$ in the support of $\mathcal{F}$.  By Proposition \ref{Prop:good dimension} the $\co_{Y,y}$-module $\mathcal{F}_y$ has dimension at most one, and hence is  Cohen-Macaulay of dimension one if and only if $\mathrm{depth}(\mathcal{F}_y)\ge 1$.  The condition $\mathrm{depth}(\mathcal{F}_y)\ge 1$ is equivalent to $\Hom_{\co_{Y,y}}(k(y),\mathcal{F}_y)=0.$
\end{proof}

By mild abuse of notation we denote again by $\deg_p$ the composition
\begin{equation}
\label{degree abuse}
\chow^2_{Y_0}(M_0)\map{}\chow^2_\vertical(M_0)\map{\deg_p}\Q.
\end{equation}

\begin{Lem}
\label{Lem:simple pullback}
Suppose that $\mathcal{F}$ is a  coherent $\co_Y$-module which is skyscraper-free at each closed point $y\in Y_0$, and assume that the support of the coherent $\co_{Y_0}$-module $j^*\mathcal{F}$ has dimension zero.  Then, recalling the pullback $i^*$ of (\ref{basic local pullback}),
$$
\deg_p(i^*\mathrm{cl}(\mathcal{F})) = \sum_{y\in Y_0(\F_p^\alg)} \length_{\co_{Y_0,y}}(j^*\mathcal{F}_y).
$$
\end{Lem}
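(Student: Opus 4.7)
The plan is to reduce the computation to a local Tor computation on $M$, exploit that $i \colon M_0 \to M$ is a regular closed immersion of codimension one (both schemes are regular), and use the Cohen-Macaulay hypothesis on $\mathcal{F}$ to show the higher Tor's vanish. Concretely, since $\mathcal{M}$ is smooth of dimension three and $\mathcal{M}_0$ smooth of dimension two over $\Z_p$ for $p\nmid\mathrm{disc}(B_0)$, the schemes $M$ and $M_0$ are regular, and a closed immersion of regular schemes is automatically regular. Thus, Zariski-locally near any closed point $m\in M_0$, the ideal sheaf of $M_0$ is generated by a single element $f\in\co_{M,m}$, and we obtain the Koszul resolution
$$
0\to \co_{M,m}\xrightarrow{\ f\ }\co_{M,m}\to \co_{M_0,m}\to 0.
$$
Using this resolution to compute $i^*[\phi_*\mathcal{F}]$ in the sense of \cite[Theorem I.3(iii)]{soule92} gives
$$
i^*[\phi_*\mathcal{F}] \;=\; [\phi_*\mathcal{F}\otimes^L_{\co_M}\co_{M_0}]\;=\;[\mathrm{Tor}_0^{\co_M}(\phi_*\mathcal{F},\co_{M_0})]-[\mathrm{Tor}_1^{\co_M}(\phi_*\mathcal{F},\co_{M_0})]
$$
in $F^2 K_0^{Y_0}(M_0)$.

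The main step is to show the second term vanishes. For any closed point $y\in Y_0$, the skyscraper-free hypothesis combined with Lemma \ref{Lem:cohen-macaulay} shows that $\mathcal{F}_y$ is either zero or Cohen-Macaulay of dimension one over $\co_{Y,y}$. Because $\phi$ is finite, the local ring map $\co_{M,m}\to \co_{Y,y}$ is finite, hence $\mathfrak{m}_{M,m}\co_{Y,y}$ is $\mathfrak{m}_{Y,y}$-primary; this implies that dimension and depth of $\mathcal{F}_y$ are preserved when the module is regarded over $\co_{M,m}$. Since the fiber $\phi^{-1}(m)\subset Y$ lies in $Y_0$, we have $(\phi_*\mathcal{F})_m=\bigoplus_{y\mapsto m}\mathcal{F}_y$, and a direct sum of Cohen-Macaulay modules of the same dimension is Cohen-Macaulay. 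The assumption that $\supp(j^*\mathcal{F})$ is zero-dimensional means that
$$
(\phi_*\mathcal{F})_m/f(\phi_*\mathcal{F})_m \;=\; (i^*\phi_*\mathcal{F})_m \;=\; (\phi_{0*}j^*\mathcal{F})_m
$$
(using $i^*\phi_*=\phi_{0*}j^*$ for finite $\phi$) has dimension zero over $\co_{M,m}$. Since $\phi_*\mathcal{F}_m$ is CM of dimension one, an element $f$ is a nonzerodivisor on it if and only if the quotient has strictly smaller dimension. Therefore $f$ acts as a nonzerodivisor on $\phi_*\mathcal{F}_m$, and $\mathrm{Tor}_1^{\co_{M,m}}(\phi_*\mathcal{F}_m,\co_{M_0,m})=0$. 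This is the heart of the proof; everything else is unwinding of definitions. Consequently
$$
i^*\mathrm{cl}(\mathcal{F}) \;=\; \mathrm{cl}(\phi_{0*} j^*\mathcal{F}) \quad\text{in } \chow^2_{Y_0}(M_0).
$$

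It remains to read off the degree. Since $\phi_{0*}j^*\mathcal{F}$ is a coherent sheaf of zero-dimensional support on $M_0$, the Gillet-Soul\'e isomorphism identifies its class in $\chow^2_{Y_0}(M_0)$ with the zero-cycle $\sum_m \length_{\co_{M_0,m}}(\phi_{0*}j^*\mathcal{F})_m\cdot[m]$, where $m$ ranges over closed points of $M_0$. Under the projection to $\chow^2_\vertical(M_0)$ only the characteristic-$p$ points survive, and the scheme-theoretic version of $\deg_p$ assigns multiplicity $[k(m):\F_p]$ to each such $m$. Using $(\phi_{0*}j^*\mathcal{F})_m=\bigoplus_{y\mapsto m}(j^*\mathcal{F})_y$ together with the identity $\length_{\co_{M_0,m}}N=[k(y):k(m)]\cdot\length_{\co_{Y_0,y}}N$ for any $\co_{Y_0,y}$-module $N$ of finite length, and finally $[k(y):k(m)]\cdot[k(m):\F_p]=[k(y):\F_p]=\#\{\text{lifts of }y\text{ to }Y_0(\F_p^\alg)\}$, we obtain
$$
\deg_p\bigl(i^*\mathrm{cl}(\mathcal{F})\bigr)\;=\;\sum_{\text{closed }y\in Y_0}[k(y):\F_p]\,\length_{\co_{Y_0,y}}(j^*\mathcal{F}_y)\;=\;\sum_{y\in Y_0(\F_p^\alg)}\length_{\co_{Y_0,y}}(j^*\mathcal{F}_y),
$$
as claimed.
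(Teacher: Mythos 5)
Your proof is correct, and it follows the same overall skeleton as the paper's: push forward along the finite map $\phi$, show that $\mathrm{Tor}_\ell^{\co_{M,x}}\big((\phi_*\mathcal{F})_x,\co_{M_0,x}\big)=0$ for all $\ell>0$ at every closed point $x\in M_0$, and then unwind the Gillet--Soul\'e isomorphism and $\deg_p$ into a sum of lengths over residue fields. Where you genuinely differ is at the Tor-vanishing step. The paper observes that $(\phi_*\mathcal{F})_x$ is a one-dimensional Cohen--Macaulay module and $\co_{M_0,x}$ a two-dimensional Cohen--Macaulay module over the regular local ring $\co_{M,x}$, and quotes the corollary on p.~111 of Serre's \emph{Local algebra}; you instead exploit that $M_0\hookrightarrow M$ is a codimension-one closed immersion of regular schemes, so its ideal is locally principal, reduce everything to $\mathrm{Tor}_1$ via the length-one Koszul resolution, and check that the local equation $f$ is a nonzerodivisor on the one-dimensional Cohen--Macaulay module $(\phi_*\mathcal{F})_x$ because the quotient has dimension zero. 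This is a more elementary, self-contained route to the same vanishing; its only cost is that it uses the specific codimension-one geometry, whereas Serre's corollary applies unchanged in higher codimension. Your residue-degree bookkeeping at the end reproduces the paper's formula $[\mathcal{G}]\mapsto\sum_{x\in M_0(\F_p^\alg)}\length_{\co_{M_0,x}}(\mathcal{G}_x)$ for the composite $F^2\mathbf{K}_0^{Y_0}(M_0)\map{}\chow^2_{Y_0}(M_0)\map{\deg_p}\Q$.

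Two small points should be repaired, though neither breaks the argument. First, $\mathcal{M}$ is smooth over $\Z_p$ only when $p\nmid\mathrm{disc}(F)\cdot\mathrm{disc}(B_0)$; at primes dividing $\mathrm{disc}(F)$ (which are allowed here) it is merely regular --- but regularity of $M$ and $M_0$ is all your Koszul argument needs, so this is harmless. Second, for a finite morphism the induced map of local rings $\co_{M,m}\map{}\co_{Y,y}$ is in general \emph{not} finite, and $(\phi_*\mathcal{F})_m$ is in general \emph{not} the direct sum $\bigoplus_{y\mapsto m}\mathcal{F}_y$; both claims only become true after completion. The statement you actually need --- that $(\phi_*\mathcal{F})_m$ is Cohen--Macaulay of dimension one over $\co_{M,m}$ --- follows instead from the standard behavior of dimension and depth under finite algebras: with $B=(\phi_*\co_Y)_m$ a finite semilocal $\co_{M,m}$-algebra and $N=(\phi_*\mathcal{F})_m$, one has $\dim_{\co_{M,m}}N=\max_{y\mapsto m}\dim_{\co_{Y,y}}\mathcal{F}_y$ and $\mathrm{depth}_{\co_{M,m}}N=\min_{y\mapsto m}\mathrm{depth}_{\co_{Y,y}}\mathcal{F}_y$ (alternatively, pass to completions, where the direct-sum decomposition does hold and Cohen--Macaulayness and dimension are insensitive to the base change). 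With that substitution your proof goes through.
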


\begin{proof}
Suppose we are given coherent $\co_M$-modules $\mathcal{G}_1$, $\mathcal{G}_2$ such that $\mathcal{G}_1\otimes_{\co_M}\mathcal{G}_2$ is supported in dimension zero.  Define the \emph{Serre intersection multiplicity} \cite[Chapter V]{serre00} at a closed point $x\in M$ by 
$$
 I_{\co_{M,x}}^\mathrm{serre}(\mathcal{G}_{1}, \mathcal{G}_{2}) = 
 \sum_{\ell\ge 0} (-1)^\ell \length_{\co_{M,x}} \mathrm{Tor}_\ell^{\co_{M,x}} (\mathcal{G}_{1,x}, \mathcal{G}_{2,x}) .
$$
Lemma \ref{Lem:cohen-macaulay} implies that $\mathcal{F}_y$ is trivial or a one-dimensional Cohen-Macaulay $\co_{Y,y}$-module for every closed point $y\in Y_0$, and it follows that at every closed point $x\in M_0$ the stalk $(\phi_*\mathcal{F})_x$  is either trivial or  a one-dimensional Cohen-Macaulay $\co_{M,x}$-module.   For a closed point $x\in M_0$ the local ring $\co_{M_0,x}$ is regular, hence Cohen-Macaulay, and so (abbreviating $\co_{M_0}=i_*\co_{M_0}$)  $\co_{M_0,x}$ is  a two-dimensional Cohen-Macaulay  $\co_{M,x}$-module. Applying the corollary of  \cite[p.~111]{serre00} we find
$$
\mathrm{Tor}_\ell^{\co_{M,x}} ((\phi_*\mathcal{F})_x,  \co_{M_0,x})=0
$$
for all $\ell >0$, and so
$$
 I_{\co_{M,x}}^\mathrm{serre}(\phi_*\mathcal{F}, \co_{M_0}) =
 \length_{\co_{M,x}} ((\phi_*\mathcal{F})_x\otimes_{\co_{M,x}} \co_{M_0,x}).
$$
By examination of the construction of (\ref{GS iso}) the composition 
$$
F^2\mathbf{K}_0^{Y_0}(M_0) \map{} \chow^2_{Y_0}(M_0)\map{\deg_p}\Q
$$
is given by 
$$
[\mathcal{G}]\mapsto \sum_{x\in M_0(\F_p^\alg)} \length_{\co_{M_0,x}}(\mathcal{G}_x).
$$
Putting this all together gives
\begin{eqnarray*}
\deg_p(i^*\mathrm{cl}(\mathcal{F})) &=&
\sum_{x\in M_0(\F_p^\alg)} I_{\co_{M,x}}^\mathrm{serre}(\phi_*\mathcal{F}, \co_{M_0}) \\
&=& \sum_{y\in Y_0(\F_p^\alg)} \length_{\co_{Y_0,y}} (\mathcal{F}_y\otimes_{\co_{M,\phi(y)}} \co_{M_0,\phi(y)})
\end{eqnarray*}
and the claim follows.
\end{proof}

We now examine the local rings of $Y$ at closed points of $Y_0$.  Let $W=W(\F_p^\alg)$ denote the ring of Witt vectors of $\F_p^\alg$, and denote by $\mathbf{Art}$ the category of local Artinian $W$-algebras with residue field $\F_p^\alg$. Let $\mathfrak{G}_0^*$ be a Barsotti-Tate group of dimension one and height two over $\F_p^\alg$.   Thus $\mathfrak{G}_0^*$ is isomorphic to the $p$-divisible group of an elliptic curve.  Abbreviate $\co=\co_F\otimes_\Z\Z_p$.  The $\Z_p$-basis $\{\varpi_1, \varpi_2\}$ of $\co$  determines an injection $\co\map{}M_2(\Z_p)$, and we define 
$$
\mathfrak{G}^*=\mathfrak{G}_0^*\times\mathfrak{G}_0^*
$$ 
with the action of $\co$ induced by the map $\co\map{}M_2(\Z_p)$.    Thus for any $\F_p^\alg$-algebra $R$ we have $\mathfrak{G}^*(R)\iso \mathfrak{G}_0^*(R)\otimes_{\Z_p}\co$.  Let $\mathbf{D}$ be the functor on $\mathbf{Art}$ which classifies deformations of $\mathfrak{G}^*$.  More precisely, for an object $R$ of $\mathbf{Art}$ let $\mathbf{D}(R)$ be the set of isomorphism classes of pairs $(\mathfrak{G},\rho)$ in which $\mathfrak{G}$ is a Barsotti-Tate group over $R$ with an action of $\co$ and 
$$
\rho:\mathfrak{G}^* \map{} \mathfrak{G}_{/\F_p^\alg}
$$
is an isomorphism of Barsotti-Tate groups  over $\F_p^\alg$ respecting $\co$-actions.  Let $\tau \in\End(\mathfrak{G}^*)$ be an endomorphism of $\mathfrak{G}^*$ which commutes with the action of $\co$.  For each object $R$ of $\mathbf{Art}$ let   $\mathbf{D}_\tau(R)\subset\mathbf{D}(R)$ to be the subset consisting of those deformations for which the endomorphism $\tau$ lifts: that is to say, those deformations $(\mathfrak{G},\rho)\in\mathbf{D}(R)$ for which  there exists a (necessarily unique) $\tilde{\tau} \in\End(\mathfrak{G})$ which commutes with the action of $\co$ and satisfies
$$
\tilde{\tau} \circ \rho = \rho\circ \tau
$$
as elements of $\Hom(\mathfrak{G}^*,\mathfrak{G}_{/\F_p^\alg})$.

\begin{Prop}\label{Prop:smooth local ring}
The functor $\mathbf{D}$ is pro-represented by a $W$-algebra $R^\mathrm{univ}$ isomorphic to a power series ring in two variables over $W$, and $\mathbf{D}_\tau$ is pro-represented by $R^\mathrm{univ}/J_\tau$ for some ideal $J_\tau\subset R^\mathrm{univ}$.
\end{Prop}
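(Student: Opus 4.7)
The plan is to describe $\mathbf{D}$ and $\mathbf{D}_\tau$ via the Dieudonn\'e theory of the Barsotti-Tate group $\mathfrak{G}^*$ and apply Grothendieck-Messing deformation theory. Writing $\mathbb{D}$ for the contravariant crystalline Dieudonn\'e functor, the decomposition $\mathfrak{G}^* \iso \mathfrak{G}_0^* \otimes_{\Z_p} \co$ furnishes an isomorphism of crystals $\mathbb{D}(\mathfrak{G}^*) \iso \mathbb{D}(\mathfrak{G}_0^*) \otimes_{\Z_p} \co$; in particular for every object $R$ of $\mathbf{Art}$ the evaluation $\mathbb{D}(\mathfrak{G}^*)(R)$ is free of rank $2$ over $\co \otimes_{\Z_p} R$, and the Hodge filtration $\omega \subset \mathbb{D}(\mathfrak{G}^*)(\F_p^\alg)$ is a $\co \otimes_{\Z_p} \F_p^\alg$-submodule free of rank $1$. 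By Grothendieck-Messing, applied through successive square-zero PD-thickenings in $\mathbf{Art}$, the functor $\mathbf{D}$ is isomorphic to the functor sending $R$ to the set of $\co \otimes_{\Z_p} R$-direct summands $\widetilde{\omega} \subset \mathbb{D}(\mathfrak{G}^*)(R)$ that are locally free of rank $1$ and lift $\omega$.

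I would next identify this translated functor with the formal completion of the $W$-scheme $\mathrm{Res}_{\co/\Z_p}(\mathbb{P}^1_\co) \times_{\Z_p} W$ at the $\F_p^\alg$-point corresponding to $\omega$. The $\Z_p$-scheme parameterizing rank-$1$ $\co \otimes_{\Z_p} R$-direct summands of a free $\co \otimes_{\Z_p} R$-module of rank $2$ is, by the universal property of Weil restriction, exactly $\mathrm{Res}_{\co/\Z_p}(\mathbb{P}^1_\co)$; this Weil restriction is smooth of relative dimension $2$ over $\Z_p$ (even when $p$ is ramified in $F$) because it is covered by two copies of $\mathrm{Res}_{\co/\Z_p}(\mathbb{A}^1_\co) = \mathbb{A}^2_{\Z_p}$ glued along the smooth open subscheme $\mathrm{Res}_{\co/\Z_p}(\mathbb{G}_{m,\co})$. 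Its formal completion at a $W$-point is then $\Spf(W[[t_1, t_2]])$, establishing the first claim.

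For $\mathbf{D}_\tau$: functoriality of the crystal equips $\tau$ with a $\co \otimes_{\Z_p} W$-linear endomorphism $\mathbb{D}(\tau)$ of $\mathbb{D}(\mathfrak{G}^*)$, and the Grothendieck-Messing lifting criterion states that $\tau$ lifts to the deformation classified by $\widetilde{\omega}$ if and only if $\mathbb{D}(\tau)(\widetilde{\omega}) \subset \widetilde{\omega}$. Applied to the universal lift $\widetilde{\omega}^\mathrm{univ} \subset \mathbb{D}(\mathfrak{G}^*)(R^\mathrm{univ})$, this is the vanishing of the $\co \otimes_{\Z_p} R^\mathrm{univ}$-linear composition
\[
\phi_\tau : \widetilde{\omega}^\mathrm{univ} \hookrightarrow \mathbb{D}(\mathfrak{G}^*)(R^\mathrm{univ}) \xrightarrow{\mathbb{D}(\tau)} \mathbb{D}(\mathfrak{G}^*)(R^\mathrm{univ}) \twoheadrightarrow \mathbb{D}(\mathfrak{G}^*)(R^\mathrm{univ})/\widetilde{\omega}^\mathrm{univ}.
\]
Both source and target are free of rank $1$ over $\co \otimes_{\Z_p} R^\mathrm{univ}$, so $\phi_\tau$ corresponds to an element of $\co \otimes_{\Z_p} R^\mathrm{univ}$; taking $J_\tau \subset R^\mathrm{univ}$ to be the ideal generated by the $\Z_p$-components of this element with respect to the basis $\{\varpi_1, \varpi_2\}$ then yields $\mathbf{D}_\tau \iso \Spf(R^\mathrm{univ}/J_\tau)$ as subfunctors of $\Spf(R^\mathrm{univ})$.

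The main technical point will be confirming the Grothendieck-Messing formalism in the $\co$-equivariant setting uniformly across the various local behaviors of $p$ in $F$, and in particular checking that the requisite PD-structures can be built up through the small extensions needed in the pro-representability argument; since all objects of $\mathbf{Art}$ are local Artinian $W$-algebras with residue field $\F_p^\alg$, this reduces to standard arguments, and the only geometric input needed beyond these is the smoothness of $\mathrm{Res}_{\co/\Z_p}(\mathbb{P}^1_\co)$ observed above.
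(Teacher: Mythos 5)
Your argument is correct, but it is not the route the paper takes. The paper handles both claims essentially by citation: pro-representability and smoothness of $\mathbf{D}$ are deduced from the Serre--Tate theorem together with the Rapoport/Deligne--Pappas/Vollaard analysis of local rings of Hilbert--Blumenthal schemes, the key input being that $\mathrm{Lie}(\mathfrak{G}^*)\iso\mathrm{Lie}(\mathfrak{G}^*_0)\otimes_{\Z_p}\co$ is free of rank one over $\co\otimes_{\Z_p}\F_p^\alg$ (the Rapoport condition); the representability of $\mathbf{D}_\tau$ by a closed formal subscheme is quoted from Rapoport--Zink. Your direct Grothendieck--Messing computation with the crystal is more self-contained, and it buys something the paper has to work for separately: since $\phi_\tau$ is a map between $\co\otimes_{\Z_p}R^{\mathrm{univ}}$-modules free of rank one, your ideal $J_\tau$ is visibly generated by two elements, which is precisely the content of Propositions \ref{Prop:ordinary lci} and \ref{Prop:supersingular lci}, proved in the paper case-by-case (Serre--Tate coordinates in the ordinary case, Zink's displays in the supersingular case).

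Two points need repair, neither fatal. First, you should justify that an $\co$-stable $R$-direct summand lifting $\omega$ is automatically locally free of rank one over $\co\otimes_{\Z_p}R$: by Nakayama it is a cyclic $\co\otimes_{\Z_p}R$-module, and a surjection of free $R$-modules of the same finite rank is an isomorphism. This is exactly where the Rapoport condition on the special fibre enters; without it (i.e.\ if $\omega$ were not free over $\co\otimes_{\Z_p}\F_p^\alg$) the lifts would form the possibly singular Deligne--Pappas local model. Second, $\mathrm{Res}_{\co/\Z_p}(\mathbb{P}^1_\co)$ is \emph{not} covered by the Weil restrictions of the two standard affine charts when $\co\otimes_{\Z_p}\F_p^\alg$ is not local (e.g.\ $p$ split or inert in $F$), since a rank-one quotient of $(\co\otimes_{\Z_p}R)^2$ need not be generated by a single coordinate. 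Smoothness of relative dimension two nevertheless holds by the infinitesimal lifting criterion applied through the finite flat morphism $\Spec(\co)\map{}\Spec(\Z_p)$, and after choosing a basis of $\mathbb{D}(\mathfrak{G}^*)(W)$ adapted to $\omega$ the relevant point does lie in a standard chart, so the completed local ring is $W[[t_1,t_2]]$ as you claim.
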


\begin{proof}
The first claim follows from work of Rapoport \cite{rapoport78} and Deligne-Pappas \cite{deligne-pappas}, who have determined the non-smooth locus of Hilbert-Blumenthal schemes. Briefly, by the Serre-Tate theorem the functor $\mathbf{D}$ is represented by the completion of the local ring of a Hilbert-Blumenthal scheme  of relative dimension two over $W$.  The smoothness of the local ring at this point is a consequence  of the fact that  $$\mathrm{Lie}(\mathfrak{G}^*)\iso \mathrm{Lie}(\mathfrak{G}^*_0)\otimes_{\Z_p}\co$$ is free of rank one over $\F_p^\alg \otimes_{\Z_p} \co$, and so satisfies the \emph{Rapoport condition} of \cite{vollaard}.  See especially \cite[Remark 3.8]{vollaard}.
The representability of $\mathbf{D}_\tau$  follows from \cite[Proposition 2.9]{rapoport96}.
\end{proof}

\begin{Prop}\label{Prop:ordinary lci}
Suppose that $\mathfrak{G}_0^*$ is isomorphic to the Barsotti-Tate group of an ordinary elliptic curve.  Then the ideal $J_\tau\subset R^\mathrm{univ}$  can be generated by two elements.
\end{Prop}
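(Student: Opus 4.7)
My plan is to use Serre-Tate coordinates adapted to the $\co$-action. Since $\mathfrak{G}_0^*$ is ordinary, so is $\mathfrak{G}^* = \mathfrak{G}_0^* \times \mathfrak{G}_0^*$; its connected-\'etale sequence splits over $\F_p^\alg$, and the two pieces $\mathfrak{G}^{*,\circ} \iso \mu_{p^\infty}^2$ and $\mathfrak{G}^{*,\mathrm{et}} \iso (\Q_p/\Z_p)^2$ inherit $\co$-actions making their Tate modules free of rank one over $\co = \co_F \otimes_\Z \Z_p$.

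The first step is to invoke the $\co$-equivariant Serre-Tate theorem (as used throughout Hilbert-Blumenthal theory; see e.g.\ \cite{hida04}) to identify $\mathbf{D}(R)$ with the group of $\Z_p$-bilinear pairings
$$q \colon T_p\mathfrak{G}^{*,\mathrm{et}} \times T_p\mathfrak{G}^{*,\circ,\vee} \to \widehat{\mathbb{G}}_m(R)$$
which are $\co$-balanced, i.e.\ satisfy $q(a x, y) = q(x, a y)$ for all $a \in \co$. Since $T_p\mathfrak{G}^{*,\mathrm{et}} \otimes_\co T_p\mathfrak{G}^{*,\circ,\vee}$ is free of rank one over $\co$, fixing a generator and using the basis $\{\varpi_1, \varpi_2\}$ of $\co$ to coordinatize produces Serre-Tate coordinates $Y_i = q(\varpi_i) - 1$ and an isomorphism $R^\mathrm{univ} \iso W[[Y_1, Y_2]]$, in agreement with Proposition \ref{Prop:smooth local ring}.

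The second step is the lifting analysis. The splitting of the connected-\'etale sequence gives $\End_\co(\mathfrak{G}^*) \iso \End_\co(\mathfrak{G}^{*,\circ}) \times \End_\co(\mathfrak{G}^{*,\mathrm{et}}) \iso \co \times \co$, under which I write $\tau = (b, a)$. Functoriality of the Serre-Tate parameter reduces the lifting of $\tau$ along the deformation classified by $q$ to the equation $q(a x, y) = q(x, b y)$ for all $x, y$. Combined with $\co$-balancedness this collapses to the single identity $q(x, (a-b) y) = 1$. If $a = b$ then $J_\tau = 0$; otherwise, setting $c = a - b \in \co$ and writing multiplication-by-$c$ on $\co$ as a matrix $(m_{ij}) \in M_2(\Z_p)$ in the basis $\{\varpi_1, \varpi_2\}$, the identity is equivalent to $q(c\varpi_i) = 1$ for $i=1,2$, that is, to the pair of equations
$$(1 + Y_1)^{m_{i1}} (1 + Y_2)^{m_{i2}} - 1 = 0 \qquad (i = 1, 2),$$
whose left-hand sides visibly generate $J_\tau$.

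The point requiring the most care is the precise form of the $\co$-equivariant Serre-Tate theorem, specifically the assertion that lifting of an endomorphism is detected by its compatibility with the Serre-Tate parameter. This needs to hold uniformly in the split, inert, and ramified cases for $p$ in $F$; the freeness of $\mathrm{Lie}(\mathfrak{G}^*)$ over $\F_p^\alg \otimes_{\Z_p} \co$ that underlies Proposition \ref{Prop:smooth local ring} (the Rapoport condition) is exactly what makes the deformation theoretic setup uniform across these three cases, so once the Serre-Tate formalism is in place the remaining computation is essentially the one displayed above.
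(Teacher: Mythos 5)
Your argument is correct and is essentially the paper's proof: the paper likewise uses the $\co$-equivariant Serre--Tate theory to identify $\mathbf{D}$ with $\Hom_{\Z_p}(P\otimes_\co Q,\mu_{p^\infty})$ for a rank-one $\co$-module $P\otimes_\co Q$, and cuts out $\mathbf{D}_\tau$ by the condition that the parameter kill the submodule $\{t_P(x)\otimes y - x\otimes t_Q(y)\}$, i.e.\ the image of multiplication by your $c=a-b$. The only cosmetic difference is that the paper passes to an adapted $\Z_p$-basis (elementary divisors) to exhibit $\mathbf{D}_\tau\iso\mu_{p^s}\times\mu_{p^t}$, whereas you write the same two defining equations in the fixed basis $\{\varpi_1,\varpi_2\}$.
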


\begin{proof}
This follows from the theory of Serre-Tate coordinates as in \cite[Theorem 4.2]{goren} or \cite[Theorem 8.9]{hida04}.  If we define rank one $\co$-modules
$$
P=\mil \mathfrak{G}^*(\F_p^\alg)[p^k] \hspace{1cm} 
Q=\mil \Hom(\mathfrak{G}^*,\mu_{p^k}) 
$$
then there is a canonical isomorphism of functors on $\mathbf{Art}$
$$
\mathbf{D} \iso \Hom_{\Z_p}(P\otimes_\co Q , \mu_{p^\infty}).
$$
The endomorphism $\tau$ of $\mathfrak{G}$ induces an $\co$-linear endomorphism $x\mapsto t_P(x)$ of $P$ and an $\co$-linear endomorphism $x\mapsto t_Q(x)$ of $Q$, and there is an isomorphism
$$
\mathbf{D}_\tau \iso \Hom_{\Z_p}( (P\otimes_\co Q) / I ,\mu_{p^\infty} )
$$
where $I$ is the $\Z_p$-submodule of $P\otimes_\co Q$ 
$$
I= \{ t_P(x)\otimes y-x\otimes t_Q(y) \mid x\in P, y\in Q \}.
$$
After choosing an appropriate $\Z_p$-basis of $P\otimes_\co Q$ we find that $\mathbf{D}_\tau\iso \mu_{p^s}\times \mu_{p^t}$ for some $0\le s,t\le \infty$.  The claim follows.
\end{proof}

\begin{Prop}\label{Prop:supersingular lci}
Suppose that $\mathfrak{G}_0^*$ is isomorphic to the Barsotti-Tate group of a supersingular elliptic curve.  Then the ideal $J_\tau\subset R^\mathrm{univ}$  can be generated by two elements.
\end{Prop}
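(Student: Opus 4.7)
The plan is to follow the strategy of Proposition \ref{Prop:smooth local ring}, but to replace the Serre-Tate coordinates used in the ordinary case by Grothendieck-Messing crystalline deformation theory, since in the supersingular setting the formal group is no longer toric and multiplicative Serre-Tate coordinates are unavailable.

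First I would reinterpret the deformation functor $\mathbf{D}$ in terms of liftings of the Hodge filtration on the Dieudonn\'e crystal: for each object $R$ of $\mathbf{Art}$, equipped with its trivial PD-structure relative to $\F_p^\alg$, a lifting of $\mathfrak{G}^*$ to $R$ together with its $\co$-action corresponds to a direct summand $\mathrm{Fil}_R \subset \mathbb{D}(\mathfrak{G}^*)(R)$ of rank two, stable under the $\co$-action, that reduces to the Hodge filtration of $\mathfrak{G}^*$. The Rapoport condition already invoked in the proof of Proposition \ref{Prop:smooth local ring} forces both $\mathrm{Fil}_R$ and the quotient $\mathrm{Lie}_R$ to be free of rank one over $R\otimes_{\Z_p}\co$. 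Passing to the limit, the universal deformation provides a universal filtration $\mathrm{Fil}^\mathrm{univ} \subset \mathbb{D}(\mathfrak{G}^*)(R^\mathrm{univ})$ with quotient $\mathrm{Lie}^\mathrm{univ}$, both free of rank one over $R^\mathrm{univ}\otimes_{\Z_p}\co$.

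Next I would translate the lifting condition for $\tau$ through Grothendieck-Messing.  Since $\tau$ commutes with the $\co$-action, the induced crystalline endomorphism $\tilde{\tau}$ of $\mathbb{D}(\mathfrak{G}^*)(R^\mathrm{univ})$ is $(R^\mathrm{univ}\otimes_{\Z_p}\co)$-linear, and $\tau$ lifts to a deformation classified by a quotient $R^\mathrm{univ}/J$ if and only if $\tilde{\tau}$ preserves $\mathrm{Fil}^\mathrm{univ}$ modulo $J$.  The obstruction is the composition
$$
\bar{\tau}:\ \mathrm{Fil}^\mathrm{univ}\ \hookrightarrow\ \mathbb{D}(\mathfrak{G}^*)(R^\mathrm{univ}) \map{\tilde{\tau}} \mathbb{D}(\mathfrak{G}^*)(R^\mathrm{univ})\ \twoheadrightarrow\ \mathrm{Lie}^\mathrm{univ},
$$
which is an $(R^\mathrm{univ}\otimes_{\Z_p}\co)$-linear map between two free rank-one modules.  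Fixing generators on each side, $\bar{\tau}$ is multiplication by a single element $\mu \in R^\mathrm{univ}\otimes_{\Z_p}\co$.  The $\Z_p$-basis $\{\varpi_1,\varpi_2\}$ of $\co$ induces an identification $R^\mathrm{univ}\otimes_{\Z_p}\co \iso R^\mathrm{univ}\oplus R^\mathrm{univ}$ of $R^\mathrm{univ}$-modules; writing $\mu = f_1\varpi_1 + f_2\varpi_2$ then exhibits the ideal cutting out $\mathbf{D}_\tau$ as $J_\tau = (f_1,f_2)$, which is the desired two-generator ideal.

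The main obstacle will be verifying the $\co$-equivariant Grothendieck-Messing dictionary uniformly across the three cases for $p$ in $F$ (split, inert, ramified); the ramified case is the most delicate because the Rapoport condition is genuinely substantive there. I would handle this by invoking the analysis of \cite{vollaard} already underlying Proposition \ref{Prop:smooth local ring}, which guarantees that $\mathrm{Fil}_R$ and $\mathrm{Lie}_R$ remain locally free of rank one over $R\otimes_{\Z_p}\co$ in all three settings, so that the obstruction $\bar{\tau}$ is genuinely described by a single element of $R^\mathrm{univ}\otimes_{\Z_p}\co$ and $J_\tau$ is cut out by (at most) two equations, as required.
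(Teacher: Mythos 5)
Your structural insight is the right one, and it is the same mechanism the paper exploits: because $\tau$ commutes with $\co$, the obstruction to deforming it is linear over $\co\otimes_{\Z_p}R$, and since the Hodge-type module it is evaluated on is generated by a single element over $\co\otimes_{\Z_p}R$ (the Rapoport condition, which holds here because $\mathfrak{G}^*=\mathfrak{G}_0^*\otimes_{\Z_p}\co$), the obstruction is encoded by two functions. The gap is in how you produce a \emph{global} obstruction over $R^\mathrm{univ}$. Grothendieck--Messing theory classifies lifts along nilpotent divided-power thickenings, one infinitesimal step at a time; the surjection $R^\mathrm{univ}\map{}\F_p^\alg$ (and likewise a general object $R$ of $\mathbf{Art}$) is not such a thickening -- its maximal ideal carries no ``trivial'' divided powers, and $p$ is not even nilpotent in $R^\mathrm{univ}$. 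Consequently the module $\mathbb{D}(\mathfrak{G}^*)(R^\mathrm{univ})$ you work with is not defined, the endomorphism $\tau$ of the special fiber does not act on any naturally defined evaluation of the crystal over the deformation ring, and so the element $\mu\in R^\mathrm{univ}\otimes_{\Z_p}\co$ and the asserted identity $J_\tau=(f_1,f_2)$ are not constructed. Even granting a section of $\Hom(\mathrm{Fil},\mathrm{Lie})$, identifying its zero locus with the locus where $\tau$ deforms requires an argument, which again is normally run only across square-zero extensions.

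The standard repair -- and it is exactly the skeleton of the paper's proof, with Zink's display-theoretic obstruction playing the role of Grothendieck--Messing -- is to apply the obstruction theory only to the square-zero extension $R^\mathrm{univ}/\mathfrak{m}J_\tau \map{} R^\mathrm{univ}/J_\tau$, where the kernel $\mathfrak{a}=J_\tau/\mathfrak{m}J_\tau$ is killed by $\mathfrak{m}$ (so the trivial divided powers exist) and where $\tau$ is already lifted to $R^\mathrm{univ}/J_\tau$ by the definition of $J_\tau$. The obstruction to lifting $\tau$ one step further is then an $\co\otimes_{\Z_p}R$-linear map from the filtration module to $\mathfrak{a}\otimes_R\mathrm{Lie}$; since the source is generated by one element $\gamma$ over $\co\otimes_{\Z_p}R$, the image of $\gamma$ involves only two elements $a_1,a_2\in\mathfrak{a}$, the obstruction vanishes modulo $(a_1,a_2)$, and the universal property of $R^\mathrm{univ}/J_\tau$ forces $\mathfrak{a}=(a_1,a_2)$; Nakayama then yields two generators of $J_\tau$. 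In the paper this is carried out with the display $(P,Q,F,V^{-1})$ over $R^\mathrm{univ}/\mathfrak{m}J_\tau$, the cyclicity of $Q/I_RP$ over $\co\otimes_{\Z_p}R$ being the display avatar of your freeness of $\mathrm{Fil}$; Zink's framework is used precisely to have an obstruction calculus valid in this setting without invoking crystal evaluations that do not exist over $R^\mathrm{univ}$. If you reorganize your argument around this square-zero extension (or otherwise substitute a theory, such as displays, that works over the whole deformation ring), your approach goes through.
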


\begin{proof}
We modify the argument of \cite[Proposition 5.1]{ARGOS-8}, and use Zink's theory of displays \cite{zink02} in order to avoid the messy language of formal group laws (and formal group cohomology) in dimension two.  Denote by $\mathfrak{G}^\mathrm{univ}$ the universal deformation of $\mathfrak{G}^*$  over $R^\mathrm{univ}$, and let  $\mathfrak{m}$ denote the maximal ideal of $R^\mathrm{univ}$.   Thus the endomorphism $\tau$ of $\mathfrak{G}^*$ lifts  to an $\co$-linear endomorphism of $\mathfrak{G}^\mathrm{univ}$ over $R^\mathrm{univ}/J_\tau$ (which we again denote by $\tau$), but not over 
$$
R=R^\mathrm{univ}/\mathfrak{m}J_\tau.
$$ 
Let $\mathfrak{a}=J_\tau/\mathfrak{m}J_\tau$ be the kernel of the surjection
$$
R\map{}R^\mathrm{univ}/J_\tau
$$
so that $\mathfrak{m}\cdot \mathfrak{a}=0$.  To the Barsotti-Tate group $\mathfrak{G}^\mathrm{univ}$ over $R^\mathrm{univ}$  Zink's theory attaches a universal display $(P,Q,F,V^{-1})^\mathrm{univ}$ over $R^\mathrm{univ}$, and we denote by $(P,Q,F,V^{-1})$  the reduction of the universal  display  to $R$.   For our purposes, we need only know that $P$ is a free module of rank four over the Witt vectors $W(R)$ with an action of $\co$,  that $Q\subset P$ is an $\co\otimes_{\Z_p} W(R)$-submodule with the property that $P/Q$ is  annihilated by the kernel $I_R$ of $W(R)\map{}R$, and that 
$$
Q/I_R P\hspace{1cm} P/Q
$$
are each free of rank two over $R$.  We claim that, in addition, $Q/I_RP$ is generated as an $\co\otimes_{\Z_p} R$-module by a single element.  Indeed, let $(P^*_0,Q_0^*, F_0^*, V_0^{-1*})$ and $(P^*,Q^*,F^*,V^{-1*})$ be the displays of $\mathfrak{G}_0^*$ and $\mathfrak{G}^*$, respectively.  Then, letting $I_{\F_p^\alg}$ denote the kernel of $W\map{}\F_p^\alg$, we have
$$
(Q/I_RP)\otimes_R\F_p^\alg \iso  Q^*/I_{\F_p^\alg}P^* \iso (Q_0^*/I_{\F_p^\alg}P_0^*) \otimes_{\Z_p} \co.
$$
As $Q_0^*/I_{\F_p^\alg}P_0^*$ is free of rank one over $\F_p^\alg$, we see that $(Q/I_RP)\otimes_R\F_p^\alg$ is free of rank one over $\co\otimes_{\Z_p} \F_p^\alg$.  By Nakayama's lemma it follows that $Q/I_RP$ is generated by a single element over $\co\otimes_{\Z_p} R$.

By Zink's theory \cite[Definition 72]{zink02} the obstruction to lifting the endomorphism $\tau$   from $R^\mathrm{univ}/J_\tau$ to $R$ is given by a nontrivial homomorphism of $\co\otimes_{\Z_p}R$-modules
$$
\mathrm{Obst}: Q/I_RP \map{}\mathfrak{a}\otimes_R P/Q.
$$
Let $\gamma$ generate $Q/I_RP$ as an $\co\otimes_{\Z_p} R$-module. If we pick  an $R$-module basis $\{e_1,e_2\}\subset P/Q$ then
$$
\mathrm{Obst}(\gamma)=a_1\otimes e_1 + a_2\otimes e_2
$$
for some $a_1,a_2\in\mathfrak{a}$, and  the composition 
$$
Q/I_RP \map{\mathrm{Obst} }\mathfrak{a}\otimes_R P/Q \map{} 
\big( \mathfrak{a}/(a_1,a_2)  \big)\otimes_R P/Q
$$
is  the trivial map.  This implies \cite[(119)]{zink02} that $\tau$ can be lifted from an endomorphism over $R^\mathrm{univ}/J_\tau$ to an endomorphism over $R/(a_1,a_2)$, and therefore $(a_1,a_2)=\mathfrak{a}$.  Thus $J_r/\mathfrak{m}J_r$ is generated as an $R^\mathrm{univ}$-module by two elements, and so Nakayama's lemma implies that the ideal $J_r$ is also generated by two elements.
\end{proof}

\begin{Cor}\label{Cor:local complete intersection}
Let $y$ be a closed point of $Y_0$.  The structure sheaf $\co_Y$ is skyscraper-free at $y$.
\end{Cor}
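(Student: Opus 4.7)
The plan is to identify the completed local ring $\widehat{\co}_{Y,y}$ with a one-dimensional local complete intersection. Since any one-dimensional local complete intersection is Cohen-Macaulay, this will imply that $\co_{Y,y}$ is itself a one-dimensional Cohen-Macaulay module over itself, and Lemma \ref{Lem:cohen-macaulay} will then finish the proof. The argument hinges on applying Propositions \ref{Prop:smooth local ring}, \ref{Prop:ordinary lci}, and \ref{Prop:supersingular lci} to the specific $p$-divisible group structure that arises at points of $Y_0$.

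The point $y$ corresponds to an object $(\mathbf{A},\lambda,t_\alpha)$ over $\F_p^\alg$, and the defining property of $Y_0 = \mathcal{Y}\times_\mathcal{M}\mathcal{M}_0$ means $(\mathbf{A},\lambda) = (\mathbf{A}_0,\lambda_0)\otimes\co_F$ for some QM abelian surface $\mathbf{A}_0$. By the Serre-Tate theorem (using the automatic lifting of polarizations invoked in the proof of Lemma \ref{generic etale}), $\widehat{\co}_{Y,y}$ pro-represents the functor classifying deformations of the $p$-divisible group of $\mathbf{A}$ with its $\co_B$-action, together with a compatible lift of $t_\alpha$. Using the idempotents of $\co_B\otimes\Z_p\iso M_2(\co_{F,p})$ (possible since $p\nmid \mathrm{disc}(B_0)$) to decompose $\mathbf{A}[p^\infty]$, and then the idempotents of $\co_{B_0}\otimes\Z_p\iso M_2(\Z_p)$ applied to $\mathbf{A}_0[p^\infty]$, one extracts a $p$-divisible group $\mathfrak{G}_0^*$ of height two and dimension one over $\F_p^\alg$ (the $p$-divisible group of an elliptic curve) such that the ``halved'' $p$-divisible group of $\mathbf{A}$ is canonically identified with $\mathfrak{G}_0^*\otimes_{\Z_p}\co\iso \mathfrak{G}_0^*\times\mathfrak{G}_0^*$, with $\co=\co_F\otimes\Z_p$ acting through the basis $\{\varpi_1,\varpi_2\}$. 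This is precisely the setup preceding Proposition \ref{Prop:smooth local ring}, and $t_\alpha$ induces an $\co$-linear endomorphism $\tau$ of $\mathfrak{G}^* = \mathfrak{G}_0^*\times\mathfrak{G}_0^*$.

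Depending on whether $\mathbf{A}_0$ is ordinary or supersingular at $p$, combining Proposition \ref{Prop:smooth local ring} with Proposition \ref{Prop:ordinary lci} or Proposition \ref{Prop:supersingular lci} shows that $\widehat{\co}_{Y,y}\iso R^\mathrm{univ}/J_\tau$, where $R^\mathrm{univ}\iso W[[x_1,x_2]]$ is a three-dimensional regular local ring and $J_\tau$ is generated by at most two elements. Hence $\dim \widehat{\co}_{Y,y}\geq 3-2=1$, while Proposition \ref{Prop:good dimension} supplies the matching upper bound, so $\widehat{\co}_{Y,y}$ is a local complete intersection of dimension exactly one. Cohen-Macaulayness is then automatic, and Lemma \ref{Lem:cohen-macaulay} concludes the argument. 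The one place where care is needed is in checking that the two successive idempotent decompositions are compatible and produce $\mathfrak{G}^*$ in the precise form required by the propositions; this is where the hypothesis $y\in Y_0$ (rather than an arbitrary closed point of $Y$) is essential, since the Serre tensor construction $\mathbf{A} = \mathbf{A}_0\otimes\co_F$ is exactly what forces the product shape $\mathfrak{G}_0^*\times\mathfrak{G}_0^*$ with the prescribed $\co$-action.
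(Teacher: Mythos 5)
Your proposal is correct and follows essentially the same route as the paper: split the Barsotti--Tate group via the idempotents coming from $\co_{B_0}\otimes_\Z\Z_p\iso M_2(\Z_p)$ (using $\mathbf{A}=\mathbf{A}_0\otimes\co_F$ at points of $Y_0$), identify the deformation ring with $R^\mathrm{univ}/J_\tau$ by Serre--Tate and unique lifting of the polarization, invoke Propositions \ref{Prop:smooth local ring}, \ref{Prop:ordinary lci}, \ref{Prop:supersingular lci} together with the dimension bound of Proposition \ref{Prop:good dimension} to get a one-dimensional local complete intersection, hence Cohen--Macaulay, and finish with Lemma \ref{Lem:cohen-macaulay}. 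The only cosmetic difference is that the paper phrases this for the completed local ring of $Y_{/W}$ at a point $z$ above $y$ and then descends Cohen--Macaulayness and dimension to $\co_{Y,y}$ by standard commutative algebra, a bookkeeping step your write-up elides but which is routine.
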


\begin{proof}
As above let $W$ denote the ring of Witt vectors of $\F_p^\alg$ and choose a point $z\in Y_{/W}$ lying above $y$.  By Lemma \ref{Lem:cohen-macaulay} and standard results in commutative algebra (e.g.~Theorem 17.5 and the corollary to Theorem 23.3 of \cite{matsumura}) it suffices to prove that the completed  local ring $\co_{Y_{/W},z}^\wedge$ is Cohen-Macaulay of dimension one.  The geometric point $\Spec(k(z))\map{}Y_{/W}$ corresponds to a triple $(\mathbf{A},\lambda,t_\alpha)$ with $\mathbf{A}=\mathbf{A}_0\otimes\co_F$ for some QM abelian surface $\mathbf{A}_0$ over $k(z)=\F_p^\alg$.  Using the splitting $\co_{B_0}\otimes_\Z\Z_p\iso M_2(\Z_p)$ the Barsotti-Tate group of $\mathbf{A}_0$ splits as $\mathfrak{G}_0^*\times\mathfrak{G}_0^*$, and the Barsotti-Tate group of $\mathbf{A}$ splits as $\mathfrak{G}^*\times\mathfrak{G}^*$.   As the Barsotti-Tate group of any deformation of $\mathbf{A}$ admits a similar splitting into two isomorphic deformations of $\mathfrak{G}^*$, it follows from the Serre-Tate theorem that the formal deformation space of the pair $(\mathbf{A},\lambda)$ is isomorphic to $\Spf(R^\mathrm{univ})$, while the formal deformation space of the triple $(\mathbf{A},\lambda,t_\alpha)$ is isomorphic to $\Spf(R^\mathrm{univ}_\tau)$ for a suitable $\tau\in\End_\co(\mathfrak{G}^*)$  (the polarization $\lambda$ lifts uniquely to any deformation of $\mathbf{A}$ by the Corollary to \cite[Theorem 3]{vollaard} or  by \cite[p. 51]{breen-labesse}).  In other words $\co_{Y_{/W},z}^\wedge\iso R_\tau^\mathrm{univ}$. Thus Propositions \ref{Prop:smooth local ring}, \ref{Prop:ordinary lci}, and \ref{Prop:supersingular lci} imply that 
\begin{equation}
\label{the local ring}
\co_{Y_{/W},z}^\wedge \iso W[[s_1,s_2]]/( f_1,f_2 )
\end{equation}
for some power series $f_1,f_2\in W[[s_1,s_2]]$.  We know from Proposition \ref{Prop:good dimension} that the ring on the left hand side of (\ref{the local ring})  has dimension at most one, and it follows from (\ref{the local ring}) that $\co_{Y_{/W},z}^\wedge$ is Cohen-Macaulay of dimension exactly one (by, for example, Exercise 6.3.4 and Corollary 8.2.18 of \cite{liu}).
\end{proof}

\begin{Prop}\label{Prop:the good degree}
If $Y_0$ is zero dimensional then the class $C_p$  satisfies
$$
\mathrm{deg}_p(i^*C_p)  = \sum_{y\in Y_0(\F_p^\alg)} \length_{\co_{Y_0,y}}(\co_{Y_0,y}).
$$
\end{Prop}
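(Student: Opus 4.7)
The proposition will follow as an essentially immediate application of Lemma~\ref{Lem:simple pullback} with the choice $\mathcal{F}=\co_Y$. The plan breaks into three short verifications.

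First, I will recall the construction of $\mathcal{C}_p$: by definition (\ref{the good cycle}), this class is obtained from $\mathrm{cl}(\co_Y)\in\chow^2_Y(M)$ via the descent isomorphism of Lemma~\ref{Lem:Galois descent}. Since $\phi:Y\map{}M$ is finite, $R\phi_*[\co_Y]=[\phi_*\co_Y]$, so the class $\mathrm{cl}(\co_Y)$ is exactly the one whose degree along $M_0$ we must compute, and $i^*\mathrm{cl}(\co_Y)$ corresponds to $i^*\mathcal{C}_p$ under the scheme-level pullback (\ref{basic local pullback}).

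Second, I will check the two hypotheses of Lemma~\ref{Lem:simple pullback} for $\mathcal{F}=\co_Y$. The skyscraper-free condition at every closed point $y\in Y_0$ is exactly the content of Corollary~\ref{Cor:local complete intersection}. The support condition is trivial: $j^*\co_Y=\co_{Y_0}$ (since the pullback of the structure sheaf along the base-change map $j:Y_0\map{}Y$ is the structure sheaf of the fiber product), and $Y_0$ is zero dimensional by hypothesis, so $\co_{Y_0}$ is supported in dimension zero.

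Finally, invoking Lemma~\ref{Lem:simple pullback} gives
$$
\deg_p(i^*\mathrm{cl}(\co_Y))=\sum_{y\in Y_0(\F_p^\alg)}\length_{\co_{Y_0,y}}(j^*\co_{Y,y})=\sum_{y\in Y_0(\F_p^\alg)}\length_{\co_{Y_0,y}}(\co_{Y_0,y}),
$$
which is the desired formula. There is no substantial obstacle here — all the real work has already been done in Lemma~\ref{Lem:simple pullback} (which packaged the Serre Tor calculation) and in Corollary~\ref{Cor:local complete intersection} (which used the display-theoretic computation of the local rings of $Y$ to verify the Cohen–Macaulay property). The only point requiring a little care is the compatibility between the stack-level class $\mathcal{C}_p$ and the scheme-level class $\mathrm{cl}(\co_Y)$ under $i^*$ and $\deg_p$, which follows from the definitions of these maps via Galois descent from the $\Z_p$-uniformization.
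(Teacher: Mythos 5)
Your proposal is correct and is essentially identical to the paper's own (one-line) proof: set $\mathcal{F}=\co_Y$ in Lemma \ref{Lem:simple pullback}, using Corollary \ref{Cor:local complete intersection} for the skyscraper-free hypothesis and the zero-dimensionality of $Y_0$ for the support hypothesis. The extra remarks you make about $j^*\co_Y=\co_{Y_0}$ and the compatibility of the scheme-level class with the descended class are exactly the routine points the paper leaves implicit.
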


\begin{proof}
Set $\mathcal{F}=\co_Y$ in  Lemma \ref{Lem:simple pullback} and combine with Corollary \ref{Cor:local complete intersection}.
\end{proof}


\section{Construction of cycles at primes of bad reduction}
\label{S:bad components}


Throughout \S \ref{S:bad components} we fix  a prime $p$ at which $B_0$ is ramified, and a totally positive $\alpha\in\co_F$.   By Hypothesis \ref{Hyp:discriminant} the  prime $p$ splits in $\co_F$, say as $p\co_F=\mathfrak{p}_1\mathfrak{p}_2$.   Our goal is to construct and examine a cycle class
$$
\mathcal{C}_p\in\chow^2_{\mathcal{Y}(\alpha)_{/\Z_p}} (\mathcal{M}_{/\Z_p}).
$$
The definition used in (\ref{the good cycle}) breaks down for $p\mid\mathrm{disc}(B_0)$ as the algebraic stack $\mathcal{Y}(\alpha)_{/\Z_p}$ may have vertical components of dimension two.  Our modified construction will make use of calculations of Kudla-Rapoport \cite{kudla00} (and of Kudla-Rapoport-Yang \cite{kudla04a,KRY} for the case $p=2$).

The following notation will be used throughout \S \ref{S:bad components}. Abbreviate
$\mathcal{Y}=\mathcal{Y}(\alpha)$ and  $\mathcal{Y}_0=\mathcal{Y}_0(\alpha).$
Let $W=W(\F_p^\alg)$ be the ring of Witt vectors of $\F_p^\alg$.   Denote by $\mathbf{Nilp}_{\Z_p}$ (resp. $\mathbf{Nilp}_W$)  the category of  $\Z_p$-schemes (resp. $W$-schemes)  on which $p$ is locally nilpotent.  Let $\A_f^p$ denote the prime-to-$p$ finite adeles of $\Q$.   Let $U\subset U^\mathrm{max}$ be a normal compact open subgroup which factors as $U=U_p U^p$ with  $U_p= U_p^\mathrm{max}$ and $U^p\subset G(\A_f^p)$.   Let $M$ denote the algebraic stack over $\Z_p$  whose objects are  triples $(\mathbf{A},\lambda, \nu )$ in which $(\mathbf{A},\lambda)$ is a $\mathfrak{D}^{-1}$-polarized QM abelian fourfold over a $\Z_p$-scheme $S$ and $\nu$ is a level $U$ structure on $(\mathbf{A},\lambda)$.  Define $Y$, $M_0$, and $Y_0$, by (\ref{cycle uniformization}) and assume that $U^p$ is chosen small enough that the algebraic stack $M$ (and hence also $Y$, $M_0$, and $Y_0$) is a scheme. Set $ H= U^\mathrm{max}/ U $   so that $\mathcal{M}_{/\Z_p}\iso [H\backslash M]$.   Let $\widehat{M}$   and $\widehat{Y}$ denote the formal completions of $M$ and $Y$ along their special fibers.  As in \S \ref{SS:moduli} we have the cartesian diagram 
$$
\xymatrix{
{\mathcal{Y}_0}\ar[r]^{\phi_0}  \ar[d]_j   &  {\mathcal{M}_0 } \ar[d]^i  \\
{\mathcal{Y} } \ar[r]_\phi & {\mathcal{M} }
}
$$
and similarly with $\mathcal{M}$, $\mathcal{M}_0$, $\mathcal{Y}$, and $\mathcal{Y}_0$ replaced by $M$, $M_0$, $Y$, and $Y_0$.


\subsection{The Cerednik-Drinfeld uniformization}
\label{SS:CD}


We first describe some simple generalizations of the Cerednik-Drinfeld uniformization of $M_0$.   As in \S \ref{SS:moduli} let 
$$
\widehat{\Lambda}^p_0=\prod_{\ell\not=p}(\co_{B_0}\otimes_\Z\Z_\ell)
$$ 
be the restricted topological product,  and define $\widehat{\Lambda}^p$ in the same way, replacing $B_0$ by $B$.  Fix a principally polarized QM abelian surface $(\mathbf{A}^*_0,\lambda_0^*)$ over $\F_p^\alg$, let $A^*_0$ be the abelian surface underlying $\mathbf{A}_0^*$), and choose an isomorphism
$$
\nu^*_0:\widehat{\Lambda}_0^p \map{} \mathrm{Ta}^p(A^*_0)
$$ 
of left $\widehat{\co}_{B_0}^p$-modules, where $\mathrm{Ta}^p(A_0^*)$ is the prime-to-$p$ adelic Tate module of $A_0^*$, in such a way that the Weil pairing 
$$
\mathrm{Ta}^p (A_0)\times \mathrm{Ta}^p (A_0) \map{}\widehat{\Z}^p (1)
$$ 
induced by $\lambda_0^*$ agrees with the pairing (\ref{polarization pairing}) up to a $(\widehat{\Z}^p)^\times$-multiple.  Define a $\mathfrak{D}^{-1}$-polarized QM abelian fourfold
$$
(\mathbf{A}^*,\lambda^*) = (\mathbf{A}^*_0\otimes\co_F,\lambda^*_0\otimes\co_F).
$$
Letting $A^*$ denote the abelian fourfold underlying $\mathbf{A}^*$,  $\nu_0^*$  induces an  isomorphism
$$
\nu^* : \widehat{\Lambda}^p \map{} \mathrm{Ta}^p( A^*)
$$
of left $\widehat{\co}_{B}^p$-modules which is a $U$-level structure on $(\mathbf{A}^*,\lambda^*)$. Define totally definite quaternion algebras over $\Q$ and $F$, respectively,
$$
\overline{B}_0=\End^0(\mathbf{A}^*_0 ) \hspace{1cm} \overline{B}=\End^0(\mathbf{A}^*).
$$
Let $\overline{G}_0\subset \overline{G}$ be the algebraic groups over $\Q$ defined in the same way as $G_0\subset G$, but with $B_0$ and $B$ replaced by $\overline{B}_0$ and $\overline{B}$.

 Let  $\mathfrak{G}^*_0$   denote the $p$-divisible group of $A^*_0$ equipped with its action of $\co_{B_0}\otimes_\Z\Z_p$.   For any  object $S$ of $\mathbf{Nilp}_W$ with special fiber $S_{/\F_p^\alg}$, denote by $\mathfrak{h}_m(S)$ the set of isomorphism classes of pairs $(\mathfrak{G}_0,\rho_0)$ in which  $\mathfrak{G}_0$ is a special formal $\co_{B_0}\otimes_\Z\Z_p$-module of dimension two and height four over $S$ in the sense of \cite[\S II.2]{boutot-carayol}, and  
$$
\rho_0 \in \Hom(\mathfrak{G}^*_0\times_{\F_p^\alg}  S_{/\F_p^\alg},  \mathfrak{G}_0\times_S S_{/\F_p^\alg} )\otimes_{\Z_p}\Q_p
$$ 
is a height $2m$ quasi-isogeny of $p$-divisible groups over $S_{/\F_p^\alg}$  respecting the action of $\co_{B_0}\otimes_\Z \Z_p$.   By a theorem of Drinfeld \cite[Th\'eor\`eme II.8.2]{boutot-carayol} there is a formal $W$-scheme $\mathfrak{h}_m$  whose functor of points is $S\mapsto \mathfrak{h}_m(S)$, and we set
$$
\mathfrak{X}_0=\bigsqcup_{m\in\Z} \mathfrak{h}_m.
$$    
The group $\overline{G}_0(\Q_p)$ acts on $\mathfrak{X}_0$ by 
$$
\gamma\cdot (\mathfrak{G}_0,\rho_0)=  (\mathfrak{G}_0, \rho_0\circ \gamma^{-1}),
$$
taking the component $\mathfrak{h}_m$ to the component $\mathfrak{h}_{m-\ord_p\mathrm{Nm}(\gamma)}$.  The isomorphism (\ref{endo base change}) provides an isomorphism  $\overline{B}_0\otimes_\Q F \iso \overline{B}$, and so, as $p$ splits in $F$, an isomorphism
$$
\overline{G}(\Q_p)\iso \{ (x,y)\in \overline{G}_0(\Q_p)\times \overline{G}_0(\Q_p) \mid \mathrm{Nm}(x)=\mathrm{Nm}(y) \}.
$$
This determines an action of $\overline{G}(\Q_p)$ on 
$$
\mathfrak{X}= \bigsqcup_{m\in\Z} ( \mathfrak{h}_m \times_W\mathfrak{h}_m).
$$

There is a unique isomorphism
$$
\iota_0:  \overline{B}_0 \otimes_\Q  \A_f^p  \map{}  B_0\otimes_\Q  \A_f^p
$$
of $\A_f^p $-modules  such that for every prime $\ell\not=p$ and every  $g \in \overline{B}_0 \otimes_\Q \Q_\ell$ the diagram 
$$
\xymatrix{
{ B_0\otimes_\Q \Q_\ell } \ar[r]^{\nu_0^*} \ar[d]_{\cdot \iota_0(g) }    &   {\mathrm{Ta}_\ell(A_0^*)\otimes_{\Z_\ell}\Q_\ell }  \ar[d]^{g\cdot }  \\
{ B_0\otimes_\Q \Q_\ell  } \ar[r]^{\nu_0^*}   &   {\mathrm{Ta}_\ell(A_0^*)  \otimes_{\Z_\ell} \Q_\ell }
}
$$
commutes, where the vertical arrow on the left is $x\mapsto x\cdot\iota_0(g)$.  The function $\iota_0$ satisfies $\iota_0(gh)=\iota_0(h)\iota_0(g)$.  Let 
$$
\iota: \overline{B} \otimes_\Q \A_f^p \map{}  B\otimes_\Q \A_f^p
$$ 
be the $F \otimes_\Q \A_f^p$-module map obtained by tensoring $\iota_0$ with $F$.  There are induced isomorphisms $\iota_0: \overline{G}_0(\A_f^p)\map{} G^\mathrm{op}_0(\A_f^p)$ and  $\iota: \overline{G}(\A_f^p)\map{} G^\mathrm{op}(\A_f^p)$ where $\mathrm{op}$ denotes the opposite group. For any compact open subgroup $C\subset G(\A_f^p)$ define 
$$
\overline{C}=\iota^{-1}(C)  \subset \overline{G}(\A_f^p).
$$

\begin{Prop}
\label{Prop:CD uniformization}
 There is an isomorphism of formal $W$-schemes 
\begin{equation*}
\widehat{M}_{/W} \iso \overline{G}(\Q)\backslash \mathfrak{X} \times \overline{G}(\A_f^p) / \overline{U}^p.
\end{equation*}
\end{Prop}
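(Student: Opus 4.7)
The plan is to follow the Rapoport--Zink template for uniformizing the formal completion of a PEL Shimura variety at a prime of bad reduction, using the Cerednik--Drinfeld theorem for $M_0$ (proved in \cite{boutot-carayol}) as the basic input. Under Hypothesis \ref{Hyp:discriminant}, $p$ splits in $F$ as $\mathfrak{p}_1\mathfrak{p}_2$, so
$$
\co_B \otimes_\Z \Z_p \iso (\co_{B_0} \otimes_\Z \Z_p) \times (\co_{B_0} \otimes_\Z \Z_p),
$$
and the central idempotents force every $p$-divisible group $\mathfrak{G}$ with an $\co_B \otimes \Z_p$-action of the relevant type to decompose as a product $\mathfrak{G}_1 \times \mathfrak{G}_2$ of two special formal $\co_{B_0}\otimes_\Z\Z_p$-modules of dimension two and height four, each of which is already handled by the Drinfeld moduli $\mathfrak{h}_m$. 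This algebraic splitting is what produces the factor $\mathfrak{h}_m \times_W \mathfrak{h}_m$ in the definition of $\mathfrak{X}$.

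The first substantive step is to show that all of $M(\F_p^\alg)$ lies in a single prime-to-$p$ isogeny class, represented by $(\mathbf{A}^*,\lambda^*,\nu^*)$. For this I combine Lemma \ref{Lem:endo algebra} (which forces $\End^0(\mathbf{A})$ at a geometric characteristic $p$ point to be the totally definite quaternion algebra $\overline{B}$) with the Eichler--Hasse classification of quaternion algebras, so that $\End^0(\mathbf{A}) \iso \overline{B}$ for every $\mathbf{A}$. Together with the existence of the principal polarization (which is unique on $\mathbf{A}_0^*$, as noted in \S\ref{SS:moduli}) and the fact that $\overline{B}$ satisfies the Hasse principle, this gives a quasi-isogeny $\rho:\mathbf{A}^* \to \mathbf{A}$ compatible with all structures, unique up to $\overline{G}(\Q)$.

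The second step is to construct the uniformization morphism
$$
\Phi : \mathfrak{X} \times \overline{G}(\A_f^p) \map{} \widehat{M}_{/W}.
$$
Given an $S$-valued point $(\mathfrak{G}_1 \times \mathfrak{G}_2,\,\rho,\,g)$, I algebraize by gluing: the $p$-divisible group $\mathfrak{G}_1 \times \mathfrak{G}_2$ (with its $\co_B\otimes \Z_p$-action) is attached via the quasi-isogeny $\rho$ and the Serre--Tate theorem to an abelian fourfold $\mathbf{A}$ quasi-isogenous to $\mathbf{A}^*$ on $S_{/\F_p^\alg}$; the polarization $\lambda^*$ and level structure $\nu^*$ are then transported through $(\rho,g)$ using the isomorphism $\iota$ to yield $\lambda$ and $\nu$ on $\mathbf{A}$. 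One checks that the Kottwitz condition is preserved because it is a pointwise condition on the Lie algebra, and it is built into the definition of special formal $\co_{B_0}\otimes \Z_p$-module at $p$ and of $\mathbf{A}^*$ away from $p$.

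The final step is to verify that $\Phi$ descends to the double quotient and is an isomorphism. The right $\overline{U}^p$-invariance is built into the level-structure equivalence; the left $\overline{G}(\Q)$-invariance follows because an element $\gamma \in \overline{G}(\Q) = \End^0(\mathbf{A}^*,\lambda^*,\co_B)^\times$ simultaneously acts on $\mathfrak{X}$ by right-composition on the quasi-isogeny $\rho$ and on $\overline{G}(\A_f^p)$ via $\iota$ on the level structure, so the resulting triple $(\mathbf{A},\lambda,\nu)$ is unchanged up to isomorphism. Surjectivity comes from the basic isogeny class result of the first step, and injectivity from the elementary fact that a QM abelian variety with polarization and level structure is determined, up to unique isomorphism, by its $p$-divisible group together with its prime-to-$p$ Tate module plus the compatibility data. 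The main obstacle is bookkeeping the $\mathfrak{D}^{-1}$-polarization data (rather than a principal polarization) through the construction and checking compatibility of the two actions of $\overline{G}(\Q)$; but via the Serre tensor description $\mathbf{A}^* = \mathbf{A}_0^* \otimes \co_F$ and the splitting at $p$, each such compatibility reduces cleanly to the analogous verification carried out for $\widehat{M}_{0/W}$ in \cite{boutot-carayol}, so no new ingredients are required beyond those theorems and the product decomposition of $\co_B \otimes \Z_p$.
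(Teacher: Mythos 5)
Your overall template (a single isogeny class, transport of structures through a quasi-isogeny, descent to the double coset) is the same as the paper's, but the two substantive steps are asserted rather than proved, and the first rests on a misreading of Lemma \ref{Lem:endo algebra}. That lemma only says that $\End^0(\mathbf{A})$ is $F$, a totally imaginary quadratic extension of $F$, or a totally definite quaternion algebra over $F$; it does not force $\End^0(\mathbf{A})\iso\overline{B}$ at a geometric point of characteristic $p$. What actually forces this, and hence a single $\co_B$-linear isogeny class, is that at a prime $p\mid\mathrm{disc}(B_0)$ every point of $M(\F_p^\alg)$ is supersingular. The paper proves this by using the splitting $\mathfrak{G}\iso\mathfrak{G}^1\oplus\mathfrak{G}^2$ and the Kottwitz condition to see that each $\mathfrak{G}^i$ is a special formal $\co_{B_0}\otimes_\Z\Z_p$-module, hence by Boutot--Carayol of height four with all slopes $1/2$, and then runs a Honda--Tate argument (after enlarging the finite field, $\pi^2=q$ for each simple isogeny factor) to conclude that $A$ is isogenous to the fourth power of a supersingular elliptic curve; the $\co_B$-linearity of the isogeny then comes from Milne's argument. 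None of this appears in your proposal, and without it the ``single prime-to-$p$ isogeny class'' step fails: a priori there could be points with commutative endomorphism algebra not isogenous to $\mathbf{A}^*$.

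The polarization bookkeeping is also genuinely more than a reduction to the case of $\widehat{M}_{0/W}$: a general geometric point of $M$ is not of the form $\mathbf{A}_0\otimes\co_F$, it is only quasi-isogenous to $\mathbf{A}^*$. To choose $\rho$ with $\rho^\vee\circ\lambda\circ\rho=\lambda^*$ the paper shows that the polarization module of $\mathbf{A}^*$ is projective of rank one over $\co_F$ with the polarizations forming a positive cone, so the two polarizations differ by a totally positive $\beta\in F^\times$, and then uses that $\beta$ is a reduced norm from the totally definite quaternion algebra $\overline{B}$ and that the Rosati involution restricts to the main involution on $\overline{B}$; an appeal to ``the Hasse principle'' does not supply this. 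Moreover, the compatibility with the polarizations is exactly what forces the two components of the quasi-isogeny to have equal height, i.e.\ what places the image in $\bigsqcup_m(\mathfrak{h}_m\times_W\mathfrak{h}_m)$ rather than in all of $\mathfrak{X}_0\times_W\mathfrak{X}_0$ --- a point your construction of $\Phi$ never addresses. In the inverse direction one must also check that a suitable prime-to-$p$ multiple of the transported quasi-polarization has kernel exactly $A[\mathfrak{D}]$, which the paper does using Lemme III.4.2 of \cite{boutot-carayol}, the equality $\deg\rho^1=\deg\rho^2$, and an adjustment by an integer $k$ with $\ord_\ell(k)=\ord_\ell(\mathrm{Nm}(g^{-1}))$ for all $\ell\not=p$; this $\mathfrak{D}^{-1}$-polarization verification is a real ingredient of the proof, not a formal consequence of the Cerednik--Drinfeld case for $M_0$.
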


\begin{proof}
This follows from the general $p$-adic uniformization results of \cite{boutot97, boutot-zink, rapoport96}, although in this simple case the proof is a straightforward imitation of the proof of the Cerednik-Drinfeld uniformization as in \cite{boutot-carayol}.  We  describe the isomorphism  on $\F_p^\alg$-valued points.   First let us show show that there is a single $\co_B$-linear isogeny class of  QM abelian fourfolds over $\F_p^\alg$.  Let $\mathbf{A}=(A,i)$ be a QM abelian fourfold over $\F_p^\alg$ with  $p$-divisible group $\mathfrak{G}=A[p^\infty]$.  As $p$ splits in $F$ there is a decomposition $\mathfrak{G}\iso \mathfrak{G}^1\oplus \mathfrak{G}^2$ in which $\co_F$ acts on $\mathfrak{G}^i$ through the map $\co_F\map{}\co_{F,\mathfrak{p}_i}\iso\Z_p$, and a corresponding decomposition of the Lie algebra 
$$
\mathrm{Lie}(A)\iso \mathrm{Lie}(\mathfrak{G}^1)\oplus\mathrm{Lie}(\mathfrak{G}^2).
$$  
The Kottwitz condition on the action of $\co_B$ on $\mathrm{Lie}(A)$ implies that each $\mathfrak{G}^i$ has dimension two and is a special formal $\co_{B_0}\otimes_\Z\Z_p$ module in the sense of \cite[\S II.2]{boutot-carayol}.  By \cite[Proposition II.5.1]{boutot-carayol} each $\mathfrak{G}^i$ has height four, and so is isogenous to the square of the $p$-divisible group of a supersingular elliptic curve  by \cite[\S III.4]{boutot-carayol}.  Now let $\F$ be a finite field of $q=p^k$ elements chosen large enough that $A$, all simple isogeny factors of $A$, and all endomorphisms of $A$ are defined over  $\F$.  Let $A'$ be a simple isogeny factor of $A$ defined over $\F$ and let $\pi\in\End(A')$ be the $q$-power Frobenius endomorphism.  As the slopes of the $p$-divisible group of $A'$ are all $1/2$, the argument of \cite[Proposition III.2]{boutot-carayol} shows that $\pi^2/q$ is a root of unity in the field $\Q(\pi)$, and so by enlarging the field $\F$ we have $\pi^2=q$.  By the Honda-Tate theory $A'$ is a supersingular elliptic curve.  Thus $A$ is isogenous to the fourth power of a supersingular elliptic curve, and so any two QM abelian fourfolds are isogenous.  The argument of \cite[Remark 5.3]{milne79} shows that this isogeny can be chosen to be $\co_B$-linear.
 
Now suppose  we are given a $\mathfrak{D}^{-1}$-polarized QM abelian fourfold $(\mathbf{A},\lambda)$  over $\F_p^\alg$.  The claim is that an $\co_B$-linear quasi-isogeny 
 $ \rho\in \Hom(\mathbf{A}^*,\mathbf{A})\otimes_{\co_F}F$ can be chosen in such a way that $\rho^\vee\circ \lambda \circ \rho=\lambda^*$.  Arguing as in \cite[\S III.4]{boutot-carayol} or \cite[Proposition 1.3]{vollaard} the \emph{polarization module} $\mathcal{P}$ of $\mathbf{A}^*$, defined as the  $\co_F$-module  of all $\phi \in \Hom(A^*,A^{*\vee})$ such that
 \begin{enumerate}
 \item   $\phi$ is $\co_B$-linear
\item $\phi=\phi^\vee$
\item $ \phi\circ i(b^*) = i(b)^\vee\circ\phi$ for all $b\in \co_B$,
 \end{enumerate}
is projective of rank one, with the subset of polarizations $\mathcal{P}^+\subset\mathcal{P}$ forming a positive cone (this is also stated without proof on \cite[p.54]{breen-labesse}).  Therefore $\lambda^*$ and $\rho^\vee\circ\lambda\circ\rho$ differ by the action of the group of totally positive elements of $F^\times$.  Write $\rho^\vee\circ\lambda\circ\rho= \lambda^*\circ i(\beta)$ with $\beta\in F^\times$ totally positive.  Viewing $\overline{B}$ as an $F$-algebra we may pick $b\in\overline{B}$ of reduced norm $\beta$.  As $\overline{B}$ is the centralizer of $B$ in $\End(A^*)\otimes_{\Z} \Q$ it is stable under the Rosati involution determined by $\lambda^*$, and the Rosati involution induces the main involution on $\overline{B}$ (the only positive involution of a totally definite quaternion algebra).  It follows that
$$
\rho^\vee\circ\lambda\circ\rho= b^\vee\circ \lambda^*\circ b
$$
and so replacing $\rho$ by  $\rho\circ b^{-1}$ gives $\rho^\vee\circ\lambda\circ \rho=\lambda^*$.

We now make the isomorphism  of the proposition  explicit on geometric points.  Fix a triple $(\mathbf{A},\lambda,\nu)\in M(\F_p^\alg)$ and (denoting by $A$ the abelian scheme underlying $\mathbf{A}$) let $\rho:A^*\map{}A$ be an $\co_B$-linear isogeny chosen so that $\rho^\vee\circ\lambda\circ \rho$ is an integer multiple of $\lambda^*$.   Attached to $\rho:A^*\map{}A$ is an $\co_B\otimes_\Z\Z_p$-linear isogeny of $p$-divisible groups $\rho : \mathfrak{G}^* \map{}  \mathfrak{G}$ which, using the splittings
$$
\mathfrak{G}^*\iso \mathfrak{G}_0^*\times\mathfrak{G}_0^*
\hspace{1cm}
\mathfrak{G} \iso \mathfrak{G}^1\times\mathfrak{G}^2
$$
induced by $\co_B\otimes_{\Z}\Z_p\iso (\co_{B_0}\otimes_\Z\Z_p)^2$,   determines an element 
$$
(\mathfrak{G},\rho)=\big( (\mathfrak{G}^1, \rho^1) , (\mathfrak{G}^2,\rho^2) \big) \in 
 \mathfrak{X}_0(\F_p^\alg)\times \mathfrak{X}_0(\F_p^\alg).
 $$
As the polarizations $\lambda$ and $\lambda^*$ each have degree prime to $p$, the equality $\rho^\vee\circ\lambda \circ \rho=m\lambda^*$ with $m\in\Z$ implies that $\mathrm{deg}(\rho^1)=\mathrm{deg}(\rho^2)$, and so the pair $(\mathfrak{G},\rho)$ lies in $\mathfrak{X}(\F_p^\alg)$.  There is a unique (up to right multiplication by $\overline{U}$) $g\in\overline{G}(\A_f^p)$ for which the diagram
$$
\xymatrix{
{\widehat{\Lambda}^p }  \ar[r]^{\nu^*}\ar[d]^{\cdot \iota(g^{-1})}  &   {\mathrm{Ta}^p(A^*) }  \ar[d]^\rho  \\
{\widehat{\Lambda}^p }  \ar[r]^{\nu}  &   {\mathrm{Ta}^p(A) }  
}
$$
commutes.  The isomorphism of the proposition  then takes the triple $(\mathbf{A},\lambda,\nu)$ to the double coset of $\big( (\mathfrak{G},\rho) , g\big))$.   

The inverse function is constructed as follows.  Starting from $\big( (\mathfrak{G},\rho) , g\big)$ we may act on the left  by an element of $\overline{G}(\Q)$ to assume that $\rho:\mathfrak{G}^*\map{}\mathfrak{G}$ is an isogeny (as opposed to merely a quasi-isogeny) and that left multiplication by $\iota(g^{-1})$ stabilizes $\widehat{\Lambda}^p$.  There is then a unique choice of QM abelian fourfold $\mathbf{A}=(A,i)$ over $\F_p^\alg$,  $\widehat{\co}_B^p$-linear isomorphism $\nu:\widehat{\Lambda}^p \map{}\mathrm{Ta}^p(A)$, isomorphism $A[p^\infty]\iso \mathfrak{G}$, and extension of $\rho$ to an isogeny $\rho:\mathbf{A}^*\map{}\mathbf{A}$ for which the above diagram commutes.  Define a quasi-polarization  $\lambda\in\Hom(A,A^\vee)\otimes_\Z\Q$  by $\rho^\vee\circ\lambda \circ \rho=\lambda^*$.   The claim is that there is a  positive integer multiple of $\lambda$ which  is a $\mathfrak{D}^{-1}$-polarization of $\mathbf{A}$.  If such a multiple exists, it is clearly unique.  First pick any positive integer $m$ large enough that  $m\lambda$ is a polarization, and consider the induced polarization $m\lambda:\mathfrak{G}\map{}\mathfrak{G}^*$.  Using the splitting $\mathfrak{G}\iso \mathfrak{G}^1\times\mathfrak{G}^2$ and \cite[Lemme III.4.2]{boutot-carayol} we see that the kernel of $m\lambda$ has the form $\mathfrak{G}^1[p^{a_1}]\times\mathfrak{G}^2[p^{a_2}]$ for some integers $a_1$ and $a_2$, but from the fact that $\rho^1$ and $\rho^2$ have the same degree one deduces that $a_1=a_2$.  Thus after dividing $m$ by a power of $p$ we may assume that $m\lambda$ is a polarization of degree prime to $p$.  Recalling that the pairing 
$$
\psi : \widehat{\Lambda}^p \times \widehat{\Lambda}^p \map{}\widehat{\Z}^p
$$
of \S \ref{SS:moduli} agrees with the pairing induced by $\nu^*$, the isogeny $\lambda^*$, and the Weil pairing on $\mathrm{Ta}^p(A^*)$, one can show that the pairing on $\widehat{\Lambda}^p$ induced by $\nu$, the polarization $m\lambda$, and the Weil pairing on $\mathrm{Ta}^p(A)$ is exactly $m\cdot \mathrm{Nm}(g) \cdot \psi$.  Choosing a positive integer $k$ prime to $p$ and satisfying $\ord_\ell(k) = \ord_\ell (\mathrm{Nm}(g^{-1}))$ for every prime $\ell\not=p$, we deduce that  $(m/k)\mathfrak{D}\subset\co_F$ and that $m\lambda$ has kernel $A[(m/k)\mathfrak{D}]$.  It follows that $k\lambda$ is a $\mathfrak{D}^{-1}$-polarization of $\mathbf{A}$.
 \end{proof}

\begin{Def}
Suppose $\Sigma$ is a finite set of prime ideals of $\co_F$ and let
$$
\co_{F,\Sigma} = \{x\in F\mid \ord_\mathfrak{q}(x) \ge 0\ \forall \mathfrak{q}\not\in \Sigma\}
$$ 
be the ring of $\Sigma$-integers in $\co_F$. If $\mathbf{A}$ is a QM abelian fourfold over a $\Z_p$-scheme $S$ and  $\tau\in \End^0(\mathbf{A})$,  we say that $\tau$ is \emph{integral away from $\Sigma$} if $\tau$ lies in the image of the inclusion
$$
\End(\mathbf{A})\otimes_{\co_F} \co_{F,\Sigma}  \hookrightarrow \End^0(\mathbf{A}).
$$
\end{Def}

Define set-valued functors on  $\mathbf{Nilp}_{W}$ as follows.  For $S$ an object of $\mathbf{Nilp}_{W}$ let $\mathfrak{M}(S)$ be the set of isomorphism classes of quadruples
 $(\mathbf{A},\lambda,\nu,t_\alpha)$ in which  $(\mathbf{A},\lambda,\nu)\in M(S)$ and $t_\alpha \in\End^0(\mathbf{A})$ is  trace-zero, integral away from $\{\mathfrak{p}_1,\mathfrak{p}_2\}$,  and satisfies $Q(t_\alpha)= \alpha$.    For $k\in\{1,2\}$ we let $\mathfrak{M}^k(S)\subset \mathfrak{M}(S)$ be the subset of quadruples $(\mathbf{A},\lambda, \nu,t_\alpha)$ for which $t_\alpha$ is integral away from ${\mathfrak{p}_k}$.      There are evident morphisms
 \begin{equation}\label{formal cartesian}
 \xymatrix{
 &    {\mathfrak{M}^1  }  \ar[dr]   \\
    {\widehat{Y}_{/W} }  \ar[dr]\ar[ur]   &  & {\mathfrak{M} }  \ar[r] & {\widehat{M}_{/W}}    \\
 &  {\mathfrak{M}^2 }  \ar[ur]
 }
 \end{equation}
 and  the square is cartesian.  Fix a connected object $S$ of $\mathbf{Nilp}_W$  and let $\overline{V}_0$ and $\overline{V}$ be  trace zero elements of $\overline{B}_0$ and $\overline{B}$, respectively.  If for each $\tau\in \overline{V}$ we define
$$
\Omega( \tau )=\{ g\in \overline{G}(\A_f^p)\mid  \widehat{\Lambda}^p\cdot \iota(g^{-1}\tau g) \subset \widehat{\Lambda}^p \}
$$
then Proposition \ref{Prop:CD uniformization} generalizes to a uniformization
\begin{equation}
\label{formal covering}
\mathfrak{M}\iso  \overline{G}(\Q)\backslash \bigsqcup_{  \substack{\tau \in \overline{V}  \\ \overline{Q}(\tau)=\alpha } }  \left( \mathfrak{X} \times  \Omega(\tau)/ \overline{U}^p \right)  
\end{equation}
where $\overline{Q}$ is the quadratic form $\overline{Q}(\tau)=-\tau^2$.   On geometric points the isomorphism is defined as follows. Given a quadruple 
$$
(\mathbf{A},\lambda,\nu,t_\alpha)\in \mathfrak{M}(\F_p^\alg)
$$ 
fix, as in the proof of Proposition \ref{Prop:CD uniformization}, an $\co_B$-linear isogeny $\rho:A^*\map{}A$ for which $\rho^\vee\circ\lambda\circ\rho$ is an integer multiple of $\lambda^*$.  Let
$$
((\mathfrak{G}, \rho) , g)\in\mathfrak{X}(\F_p^\alg)\times \overline{G}(\A_f^p)
$$ 
be the pair associated to $\rho$ and $(\mathbf{A},\lambda,\nu)$ as in the proof of Proposition \ref{Prop:CD uniformization} , and define
$$
\tau=\rho^{-1}\circ t_\alpha\circ \rho \in \End^0(\mathbf{A}^*).
$$
Then $\tau$ is trace zero and satisfies $\overline{Q}(\tau)=-\alpha$, and the condition that $t_\alpha$ be integral away from $p$ is equivalent to the condition that $g\in\Omega(\tau)$.  The isomorphism (\ref{formal covering}) now takes the quadruple $(\mathbf{A},\lambda,\nu,t_\alpha)$ to the point on the right hand side of  (\ref{formal covering}) corresponding  the pair $((\mathfrak{G},\rho),g)$ lying in the component of the disjoint union indexed by $\tau$.

 For every $\tau_0\in \overline{V}_0\otimes_\Q\Q_p$, viewed as a quasi-endomorphism 
 $$
 \tau_0\in \End(\mathfrak{G}_0^* \times_S S_{/\F_p^\alg} )\otimes_{\Z_p}\Q_p,
 $$
 let  $\mathfrak{h}_m(\tau_0)(S)\subset\mathfrak{h}_m(S)$ be the subset consisting of those pairs $(\mathfrak{G}_0,\rho_0)$ for which the quasi-endomorphism
$$
\rho_0 \circ \tau_0\circ \rho_0^{-1} \in \End(\mathfrak{G}_0\times_S S_{/\F_p^\alg} ) \otimes_{\Z_p}\Q_p
$$
lies in the image of the (injective) reduction map 
$$
\End(\mathfrak{G}_0)    \map{} \End (\mathfrak{G}_0\times_S S_{/\F_p^\alg}).
$$
The set valued functor $\mathfrak{h}_m(\tau_0)$ on $\mathbf{Nilp}_W$ is represented  by a closed formal subscheme of $\mathfrak{h}_m$ (see \cite[\S 2]{kudla00} and  \cite[Chapter 2]{rapoport96} for details) and we set 
$$
\mathfrak{X}_0(\tau_0)=\bigsqcup_{m\in\Z} \mathfrak{h}_m(\tau_0) .
$$   
For each $\tau \in \overline{V}$ let $(\tau_1,\tau_2)$ be the image of $\tau$ under the isomorphism
\begin{equation}
\label{CD decomp}
\overline{V} \otimes_\Q\Q_p \iso ( \overline{V}_0 \otimes_\Q\Q_p)\times ( \overline{V}_0 \otimes_{\Q}\Q_p).
\end{equation}
Define closed formal subschemes of $\mathfrak{X}$  by
 $$
 \mathfrak{X}^1(\tau)= \bigsqcup_{m\in\Z} (\mathfrak{h}_m(\tau_1)\times_W\mathfrak{h}_m)
 \hspace{1cm}
 \mathfrak{X}^2(\tau)= \bigsqcup_{m\in\Z} (\mathfrak{h}_m\times_W\mathfrak{h}_m(\tau_2) )
 $$
As with the isomorphism  of functors (\ref{formal covering}), the  functor $\mathfrak{M}^k$ is represented by the closed  formal subscheme of $\mathfrak{M}$
\begin{equation}
\label{formal covering II}
\mathfrak{M}^k\iso  \overline{G}(\Q)\backslash  \bigsqcup_{  \substack{\tau \in \overline{V}  \\ \overline{Q}(\tau)=\alpha } }    \left( \mathfrak{X}^k ( \tau ) \times \Omega(\tau)  / \overline{U}^p \right).
\end{equation}
If we now set $\mathfrak{X}(\tau)=\mathfrak{X}(\tau_1)\times_\mathfrak{X}\mathfrak{X}(\tau_2)$ then there is a diagram
$$
 \xymatrix{
 &    {\mathfrak{X}^1(\tau)  }  \ar[dr]   \\
    {\mathfrak{X}(\tau) }  \ar[dr]\ar[ur]   &  & {\mathfrak{X} }  \ar[r] & { \Spf(W)}    \\
 &  {\mathfrak{X}^2(\tau) }  \ar[ur]
 }
$$
in which the square is cartesian, and an isomorphism of functors
$$
\widehat{Y}_{/W}\iso \overline{G}(\Q)\backslash  \bigsqcup_{  \substack{\tau \in \overline{V}  \\ \overline{Q}(\tau)=\alpha } }    \left( \mathfrak{X} ( \tau ) \times \Omega(\tau)  / \overline{U}^p \right).
$$

\begin{Lem}
\label{Lem:drinfeld codimension}
For  any $x\in\mathfrak{M}^k$ the local ring $\co_{\mathfrak{M},x}$ is regular and   
\begin{equation}
\label{some formal inequality}
\dim\co_{\mathfrak{M}^k,x} < \dim \co_{\mathfrak{M},x}.
\end{equation}
\end{Lem}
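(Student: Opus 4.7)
The plan is to deduce both assertions from the Rapoport-Zink uniformizations (\ref{formal covering}) and (\ref{formal covering II}), combined with the structure theory of Kudla-Rapoport divisors in Drinfeld's formal scheme $\mathfrak{h}_m$. At a geometric point $x \in \mathfrak{M}^k$, first choose via (\ref{formal covering II}) a lift $(\tau, \tilde{x}, g)$ with $\tau \in \overline{V}$, $\overline{Q}(\tau) = \alpha$, and $\tilde{x} = (\tilde{x}^{(1)}, \tilde{x}^{(2)}) \in \mathfrak{X}^k(\tau) \subset \mathfrak{X}$. Since the forgetful map is the identity on the $\mathfrak{X}$-factor of the uniformizing data, the completed local rings satisfy
$$
\widehat{\co}_{\mathfrak{M}, x} \iso \widehat{\co}_{\mathfrak{h}_m, \tilde{x}^{(1)}} \, \widehat{\otimes}_W \, \widehat{\co}_{\mathfrak{h}_m, \tilde{x}^{(2)}}
$$
and an analogous expression for $\widehat{\co}_{\mathfrak{M}^k, x}$ is obtained by replacing the $k$-th factor with $\widehat{\co}_{\mathfrak{h}_m(\tau_k), \tilde{x}^{(k)}}$. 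This reduces the lemma to a local question on the Drinfeld formal scheme.

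The key input is the theorem of Kudla-Rapoport \cite{kudla00} (with the residue characteristic two case handled by \cite{kudla04a, KRY}): because $\overline{Q}(\tau_k) \neq 0$, the closed formal subscheme $\mathfrak{h}_m(\tau_k) \hookrightarrow \mathfrak{h}_m$ is an effective relative Cartier divisor over $\Spf W$, locally cut out by a single non-zero-divisor $f_k$ in the two-dimensional regular local ring $\widehat{\co}_{\mathfrak{h}_m, \tilde{x}^{(k)}}$ furnished by Drinfeld's regularity of $\mathfrak{h}_m$. Using the two possible Drinfeld local shapes---$W[[t]]$ at a point smooth over $W$, and $W[[u, v]]/(uv - p)$ at an intersection point of two components of the special fiber---a direct case analysis shows that $\widehat{\co}_{\mathfrak{h}_m, \tilde{x}^{(1)}} \, \widehat{\otimes}_W \, \widehat{\co}_{\mathfrak{h}_m, \tilde{x}^{(2)}}$ is regular of dimension three in every configuration except when both factors have the second shape.

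The principal obstacle is therefore to exclude this simultaneously singular configuration at points of $\mathfrak{X}^k(\tau)$; I expect to handle this via the finer description of the support of $\mathfrak{h}_m(\tau_k)$ in \cite{kudla00}, where the horizontal quasi-canonical components are confined to the smooth locus of $\mathfrak{h}_m$ and the vertical components are listed explicitly in terms of the divisor data attached to $\tau_k$, so that a singular point of $\mathfrak{h}_m$ can lie on $\mathfrak{h}_m(\tau_k)$ only under specific combinatorial conditions on $\tau_k$ which prevent a singular point from simultaneously occurring in the opposite factor at any point of $\mathfrak{M}^k$. Granted this, regularity of $\widehat{\co}_{\mathfrak{M}, x}$ is immediate from the case analysis, and the strict inequality (\ref{some formal inequality}) then follows by applying Krull's principal ideal theorem to $f_k$, pulled back along the $k$-th projection $\mathfrak{X} \to \mathfrak{h}_m$, which remains a non-zero-divisor in the Cohen-Macaulay regular ambient ring.
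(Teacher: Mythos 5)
Your reduction via the uniformizations (\ref{formal covering}) and (\ref{formal covering II}) and the identification of the completed local ring of $\mathfrak{M}$ at $x$ with $\widehat{\co}_{\mathfrak{h}_m,\tilde{x}^{(1)}}\widehat{\otimes}_W\widehat{\co}_{\mathfrak{h}_m,\tilde{x}^{(2)}}$ are correct, and so is your case analysis: the completed tensor product is regular of dimension three except in the superspecial--superspecial configuration, where it is $W[[s_1,t_1,s_2,t_2]]/(s_1t_1-p,\ s_2t_2-p)$, of Krull dimension $3$ but embedding dimension $4$. The genuine gap is the step you flag with ``I expect to handle this'': the hoped-for exclusion of that configuration from $\mathfrak{M}^k$ is not only unproven, it is false. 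In (\ref{formal covering II}) the factor opposite to $\mathfrak{h}_m(\tau_k)$ is completely unconstrained, so it may be any superspecial point, and $\mathfrak{h}_m(\tau_k)$ itself meets the superspecial locus in general (for instance it contains entire vertical components of the special fiber for the $\tau_k$ occurring when $\ord_p\overline{Q}_0(\tau_k)>0$). Indeed the paper's proof of Proposition \ref{Lem:bad generic fiber} explicitly analyzes points at which both coordinates are superspecial and both special cycles are nontrivial, so no combinatorial condition on $\tau_k$ can rule these points out. Consequently your route to the regularity assertion collapses; in fact your own local computation shows that regularity of $\co_{\mathfrak{M},x}$ genuinely fails at such closed points of $\mathfrak{M}^k$, so no argument can deliver the blanket statement by excluding them.

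What rescues the lemma is not an exclusion but attention to where it is used: the non-regular locus of $\mathfrak{X}$ consists of isolated closed points, and in Proposition \ref{Prop:derived support} the lemma is invoked only at the generic points $\eta$ of two-dimensional vertical components of $Y_{/W}$; at such non-closed points $\co_{\mathfrak{M},\eta}$ is a localization at a height-one prime away from those closed points, hence a discrete valuation ring, which is all that is needed. (The paper's own proof simply asserts regularity of $\mathfrak{X}$ ``from its construction'' and gets the inequality from the explicit local equations of \cite{kudla00}; it is open to the same objection at the bad closed points, but holds at every point where it is applied.) Your argument for the dimension inequality itself is essentially sound and parallel to the paper's: a nonzero local equation for $\mathfrak{h}_m(\tau_k)$, pulled back along the flat projection, is a non-zero-divisor in the local ring of $\mathfrak{X}$, which is a complete intersection (hence Cohen--Macaulay and equidimensional even when not regular), so Krull's principal ideal theorem gives the strict drop. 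Note only that ``effective relative Cartier divisor over $\Spf(W)$'' is inaccurate: the cycles $\mathfrak{h}_m(\tau_k)$ are not flat over $W$ when they contain vertical components, though they are still locally cut out by one equation.
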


\begin{proof}
The regularity of the formal scheme $\mathfrak{X}$ is clear from its construction (and from  the construction of $\mathfrak{h}_m$ as in \cite[Chapter I]{boutot-carayol} or \cite[\S 1]{kudla00}), and so the regularity of $\mathfrak{M}$ follows from the uniformization (\ref{formal covering}).  Using the uniformizations (\ref{formal covering}) and (\ref{formal covering II}), to prove the inequality (\ref{some formal inequality}) it suffices to prove 
$$
\dim\co_{\mathfrak{X}^k(\tau),x} < \dim\co_{\mathfrak{X},x }
$$
for every closed point $x\in\mathfrak{X}^k(\tau)$.  For such an $x$, $\dim \co_{\mathfrak{X},x}=3$ while the explicit local equations for the closed formal subscheme $\mathfrak{h}_m(\tau_k)\hookrightarrow \mathfrak{h}_m$ computed by Kudla-Rapoport \cite[\S 3]{kudla00} (and, for $p=2$, by Kudla-Rapoport-Yang in the appendix to \cite[\S 11]{kudla04a}) show that $\co_{\mathfrak{X}^k(\tau),x}$ has dimension  at most $2$.
\end{proof}


\subsection{The construction of $\mathcal{C}_p$}


Continue with the notation of the previous subsection.  We have defined formal schemes $\mathfrak{M}$ and $\mathfrak{M}^k$ over $W$ which represent particular functors on $\mathbf{Nilp}_W$.  The definitions of these functors extend verbatim to functors on $\mathbf{Nilp}_{\Z_p}$, and presumably these functors are represented by formal schemes over $\Z_p$.  However, we will avoid this issue by keeping track of additional descent datum.  Let ${\mathrm{Frob}}:W\map{}W$ be the continuous $\Z_p$-algebra isomorphism whose reduction modulo $pW$ is the absolute Frobenius $x\mapsto x^p$.  Define a formal $W$-scheme $\mathfrak{M}^{\mathrm{Frob}}$ as the pullback of $\mathfrak{M}$ by ${\mathrm{Frob}}$, so that
$$
\xymatrix{
{\mathfrak{M}^{\mathrm{Frob}}}\ar[r]^{ \mathrm{Frob} } \ar[d]   &  {\mathfrak{M}}\ar[d]  \\
{\Spf(W)} \ar[r]^{\mathrm{Frob}} & {\Spf(W)}
}
$$
is cartesian.   If for any $r\in\Z^+$ we set $W_r= W/p^rW$ then the $W_r$-scheme $\mathfrak{M}_{/W_r}$ carries a universal quadruple $(\mathbf{A},\lambda,\nu, t_\alpha)$ which we may pull back by ${\mathrm{Frob}}$ to yield a quadruple $(\mathbf{A}^{\mathrm{Frob}},\lambda^{\mathrm{Frob}},\nu^{\mathrm{Frob}},t_\alpha^{\mathrm{Frob}})$ over the scheme $\mathfrak{M}^{\mathrm{Frob}}_{/W_r}$.  By the universal property of $\mathfrak{M}$ this quadruple determines a morphism of $W_r$-schemes $\mathfrak{M}_{/W_r}^{\mathrm{Frob}}\map{}\mathfrak{M}_{/W_r}$ which is easily seen to be an isomorphism.  Letting $r\to\infty$ we obtain  a canonical isomorphism of formal $W$-schemes $\mathfrak{M}^{\mathrm{Frob}} \iso\mathfrak{M}$.   If $\mathfrak{F}$ is a coherent $\co_{\mathfrak{M}}$-module we say that $\mathfrak{F}$ is ${\mathrm{Frob}}$-invariant if this isomorphism identifies the pullback  $\mathrm{Frob}^*\mathfrak{F}$ with $\mathfrak{F}$. Similarly we may speak of ${\mathrm{Frob}}$-invariant coherent sheaves on $\mathfrak{M}^k$  or  $Y_{/W}$, and Grothendieck's  theory of faithfully flat descent \cite[Chapter 6]{BLR} implies that there is a canonical bijection between ${\mathrm{Frob}}$-invariant coherent $\co_{Y_{/W}}$-modules  and coherent $\co_{Y}$-modules.

 Let $\mathfrak{F}^k$  be a ${\mathrm{Frob}}$-invariant coherent $\co_{\mathfrak{M}^k}$-module, which we view also as a ${\mathrm{Frob}}$-invariant coherent $\co_{\mathfrak{M} }$-module annihilated by the ideal sheaf of the closed formal subscheme $\mathfrak{M}^k$.  It follows from (\ref{formal cartesian}) that the ${\mathrm{Frob}}$-invariant coherent $\co_{\mathfrak{M}}$-module  $\mathrm{Tor}_\ell^{\co_\mathfrak{M}}(\mathfrak{F}^1,\mathfrak{F}^2)$ on $\mathfrak{M}$ is annihilated by ideal sheaf of the formal subscheme $\widehat{Y}_{/W}\map{} \mathfrak{M}$ for every $\ell\ge 0$.  Thus we may view $\mathrm{Tor}_\ell^{\co_\mathfrak{M}}(\mathfrak{F}^1,\mathfrak{F}^2)$  as a ${\mathrm{Frob}}$-invariant coherent $\co_{\widehat{Y}_{/W}}$-module and also, by formal GAGA \cite[Theorem 8.4.2]{FGA} and faithfully flat descent, as a coherent $\co_Y$-module.  Thus we may define
\begin{equation}
\label{Tor}
[\mathfrak{F}^1\otimes^L_{\co_{\mathfrak{M}}} \mathfrak{F}^2]\define \sum_{\ell \ge 0}  (-1)^\ell [\mathrm{Tor}_\ell^{\co_\mathfrak{M}}(\mathfrak{F}^1,\mathfrak{F}^2)] \in \mathbf{K}_0(Y).
\end{equation}
Our next goal is to prove that the class (\ref{Tor}) is supported in dimension one, in the following sense.

\begin{Def}
Let $S$ be a Noetherian scheme and $[\mathcal{F}]\in\mathbf{K}_0(S)$.  We will say that $[\mathcal{F}]$ is \emph{supported in dimension $m$} if $[\mathcal{F}]$ lies in the kernel of the localization map 
$$
\mathbf{K}_0(S) \map{} \mathbf{K}_0(\Spec(\co_{S,\eta}))
$$
defined by $[\mathcal{F}]\mapsto [\mathcal{F}_\eta]$ for every point $\eta\in S$ with $\dim \overline{\{\eta\}} > m.$
\end{Def}

\begin{Lem}\label{Lem:no tor}
Suppose $D$ is a local Noetherian domain  and $N_1$ and $N_2$ are finitely generated torsion $D$-modules with $N_2$ of finite length.  Then 
$$
\sum_{\ell\ge 0} (-1)^\ell \length_D \mathrm{Tor}^D_\ell(N_1 ,N_2) = 0.
$$
\end{Lem}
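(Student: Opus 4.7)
The plan is to exploit the annihilator of $N_1$. Since $N_1$ is finitely generated and torsion over the domain $D$, we may choose a nonzero $a\in D$ with $aN_1=0$; as $D$ is a domain this $a$ is automatically a non-zero-divisor. The two-term Koszul complex
$0\to D \map{a} D\to D/(a)\to 0$
is then a free resolution of $D/(a)$, so $\mathrm{Tor}_q^D(D/(a),N_2)$ vanishes for $q\ge 2$, equals $\ker(a\mid N_2)$ for $q=1$, and equals $N_2/aN_2$ for $q=0$.

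Next I would regard $N_1$ as a module over $D/(a)$ and invoke the change-of-rings spectral sequence
$$
E_2^{p,q} = \mathrm{Tor}_p^{D/(a)}\bigl(N_1,\, \mathrm{Tor}_q^D(D/(a),N_2)\bigr) \Rightarrow \mathrm{Tor}_{p+q}^D(N_1,N_2),
$$
which by the above vanishing is concentrated in the two rows $q=0,1$. Passing to Euler characteristics produces the identity
$$
\sum_{\ell\ge 0}(-1)^\ell\length_D \mathrm{Tor}_\ell^D(N_1,N_2) = \chi_{D/(a)}\bigl(N_1,\,N_2/aN_2\bigr) - \chi_{D/(a)}\bigl(N_1,\,\ker(a\mid N_2)\bigr),
$$
where $\chi_{D/(a)}(N_1,T)$ abbreviates $\sum_p(-1)^p\length_{D/(a)}\mathrm{Tor}_p^{D/(a)}(N_1,T)$, and both arguments $T=N_2/aN_2$ and $T=\ker(a\mid N_2)$ are finite-length $D/(a)$-modules.

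The crucial observation is then that $N_2/aN_2$ and $\ker(a\mid N_2)$ have the same length as $D/(a)$-modules, as one sees immediately from the four-term exact sequence
$0 \to \ker(a\mid N_2) \to N_2 \map{a} N_2 \to N_2/aN_2 \to 0$.
Because the Grothendieck group of finite-length modules over the local ring $D/(a)$ is $\Z$, generated by the common residue field of $D$ and $D/(a)$, any additive function on this category, in particular $T\mapsto \chi_{D/(a)}(N_1,T)$, factors through the length. The two terms on the right-hand side of the identity above therefore coincide, and the left-hand side must vanish.

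The main obstacle is verifying that all Euler characteristics in sight are genuinely finite sums. This is automatic in the applications of the lemma here, where the ambient local rings at bad primes are regular (cf.\ Lemma \ref{Lem:drinfeld codimension}) so that $N_2$ has finite projective dimension over $D$ and only finitely many Tor-terms are nonzero; in the stated generality the spectral sequence itself at least ensures that finiteness of the left-hand side forces finiteness of the difference appearing on the right, which is what the cancellation argument actually requires.
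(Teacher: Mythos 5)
Your strategy---kill $N_1$ by a nonzero $a\in D$, run the change-of-rings spectral sequence $E^2_{p,q}=\mathrm{Tor}_p^{D/(a)}(N_1,\mathrm{Tor}_q^D(D/(a),N_2))\Rightarrow \mathrm{Tor}_{p+q}^D(N_1,N_2)$, and cancel the two rows by devissage in $K_0$ of finite-length $D/(a)$-modules---is genuinely different from the paper's, but it has a gap that your closing paragraph does not repair. The quantities $\chi_{D/(a)}(N_1,T)$ you compare need not be finite sums, even when $D$ is regular so that the left-hand side of the lemma is an honest finite sum. The ring $D/(a)$ is essentially never regular, and $N_1$ will typically have infinite projective dimension over it with $\mathrm{Tor}_p^{D/(a)}(N_1,k)\neq 0$ for \emph{every} $p$. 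Concretely, take $D=k[[x,y]]$, $N_1=D/(x^2,xy)$, $a=x^2$, $N_2=k$: over $S=D/(x^2)$ one finds the periodic free resolution $\cdots\map{x}S\map{x}S\map{xy}S\map{}N_1\map{}0$, so $\mathrm{Tor}_p^{S}(N_1,k)\cong k$ for all $p\geq 0$ and both rows of your $E_2$-page are nonzero in every column (while $\mathrm{Tor}_\ell^D(N_1,k)$ vanishes for $\ell>2$, with lengths $1,2,1$). In this situation the identity you extract from the spectral sequence is not valid: the equality of Euler characteristics of $E_2$ and $E_\infty$ is proved page by page via $\chi(E_{r+1})=\chi(E_r)$ and requires only finitely many nonzero $E_2$-terms, and the ``difference'' of two divergent row-sums is undefined. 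The devissage step fails for the same reason, since ``$\chi_{D/(a)}(N_1,-)$ factors through length'' rests on $\chi_{D/(a)}(N_1,k)$ being a finite number. Your final remark only secures finiteness of the $\mathrm{Tor}^D$-terms, not of the $\mathrm{Tor}^{D/(a)}$-terms your argument actually manipulates, and the phrase ``finiteness of the difference on the right'' has no content when each row diverges. Note also that equality of classes in $K_0$ gives equality of row Euler characteristics only, not term-by-term equality of the $\mathrm{Tor}_p$'s, so there is no escape by cancelling the rows degree by degree.

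The paper avoids the auxiliary ring entirely: it takes a finite resolution $0\map{}D^{n_k}\map{}\cdots\map{}D^{n_0}\map{}N_1\map{}0$ by finite free modules (an implicit hypothesis, satisfied in all applications since the relevant local rings are regular), observes that $\sum(-1)^in_i=0$ by localizing at the generic point because $N_1$ is torsion, and then notes that for the finite complex $D^{n_\bullet}\otimes_DN_2$ of finite-length modules the alternating sum of the lengths of the homology, which is $\sum(-1)^\ell\length_D\mathrm{Tor}_\ell^D(N_1,N_2)$, equals the alternating sum of the lengths of the terms, namely $\length_D(N_2)\cdot\sum(-1)^in_i=0$. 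Your observation that $\length(N_2/aN_2)=\length(\ker(a\mid N_2))$ is precisely this computation for the special case $N_1=D/(a)$; the passage from that case to general $N_1$ by devissage over $D/(a)$ is what cannot be justified without the finiteness you are missing, so you should replace the spectral-sequence step by the direct rank count above.
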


\begin{proof}
The proof is based on that of \cite[Lemma 4.1]{kudla00}.  Let 
$$
\cdots \map{} D^{n_1} \map{} D^{n_0} \map{} N_1\map{}0
$$ 
be a finite resolution of $N_1$ by free $D$-modules of finite rank.  As $N_1$ is torsion we must have $\sum(-1)^i n_i=0$.  As $N_2$ has finite length,  the alternating sum of the lengths of the homology modules of 
$$
\cdots \map{} D^{n_1} \otimes_DN_2\map{}D^{n_0}\otimes_D N_2 \map{}0
$$
 is equal to
$$
\sum_{i\ge 0} (-1)^i \cdot \length_D (D^{n_i} \otimes_D N_2 ) = \length_D(N_2 )\cdot\sum_{i\ge 0} (-1)^i n_i=0.
$$
\end{proof}

\begin{Prop}\label{Prop:derived support}
For  $k\in\{1,2\}$ let $\mathfrak{F}^k$ be a ${\mathrm{Frob}}$-invariant coherent $\co_{\mathfrak{M}^k}$-module.  The class (\ref{Tor}) is supported in dimension one.
\end{Prop}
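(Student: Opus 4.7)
The plan is to reduce, via localization at the generic points of the two-dimensional components of $Y$, to the vanishing of an alternating sum of Tor lengths over a discrete valuation ring, which will be supplied by Lemma \ref{Lem:no tor}.

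First I will observe that by Lemma \ref{generic etale} the generic fiber $Y_{/\Q_p}$ is zero-dimensional, so every $\eta \in Y$ with $\dim \overline{\{\eta\}} \ge 2$ is the generic point of a two-dimensional vertical component. Working fpqc-locally along $\Z_p \to W$ and invoking formal GAGA for the proper $W$-scheme $Y_{/W}$ (which is finite over $M_{/W}$), together with the fact that by construction the Tor sheaves are supported on $\widehat{Y}_{/W}$, I reduce the statement to the following: for a choice of lift $\tilde\eta$ of $\eta$ to $Y_{/W}$, viewed as a point of $\mathfrak{M}$ via $Y_{/\F_p^\alg}\hookrightarrow\widehat{Y}_{/W}\hookrightarrow\mathfrak{M}$, I must show
\[
\sum_{\ell\ge 0}(-1)^\ell\length_D \mathrm{Tor}^D_\ell\bigl(\mathfrak{F}^1_{\tilde\eta},\mathfrak{F}^2_{\tilde\eta}\bigr)=0,
\]
where $D=\co_{\mathfrak{M},\tilde\eta}$.

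The heart of the argument is that $D$ is a discrete valuation ring on which both stalks are finite-length modules. The uniformization (\ref{formal covering}) together with the regularity of $\mathfrak{X}$ noted in the proof of Lemma \ref{Lem:drinfeld codimension} shows that $\mathfrak{M}$ is a regular formal $W$-scheme of absolute dimension three; since $\overline{\{\tilde\eta\}}$ has dimension two in $\mathfrak{M}$, the regular local ring $D$ has dimension one, hence is a DVR. The stalk $\mathfrak{F}^k_{\tilde\eta}$ is annihilated by the ideal defining $\mathfrak{M}^k\hookrightarrow\mathfrak{M}$, so it is a finitely generated module over $\co_{\mathfrak{M}^k,\tilde\eta}$; and Lemma \ref{Lem:drinfeld codimension} forces $\dim\co_{\mathfrak{M}^k,\tilde\eta}<\dim D=1$, so this quotient is Artinian and $\mathfrak{F}^k_{\tilde\eta}$ has finite length over $D$. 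Applying Lemma \ref{Lem:no tor} with $N_1=\mathfrak{F}^1_{\tilde\eta}$ and $N_2=\mathfrak{F}^2_{\tilde\eta}$ then completes the argument.

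The step that I expect to demand the most care is the initial reduction, where one must compare stalks of coherent sheaves on $Y$, on $Y_{/W}$, on the formal scheme $\widehat{Y}_{/W}$, and finally on the ambient $\mathfrak{M}$, and verify compatibility of these identifications with the formation of $\mathrm{Tor}$. This is essentially formal --- using $\mathrm{Frob}$-invariance to descend from $Y_{/W}$ to $Y$, formal GAGA to relate $Y_{/W}$ and $\widehat{Y}_{/W}$, and the support hypothesis to view the Tor sheaves as sitting on the closed formal subscheme $\widehat{Y}_{/W}\hookrightarrow\mathfrak{M}$ --- but it is the bookkeeping step where all the compatibilities must actually be checked.
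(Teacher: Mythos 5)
Your proposal is correct and follows essentially the same route as the paper: localize at the generic point of each two-dimensional (necessarily vertical) component, pass to a lift on $Y_{/W}$ viewed inside $\mathfrak{M}$, use Lemma \ref{Lem:drinfeld codimension} to see that $\co_{\mathfrak{M},\eta}$ is a discrete valuation ring with the stalks $\mathfrak{F}^k_\eta$ of finite length, and conclude by Lemma \ref{Lem:no tor}. The bookkeeping you flag at the end is handled in the paper by a small commutative diagram of $\mathbf{K}_0$-groups comparing lengths over $\co_{Y,\kappa}$ and $\co_{Y_{/W},\eta}$, but the substance is identical.
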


\begin{proof}
 Let $\kappa \in Y$ be the generic point of any two-dimensional component, so that $\co_{Y,\kappa}$ is Artinian and $\kappa \in Y_{/\F_p}$.  Choose a point $\eta \in Y_{/W}$ above $\kappa$ and consider the commutative diagram
 $$
\xymatrix{ 
{ \mathbf{K}_0(Y) }  \ar[r] \ar[d] & { \mathbf{K}_0(\Spec(\co_{Y,\kappa}))} \ar[d] \ar[r]^{ } &  {\Z}\ar[d]^{\mathrm{id}} \\
{ \mathbf{K}_0(Y_{/W}) }  \ar[r]  & {\mathbf{K}_0(\Spec(\co_{Y_{/W},\eta}))  }\ar[r] & { \Z }
}
$$
 in which the  arrows are as follows: the left and middle vertical arrows are flat pullback of coherent sheaves,  the upper and lower horizontal arrows on the left are localization, the horizontal arrow in  the upper right is the isomorphism
 $[\mathcal{F}]\mapsto \length_{\co_{Y,\kappa}}(\mathcal{F})$, and the horizontal arrow in the lower right is defined similarly.  Thus to check that a class $[\mathcal{F}]\in\mathbf{K}_0(Y)$ is supported in dimension one it suffices to check that $[\mathcal{F}]$ satisfies 
 $$
 \length_{\co_{Y_{/W},\eta}}(\mathcal{F}_{/W,\eta})=0
 $$
 for every two-dimensional component $\overline{\{\eta \}} \in  Y_{/W}$.  As the residue field of $\eta$ has characteristic $p$ we may  identify $\eta$ with a point of the formal scheme $\widehat{Y}_{/W}$.       Lemma \ref{Lem:drinfeld codimension} implies that  $\co_{\mathfrak{M},\eta}$ is regular of dimension one, and so  is a discrete valuation ring; the same lemma implies that $\co_{\mathfrak{M}^k,\eta}$, and hence also $\mathfrak{F}^k_{\eta}$, is a finite length $\co_{\mathfrak{M},\eta}$-module.  Applying Lemma \ref{Lem:no tor}  we find
$$
\sum_{\ell \ge 0} (-1)^\ell\ \length_{\co_{\mathfrak{M},\eta}} \big( \mathrm{Tor}_\ell^{\co_{\mathfrak{M},\eta}}(\mathfrak{F}^1_{\eta},\mathfrak{F}^2_{\eta})\big) =0.
$$
This implies $\length_{\co_{Y_{/W},\eta}} ( \mathfrak{F}^1\otimes^L_{\co_\mathfrak{M}} \mathfrak{F}^2 )=0$, completing the proof.
\end{proof}

\begin{Lem}
\label{Lem:low support}
Suppose $[\mathcal{F}]\in \mathbf{K}_0(Y)$ is supported in dimension one.   Then the image of $[\mathcal{F}]$ under the homomorphism $$R\phi_*:\mathbf{K}_0(Y)\map{} \mathbf{K}_0^Y(M)$$
 of (\ref{derived push-forward})  lies in $F^2\mathbf{K}_0^Y(M)$.
\end{Lem}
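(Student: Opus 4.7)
The plan is as follows. Since $\mathcal{Y}\to\mathcal{M}$ is finite, so is $\phi:Y\to M$, and therefore $R^k\phi_*\mathcal{F}=0$ for $k>0$ and any coherent $\mathcal{F}$; hence $R\phi_*[\mathcal{F}]=[\phi_*\mathcal{F}]$. The definition of the filtration says that $F^2\mathbf{K}_0^Y(M)$ is the union over closed subschemes $Z\subseteq\phi(Y)$ of codimension $\ge 2$ of the images of $\mathbf{K}_0^Z(M)\to\mathbf{K}_0^Y(M)$. Since $M$ is three-dimensional, codimension $\ge 2$ means $\dim Z\le 1$, so it suffices to realize $[\mathcal{F}]$ as a sum of classes of coherent sheaves on $Y$ whose scheme-theoretic supports are at most one-dimensional: the pushforwards will then automatically land in $F^2\mathbf{K}_0^Y(M)$.

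To achieve this I would apply Lemma \ref{Lem:sheaf decomp} with $S=Y$ and $\Omega$ the set of irreducible components of $Y$ of dimension strictly greater than one. Here $\dim Y\le 2$: the generic fiber is zero-dimensional by Lemma \ref{generic etale}, horizontal components are one-dimensional, and the text guarantees that vertical components at bad primes have dimension at most two. Consequently, for any closed subscheme $Z\hookrightarrow Y$ having no component contained in $\Omega$, every irreducible component of $Z$ is either a component of $Y$ not in $\Omega$, or is strictly contained in such a component, and in either case has dimension at most one; so $\dim Z\le 1$.

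By hypothesis $[\mathcal{F}]$ is supported in dimension one, which means precisely that its image in $\bigoplus_{D\in\Omega}\mathbf{K}_0(\Spec\co_{Y,\eta_D})$ vanishes. Exactness of the sequence in Lemma \ref{Lem:sheaf decomp} therefore produces a decomposition
\[
[\mathcal{F}]=\sum_i[\mathcal{G}_i]\quad\text{in }\mathbf{K}_0(Y),
\]
where each $\mathcal{G}_i$ is a coherent sheaf on a closed subscheme $Z_i\hookrightarrow Y$ with $\dim Z_i\le 1$. Pushing forward, $\phi_*\mathcal{G}_i$ is a coherent $\co_M$-module supported on $\phi(Z_i)\subseteq\phi(Y)$, and $\phi(Z_i)$ is a closed subscheme of $M$ of dimension $\le 1$, hence of codimension $\ge 2$. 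Thus $[\phi_*\mathcal{G}_i]$ lies in the image of $\mathbf{K}_0^{\phi(Z_i)}(M)\to\mathbf{K}_0^Y(M)$, which by definition is contained in $F^2\mathbf{K}_0^Y(M)$. Summing over $i$ gives $R\phi_*[\mathcal{F}]\in F^2\mathbf{K}_0^Y(M)$.

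There is no substantial obstacle: the argument is essentially a careful bookkeeping of dimensions combined with the observation that Lemma \ref{Lem:sheaf decomp} is exactly the bridge between the $K$-theoretic notion of support (vanishing of localizations at generic points) and the geometric notion (existence of a representative with low-dimensional support). The only point requiring care is ensuring that $\dim Y\le 2$ in the bad-reduction setting, so that removing the dimension-two components really does cut $Y$ down to a closed subscheme of dimension at most one.
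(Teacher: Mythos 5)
Your argument is correct and is essentially the paper's own proof: both invoke Lemma \ref{Lem:sheaf decomp} (with $\Omega$ the components of dimension greater than one) to convert the $K$-theoretic support hypothesis into a representation of $[\mathcal{F}]$ by classes coming from closed subschemes of dimension at most one, and then push forward, noting that these land in filtration level two because their images in $M$ have codimension at least two. The extra bookkeeping you do (finiteness of $\phi$, the bound $\dim Y\le 2$) is fine and matches what the paper leaves implicit.
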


\begin{proof}
Lemma \ref{Lem:sheaf decomp} implies that $[\mathcal{F}]$ lies in the image of $\mathbf{K}_0(Z)\map{}\mathbf{K}_0(Y)$ for some closed subscheme $Z\map{}Y$ of dimension one.  But then  $R\phi_*[\mathcal{F}]$ lies in the image of 
$$
 \mathbf{K}_0^Z(M) = F^2\mathbf{K}_0^Z(M)\map{} F^2 \mathbf{K}_0^Y(M).
$$
\end{proof}

We now have a machine for producing elements of $\chow^2_Y(M)$.  If $[\mathcal{F}]\in\mathbf{K}_0(Y)$ is supported in dimension one we define, using Lemma \ref{Lem:low support},
$$
\mathrm{cl}(\mathcal{F}) \in \chow^2_Y(M)
$$ 
to be the image of $R\phi_*[\mathcal{F}]$ under the map $F^2\mathbf{K}_0^Y(M)\map{} \chow^2_Y(M)$ induced by the isomorphism (\ref{GS iso}).   In particular, if $\mathfrak{F}^k$ is a  ${\mathrm{Frob}}$-invariant coherent $\co_{\mathfrak{M}^k}$-module for $k\in\{1,2\}$ then we may form, using Proposition \ref{Prop:derived support}, 
\begin{equation}\label{the machine}
\mathrm{cl}(\mathfrak{F}^1\otimes^L_{\co_{\mathfrak{M}}} \mathfrak{F}^2 ) \in  \chow^2_Y(M).
\end{equation}
The issue is which coherent sheaves to choose, and we make the obvious choice  $\mathfrak{F}^k=\co_{\mathfrak{M}^k}$.  Applying the construction (\ref{the machine}), define
$$
C_p=\mathrm{cl}(  \co_{\mathfrak{M}^1} \otimes^L_{\co_{\mathfrak{M} }}  \co_{\mathfrak{M}^2} ) \in \chow^2_Y(M).
$$
The group $H=U^\mathrm{max}/U$ acts on  $\mathfrak{M}$ and $\mathfrak{M}^k$ by permuting $U$-level structures, and the cycle class $C_p$ is $H$-invariant by construction.  By Lemma \ref{Lem:Galois descent} $C_p$ arises as the flat pullback of a cycle class
\begin{equation}
\label{the bad cycle}
\mathcal{C}_p  \in \chow^2_{\mathcal{Y}_{/\Z_p}}(\mathcal{M}_{/\Z_p}).
\end{equation}

\begin{Prop}\label{Lem:bad generic fiber}
The homomorphism (\ref{generic fiber}) takes $\mathcal{C}_p$ to the cycle $\mathcal{C}_\Q\times_\Q\Q_p$ constructed from (\ref{0-cycle}).
  \end{Prop}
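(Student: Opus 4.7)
The plan is to mimic the flat-base-change argument of Lemma \ref{Lem:good generic fiber}. Using the $\Z_p$-uniformization $M\to \mathcal{M}_{/\Z_p}$, it suffices to show that the image of $C_p$ under the analogue of (\ref{generic fiber}) on $M$ equals the $0$-cycle $\sum_{y\in Y_{/\Q_p}}\phi(y)$ underlying $C_\Q\times_\Q\Q_p$. Because the Gillet-Soul\'e map (\ref{GS iso}) is compatible with flat pullback, the coefficient of any $1$-dimensional horizontal integral subscheme $\overline{\{y\}}\subset Y$ (with $y\in Y_{/\Q_p}$) in $C_p$ equals the Euler characteristic
$$
\sum_{\ell\ge 0}(-1)^\ell \length_{\co_{M,\phi(y)}}(\phi_*\mathcal{T}_\ell)_{\phi(y)},
$$
where $\mathcal{T}_\ell$ is the coherent $\co_Y$-module obtained from $\mathrm{Tor}_\ell^{\co_\mathfrak{M}}(\co_{\mathfrak{M}^1},\co_{\mathfrak{M}^2})$ via formal GAGA (applied to $Y_{/W}$, which is proper over $W$ since $\phi:Y\to M$ is finite and $M$ is projective over $W$) and $\mathrm{Frob}$-descent. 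I would therefore reduce the proposition to the claim that at every closed point $y$ of $Y_{/\Q_p}$ one has $\mathcal{T}_{0,y}=\co_{Y,y}$ and $\mathcal{T}_{\ell,y}=0$ for $\ell>0$; the displayed Euler characteristic would then collapse to $[k(y):k(\phi(y))]$ and summation over $y$ would recover $\phi_*[Y_{/\Q_p}] = \mathcal{C}_\Q\times_\Q\Q_p$.

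To establish this vanishing I would invoke the uniformization (\ref{formal covering II}) to reduce to the local question of whether $\mathfrak{X}^1(\tau),\mathfrak{X}^2(\tau)\hookrightarrow\mathfrak{X}$ meet transversally at each point of $\mathfrak{X}(\tau)$ lying above $y$. Since $p$ splits in $\co_F$, the factorization $\mathfrak{G}^*\iso\mathfrak{G}_0^*\times\mathfrak{G}_0^*$ is exactly the decomposition of $\co_F\otimes_\Z\Z_p$ into its two primary components at $\mathfrak{p}_1$ and $\mathfrak{p}_2$, and the Kudla-Rapoport explicit equations of \cite{kudla00} (extended to $p=2$ in \cite{kudla04a}) describe $\mathfrak{X}^k(\tau)$ as the vanishing locus in the regular formal scheme $\mathfrak{X}$ of the obstruction to lifting the component $\tau_k$ along the $\mathfrak{p}_k$-factor of $\mathfrak{G}^*$. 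Because these two ideals involve entirely independent Barsotti-Tate factors, I expect them to form a regular sequence in $\co_{\mathfrak{X}}$ wherever $\mathfrak{X}^1(\tau)$ and $\mathfrak{X}^2(\tau)$ both have their expected codimensions; and this expected-codimension condition is precisely what is forced at any point above $y\in Y_{/\Q_p}$ by the \'etaleness of $Y_{/\Q_p}$ over $\Q_p$ (Lemma \ref{generic etale}). The Koszul resolution then delivers both $\mathrm{Tor}_\ell^{\co_\mathfrak{X}}(\co_{\mathfrak{X}^1(\tau)},\co_{\mathfrak{X}^2(\tau)})=0$ for $\ell>0$ and an identification of the $0$-th Tor with $\co_{\mathfrak{X}(\tau)}$ at the relevant stalk, giving the required statement about $\mathcal{T}_\bullet$ after descent.

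The main obstacle is the transversality step: in the special fiber $\mathfrak{X}^1(\tau)$ and $\mathfrak{X}^2(\tau)$ genuinely can have excess intersection (this is precisely the phenomenon that creates the $2$-dimensional vertical components of $\mathcal{Y}_{/\F_p}$ and motivates the whole construction of \S \ref{S:bad components}), so the argument must carefully exploit the fact that inverting $p$ decouples the two sets of deformation equations. Concretely, one must read off from the explicit Kudla-Rapoport local coordinates that the $p$-inverted generators of the ideals of $\mathfrak{X}^k(\tau)$ cut out a complete intersection at points above $Y_{/\Q_p}$, which is a deliberate but essentially routine depth computation in the regular local ring of $\mathfrak{X}$.
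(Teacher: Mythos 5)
Your outline is the paper's: use the Cerednik--Drinfeld uniformization to reduce to a local Tor computation in the completed local rings of $\mathfrak{X}$, and conclude that the higher Tor sheaves contribute nothing to the image of $\mathcal{C}_p$ in the generic fiber. (Note that the $0$-th Tor needs no argument: the square in (\ref{formal cartesian}) is cartesian, so $\co_{\mathfrak{M}^1}\otimes_{\co_{\mathfrak{M}}}\co_{\mathfrak{M}^2}=\co_{\widehat{Y}_{/W}}$ identically, not just at points over $Y_{/\Q_p}$.) The gap is exactly at the step you defer to a ``routine depth computation.'' All points of the formal scheme $\mathfrak{X}$ lie over the special fiber, so the only places where the Kudla--Rapoport equations can be invoked are closed points $x$ that may simultaneously lie on a horizontal component through some $y\in Y_{/\Q_p}$ \emph{and} on a two-dimensional vertical component. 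At such an $x$ the ideals of $\mathfrak{X}^1(\tau)$ and $\mathfrak{X}^2(\tau)$ genuinely fail to form a regular sequence in $\co_{\mathfrak{X},x}$ (each generator may be divisible by the equation of a common vertical divisor), so the Koszul resolution is unavailable and $\mathrm{Tor}_1\neq 0$ there; the \'etaleness of $Y_{/\Q_p}$ over $\Q_p$ (Lemma \ref{generic etale}) controls only the horizontal part of the intersection and does not restore transversality at these closed points. So the statement you would need to ``read off'' from the local coordinates is false as you have formulated it.

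What is true, and what the paper proves, is the weaker statement that $\mathrm{Tor}_\ell^{R}(N^1,N^2)$ is $W$-torsion for $\ell>0$, where $N^k$ is the completed stalk of $\co_{\mathfrak{X}^k(\tau)}$ at $x$ and $R=\co_{\mathfrak{X},x}^{\wedge}$. The device is to replace each $N^k$ by its maximal $W$-torsion-free quotient $P^k$; the explicit Kudla--Rapoport presentations of these quotients ($R/(s_1)$ and $R/(s_2)$ at ordinary points, $R/(s_k+a_kp+u_kt_k)$ at superspecial points) are transversal in the sense you want, and a two-term free resolution gives $\mathrm{Tor}_\ell^R(P^1,P^2)=0$ for $\ell>0$ in each of the four ordinary/superspecial cases. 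Since $N^k$ and $P^k$ differ by $W$-torsion, so do the Tor's; one then passes from the formal scheme back to $Y$ (torsion at every closed point of $\widehat{Y}_{/W}$ forces the stalk of each $\mathcal{T}_\ell$, $\ell>0$, to be $\Z_p$-torsion at every point of $Y$, hence zero on $Y_{/\Q_p}$), and the counting of Lemma \ref{Lem:good generic fiber} finishes the proof. Your ``invert $p$ and get a regular sequence'' is morally this statement, but until the regular-sequence claim is made for the torsion-free quotients (equivalently, verified after localization away from $p$) and checked against the explicit equations, the argument has a hole precisely where the excess vertical components of \S \ref{S:bad components} enter.
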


\begin{proof}
First suppose that we have a closed point $x_0\in \mathfrak{X}_0$ and a $\tau_0\in \overline{V}_0\otimes_\Q\Q_p$.  In the language of \cite{kudla00} the point $x_0$ is either \emph{ordinary} or \emph{superspecial}.  Let $R_0$ denote the completion of the local ring $\co_{\mathfrak{X}_0,x_0}$ under the topology induced by its maximal ideal, and let $N_0$ denote the completion of the stalk  $\co_{\mathfrak{X}_0(\tau_0),x_0}$.   Let $P_0$ denote the quotient of $N_0$ by its maximal $W$-torsion submodule.  Kudla-Rapoport \cite[\S 3]{kudla00} and Kudla-Rapoport-Yang (in the case $p=2$; see the appendix to \cite[\S 11]{kudla04a})  determine the structure of $N_0$, and hence also $P_0$, quite explicitly.  If $x_0$ is ordinary then $R_0\iso W[[s]]$ and either $P_0=0$ or $P_0\iso W[[s]]/(s)$.  If $x_0$ is superspecial then $R_0\iso W[[s,t]]/(st-p)$ and either $P_0=0$ or 
$$
P_0\iso W[[s,t]]/(st-p,s+ap+ut) 
$$
for some $a,u\in W$ with $u$ a unit.  

Now fix a closed point $x\in \mathfrak{X}$ contained in both $\mathfrak{X}^1(\tau)$ and $\mathfrak{X}^2(\tau)$, let $R$ be the completed local ring of $\co_{\mathfrak{X}}$ at $x$,  let $N^k$ be the completed stalk of   $\co_{\mathfrak{X}^k(\tau)}$ at $x$, and let $P^k$ be the quotient of $N^k$ by its maximal $W$-torsion submodule.   The geometric point 
$$
\Spf(k(x))\map{}\mathfrak{X}\map{} \mathfrak{X}_0\times_W\mathfrak{X}_0
$$
determines an ordered pair of points $(x_1,x_2)$ of  $\mathfrak{X}_0$, and we consider separately the four possibilities for the pair of points appearing.  First suppose both points are ordinary.  Then
$$
R\iso W[[s_1,s_2]]\hspace{1cm} P^1\iso R/(s_1)\hspace{1cm} P^2\iso R/(s_2).
$$
Using the projective resolution
$$
0\map{} R\map{s_1} R\map{}P_1\map{}0
$$
of $P_1$ we find that the $R$-modules $\mathrm{Tor}^R_\ell(P^1,P^2)$ are equal to the homology modules of the complex
$$
0\map{} R/(s_2)\map{s_1}R/(s_2)\map{}0
$$
and therefore
\begin{equation}\label{rigid tor}
\mathrm{Tor}^R_\ell(P^1,P^2) =0 \hspace{.5cm} \forall  \ell >0.
\end{equation}
Next suppose that $x_1$ is ordinary and $x_2$ is superspecial.  Then 
$$
R\iso W[[s_1,s_2,t_2]]/(s_2t_2-p)  \hspace{1cm} P^1\iso R/(s_1)\hspace{1cm} P^2\iso R/( s_2+a_2p+u_2 t_2 )
$$
and using the same projective resolution of $P_1$ as in the ordinary/ordinary case we find that $\mathrm{Tor}^R_\ell(P^1,P^2)$ is given by the homology of 
$$
0\map{} R/(s_2+a_2+u_2t_2)\map{s_1}R/(s_2+a_2p+u_2t_2)\map{}0.
$$
Again we see that (\ref{rigid tor}) holds.  Obviously the case of $x_1$ superspecial and $x_2$ ordinary is similar.  Finally suppose that $x_1$ and $x_2$ are both superspecial.  Then
$$
R\iso W[[s_1,t_1, s_2,t_2]]/(s_1t_1-p, s_2t_2-p)  \hspace{1cm} P^k\iso R/(s_k+a_k p +u_k t_k )
$$
for some $a_k\in W^\times$.  Using the projective resolution
$$
0\map{}R\map{s_1+a_1p+u_1 t_1} R\map{}P^1\map{}0
$$
of $P^1$ one again verifies (\ref{rigid tor}).  We deduce that in all cases, whenever $\ell>0$ the $R$-module $\mathrm{Tor}^R_\ell(N^1,N^2)$  is $W$-torsion.  Using the $p$-adic uniformizations (\ref{formal covering}) and (\ref{formal covering II}) it follows  that
$$
\mathrm{Tor}_\ell^{\co_{\mathfrak{M},x}}( \co_{\mathfrak{M}^1,x} ,  \co_{\mathfrak{M}^2,x} )
$$
is $W$-torsion  for all $\ell>0$ and all closed points $x\in \mathfrak{M}$.

Fix $\ell>0$ and abbreviate 
$$
\mathcal{T} = \mathrm{Tor}_\ell^{\co_{\mathfrak{M}}}(\co_{\mathfrak{M}^1},\co_{\mathfrak{M}^2} )
$$
viewed either as a coherent $\co_Y$-module or a coherent $\co_{\widehat{Y}_{/W}}$-module.  A closed point $y\in Y$ is contained in the special fiber of $Y$, and so may be viewed as a point of $\widehat{Y}$.  For any point $x\in \widehat{Y}_{/W}$ above $y$, the previous paragraph shows  that $\mathcal{T}_x$ is $W$-torsion, and it follows that $\mathcal{T}_y$ is $\Z_p$-torsion.  As this holds for all closed points $y\in Y$, it holds for \emph{all} points $y\in Y$, including those contained in the generic fiber.  But if $y\in Y$ lies in the generic fiber then $\mathcal{T}_y$ is a $\Q_p$-vector space, and hence $\mathcal{T}_y=0$.   Thus the pullback $\mathcal{T}_{/\Q_p}$ of $\mathcal{T}$ by $Y_{/\Q_p}\map{}Y$ has trivial stalks, and so is trivial.  In particular $[\mathcal{T}]$ lies in the kernel of the flat pullback $\mathbf{K}_0(Y)\map{}\mathbf{K}_0(Y_{/\Q_p})$ and so  the class
$$
[\co_Y] =[\co_{\mathfrak{M}^1} \otimes_{\co_{\mathfrak{M}}}  \co_{\mathfrak{M}^2} ]= [\mathrm{Tor}_0^{\co_{\mathfrak{M}}}(\co_{\mathfrak{M}^1},\co_{\mathfrak{M}^2} )]
$$
has the same image as $[ \co_{\mathfrak{M}^1} \otimes^L_{\co_{\mathfrak{M}}}  \co_{\mathfrak{M}^2} ]$  under $\mathbf{K}_0(Y)\map{}\mathbf{K}_0(Y_{/\Q_p})$.  The claim  now follows exactly as in the proof of Lemma \ref{Lem:good generic fiber}.
\end{proof}


\subsection{Computing the pullback}
\label{S:formal pullback calculation}


We assume throughout \S \ref{S:formal pullback calculation} that  $Y_{0/\Q_p}=\emptyset$.  
Continuing with the notation of the previous subsections, our next task is to compute $\deg_p(i^*C_p) $ where $i^*$ is the pullback of (\ref{basic local pullback}) and $\deg_p$ is the composition (\ref{degree abuse}).    Let $\mu:Y_0\map{}\Spec(\Z_p)$ be the structure map.  The assumption that $Y_{0/\Q_p}=\emptyset$ implies that for any coherent $\co_{Y_0}$-module $\mathcal{F}$ the higher direct image $R^\ell\mu_*\mathcal{F}$ is a torsion $\Z_p$-module, and so there is a  push-forward
$$
R\mu_*:\mathbf{K}_0(Y_0)\map{}\mathbf{K}_0^{\{p\}}(\Spec(\Z_p) )
$$
defined by $$R\mu_*[\mathcal{F}] = \sum_{\ell\ge 0}(-1)^\ell [R^\ell \mu_*\mathcal{F}].$$  The composition of the push-forward with the isomorphism $\mathbf{K}_0^{\{p\}}(\Spec(\Z_p) )\iso \Z$ defined by $[\mathcal{G}]\mapsto \length_{\Z_p}(\mathcal{G})$ defines the \emph{Euler characteristic}
$
\chi:\mathbf{K}_0(Y_0)\map{}\Z
$
$$
\chi(\mathcal{F}) = \sum_{\ell\ge 0}(-1)^\ell \length_{\Z_p}(R^\ell \mu_*\mathcal{F}).
$$

Let $\widehat{M}_0$ and $\widehat{Y}_0$ denote the formal completions of $M_0$ and $Y_0$ along their special fibers, respectively.  Define  formal $W$-schemes ($k\in\{1,2\}$)
$$
\mathfrak{M}_0 = \widehat{M}_{0/W} \times_{\widehat{M}_{/W}} \mathfrak{M}
\hspace{1cm}
\mathfrak{M}_0^k = \widehat{M}_{0/W} \times_{\widehat{M}_{/W}} \mathfrak{M}^k
$$
so that   there is an isomorphism
$$
\widehat{Y}_{0/W}\iso  \mathfrak{M}_0\times_\mathfrak{M}\widehat{Y}_{/W}
$$
and a diagram
$$
 \xymatrix{
 &    {\mathfrak{M}^1_0  }  \ar[dr]   \\
    {\widehat{Y}_{0/W} }  \ar[dr]\ar[ur]   &  & {\mathfrak{M}_0 }  \ar[r] & {\widehat{M}_{0/W}}    \\
 &  {\mathfrak{M}^2_0 }  \ar[ur]
 }
$$
in which the square is cartesian.   For each $(s_1,s_2)\in \overline{V}_0$ we view $\tau=s_1\varpi_1+s_2\varpi_2$ as an element of $\overline{V}\iso \overline{V}_0\otimes_\Q F$ and let $(\tau_1,\tau_2)\in ( \overline{V}_0\otimes_\Q\Q_p)^2$ denote the image of $\tau$  under (\ref{CD decomp}).  If we define
$$
\Omega_0(s_1,s_2)  = \Omega(\tau)\cap  \overline{G}_0(\A_f^p) \\
$$
then, as in the discussion surrounding \cite[(8.17)]{kudla00}, there is a Cerednik-Drinfeld style uniformization
\begin{equation}\label{formal covering III}
\mathfrak{M}_0  \iso  \overline{G}_0(\Q)\backslash \bigsqcup_{  \substack{ (s_1, s_2) \in \overline{V}_0   \\ \overline{Q}(\tau)=\alpha } }  \left( \mathfrak{X}_0 \times  \Omega_0(s_1,s_2) \overline{U}^{\mathrm{max},p} / \overline{U}^p \right) .
\end{equation}
One obtains a similar uniformization of $\mathfrak{M}^k_0$ by replacing $\mathfrak{X}_0$ with  $\mathfrak{X}_0(\tau_k)$, and a uniformization of $\widehat{Y}_{0/W}$ by replacing $\mathfrak{X}_0$ with $\mathfrak{X}_0(\tau_1)\times_{\mathfrak{X}_0} \mathfrak{X}_0(\tau_2)$.

Define a homomorphism $\mathbf{K}_0(Y)\map{}\mathbf{K}_0(Y_0)$ as follows.
 If $\mathcal{F}$ is a coherent $\co_Y$-module then, by pulling back to $Y_{/W}$ and passing to the formal completion, we may view $\mathcal{F}$ as a $\sigma$-invariant coherent $\co_\mathfrak{M}$-module  annihilated by the ideal sheaf of the closed formal subscheme $\widehat{Y}_{/W} \map{}\mathfrak{M}$.  For each $\ell\ge 0$ the coherent $\co_{\mathfrak{M}}$-module $\mathrm{Tor}_\ell^{\co_{\mathfrak{M}}}(\mathcal{F},\co_{\mathfrak{M}_0})$ is $\mathrm{Frob}$-invariant and is annihilated by the ideal sheaf of the closed formal subscheme $\widehat{Y}_{0/W}\map{} \mathfrak{M}$, and so by formal GAGA and faithfully flat descent may be viewed as a coherent $\co_{Y_0}$-module.  Thus we may define
$$
 [\mathcal{F}  \otimes^L_{\co_\mathfrak{M}} \co_{\mathfrak{M}_0}] \define \sum_{\ell\ge 0} (-1)^\ell\ [\mathrm{Tor}_\ell^{\co_{\mathfrak{M}}}(\mathcal{F},\co_{\mathfrak{M}_0})] \in\mathbf{K}_0(Y_0),
$$
and one checks that $[\mathcal{F}]\mapsto  [\mathcal{F}  \otimes^L_{\co_\mathfrak{M}} \co_{\mathfrak{M}_0}]$ defines a map $\mathbf{K}_0(Y)\map{}\mathbf{K}_0(Y_0)$.

\begin{Prop}
\label{Prop:drinfeld degree}
  If  $[\mathcal{F}]\in\mathbf{K}_0(Y)$ is supported in dimension one then  the class $  [\mathcal{F}  \otimes^L_{\co_\mathfrak{M}} \co_{\mathfrak{M}_0}]  $ is supported in dimension zero and 
\begin{eqnarray}
\label{bad reduction degree}
\deg_p(i^* \mathrm{cl}(\mathcal{F})) = \chi( \mathcal{F}  \otimes^L_{\co_\mathfrak{M}} \co_{\mathfrak{M}_0}  )
\end{eqnarray}
where $i^*$ is the pullback of (\ref{basic local pullback}) and $\deg_p$ is the composition (\ref{degree abuse}).
\end{Prop}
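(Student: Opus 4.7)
The proof splits into two parts: (1) showing that $\mathcal{F}\otimes^L_{\co_\mathfrak{M}}\co_{\mathfrak{M}_0}$ is supported in dimension zero, and (2) establishing the identity (\ref{bad reduction degree}). Because $Y_{0/\Q_p}=\emptyset$, every component of $Y_0$ is vertical, and since the special fiber $M_{0/\F_p}$ has dimension one each such component has dimension at most one.  For (1), let $\eta$ be the generic point of a one-dimensional component of $Y_0$ (if none exists the claim is vacuous).  The closed immersion $\mathcal{M}_0\hookrightarrow\mathcal{M}$ of regular stacks has codimension one, so locally at $\eta$ it is a regular immersion cut out by a single regular element $f\in\co_{\mathfrak{M},\eta}$.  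The resulting Koszul resolution
\begin{equation*}
0\to\co_\mathfrak{M}\map{f}\co_\mathfrak{M}\to\co_{\mathfrak{M}_0}\to 0
\end{equation*}
forces $\mathrm{Tor}_\ell^{\co_\mathfrak{M}}(\mathcal{F},\co_{\mathfrak{M}_0})=0$ for $\ell\geq 2$, with $\mathrm{Tor}_0=\mathcal{F}/f\mathcal{F}$ and $\mathrm{Tor}_1=\mathcal{F}[f]$.  By Lemma \ref{Lem:sheaf decomp} we may represent $[\mathcal{F}]$ by a coherent sheaf supported on a closed subscheme of $Y$ of dimension at most one; then $\mathcal{F}_\eta$ is finite-length---hence torsion---over the two-dimensional regular local domain $\co_{\mathfrak{M},\eta}$, and the four-term exact sequence arising from multiplication by $f$ gives $\length(\mathcal{F}_\eta[f])=\length(\mathcal{F}_\eta/f\mathcal{F}_\eta)$, so the alternating Euler characteristic at $\eta$ vanishes.

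For (2), unpack $i^*\mathrm{cl}(\mathcal{F})$ using the Gillet-Soul\'e isomorphism (\ref{GS iso}) and the $K$-theoretic pullback (\ref{basic local pullback}): if $\mathcal{P}^\bullet\to\phi_*\mathcal{F}$ is a locally free resolution on $M$, then $i^*\mathrm{cl}(\mathcal{F})$ is the image in $\chow^2_{Y_0}(M_0)$ of
\begin{equation*}
[i^*\mathcal{P}^\bullet]=\sum_\ell(-1)^\ell[\mathrm{Tor}_\ell^{\co_M}(\phi_*\mathcal{F},\co_{M_0})]\in F^2\mathbf{K}_0^{Y_0}(M_0).
\end{equation*}
By Step 1 these Tor sheaves have zero-dimensional support, so the associated 0-cycle evaluates as
\begin{equation*}
\deg_p(i^*\mathrm{cl}(\mathcal{F}))=\sum_\ell(-1)^\ell\sum_{x\in Y_0}[k(x):\F_p]\cdot\length_{\co_{M_0,x}}\bigl(\mathrm{Tor}_\ell^{\co_M}(\phi_*\mathcal{F},\co_{M_0})_x\bigr),
\end{equation*}
while the same support result kills $R^k\mu_*$ for $k>0$ on the other side, yielding
\begin{equation*}
\chi(\mathcal{F}\otimes^L_{\co_\mathfrak{M}}\co_{\mathfrak{M}_0})=\sum_\ell(-1)^\ell\sum_{x\in Y_0}[k(x):\F_p]\cdot\length_{\co_{Y_0,x}}\bigl(\mathrm{Tor}_\ell^{\co_\mathfrak{M}}(\mathcal{F},\co_{\mathfrak{M}_0})_x\bigr)
\end{equation*}
via $\length_{\Z_p}(N)=[k(x):\F_p]\cdot\length_{\co_{Y_0,x}}(N)$ for any finite-length $\Z_p$-module $N$ supported at $x$.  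The two right-hand sides agree: flat base change $\Z_p\to W$ together with stalk-preservation under formal completion identifies the algebraic Tor with the formal Tor, and faithfully flat descent back to $\Z_p$ produces equal coherent sheaves on $Y_0$; the equality $\length_{\co_{M_0,x}}=\length_{\co_{Y_0,x}}$ on the shared Tor module then follows because composition factors over either local ring are copies of the common residue field $k(x)$.

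The principal obstacle is Step 1.  The key enabling observation is that $\mathcal{M}_0\hookrightarrow\mathcal{M}$ is a regular immersion of codimension one---a consequence of the Serre tensor construction and the regularity of both stacks---which collapses the derived tensor product to a two-term Koszul complex and reduces the support claim to a classical identity for torsion modules under multiplication by a regular element (cf.~Lemma \ref{Lem:no tor} and Proposition \ref{Prop:derived support}).  The comparison in Step 2 is then essentially bookkeeping around the conversion between $\length_{\Z_p}$ and local-ring length via the factor $[k(x):\F_p]$.
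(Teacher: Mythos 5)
Your Step 1 is essentially sound and close to the paper's own argument: at the generic point $\eta$ of a one-dimensional (necessarily vertical) component of $Y_0$ the stalk $\mathcal{F}_\eta$ is finite length, and the vanishing of the alternating sum of lengths of the Tor modules is exactly the content of Lemma \ref{Lem:no tor} (your Koszul resolution coming from the codimension-one regular immersion is a special case of that computation). The genuine problem is in Step 2, at the sentence ``By Step 1 these Tor sheaves have zero-dimensional support.'' Step 1 proves a $K$-theoretic statement about the \emph{alternating sum} $[\mathcal{F}\otimes^L_{\co_\mathfrak{M}}\co_{\mathfrak{M}_0}]$; it does not show that the individual sheaves $\mathrm{Tor}_\ell^{\co_M}(\phi_*\mathcal{F},\co_{M_0})$ (or $\mathrm{Tor}_\ell^{\co_\mathfrak{M}}(\mathcal{F},\co_{\mathfrak{M}_0})$) are supported in dimension zero. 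In the bad-reduction situation this proposition is built for, they typically are not: $Y_0$ can contain entire one-dimensional vertical components (this is precisely the phenomenon on the Drinfeld space where $\mathfrak{h}_m(\tau_1)$ and $\mathfrak{h}_m(\tau_2)$ share projective lines, the reason $e_p(T)$ is defined as an Euler characteristic in \cite{kudla00} and \cite{KRY}). Consequently your two displayed formulas are not meaningful as written: the stalk lengths $\length_{\co_{M_0,x}}(\mathrm{Tor}_\ell(\cdots)_x)$ can be infinite at closed points lying on such components, and $R^k\mu_*$ for $k>0$ is not killed sheaf-by-sheaf (its contribution only cancels in the alternating sum).

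The way to repair this is the route the paper takes: work throughout with $K$-theory classes rather than individual sheaves. Having shown the class $[\mathcal{F}\otimes^L_{\co_\mathfrak{M}}\co_{\mathfrak{M}_0}]$ is supported in dimension zero, apply Lemma \ref{Lem:sheaf decomp} to write it, in $\mathbf{K}_0(Y_0)$, as a combination of classes of sheaves supported at finitely many closed points; both $\chi$ and the composite $\tilde{\mathbf{K}}_0(Y_0)\map{R\phi_{0*}}F^2\mathbf{K}_0^{Y_0}(M_0)\map{}\chow^2_{Y_0}(M_0)\map{\deg_p}\Q$ are homomorphisms, so it suffices to check they agree on skyscraper classes, where your length-versus-$\length_{\Z_p}$ bookkeeping is valid. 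You also need, and only gesture at, the commutativity of the square identifying $i^*\circ R\phi_*$ on $F^2\mathbf{K}_0^Y(M)$ with $R\phi_{0*}$ applied to $[\mathcal{F}]\mapsto[\mathcal{F}\otimes^L_{\co_\mathfrak{M}}\co_{\mathfrak{M}_0}]$; this is where the comparison between algebraic Tor over $\co_M$ and formal Tor over $\co_\mathfrak{M}$ (flat completion, base change to $W$, and descent) must be stated precisely, since without it the two sides of (\ref{bad reduction degree}) are computed in different categories.
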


\begin{proof}
Using Lemma \ref{Lem:sheaf decomp} we are reduced to the case where $\mathcal{F}$ is supported in dimension one in the usual sheaf-theoretic sense.  To show that $  [\mathcal{F}  \otimes^L_{\co_\mathfrak{M}} \co_{\mathfrak{M}_0}]  $ is supported in dimension zero we must show that it lies in the kernel of 
$$
\mathbf{K}_0(Y_0)\map{}\mathbf{K}_0(\Spec(\co_{Y_0,\kappa})) \iso \Z
$$
for every $\kappa \in Y_0$ satisfying $\dim\overline{\{\kappa \}}=1$, where the isomorphism is
$$[\mathcal{G}]\mapsto \length_{\co_{Y_0,\eta}}(\mathcal{G}).$$    As we assume that $Y_{0/\Q_p}=\emptyset$ such a $\kappa$  must lie in the special fiber of $Y_0$, and thus we may view $\kappa$ as a point of the formal completion $\widehat{Y}_0$.  Given a point $\eta \in\widehat{Y}_{0/W}$ lying above $\kappa$ the hypothesis that $\mathcal{F}$ is supported in dimension one implies that $\mathcal{F}_{/W,\eta}$ is a finite length  $\co_{\widehat{Y}_{/W},\eta}$-module, and Lemma \ref{Lem:no tor}  then shows that
 $$
 \sum_{\ell \ge 0} (-1)^\ell \length_{\co_{\mathfrak{M},\eta}} \big( \mathrm{Tor}_\ell^{\co_{\mathfrak{M},\eta}} (\mathcal{F}_{/W,\eta },  \co_{\mathfrak{M}_0,\eta} ) \big)=0.
 $$
 Exactly as in the proof of Proposition \ref{Prop:derived support} it follows that
$$
\length_{\co_{Y_0,\kappa}}( [\mathcal{F}\otimes^L_{\co_{\mathfrak{M} }} \co_{\mathfrak{M}_0} ] )=0.
$$
This completes the proof of the first claim.

It remains to prove (\ref{bad reduction degree}).  Let $\tilde{\mathbf{K}}_0(Y)\subset \mathbf{K}_0(Y)$ be the subgroup of classes supported in dimension one, and similarly let $\tilde{\mathbf{K}}_0(Y_0)\subset \mathbf{K}_0(Y_0)$ be the subgroup of classes supported in dimension zero.  There is a commutative diagram
$$
\xymatrix{
{\tilde{\mathbf{K}}_0(Y)  } \ar[r] \ar[d]^{R\phi_*} & {\tilde{\mathbf{K}}_0(Y_0)  } \ar[d]^{R\phi_{0*}}   \\
{F^2\mathbf{K}_0^Y(M) } \ar[r]^{i^*} \ar[d]  & {F^2\mathbf{K}_0^{Y_0}(M_0) } \ar[d]  \\
{\chow^2_Y(M)} \ar[r]^{i^*}  &  {\chow^2_{Y_0}(M_0) }
}
$$
in which the top horizontal arrow is $[\mathcal{F}]\mapsto [\mathcal{F}  \otimes^L_{\co_\mathfrak{M}} \co_{\mathfrak{M}_0}]$ and the middle horizontal arrow is the pullback of \S \ref{SS:K theory}, given by
$$
[\mathcal{G}]\mapsto [\mathcal{G}  \otimes^L_{\co_M} \co_{M_0}] = \sum_{\ell\ge 0} (-1)^\ell [\mathrm{Tor}_\ell^{\co_M}(\mathcal{G},\co_{M_0})].
$$
Thus the left hand side of (\ref{bad reduction degree}) is equal to the image of $[\mathcal{F}\otimes^L_{\co_{\mathfrak{M}}}\co_{\mathfrak{M}_0}]$ under the composition
$$
\tilde{\mathbf{K}}_0(Y_0) \map{R\phi_{0*}}  F^2\mathbf{K}_0^{Y_0}(M_0) \map{} \chow^2_{Y_0}(M_0) \map{\deg_p}\Q.
$$
Using Lemma \ref{Lem:sheaf decomp} to reduce to the case in which $\mathcal{G}$ is a skyscraper sheaf supported at a closed point of $Y_0$ one checks  that this composition is simply $[\mathcal{G}]\mapsto \chi(\mathcal{G})$, completing the proof.
\end{proof}

Given $\mathrm{Frob}$-invariant coherent $\co_{\mathfrak{M}_0^k}$-modules $\mathfrak{F}_0^k$ for $k\in\{1,2\}$, define 
$$
[ \mathfrak{F}_0^1\otimes^L_{\co_{\mathfrak{M}_0 }}\mathfrak{F}_0^2 ]
= \sum_{\ell\ge 0}  (-1)^\ell [\mathrm{Tor}_\ell^{\co_{\mathfrak{M}_0}} (\mathfrak{F}_0^1,\mathfrak{F}_0^2)  ]
\in \mathbf{K}_0(Y_0)
$$
as in the construction (\ref{Tor}).  This class is supported in dimension zero, by imitating the proof of Proposition \ref{Prop:derived support}.

\begin{Lem}
\label{Lem:associative tensor}
  We have the equality of Euler characteristics
$$
\chi\big( (\co_{\mathfrak{M}^1}  \otimes^L_{\co_{\mathfrak{M}}}\co_{\mathfrak{M}^2}) \otimes^L_{\co_{\mathfrak{M}}} \co_{\mathfrak{M}_0} \big) 
= 
\chi( \co_{\mathfrak{M}_0^1} \otimes^L_{\co_{\mathfrak{M}_0 }}\co_{\mathfrak{M}_0^2} ).
$$
\end{Lem}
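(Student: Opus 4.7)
The plan is to prove the lemma by combining associativity of the derived tensor product with a base-change identification, reducing both complexes to a single triple derived tensor product whose Euler characteristic is then computed in two different ways.

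First I record the formal identity
\begin{equation*}
A \otimes^L_R B \otimes^L_R C \;\simeq\; (A \otimes^L_R C) \otimes^L_C (B \otimes^L_R C),
\end{equation*}
valid whenever $C$ is an $R$-algebra and $A,B$ are $R$-modules. This follows formally from associativity of the derived tensor product together with the trivial identity $C \otimes^L_C M \simeq M$ for a $C$-module $M$:
\begin{equation*}
(A \otimes^L_R C) \otimes^L_C (B \otimes^L_R C) \simeq A \otimes^L_R (C \otimes^L_C (B \otimes^L_R C)) \simeq A \otimes^L_R B \otimes^L_R C.
\end{equation*}
Taking $R=\co_\mathfrak{M}$, $C=\co_{\mathfrak{M}_0}$, $A=\co_{\mathfrak{M}^1}$, $B=\co_{\mathfrak{M}^2}$, the complex inside $\chi$ on the LHS of the lemma is identified with $(\co_{\mathfrak{M}^1}\otimes^L_{\co_\mathfrak{M}} \co_{\mathfrak{M}_0}) \otimes^L_{\co_{\mathfrak{M}_0}} (\co_{\mathfrak{M}^2}\otimes^L_{\co_\mathfrak{M}} \co_{\mathfrak{M}_0})$.

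Next I identify each derived pullback $\co_{\mathfrak{M}^k}\otimes^L_{\co_\mathfrak{M}} \co_{\mathfrak{M}_0}$ with the ordinary tensor product $\co_{\mathfrak{M}_0^k}$, which reduces to verifying that $\mathrm{Tor}^{\co_\mathfrak{M}}_i(\co_{\mathfrak{M}^k}, \co_{\mathfrak{M}_0})=0$ for $i>0$. Lemma \ref{Lem:drinfeld codimension} yields the regularity of $\mathfrak{M}$, and the same argument applied to the uniformization \eqref{formal covering III} shows that $\mathfrak{M}_0$ is regular of dimension $\dim\mathfrak{M}-1=2$, so $\mathfrak{M}_0\hookrightarrow \mathfrak{M}$ is locally a regular embedding of codimension one. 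Hence $\co_{\mathfrak{M}_0}$ admits a length-one Koszul resolution $0\to \co_\mathfrak{M}\xrightarrow{f} \co_\mathfrak{M}\to \co_{\mathfrak{M}_0}\to 0$ for a local non-zero-divisor $f$; only $\mathrm{Tor}_1$ can be nonzero, and it coincides with the $f$-torsion subsheaf of $\co_{\mathfrak{M}^k}$. This $f$-torsion vanishes provided no associated component of $\mathfrak{M}^k$ is contained in $\mathfrak{M}_0$, a point-local check which I carry out using the Cerednik-Drinfeld uniformizations \eqref{formal covering II} and \eqref{formal covering III} together with the explicit Kudla-Rapoport equations for $\mathfrak{h}_m(\tau_k)\subset \mathfrak{h}_m$ (and the Kudla-Rapoport-Yang equations at $p=2$) recalled in Lemma \ref{Lem:drinfeld codimension}.

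Combining the two steps yields a quasi-isomorphism between the complexes whose Euler characteristics appear on the two sides of the lemma, and taking $\chi$ gives the asserted equality. The main obstacle lies in step two: one must treat the four combinations (ordinary/ordinary, ordinary/superspecial, superspecial/ordinary, superspecial/superspecial) of closed points of $\mathfrak{X}_0\times_W \mathfrak{X}_0$ encountered in the proof of Proposition \ref{Lem:bad generic fiber}, and verify in each case that a local defining equation of $\mathfrak{M}_0$ restricts to a non-zero-divisor in the explicit Kudla-Rapoport local ring of $\mathfrak{M}^k$. The running hypothesis $Y_{0/\Q_p}=\emptyset$ (which forces the intersection $\mathfrak{M}_0\cap \mathfrak{M}^k = \widehat{Y}_{0/W}$ to live entirely in characteristic $p$) rules out the only other potential obstruction, namely that some generic-fiber component of $\mathfrak{M}^k$ could be swallowed inside $\mathfrak{M}_0$.
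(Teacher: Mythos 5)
Your first step (the change-of-rings identity for the triple derived tensor product) is exactly the paper's first step, and the goal of your second step --- the vanishing of $\mathrm{Tor}^{\co_{\mathfrak{M}}}_i(\co_{\mathfrak{M}^k},\co_{\mathfrak{M}_0})$ for $i>0$ --- is also the right one. The gap is in how you propose to prove that vanishing. In the Cerednik--Drinfeld uniformization the embedding $\mathfrak{M}_0\map{}\mathfrak{M}$ is induced by the diagonal $\mathfrak{X}_0\map{}\bigsqcup_m(\mathfrak{h}_m\times_W\mathfrak{h}_m)$, and this is \emph{not} an effective Cartier divisor at the superspecial points. At a point of $\mathfrak{h}_m$ with completed local ring $W[[s,t]]/(st-p)$, the completed local ring of $\mathfrak{h}_m\times_W\mathfrak{h}_m$ at the corresponding diagonal point is $R\iso W[[s_1,t_1,s_2,t_2]]/(s_1t_1-p,\ s_2t_2-p)$ --- these are precisely the local rings written down in the proof of Proposition \ref{Lem:bad generic fiber} --- and the ideal of the diagonal is $(s_1-s_2,\ t_1-t_2)$, whose image in $\mathfrak{m}_R/\mathfrak{m}_R^2$ is two-dimensional. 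That ideal is therefore not principal, there is no length-one Koszul resolution $0\map{}\co_{\mathfrak{M}}\map{}\co_{\mathfrak{M}}\map{}\co_{\mathfrak{M}_0}\map{}0$, and your reduction of the higher Tor's to an ``$f$-torsion'' check collapses exactly at the supersingular points, which are the only points where $\mathfrak{M}^1$, $\mathfrak{M}^2$ and $\mathfrak{M}_0$ actually meet.

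The vanishing should instead be proved by flat base change, which needs no hypothesis on the embedding of $\mathfrak{M}_0$: locally $\mathfrak{X}^1(\tau)=\mathfrak{h}_m(\tau_1)\times_W\mathfrak{h}_m$ is the preimage of $\mathfrak{h}_m(\tau_1)$ under the \emph{flat} first projection $\pi_1$, and $\pi_1$ composed with the diagonal is the identity. Pulling back a locally free resolution $\mathcal{P}^\bullet\map{}\co_{\mathfrak{h}_m(\tau_1)}$ along $\pi_1$ gives a locally free resolution of $\co_{\mathfrak{h}_m(\tau_1)\times_W\mathfrak{h}_m}$, and restricting it to the diagonal returns $\mathcal{P}^\bullet$ itself, so all higher Tor's vanish with no case analysis. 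If you insist on a local-equation argument you would at minimum have to treat the superspecial diagonal points separately, and you would also need the full local rings $N^k$ of $\mathfrak{X}^k(\tau)$ including their $W$-torsion, not just the torsion-free quotients $P^k$ that the Kudla--Rapoport equations describe.
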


\begin{proof}
Elementary homological algebra shows that
\begin{eqnarray*}
\lefteqn{
  (\co_{\mathfrak{M}^1}  \otimes^L_{\co_{\mathfrak{M}}}\co_{\mathfrak{M}^2}) \otimes^L_{\co_{\mathfrak{M}}} \co_{\mathfrak{M}_0}   }   \\
 & & =
  (\co_{\mathfrak{M}^1}  \otimes^L_{\co_{\mathfrak{M}}}\co_{\mathfrak{M}_0}) \otimes^L_{\co_{\mathfrak{M}}}    (\co_{\mathfrak{M}^2}  \otimes^L_{\co_{\mathfrak{M}}}\co_{\mathfrak{M}_0}) 
\end{eqnarray*}
in the derived category of  locally free quasi-coherent $\co_{\mathfrak{M}}$-modules.  From this it follows that
\begin{eqnarray*}
\lefteqn{
\chi\big(  (\co_{\mathfrak{M}^1}  \otimes^L_{\co_{\mathfrak{M}}}\co_{\mathfrak{M}^2}) \otimes^L_{\co_{\mathfrak{M}}} \co_{\mathfrak{M}_0}  \big)  }   \\
 & & =
 \sum_{i,j,\ell}(-1)^{i+j+\ell}  \chi  \big(\mathrm{Tor}^{\co_{\mathfrak{M}_0 } }_\ell 
 \big(  \mathrm{Tor}^{\co_{\mathfrak{M} } }_i ( \co_{\mathfrak{M}^1 }  , \co_{\mathfrak{M}_0})  ,
  \mathrm{Tor}^{\co_{\mathfrak{M} } }_j ( \co_{\mathfrak{M}^2 }  , \co_{\mathfrak{M}_0})      \big)    \big).
\end{eqnarray*}

We claim that 
$$
\mathrm{Tor}^{\co_{\mathfrak{M} } }_i ( \co_{\mathfrak{M}^1 }  , \co_{\mathfrak{M}_0})  
\iso \left\{ \begin{array}{ll}
\co_{\mathfrak{M}_0^1} & \mathrm{if\ }i=0 \\
0 & \mathrm{otherwise} \end{array}
\right.
$$
for all $i \ge 0$ (and similarly for $\co_{\mathfrak{M}^2}$).   When $i=0$ this is clear from $\mathfrak{M}^1\times_\mathfrak{M} \mathfrak{M}_0\iso \mathfrak{M}_0^1$.   When $i>0$ it suffices to check the vanishing at stalks of the sheaf on the left.  Using the uniformization (\ref{formal covering}) we are then reduced to proving the vanishing of 
\begin{equation}\label{fancy tors}
\mathrm{Tor}^{\co_{\mathfrak{h}_m\times_W \mathfrak{h}_m} }_i ( \co_{\mathfrak{h}_m(\tau_1)\times_W\mathfrak{h}_m}   , \co_{\mathfrak{h}_m}) =0
\end{equation}
for every $\tau_1\in\overline{V}_0\otimes_\Q\Q_p$.  Here we view $\mathfrak{h}_m$ as a closed formal subscheme of $\mathfrak{h}_m \times_W\mathfrak{h}_m$ using the diagonal embedding, and hence view $\co_{\mathfrak{h}_m}$ as a coherent $\co_{\mathfrak{h}_m \times_W \mathfrak{h}_m}$-module.  The key observation is that the flatness of $\mathfrak{h}_m\map{}\Spf(W)$ implies the flatness of the first projection $\pi_1:\mathfrak{h}_m\times_W\mathfrak{h}_m\map{}\mathfrak{h}_m$.
 Choosing a  resolution 
 $$
 \mathcal{P}^\bullet\map{} \co_{\mathfrak{h}_m(\tau_1)} 
 $$ 
 of $\co_{\mathfrak{h}_m(\tau_1)}$ by locally free quasi-coherent $\co_{\mathfrak{h}_m}$-modules the pullback complex 
 $$
 \pi_1^*\mathcal{P}^\bullet\map{} \co_{\mathfrak{h}_m(\tau_1)\times_W\mathfrak{h}_m} 
 $$ 
 is a locally free resolution of $\co_{\mathfrak{h}_m(\tau_1)\times_W\mathfrak{h}_m}$.   Using this resolution to compute the left hand side of (\ref{fancy tors}) and using the isomorphism  
 $$
 (\pi_1^*\mathcal{P})\otimes_{\co_{\mathfrak{h}_m \times_W \mathfrak{h}_m } } \co_{\mathfrak{h}_m} \iso \mathcal{P}
 $$ 
 for any quasi-coherent $\co_{\mathfrak{h}_m}$-module $\mathcal{P}$,  (\ref{fancy tors}) follows.

Combining the previous two paragraphs we are now left with
$$
\chi\big(  (\co_{\mathfrak{M}^1}  \otimes^L_{\co_{\mathfrak{M}}} \co_{\mathfrak{M}^2})  \otimes^L_{\co_{\mathfrak{M}}} \co_{\mathfrak{M}_0}  \big)
  =
\sum_{\ell}(-1)^{\ell}  \chi  \big(\mathrm{Tor}^{\co_{\mathfrak{M}_0 } }_\ell (  \co_{\mathfrak{M}^1_0} ,  \co_{\mathfrak{M}^2_0 }  ) \big)  
$$
as desired.
\end{proof}

\begin{Cor}\label{Cor:too much tensor} 
We have
$$
\deg_p(i^* C_p) = \chi (\co_{\mathfrak{M}_0^1} \otimes^L_{\co_{\mathfrak{M}_0 }}  \co_{\mathfrak{M}_0^2} ).
$$
\end{Cor}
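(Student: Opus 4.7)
The plan is to chain together the two results immediately preceding the corollary. By construction, $C_p = \mathrm{cl}(\co_{\mathfrak{M}^1} \otimes^L_{\co_{\mathfrak{M}}} \co_{\mathfrak{M}^2})$, where the cycle class on the right is built from the class $[\co_{\mathfrak{M}^1} \otimes^L_{\co_{\mathfrak{M}}} \co_{\mathfrak{M}^2}] \in \mathbf{K}_0(Y)$ defined in (\ref{Tor}). By Proposition \ref{Prop:derived support} this $K$-theory class is supported in dimension one, so Proposition \ref{Prop:drinfeld degree} applies and yields
$$
\deg_p(i^* C_p) = \chi\bigl( (\co_{\mathfrak{M}^1} \otimes^L_{\co_{\mathfrak{M}}} \co_{\mathfrak{M}^2}) \otimes^L_{\co_{\mathfrak{M}}} \co_{\mathfrak{M}_0} \bigr).
$$

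Next I would invoke Lemma \ref{Lem:associative tensor}, which identifies the Euler characteristic on the right with $\chi(\co_{\mathfrak{M}_0^1} \otimes^L_{\co_{\mathfrak{M}_0}} \co_{\mathfrak{M}_0^2})$. Composing these two equalities gives the claim.

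The only genuine content to verify is that everything is legal: the class $\mathcal{F} = [\co_{\mathfrak{M}^1} \otimes^L_{\co_{\mathfrak{M}}} \co_{\mathfrak{M}^2}]$ is indeed a well-defined element of $\mathbf{K}_0(Y)$ (this is exactly the definition (\ref{Tor}), using the $\mathrm{Frob}$-invariance of $\co_{\mathfrak{M}^k}$ together with formal GAGA and faithfully flat descent to pass from $\widehat{Y}_{/W}$ to $Y$), and that the hypothesis of Proposition \ref{Prop:drinfeld degree} is met, which is precisely Proposition \ref{Prop:derived support}. With these checks in place the proof is purely formal: it is the one-line citation

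$$
\deg_p(i^* C_p) \stackrel{\ref{Prop:drinfeld degree}}{=} \chi\bigl( (\co_{\mathfrak{M}^1} \otimes^L_{\co_{\mathfrak{M}}} \co_{\mathfrak{M}^2}) \otimes^L_{\co_{\mathfrak{M}}} \co_{\mathfrak{M}_0} \bigr) \stackrel{\ref{Lem:associative tensor}}{=} \chi(\co_{\mathfrak{M}_0^1} \otimes^L_{\co_{\mathfrak{M}_0}} \co_{\mathfrak{M}_0^2}).
$$

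There is no real obstacle here; all the substantive work has already been done in Propositions \ref{Prop:derived support}, \ref{Prop:drinfeld degree} and Lemma \ref{Lem:associative tensor}. The corollary is simply packaging those three inputs into the shape that will be used in the next section.
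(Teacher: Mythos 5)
Your proposal is correct and is exactly the paper's argument: the paper's proof of this corollary is the single line ``Combine Proposition \ref{Prop:drinfeld degree} and Lemma \ref{Lem:associative tensor},'' with the hypothesis of Proposition \ref{Prop:drinfeld degree} supplied by Proposition \ref{Prop:derived support} just as you note. Nothing further is needed.
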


\begin{proof}
Combine Proposition \ref{Prop:drinfeld degree} and Lemma \ref{Lem:associative tensor}.
\end{proof}

We now turn to the calculation of the right hand side of the equality of Corollary \ref{Cor:too much tensor}, which we reduce to calculations of  Kudla-Rapoport-Yang.   Let $\mathrm{Tr}$ be the reduced trace on $\overline{B}_0$.  Define a $\Q$-valued bilinear form on $\overline{V}_0$ 
$$
[s_1,s_2]=-\mathrm{Tr} (s_1s_2)
$$
and for each  $T\in\Sym_2(\Z)^\vee$ let  $\overline{V}_0(T)\subset \overline{V}_0\times \overline{V}_0$ be the set of pairs $(s_1,s_2)$ such that (\ref{fundamental form}) holds.   It follows from \cite[Theorem III.3.1]{lam} that the group $\overline{G}_0(\Q)$ acts transitively on $\overline{V}_0(T)$ by conjugation.  Define a $\Z[1/p]$-lattice $\overline{L}_0\subset \overline{V}_0$
$$
\overline{L}_0= \{ v\in \overline{V}_0 \mid \widehat{\Lambda}_0^p \cdot \iota_0(v)\subset \widehat{\Lambda}_0^p  \}
$$
 and let $\overline{L}_0(T)= \overline{V}_0(T)\cap ( \overline{L}_0 \times \overline{L}_0 )$.  Define also a discrete subgroup  
$$
\overline{\Gamma}_0= \{ \gamma \in \overline{G}_0(\Q) \mid \widehat{\Lambda}^p_0 \cdot  \iota_0(\gamma) = \widehat{\Lambda}^p_0  \} 
$$
and note that $\overline{\Gamma}_0$ acts on $\overline{L}_0$ and on $\overline{L}_0(T)$ by conjugation.

Let $ \overline{Q}_0(\tau_0)= -\tau_0^2$  be the  quadratic form on $\overline{V}_0$ associated to $[\ ,\ ]$.  Suppose that  $T\in\Sym_2(\Z)^\vee$ is nonsingular, and that $s_1, s_2\in \overline{V}_0 \otimes_\Q\Q_p$  satisfy both $\overline{Q}_0(s_k) \in\Z_p-\{0\}$  and the relation (\ref{fundamental form}).   In particular this implies that $s_1$ and $s_2$ are linearly independent.  Letting $\mu:\mathfrak{h}_m\map{}\Spf(W)$ denote the structure map, a theorem of Kudla-Rapoport  \cite[Theorem 6.1]{kudla00} (and using the appendix to \cite[Chapter 6]{KRY} for the case $p=2$) shows that for any $m\in\Z$ the integer
$$
e_p(T)\define \sum_{k,\ell\ge 0} (-1)^{k+\ell} \mathrm{length}_W  R^\ell\mu_* \mathrm{Tor}_k^{\co_{\mathfrak{h}_m}} (\co_{\mathfrak{h}_m(s_1)} ,\co_{\mathfrak{h}_m(s_2)} )
$$
depends only on the isomorphism class of the rank two quadratic space 
$$
(\Z_p\oplus\Z_p,T)\iso (\Z_ps_1+\Z_ps_2, \overline{Q}_0)
$$ and not on $s_1$, $s_2$ themselves (and all $W$-modules appearing in the sum have finite length).  Furthermore $e_p(T)$ is independent of $m$ by the argument following \cite[(8.40)]{kudla00}.  In the notation of \cite[Chapter 6.2]{KRY} we have  $\nu_p(T)=2\cdot e_p(T)$.

For each $T\in\Sigma(\alpha)$ define 
$
Z(T) = \mathcal{Z}(T)_{/\Z_p}  \times_{\mathcal{M}_{/\Z_p}}  M.
$
The decomposition (\ref{moduli decomposition}) induces an analogous decomposition of $Y_0$, and hence an isomorphism
\begin{equation}
\label{alt moduli decomposition}
\mathbf{K}_0(Y_0) \iso \bigoplus_{T\in \Sigma(\alpha)} \mathbf{K}_0(Z(T)).
\end{equation}
For any $[\mathcal{G}]\in \mathbf{K}_0(Y_0)$ let $\chi_T(\mathcal{G})$ denote the Euler characteristic of the projection of $[\mathcal{G}]$ to the summand $\mathbf{K}_0(Z(T))$.

\begin{Prop}
\label{Prop:drinfeld decomposition}
Assume that $F(\sqrt{-\alpha} )/\Q$ is not biquadratic, so that $Y_{0/\Q_p}=\emptyset$ and  $\Sigma(\alpha)$ contains no singular matrices by Lemma \ref{Lem:biquadratic}.  If $T\in\Sigma(\alpha)$ satisfies (\ref{fundamental form}) for some $s_1, s_2\in \overline{V}_0 $ then  
$$
\frac{1}{|H|} \cdot 
\chi_T ( \co_{\mathfrak{M}_0^1}  \otimes^L_{\co_{\mathfrak{M}_0}} \co_{\mathfrak{M}_0^2}  )
=  e_p(T) \cdot | \overline{\Gamma}_0 \backslash \overline{L}_0(T)| .
$$
\end{Prop}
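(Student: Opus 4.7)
My plan is to pass to the Cerednik-Drinfeld uniformizations of $\mathfrak{M}_0$, $\mathfrak{M}_0^1$, and $\mathfrak{M}_0^2$, and compute the Euler characteristic orbit-by-orbit over $\overline{G}_0(\Q)\backslash\overline{V}_0(T)$, reducing the per-orbit local contribution to the Kudla-Rapoport intersection number $e_p(T)$.

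First I would apply the uniformization (\ref{formal covering III}) of $\mathfrak{M}_0$, together with the analogous uniformizations of $\mathfrak{M}_0^k$ obtained by replacing $\mathfrak{X}_0$ with $\mathfrak{X}_0(\tau_k)$ (and of $\widehat{Y}_{0/W}$ by replacing $\mathfrak{X}_0$ with $\mathfrak{X}_0(\tau_1)\times_{\mathfrak{X}_0}\mathfrak{X}_0(\tau_2)$). These express each formal scheme as a $\overline{G}_0(\Q)$-quotient of a disjoint union indexed by pairs $(s_1,s_2)\in\overline{V}_0$ with $\overline{Q}(s_1\varpi_1+s_2\varpi_2)=\alpha$, and restricting to the $T$-summand of the decomposition (\ref{alt moduli decomposition}) singles out exactly the pairs in $\overline{V}_0(T)$. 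Under these uniformizations the pullback of $\co_{\mathfrak{M}_0^1}\otimes^L_{\co_{\mathfrak{M}_0}}\co_{\mathfrak{M}_0^2}$ to each summand becomes $\co_{\mathfrak{X}_0(\tau_1)}\otimes^L_{\co_{\mathfrak{X}_0}}\co_{\mathfrak{X}_0(\tau_2)}$ times a prime-to-$p$ adelic double coset.

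Next, for a pair $(s_1,s_2)\in\overline{V}_0(T)$, let $(\tau_1,\tau_2)$ be the image under (\ref{CD decomp}) of $\tau=s_1\varpi_1+s_2\varpi_2$. The transition matrix from $(s_1,s_2)$ to $(\tau_1,\tau_2)$ has entries the images of $\varpi_1,\varpi_2$ in $\co_{F,\mathfrak{p}_1}\iso\Z_p$ and $\co_{F,\mathfrak{p}_2}\iso\Z_p$, and lies in $\GL_2(\Z_p)$ because Hypothesis \ref{Hyp:discriminant}(c) forces $p$ to split in $F$, so that $\{\varpi_1,\varpi_2\}$ remains a $\Z_p$-basis of $\co_F\otimes_\Z\Z_p$. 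Consequently $(\Z_p\tau_1+\Z_p\tau_2,\overline{Q}_0)$ is isomorphic as a rank-two quadratic $\Z_p$-lattice to $(\Z_p\oplus\Z_p,T)$, and the cited theorem of Kudla-Rapoport (with the $p=2$ appendix of \cite{KRY}) gives that the local sheaf-theoretic contribution on each component $\mathfrak{h}_m$ equals $e_p(T)$, independently of $m$.

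The most delicate step will be the bookkeeping for the global sum. By \cite[Theorem III.3.1]{lam}, $\overline{G}_0(\Q)$ acts transitively on $\overline{V}_0(T)$, and the reduced norm $\mathrm{Nm}:\overline{G}_0(\Q)\map{}\Q^\times$ has image containing a uniformizer of $\Z_p$, so $\overline{G}_0(\Q)$ also acts transitively on the set $\pi_0(\mathfrak{X}_0)\iso\Z$ of connected components; this will collapse the a priori infinite sum over $m\in\Z$ to a single copy of $e_p(T)$ per orbit on $\overline{V}_0(T)$. A standard adelic manipulation (as in \cite{kudla00}) should then identify the remaining quotient
$$
\overline{G}_0(\Q)\backslash\bigsqcup_{(s_1,s_2)\in \overline{V}_0(T)} \Omega_0(s_1,s_2)\,\overline{U}^{\mathrm{max},p}/\overline{U}^p
$$
with a finite set of cardinality $|H|\cdot|\overline{\Gamma}_0\backslash\overline{L}_0(T)|$: the factor $|\overline{\Gamma}_0\backslash\overline{L}_0(T)|$ arises from moving each pair to one where the conjugate $(g^{-1}s_1g,\,g^{-1}s_2g)$ lies in $\overline{L}_0$, and the factor $|H|=|U^\mathrm{max}/U|$ accounts for the refinement from $\overline{U}^{\mathrm{max},p}$ to $\overline{U}^p$ (noting $U_p=U_p^\mathrm{max}$, so $H$ is purely prime-to-$p$). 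Combining these inputs produces the stated formula. The principal obstacle is verifying this final adelic count precisely, especially ensuring that the stabilizers of $(s_1,s_2)$ in $\overline{G}_0(\Q)$ interact with the transitive component action so that the combined factor is exactly $|H|\cdot|\overline{\Gamma}_0\backslash\overline{L}_0(T)|$ without spurious stabilizer corrections.
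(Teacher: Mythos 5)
Your reduction steps match the paper's: you pass to the uniformization (\ref{formal covering III}) and its analogues for $\mathfrak{M}_0^k$, and you use the lattice equality $\Z_p s_1+\Z_p s_2=\Z_p\tau_1+\Z_p\tau_2$ (valid since $p$ splits in $F$, so $p\nmid\mathrm{disc}(F)$) to invoke Kudla--Rapoport and get a contribution $e_p(T)$ on each component $\mathfrak{h}_m$, independent of $m$. The gap is exactly at the counting step you flag, and it is not a technicality: you cannot use transitivity of $\overline{G}_0(\Q)$ on $\pi_0(\mathfrak{X}_0)$ independently of its transitivity on $\overline{V}_0(T)$, because the group acts \emph{diagonally} on the disjoint union of the products $\mathfrak{X}_0\times\Omega_0(s_1,s_2)\overline{U}^{\mathrm{max},p}/\overline{U}^p$. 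Once you have used transitivity to fix one pair $(s_1,s_2)$, the only elements left to move components are those in its stabilizer, which (by linear independence of $s_1,s_2$) is the center $\Q^\times$; a central $z$ sends $\mathfrak{h}_m$ to $\mathfrak{h}_{m-2\ord_p(z)}$, an \emph{even} shift, so two component classes survive, not one. The elements of odd norm valuation you want to use to identify all components conjugate $(s_1,s_2)$ to a different pair and have already been spent. This is why the paper obtains $\mathfrak{M}_0(T)\iso(\mathfrak{h}_0\sqcup\mathfrak{h}_1)\times\big(\Q^\flat\backslash\Omega_0(s_1,s_2)\overline{U}^{\mathrm{max},p}/\overline{U}^p\big)$ with $\Q^\flat=\{x\in\Q^\times\mid\ord_p(x)=0\}$, hence a factor $2\,e_p(T)$ rather than your single copy.

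Correspondingly, your claimed value for the adelic count is also wrong, and the two errors do not cancel as written. After using transitivity on pairs, your displayed quotient is $\Q^\times\backslash\Omega_0(s_1,s_2)\overline{U}^{\mathrm{max},p}/\overline{U}^p$. The paper's count (\ref{silly cosets}), proved by strong approximation via the map sending $\gamma u$ to $\overline{\Gamma}_0\cdot(\gamma^{-1}s_1,\gamma^{-1}s_2)$ with fibers $\{\pm 1\}\backslash\gamma\overline{U}^{\mathrm{max},p}/\overline{U}^p$, gives $2\cdot|\Q^\flat\backslash\Omega_0(s_1,s_2)\overline{U}^{\mathrm{max},p}/\overline{U}^p|=|H|\cdot|\overline{\Gamma}_0\backslash\overline{L}_0(T)|$; so already the quotient by the \emph{smaller} group $\Q^\flat$ has cardinality $\tfrac12|H|\cdot|\overline{\Gamma}_0\backslash\overline{L}_0(T)|$, and quotienting further by all of $\Q^\times$ only shrinks it (the central element $p$ generally acts nontrivially, since $\pm p\notin\overline{U}^p$ for small $U^p$). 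So your set cannot have cardinality $|H|\cdot|\overline{\Gamma}_0\backslash\overline{L}_0(T)|$. What makes the constants come out is precisely the bookkeeping you hoped to avoid: fix one pair, split the residual $\Q^\times$-action into even component shifts (giving $\mathfrak{h}_0\sqcup\mathfrak{h}_1$ and the factor $2$) plus the $\Q^\flat$-action on the adelic factor, and then prove (\ref{silly cosets}), where the $\{\pm1\}$ in the fibers supplies the compensating $\tfrac12$ against $|H|$. Your plan needs to be restructured along these lines; as stated, the ``single copy of $e_p(T)$ per orbit'' step is false and the final identification is not a standard manipulation but the actual content of the proof.
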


\begin{proof}
The proof is based on  that of \cite[Theorem 8.5]{kudla00}.
Abbreviate $[\mathfrak{F}_T]$ for the projection of  $[\co_{\mathfrak{M}_0^1}  \otimes^L_{\co_{\mathfrak{M}_0}} \co_{\mathfrak{M}_0^2} ]$ to the summand of (\ref{alt moduli decomposition}) indexed by $T$ and let $$\mu:Y_0\map{}\Spec(\Z_p)$$ be the structure map so that
$$
\chi_T( \co_{\mathfrak{M}_0^1}  \otimes^L_{\co_{\mathfrak{M}_0}} \co_{\mathfrak{M}_0^2}  ) = \length_{\Z_p}( R\mu_* \mathfrak{F}_T ).
$$

 Let $\widehat{Z}(T)$ denote the formal completion of $Z(T)$ along its special fiber.  After formal completion and base change to $W$ the morphism $Z(T)\map{} M_0$ admits a factorization
$$
\widehat{Z}(T)_{/W} \map{} \mathfrak{M}_0(T) \map{} \widehat{M}_{0/W}
$$ 
in which the open and closed formal subscheme $\mathfrak{M}_0(T)$ of $ \mathfrak{M}_0$ is defined,  following (\ref{formal covering III})  and \cite[(8.17)]{kudla00},  as 
$$
\mathfrak{M}_0(T)  \iso  \overline{G}_0(\Q)\backslash \bigsqcup_{  (s_1, s_2) \in \overline{V}_0(T)   }  \left( \mathfrak{X}_0 \times  \Omega_0(s_1,s_2) \overline{U}^{\mathrm{max},p} / \overline{U}^p \right). 
$$
The closed  formal subscheme $\widehat{Z}(T)_{/W}\map{}\mathfrak{M}_0(T)$  is obtained by replacing $\mathfrak{X}_0$ with $$\mathfrak{X}_0(\tau_1) \times_{\mathfrak{X}_0} \mathfrak{X}_0(\tau_2)$$ as in the comments following (\ref{formal covering III}), where $(\tau_1,\tau_2)\in ( \overline{V}_0\otimes_\Q\Q_p)^2$ is the image of $\tau=s_1\varpi_1+s_2\varpi_2\in  \overline{V}$  under (\ref{CD decomp}) .  Similarly we define
$$
\mathfrak{M}^k_0(T)  \iso  \overline{G}_0(\Q)\backslash \bigsqcup_{     (s_1, s_2) \in \overline{V}_0(T) }       \left( \mathfrak{X}_0 (\tau_k) \times \Omega_0(s_1,s_2) \overline{U}^{\mathrm{max},p} /  \overline{U}^p \right)
$$
so that there is a diagram
$$
\xymatrix{
  &  {  \mathfrak{M}_0^1(T)  } \ar[dr]   \\
  { \widehat{Z}(T)_{/W}  }    \ar[ur]\ar[dr]   &     &    {  \mathfrak{M}_0(T)  }   \ar[r]   &  {\mathfrak{M}_0}   \\
   &   {  \mathfrak{M}_0^2(T)  }  \ar[ur]
}
$$
in which the square is cartesian.    We have the equality in $\mathbf{K}_0( Z(T))$
$$
[\mathfrak{F}_T] = \sum_{\ell \ge 0}(-1)^\ell [\mathrm{Tor}_\ell^{\co_{\mathfrak{M}_0(T)}} (   \co_{\mathfrak{M}_0^1(T)}  ,  \co_{\mathfrak{M}_0^2(T)}   ) ].
$$

 Fix one pair $(s_1,s_2)\in \overline{V}_0(T)$.  As all such pairs are conjugate by $\overline{G}_0(\Q)$ with stabilizer $\Q^\times$ (using the linear independence of $\{s_1,s_2\}$) there are isomorphisms
\begin{eqnarray*}
\mathfrak{M}_0(T)
& \iso & \Q^\times \backslash  \left( \mathfrak{X}_0 \times  \Omega_0(s_1,s_2) \overline{U}^{\mathrm{max},p}  /  \overline{U}^p \right)  \\
& \iso &  (\mathfrak{h}_0 \sqcup \mathfrak{h}_1)\times  \left( \Q^\flat \backslash \Omega_0(s_1,s_2) \overline{U}^{\mathrm{max},p} / \overline{U}^p \right)
\end{eqnarray*}
where
$\Q^\flat =\{x\in\Q^\times\mid \ord_p(x)=0\}$.  For the second isomorphism we have used the fact that the action of $z\in \Q^\times$ on $\mathfrak{X}_0$ takes $\mathfrak{h}_m$ isomorphically to $\mathfrak{h}_{m-2\ord_p(z)}$.  Under these identifications we have
$$
\mathfrak{M}_0^k(T)  \iso  (\mathfrak{h}_0(\tau_k) \sqcup \mathfrak{h}_1(\tau_k))\times  \left( \Q^\flat \backslash \Omega_0(s_1,s_2) \overline{U}^{\mathrm{max},p} / \overline{U}^p \right)
$$
and it follows that $\length_{\Z_p}(R\mu_*\mathfrak{F}_T)$ is equal to
\begin{eqnarray*}
\length_{\Z_p}(R\mu_*\mathfrak{F}_T)  
& = &  | \Q^\flat \backslash \Omega_0(s_1,s_2) \overline{U}^{\mathrm{max},p} / \overline{U}^p  |  \\
& &  \times \Big[\sum_{ k,\ell \ge 0}(-1)^{k+\ell} \length_W R^k\mu_* \mathrm{Tor}_\ell^{\co_{\mathfrak{h}_0}} (  \co_{\mathfrak{h}_0}(\tau_1) , \co_{\mathfrak{h}_0} (\tau_2)  )  \\
& &+  \sum_{ k,\ell \ge 0}(-1)^{k+\ell} \length_W R^k\mu_* \mathrm{Tor}_\ell^{\co_{\mathfrak{h}_1}} (  \co_{\mathfrak{h}_1}(\tau_1) , \co_{\mathfrak{h}_1} (\tau_2)  )  \Big]  \\
& =  &  2\cdot    | \Q^\flat \backslash \Omega_0(s_1,s_2) \overline{U}^{\mathrm{max},p} / \overline{U}^p  |   \cdot e_p(T) 
\end{eqnarray*}
where we have used  the equality  $\Z_p s_1+\Z_p s_2=\Z_p\tau_1 +\Z_p\tau_2$  of lattices in $\overline{V}_0\otimes_\Q\Q_p$ for the final equality.  It now suffices to prove
\begin{equation}
\label{silly cosets}
2\cdot |\Q^\flat\backslash \Omega_0(s_1,s_2)\overline{U}^{\mathrm{max},p}/\overline{U}^p |  =     |H| \cdot  | \overline{\Gamma}_0 \backslash \overline{L}_0(T) |.
\end{equation}
By strong approximation any element of $\Omega_0(s_1,s_2)\overline{U}^{\mathrm{max},p}$ has the form $\gamma u$ with $$\gamma\in \overline{G}_0(\Q)\cap \Omega_0(s_1,s_2)$$ and $u\in \overline{U}^{\mathrm{max},p}$.  The element $\gamma$ then satisfies
 $$
  (\gamma^{-1}s_1,\gamma^{-1}  s_2) \in \overline{L}_0(T)
 $$ 
and we have a well-defined function
$$
\Q^\flat\backslash \Omega_0(s_1,s_2) \overline{U}^{\mathrm{max},p}/ \overline{U}^p \map{}  \overline{\Gamma}_0 \backslash \overline{L}_0(T)
$$
which takes the double coset of $\gamma u$ to  $\overline{\Gamma}_0\cdot (\gamma^{-1}s_1,\gamma^{-1}  s_2).$
The surjectivity of this function follows easily from the transitivity of $\overline{G}_0(\Q)$ on $\overline{V}_0(T)$.  The fiber  over $\overline{\Gamma}_0\cdot (\gamma^{-1} s_1,\gamma^{-1} s_2)$ is in bijection with $ \{\pm 1\} \backslash  \gamma  \overline{U}^{\mathrm{max},p}/ \overline{U}^p$, and (\ref{silly cosets})  follows.
\end{proof}


\section{Pullbacks of arithmetic cycles}



\subsection{Construction of arithmetic cycles}
\label{SS:construction}


Throughout \S \ref{SS:construction} we fix  $\alpha \in\co_F$ and $v\in F\otimes_\Q\R$, both totally positive, and  let $v=(v_1,v_2)$ be the corresponding element under the isomorphism (\ref{F split}).    Abbreviate $\mathcal{Y}=\mathcal{Y}(\alpha)$ and $\mathcal{Y}_0=\mathcal{Y}_0(\alpha)$ as in earlier sections.  For each prime $p$ let 
$$
\mathcal{C}_p \in\chow^2_{\mathcal{Y}_{/\Z_p}}(\mathcal{M}_{/\Z_p})
$$
be the cycle class (\ref{the good cycle}) if $p$ does not divide $\mathrm{disc}(B_0)$, or the cycle class (\ref{the bad cycle})  if $p$ does divide $\mathrm{disc}(B_0)$.  Let $\mathcal{C}_\Q\in Z^2_{\mathcal{Y}_{/\Q}}(\mathcal{M}_{/\Q})$ be the cycle constructed in \S \ref{SS:Green} with Green current $\Xi_v=\Xi_v(\alpha)$.  By Lemmas \ref{Lem:good generic fiber} and \ref{Lem:bad generic fiber} each triple $(\mathcal{C}_\Q,\Xi_v ,\mathcal{C}_p)$ is a local cycle datum of codimension two with support on $\mathcal{Y}$ in the sense of \S \ref{SS:chow2}, and  it is easy to see that  $\mathcal{C}_p^\vertical =0$ for any prime $p$ with the property that $\mathcal{Y}$ has no components supported in characteristic $p$. Hence $(\mathcal{C}_\Q,\Xi_v,\mathcal{C}_\bullet)$ is a global cycle datum.  
Applying the construction of (\ref{datum to cycle}) to $(\mathcal{C}_\Q,\Xi_v,\mathcal{C}_\bullet)$ yields an arithmetic cycle class
$$
\widehat{\mathcal{Y}}(\alpha,v) 
=
\widehat{\mathcal{Y}}(\alpha,v)^\horizontal + \sum_{p} \widehat{\mathcal{Y}}(\alpha)^\vertical_p
\in \widehat{\chow}^2(\mathcal{M})
$$
in which $\widehat{\mathcal{Y}}(\alpha,v)^\horizontal$ is the Zariski closure of $\mathcal{C}_\Q$ endowed with the Green current $\Xi_v$ and $\widehat{\mathcal{Y}}(\alpha)^\vertical_p$ is the image of $\mathcal{C}_p^\vertical$ under the map (\ref{vertical embedding}). 

For the remainder of \S \ref{SS:construction} we assume that $F(\sqrt{-\alpha})/\Q$ is not biquadratic.
By the final claim of Lemma \ref{Lem:biquadratic} the stack $\mathcal{Y}_{0/\Q}$ is empty, and so  (\ref{pullback degree})  allows us to compute the arithmetic degree along $\mathcal{M}_0$ of the arithmetic cycle class $\widehat{\mathcal{Y}}(\alpha,v)$ in terms of the global cycle datum $(i^*\mathcal{C}_\Q, i^*\Xi_v, i^*\mathcal{C}_\bullet)$ of codimension two supported on $\mathcal{Y}_0$.  Indeed, (\ref{pullback degree}) tells us that 
\begin{equation}
\label{proper pullback decomposition}
\widehat{\deg}_{\mathcal{M}_0} \widehat{\mathcal{Y}}(\alpha,v)= \deg_\infty(i^* \Xi_v)  +  \sum_{p \mathrm{\ prime} }\deg_p(i^*\mathcal{C}_p)\cdot \log(p) 
\end{equation}
where, slightly abusing notation, $\deg_p$ is the composition
$$
\chow^2_{\mathcal{Y}_{0/\Z_p}} (\mathcal{M}_{0/\Z_p} ) \map{}\chow^2_\vertical(\mathcal{M}_{0/\Z_p} ) \map{\deg_p}\Q.
$$

For every $T\in\Sym_2(\Z)^\vee$ and symmetric positive definite $\mathbf{v}\in M_2(\R)$ Kudla, Rapoport, and Yang  have defined (see \S 3.6 of \cite{KRY}  and Chapter 6 of  \emph{loc.~cit.}) an arithmetic zero cycle 
$$
\widehat{\mathcal{Z}}(T,\mathbf{v})\in\widehat{\chow}^2_\R(\mathcal{M}_0)
$$
in the $\R$-arithmetic Chow group as defined in \cite[\S 2.4]{KRY}.  Futhermore, when  $\det(T)\not=0$ they define a finite set of places $\mathrm{Diff}(T,B_0)$ of $\Q$   with the property 
\begin{equation}
\label{zero-cycle support}
\mathcal{Z}(T)(\F_p^\alg)\not=\emptyset \implies \mathrm{Diff}(T,B_0)= \{p\}
\end{equation}
for each finite place $p$.  When $T\in\Sym_2(\Z)^\vee$ is nonsingular and $|\mathrm{Diff}(T,B_0)| > 1$ then $\widehat{\mathcal{Z}}(T,\mathbf{v})=0$ by definition.   Recall that $\det(T)\not=0$ for every $T\in\Sigma(\alpha)$ by Lemma \ref{Lem:biquadratic} and our hypothesis that $F(\sqrt{-\alpha})/\Q$ is not biquadratic.

Recalling our fixed $\Z$-basis $\{\varpi_1,\varpi_2\}$ of $\co_F$ set
\begin{equation}\label{twisting matrix}
R=\left( \begin{matrix}    \varpi_1 & \varpi_1^\sigma  \\ \varpi_2 & \varpi_2^\sigma \end{matrix} \right)
\end{equation}
and abbreviate
\begin{equation}\label{bold v}
\mathbf{v} = R \left(\begin{matrix}  v_1 & \\  & v_2 \end{matrix}\right) {}^t R.
\end{equation}
According to \cite[\S 2.4]{KRY} the $\R$-arithmetic Chow group admits  an \emph{arithmetic degree} isomorphism
$$
\widehat{\deg}: \widehat{\chow}^2_\R(\mathcal{M}) \iso \R
$$
defined in exactly the same way as the arithmetic degree constructed in  \S \ref{SS:chow2} for arithmetic Chow groups with rational coefficients.

\begin{Lem}\label{Lem:proper archimedean}
We have 
$$
\deg_\infty(i^*\Xi_v)  = \sum_{ \substack{T\in\Sigma(\alpha) \\ \mathrm{Diff}(T,B_0)=\{ \infty \} }}\widehat{\deg}\ \widehat{\mathcal{Z}}(T,\mathbf{v}).
$$
\end{Lem}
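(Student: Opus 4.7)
The plan is to combine Proposition \ref{Prop:archimedean decomp} with the Kudla--Rapoport--Yang description of the archimedean Green currents underlying the cycles $\widehat{\mathcal{Z}}(T,\mathbf{v})$. Proposition \ref{Prop:archimedean decomp} already expresses $\deg_\infty(i^*\Xi_v)$ as
$$
\frac{1}{2}\sum_{T\in\Sigma(\alpha)}\sum_{(s_1,s_2)\in \Gamma_0^{\mathrm{max}}\backslash L_0(T)}\frac{1}{|\mathrm{stab}(s_1,s_2)|}\int_{X_0}\xi_0(v_1^{1/2}\tau_1)*\xi_0(v_2^{1/2}\tau_2),
$$
where $\tau=s_1\varpi_1+s_2\varpi_2$ and $(\tau_1,\tau_2)=(s_1,s_2)R$ under (\ref{F split}). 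What remains is to match the $T$-summand with $\widehat{\deg}\,\widehat{\mathcal{Z}}(T,\mathbf{v})$ when $\mathrm{Diff}(T,B_0)=\{\infty\}$, and to check that all other $T$-summands vanish term by term.

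The central step is to identify the star product integrand with the KRY archimedean Green form for $\mathcal{Z}(T)$. Kudla--Rapoport--Yang construct this Green form for arbitrary symmetric positive definite $\mathbf{v}$ using Kudla's Schwartz forms; when $\mathbf{v}=\mathrm{diag}(v_1,v_2)$ the construction produces a star product $\xi_0(v_1^{1/2}s_1)*\xi_0(v_2^{1/2}s_2)$ summed over $\Gamma_0^\mathrm{max}$-orbits of pairs $(s_1,s_2)$ with moment matrix $T$, and the covariance $\mathbf{v}\mapsto A\mathbf{v}\,{}^tA$, $(s_1,s_2)\mapsto (s_1,s_2)A$ of Kudla's Schwartz form reduces the general $\mathbf{v}$ to the diagonal case. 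Taking $A=R$ from (\ref{twisting matrix}) turns $\mathrm{diag}(v_1,v_2)$ into the $\mathbf{v}$ of (\ref{bold v}) and converts $(s_1,s_2)$ into $(\tau_1,\tau_2)$, reproducing exactly the integrand in our formula.

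Next I would split the sum according to $\mathrm{Diff}(T,B_0)$. For $T$ with $\mathrm{Diff}(T,B_0)=\{\infty\}$, property (\ref{zero-cycle support}) implies $\mathcal{Z}(T)(\overline{\F}_p)=\emptyset$ at every finite prime $p$, and the archimedean obstruction (incompatibility of the positive-definite signature of $T$ with the signature $(1,2)$ trace-zero form on $B_0$) forces $\mathcal{Z}(T)(\C)=\emptyset$ as well. Thus $\widehat{\mathcal{Z}}(T,\mathbf{v})$ is a Green current supported on the empty cycle and its arithmetic degree reduces to $\deg_\infty$ of the associated Green form, which by the identification of the previous paragraph agrees with the $T$-summand of Proposition \ref{Prop:archimedean decomp}; the normalization $\tfrac{1}{2|H|}\int_{M_0(\C)}$ implicit in $\deg_\infty$ matches the $\tfrac{1}{2|\mathrm{stab}(s_1,s_2)|}$ prefactor once one sums over $\Gamma_0^{\mathrm{max}}$-orbits. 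For $T$ with $|\mathrm{Diff}(T,B_0)|\ge 2$ the KRY convention gives $\widehat{\mathcal{Z}}(T,\mathbf{v})=0$ and the parallel vanishing property of Kudla's Schwartz form forces the corresponding $T$-summand on the left to vanish. For $T$ with $\mathrm{Diff}(T,B_0)=\{p\}$ finite, the KRY class is purely vertical at $p$ and its archimedean Green form contributes nothing to the arithmetic degree, again matching the vanishing of the corresponding summand on the left.

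The principal obstacle is the KRY identification of the second paragraph: one must carefully unwind Kudla's Schwartz form construction under the twist $R$ of (\ref{twisting matrix}) and verify that the normalization factors, stabilizer weights, and orbit-counting conventions all line up with those appearing in Proposition \ref{Prop:archimedean decomp}. This amounts to a bookkeeping exercise in the definitions of \cite[Chapters 3 and 6]{KRY}, but it requires some care with the measure-theoretic subtleties of star products, analogous to the calculations made in the proof of Proposition \ref{Prop:archimedean decomp}.
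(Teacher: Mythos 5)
Your route is the same as the paper's: start from Proposition \ref{Prop:archimedean decomp} and match each $T$-summand with $\widehat{\deg}\ \widehat{\mathcal{Z}}(T,\mathbf{v})$ through the covariance of the Kudla--Rapoport--Yang archimedean construction under $(s_1,s_2)\mapsto (s_1,s_2)A$, $\mathrm{diag}(v_1,v_2)\mapsto A\,\mathrm{diag}(v_1,v_2)\,{}^tA$, taken with $A=R$. The step you single out as the principal obstacle is in fact very short: setting $\mathbf{u}=R\,\mathrm{diag}(v_1^{1/2},v_2^{1/2})$ one has $\mathbf{v}=\mathbf{u}\,{}^t\mathbf{u}$ and $(s_1,s_2)\cdot\mathbf{u}=(v_1^{1/2}\tau_1,v_2^{1/2}\tau_2)$, so \cite[Corollary 6.3.4]{KRY} gives $\nu_\infty(T,\mathbf{v})=\tfrac{1}{2}\int_{X_0}\xi_0(v_1^{1/2}\tau_1)*\xi_0(v_2^{1/2}\tau_2)$, and the definition \cite[(6.3.2)]{KRY} is then literally the orbit sum with the weights $\tfrac{1}{2|\mathrm{stab}(s_1,s_2)|}$ that you need; no further unwinding of Schwartz forms is required.

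The genuine weak spot is your treatment of the $T\in\Sigma(\alpha)$ with $\mathrm{Diff}(T,B_0)\neq\{\infty\}$, together with an inverted signature count. For nonsingular $T$ one has $\mathrm{Diff}(T,B_0)=\{\infty\}$ if and only if $T$ is represented by the rational quadratic space $(V_0,Q_0)$, whose real points have signature $(1,2)$; in particular such $T$ are \emph{not} positive definite, which is exactly why $L_0(T)$ can be nonempty and the term survives, and the emptiness of $\mathcal{Z}(T)_{/\Q}$ comes from Lemma \ref{Lem:biquadratic}(b) (i.e. \cite[Theorem 3.6.1]{KRY}), not from any incompatibility of a ``positive definite $T$'' with the $(1,2)$ form as you assert. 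Conversely, if $\mathrm{Diff}(T,B_0)\neq\{\infty\}$ then $\mathrm{Diff}(T,B_0)$ contains a finite prime $p$ at which $T$ is not represented by $V_0\otimes_\Q\Q_p$, so $T$ is not represented by $V_0$ at all; hence $V_0(T)=\emptyset$, a fortiori $L_0(T)=\emptyset$, and the corresponding summand in Proposition \ref{Prop:archimedean decomp} is an empty sum. This one-line observation is what must replace your appeal to a ``parallel vanishing property of Kudla's Schwartz form'' in the case $|\mathrm{Diff}(T,B_0)|\ge 2$ and your unjustified assertion that the left-hand summand vanishes when $\mathrm{Diff}(T,B_0)=\{p\}$: as written, neither of those is an argument about the left-hand side, which is a sum over lattice pairs and vanishes only because its index set is empty. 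With that substitution your proof coincides with the paper's.
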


\begin{proof}
Return to the notation of \S \ref{SS:Green}.   For any nonsingular $T\in\Sym_2(\Z)^\vee$ we have $\mathrm{Diff}(T,B_0)=\{\infty\}$ if and only if the matrix $T$ is represented by the quadratic space $V_0$.  Assume that this is the case.  For each pair $(s_1,s_2)\in V_0(T)$ set 
$$
\tau=s_1\varpi_1+s_2\varpi_2 \in V\iso V_0\otimes_\Q F
$$ 
and write  $(\tau_1 ,\tau_2)$ for the image of $\tau$ under $V\otimes_\Q \R \iso (V_0\otimes_\Q\R)^2$.  
 The matrix 
 $$
\mathbf{u} = R \left( \begin{matrix}   v_1^{1/2} &  \\   &   v_2^{1/2} \end{matrix} \right)
$$
satisfies   $\mathbf{v}=\mathbf{u}\cdot {^t}\mathbf{u}$  and $(s_1,s_2)\cdot\mathbf{u}=(v_1^{1/2}\tau_1, v_2^{1/2}\tau_2) $.  Therefore the real number $\nu_\infty(T,\mathbf{v})$ of \cite[Corollary 6.3.4]{KRY} is given by
$$
\nu_\infty(T,\mathbf{v})= \frac{1}{2} \int_{X_0} \xi_0(v_1^{1/2}\tau_1) * \xi_0(v_2^{1/2}\tau_2)
$$
and does not depend on the choice of $(s_1,s_2) \in V_0(T)$.  But then by definition (see \cite[(6.3.2)]{KRY}) we  have
\begin{equation}\label{boring archimedean}
  \widehat{\deg}\ \widehat{\mathcal{Z}}(T,\mathbf{v})  
 =
   \sum _{ (s_1,s_2)\in \Gamma_0^\mathrm{max} \backslash L_0(T)  }   \frac{1}{ 2 \cdot  |\mathrm{stab}(s_1,s_2) | } \int_{X_0} \xi_0(v_1^{1/2}\tau_1) * \xi_0(v_2^{1/2}\tau_2)  \\
\end{equation}
where $\mathrm{stab}(s_1,s_2)$ is the stabilizer of $(s_1,s_2)$ in $\Gamma_0^\mathrm{max}$. When $\mathrm{Diff}(T,B_0) \not= \{\infty\}$ the sum on the right hand side of (\ref{boring archimedean}) is empty,  and so the claim follows from  Proposition \ref{Prop:archimedean decomp}.
\end{proof}

\begin{Lem}\label{Lem:proper I}
If $p\nmid\mathrm{disc}(B_0)$ then
$$
\deg_p(i^*\mathcal{C}_p) \log(p) = \sum_{ \substack{T\in\Sigma(\alpha) \\ \mathrm{Diff}(T,B_0)=\{p\} }} \widehat{\deg}\ \widehat{\mathcal{Z}}(T,\mathbf{v}).
$$
\end{Lem}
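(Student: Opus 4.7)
The plan is to compute $\deg_p(i^*\mathcal{C}_p)$ directly using the machinery of \S\ref{SS:K theory}, reorganize the resulting local contributions by the type decomposition $\mathcal{Y}_0\iso\bigsqcup_T\mathcal{Z}(T)$ of (\ref{moduli decomposition}), and match each piece with the Kudla-Rapoport-Yang arithmetic cycle $\widehat{\mathcal{Z}}(T,\mathbf{v})$.

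First I would verify that $\mathcal{Y}_0$ is zero-dimensional in characteristic $p$. The non-biquadratic hypothesis together with Lemma \ref{Lem:biquadratic} gives $\mathcal{Y}_{0/\Q}=\emptyset$, while Proposition \ref{Prop:good dimension} bounds the dimension of $\mathcal{Y}_{/\F_p^\alg}$ by one, and the codimension-one inclusion $\mathcal{M}_0\hookrightarrow\mathcal{M}$ cuts this down to dimension zero. This puts us in the situation of Proposition \ref{Prop:the good degree}.

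Recall that $\mathcal{C}_p$ was constructed (\ref{the good cycle}) by Galois descent from the class $C_p=\mathrm{cl}(\co_Y)\in\chow^2_Y(M)$ associated to a $\Z_p$-uniformization $\mathcal{M}_{/\Z_p}\iso[H\backslash M]$. Applying Proposition \ref{Prop:the good degree} to compute $\deg_p(i^*C_p)$ at the level of the uniformizing scheme, and then using the compatibility of the stack-theoretic $\deg_p$ with the factor $|H|^{-1}$ from Lemma \ref{Lem:Galois descent}, I expect to obtain an identity of the shape
$$
\deg_p(i^*\mathcal{C}_p) = \sum_{y\in\mathcal{Y}_0(\F_p^\alg)} \frac{1}{|\Aut(y)|}\length_{\co_{\mathcal{Y}_0,y}}(\co_{\mathcal{Y}_0,y}).
$$
Splitting this sum along (\ref{moduli decomposition}) reorganizes it as a double sum indexed by $T\in\Sigma(\alpha)$ and $y\in\mathcal{Z}(T)(\F_p^\alg)$. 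The inner sum is automatically empty unless $\mathcal{Z}(T)(\F_p^\alg)\not=\emptyset$, which by (\ref{zero-cycle support}) forces $\mathrm{Diff}(T,B_0)=\{p\}$.

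To conclude, I would invoke the Kudla-Rapoport-Yang formula for the arithmetic degree of $\widehat{\mathcal{Z}}(T,\mathbf{v})$ at a nonsingular $T$ with $\mathrm{Diff}(T,B_0)=\{p\}$ and $p\nmid\mathrm{disc}(B_0)$. Under these hypotheses \cite[\S 6.3]{KRY} shows that the arithmetic degree is concentrated entirely at $p$ and equals
$$
\widehat{\deg}\,\widehat{\mathcal{Z}}(T,\mathbf{v}) = \log(p)\cdot\sum_{y\in\mathcal{Z}(T)(\F_p^\alg)}\frac{1}{|\Aut(y)|}\length_{\co_{\mathcal{Z}(T),y}}(\co_{\mathcal{Z}(T),y}).
$$
Multiplying the expression for $\deg_p(i^*\mathcal{C}_p)$ by $\log p$ and regrouping then yields the lemma. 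The main obstacle is the careful normalization check in the middle step: one must confirm that the $|H|^{-1}$ appearing in the Galois descent matches the factors of $|\Aut(y)|$ appearing in the KRY formula, so that no extraneous constants appear when assembling the final sum. This is bookkeeping rather than geometry, but it is the step where sign or normalization errors would most easily creep in.
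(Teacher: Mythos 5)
Your overall route is the same as the paper's: show that $\mathcal{Y}_{0/\Z_p}$ is zero dimensional, apply Proposition \ref{Prop:the good degree} (divided by $|H|$) to express $\deg_p(i^*\mathcal{C}_p)$ as a weighted sum of lengths of local rings at the points of $\mathcal{Y}_0(\F_p^\alg)$, split the sum using (\ref{moduli decomposition}) and (\ref{zero-cycle support}), and match each piece with the local formula for $\widehat{\deg}\ \widehat{\mathcal{Z}}(T,\mathbf{v})$ at a prime of good reduction (the paper cites Proposition 6.11 and Corollary 6.1.2 of \cite{KRY} for exactly the identity you quote). The normalization bookkeeping you flag at the end is handled in the paper just as you propose, by dividing the scheme-level equality of Proposition \ref{Prop:the good degree} by $|H|$, so that part is fine.

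The one step that does not hold as written is your justification of zero dimensionality. You argue that $\dim \mathcal{Y}_{/\F_p^\alg}\le 1$ (Proposition \ref{Prop:good dimension}) and that the codimension-one inclusion $\mathcal{M}_0\hookrightarrow\mathcal{M}$ ``cuts this down to dimension zero.'' Intersecting with a codimension-one closed substack does not automatically lower dimension: a one-dimensional vertical component of $\mathcal{Y}_{/\F_p^\alg}$ could a priori lie entirely inside $\mathcal{M}_{0/\F_p}$, in which case $\mathcal{Y}_{0/\F_p}$ would still be one dimensional and Proposition \ref{Prop:the good degree} (and Lemma \ref{Lem:simple pullback} behind it) would not apply; properness of this intersection is precisely what must be established. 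The paper rules it out by invoking the decomposition (\ref{moduli decomposition}) at the outset: since $F(\sqrt{-\alpha})/\Q$ is not biquadratic, every $T\in\Sigma(\alpha)$ is nonsingular by Lemma \ref{Lem:biquadratic}, and \cite[Theorem 3.6.1]{KRY} then shows that each $\mathcal{Z}(T)_{/\Z_p}$, hence $\mathcal{Y}_{0/\Z_p}$, is zero dimensional when $p\nmid\mathrm{disc}(B_0)$. With that substitution for your dimension-cutting step, your argument coincides with the paper's proof.
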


\begin{proof}
As $p\nmid \mathrm{disc}(B_0)$, Proposition 6.11 and Corollary 6.1.2 of \cite{KRY} imply that for every $T\in\Sigma(\alpha)$ with $\mathrm{Diff}(T,B_0)=\{p\}$
$$
\widehat{\deg}\ \widehat{\mathcal{Z}}(T,\mathbf{v}) = 
\sum_{z \in \mathcal{Z}(T)(\F_p^\alg) } \frac{\log(p)}{|\Aut_{\mathcal{Z}(T)}(z)|}  \length_{\mathcal{O}^\mathrm{sh}_{\mathcal{Z}(T),z }}  (\mathcal{O}^\mathrm{sh}_{\mathcal{Z}(T),z})  
$$
where the sum is over the isomorphism classes of objects in $\mathcal{Z}(T)(\F_p^\alg)$ and  $\mathcal{O}^\mathrm{sh}_{\mathcal{Z}(T),z }$ is the strictly Henselian local ring of $\mathcal{Z}(T)$ at $z$. On the other hand, as we assume that $F(\sqrt{-\alpha})/\Q$ is not biquadratic Lemma \ref{Lem:biquadratic}, the decomposition (\ref{moduli decomposition}), and \cite[Theorem 3.6.1]{KRY} imply that $\mathcal{Y}_{0/\Z_p}$ is zero dimensional.  Thus we may apply Proposition \ref{Prop:the good degree} to obtain
$$
\deg_p(i^*\mathcal{C}_p) 
=  
 \sum_{y \in \mathcal{Y}_0(\F_p^\alg) } \frac{ 1}{|\Aut_{\mathcal{Y}_0}(y)|}  \length_{\mathcal{O}^\mathrm{sh}_{\mathcal{Y}_0,y }}  (\mathcal{O}^\mathrm{sh}_{\mathcal{Y}_0,y})
$$
(by dividing both sides of the equality of Proposition \ref{Prop:the good degree}  by $|H|$). Combining this with the decomposition (\ref{moduli decomposition}) and using (\ref{zero-cycle support}) we arrive at the desired equality.
\end{proof}

\begin{Lem}\label{Lem:proper II}
Suppose $p\mid\mathrm{disc}(B_0)$.  Then
$$
\deg_p(i^*\mathcal{C}_p) \log(p) = \sum_{ \substack{T\in\Sigma(\alpha) \\ \mathrm{Diff}(T,B_0)=\{p\} }} \widehat{\deg}\ \widehat{\mathcal{Z}}(T,\mathbf{v}).
$$
\end{Lem}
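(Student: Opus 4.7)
The plan is to combine Corollary \ref{Cor:too much tensor} with Proposition \ref{Prop:drinfeld decomposition} and then match the result, summand by summand, against the bad-reduction formula for $\widehat{\deg}\,\widehat{\mathcal{Z}}(T,\mathbf{v})$ proved by Kudla-Rapoport-Yang.  First I would descend from the scheme to the stack.  Since $\mathcal{C}_p$ was defined by $H$-equivariant flat descent from $C_p$, and since the stack-theoretic $\deg_p$ differs from its scheme-theoretic counterpart by the factor $|H|$ (each closed point of $\mathcal{M}_{0}$ has $|H|/|\Aut|$ preimages in $M_0$), Corollary \ref{Cor:too much tensor} yields
$$
\deg_p(i^*\mathcal{C}_p)=\frac{1}{|H|}\,\chi\bigl(\co_{\mathfrak{M}_0^1}\otimes^L_{\co_{\mathfrak{M}_0}}\co_{\mathfrak{M}_0^2}\bigr).
$$

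Next I would break this Euler characteristic up along the decomposition (\ref{alt moduli decomposition}) and apply Proposition \ref{Prop:drinfeld decomposition} to each piece.  For any $T\in\Sigma(\alpha)$ that is represented by $\overline{V}_0$---i.e. for which $\overline{V}_0(T)\ne\emptyset$---the proposition gives
$$
\frac{1}{|H|}\,\chi_T\bigl(\co_{\mathfrak{M}_0^1}\otimes^L_{\co_{\mathfrak{M}_0}}\co_{\mathfrak{M}_0^2}\bigr)=e_p(T)\cdot|\overline{\Gamma}_0\backslash\overline{L}_0(T)|.
$$
If instead $\overline{V}_0(T)=\emptyset$, then the uniformization (\ref{formal covering III}) defining $\mathfrak{M}_0(T)$ is the empty index set, so that open-closed piece of $\mathfrak{M}_0$ is empty and $\chi_T=0$.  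The remaining identification is the standard Hasse-invariant calculation: $\overline{B}_0$ is the totally definite quaternion algebra obtained from $B_0$ by interchanging invariants at $p$ and $\infty$, so $\overline{V}_0$ agrees with $V_0$ locally away from $\{p,\infty\}$ and becomes positive definite at $\infty$ and split at $p$; thus the nonsingular forms $T\in\Sigma(\alpha)$ represented by $\overline{V}_0$ are exactly those with $\mathrm{Diff}(T,B_0)=\{p\}$ (nonsingularity and positive-definiteness of $T$ being automatic under our hypothesis on $\alpha$, by Lemma \ref{Lem:biquadratic}).

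Finally I would invoke the bad-reduction analogue of the calculation used in Lemma \ref{Lem:proper I}, namely the formula of \cite[Chapter 6.2]{KRY} (phrased using the Cerednik-Drinfeld uniformization of $\mathcal{M}_0$ and the Kudla-Rapoport local multiplicities $\nu_p(T)=2e_p(T)$), which states
$$
\widehat{\deg}\,\widehat{\mathcal{Z}}(T,\mathbf{v}) = e_p(T)\cdot|\overline{\Gamma}_0\backslash\overline{L}_0(T)|\cdot\log(p)
$$
for every $T\in\Sigma(\alpha)$ with $\mathrm{Diff}(T,B_0)=\{p\}$.  Summing over $T$ and comparing with the previous paragraph yields the lemma.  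The main obstacle is bookkeeping in this last step: one must verify that the factor $|H|$ cancels correctly, that the orbit count $|\overline{\Gamma}_0\backslash\overline{L}_0(T)|$ coincides with the orbit count used by Kudla-Rapoport-Yang (accounting for the $\{\pm1\}$-stabilizer reflected in the factor $\tfrac12$ relating $\nu_p(T)$ to $e_p(T)$), and that the local-global passage from $\mathcal{M}_0(\C)$-adelic double cosets to $\overline{\Gamma}_0$-orbits matches their archimedean conventions; once this matching is pinned down, the argument is formal.
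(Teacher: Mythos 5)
Your argument is correct and follows essentially the same route as the paper's proof: Corollary \ref{Cor:too much tensor}, then Proposition \ref{Prop:drinfeld decomposition} applied summand by summand along (\ref{alt moduli decomposition}) (with the $T$ having $\mathrm{Diff}(T,B_0)\neq\{p\}$ contributing nothing because they are not represented by $\overline{V}_0$, equivalently because $\mathcal{Z}(T)(\F_p^\alg)=\emptyset$), and finally the formula $\widehat{\deg}\ \widehat{\mathcal{Z}}(T,\mathbf{v})=e_p(T)\cdot|\overline{\Gamma}_0\backslash\overline{L}_0(T)|\cdot\log(p)$ of \cite[Theorem 6.2.1]{KRY}, which is exactly the citation the paper uses, so the convention-matching you defer is already packaged there. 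The only slip is your parenthetical that positive-definiteness of $T$ is automatic by Lemma \ref{Lem:biquadratic}: that lemma only gives $\det(T)\neq 0$, and positive-definiteness is instead forced by representability in the totally definite space $\overline{V}_0$ (equivalently by $\mathrm{Diff}(T,B_0)=\{p\}$), which is all your argument actually requires.
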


\begin{proof}  
Return to the notation of \S \ref{S:bad components}.  By Corollary \ref{Cor:too much tensor}  we have
$$
\deg_p(i^* C_p) =   \chi( \co_{\mathfrak{M}_0^1} \otimes^L_{ \co_{\mathfrak{M}_0} }  \co_{\mathfrak{M}_0^2 }) .
$$
For each $T\in\Sigma(\alpha)$ either $\mathrm{Diff}(T,B_0)\not=\{p\}$ or $\mathrm{Diff}(T,B_0)=\{p\}$.  In the former  case $\mathcal{Z}(T)(\F_p^\alg)=\emptyset$.  In the latter case  the matrix $T$ is represented by the quadratic space $\overline{V}_0$ of   \S \ref{SS:CD}, and so  the hypotheses of Proposition \ref{Prop:drinfeld decomposition} are satisfied.  Therefore
$$
\frac{1}{|H|}\cdot  \deg_p(i^* C_p) =  \sum_{ \substack{T\in\Sigma(\alpha) \\ \mathrm{Diff}(T,B_0)=\{p\} }} e_p(T)\cdot | \overline{\Gamma}_0 \backslash \overline{L}_0(T)|.
$$
By \cite[Theorem 6.2.1]{KRY} we have, for any $T$ appearing in the  sum,
$$
\widehat{\deg}\ \widehat{\mathcal{Z}}(T,\mathbf{v}) =  e_p(T)\cdot | \overline{\Gamma}_0 \backslash \overline{L}_0(T)|\cdot \log(p)
$$
which completes the proof.
\end{proof}

\begin{Thm}\label{Thm:arithmetic decomposition}
Suppose we are given  $\alpha\in\co_F$ and $v\in F\otimes_\Q\R$, both totally positive. If the extension $F(\sqrt{-\alpha})/\Q$ is not biquadratic then
$$
\widehat{\deg}_{\mathcal{M}_0} \widehat{\mathcal{Y}}(\alpha,v)    =  \sum_{ T\in\Sigma(\alpha) }  \widehat{\deg}\ \widehat{\mathcal{Z}}(T,\mathbf{v})
$$
where $\mathbf{v}$ is defined by (\ref{bold v}).
\end{Thm}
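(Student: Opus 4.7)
The plan is to read off the theorem directly from the decomposition (\ref{proper pullback decomposition}) combined with Lemmas \ref{Lem:proper archimedean}, \ref{Lem:proper I}, and \ref{Lem:proper II}, which together compute each term on the right side of (\ref{proper pullback decomposition}) as a partial sum of the arithmetic degrees $\widehat{\deg}\,\widehat{\mathcal{Z}}(T,\mathbf{v})$ indexed by the subset of $T\in\Sigma(\alpha)$ singled out by a single place of $\Q$.

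More concretely, I would proceed as follows. First, since by hypothesis $F(\sqrt{-\alpha})/\Q$ is not biquadratic, Lemma \ref{Lem:biquadratic} ensures that every $T\in\Sigma(\alpha)$ is nonsingular and that $\mathcal{Y}_{0/\Q}=\emptyset$; the latter is the vanishing needed to invoke the pullback formula (\ref{pullback degree}), which gives (\ref{proper pullback decomposition}):
\begin{equation*}
\widehat{\deg}_{\mathcal{M}_0}\widehat{\mathcal{Y}}(\alpha,v) = \deg_\infty(i^*\Xi_v) + \sum_{p}\deg_p(i^*\mathcal{C}_p)\cdot\log(p).
\end{equation*}
Next, I apply Lemma \ref{Lem:proper archimedean} to rewrite the archimedean contribution as the sum of $\widehat{\deg}\,\widehat{\mathcal{Z}}(T,\mathbf{v})$ over those $T\in\Sigma(\alpha)$ with $\mathrm{Diff}(T,B_0)=\{\infty\}$, and Lemmas \ref{Lem:proper I} and \ref{Lem:proper II} to rewrite each finite-place contribution as the analogous sum over $T\in\Sigma(\alpha)$ with $\mathrm{Diff}(T,B_0)=\{p\}$, in the two cases $p\nmid\mathrm{disc}(B_0)$ and $p\mid\mathrm{disc}(B_0)$.

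Finally, I assemble these identities. Since each $T\in\Sigma(\alpha)$ is nonsingular, its diff-set $\mathrm{Diff}(T,B_0)$ is a finite, nonempty set of places of $\Q$ of odd cardinality, and $\widehat{\mathcal{Z}}(T,\mathbf{v})=0$ whenever $|\mathrm{Diff}(T,B_0)|>1$ by definition. Hence the nonzero contributions to the right-hand side of the theorem partition according to the unique place in $\mathrm{Diff}(T,B_0)$, and the three lemmas exactly exhaust this partition:
\begin{equation*}
\sum_{T\in\Sigma(\alpha)}\widehat{\deg}\,\widehat{\mathcal{Z}}(T,\mathbf{v})
= \sum_{\substack{T\in\Sigma(\alpha)\\ \mathrm{Diff}(T,B_0)=\{\infty\}}}\widehat{\deg}\,\widehat{\mathcal{Z}}(T,\mathbf{v})
+ \sum_{p}\ \sum_{\substack{T\in\Sigma(\alpha)\\ \mathrm{Diff}(T,B_0)=\{p\}}}\widehat{\deg}\,\widehat{\mathcal{Z}}(T,\mathbf{v}).
\end{equation*}
Matching term by term with (\ref{proper pullback decomposition}) gives the theorem.

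In short, all of the analytic and geometric work is already done: the archimedean calculation is packaged in Proposition \ref{Prop:archimedean decomp} and repackaged in Lemma \ref{Lem:proper archimedean}; the good-reduction K-theoretic degree computation is packaged in Proposition \ref{Prop:the good degree} and repackaged in Lemma \ref{Lem:proper I}; and the hard Cerednik-Drinfeld computation of vertical contributions at $p\mid\mathrm{disc}(B_0)$ is packaged in Proposition \ref{Prop:drinfeld decomposition} via Corollary \ref{Cor:too much tensor} and then in Lemma \ref{Lem:proper II}. The main obstacle, already absorbed in the preparatory material, was ensuring that the three lemmas truly cover all $T\in\Sigma(\alpha)$ contributing nontrivially to the right side; this rests on the biquadratic hypothesis (to force nonsingularity of $T$, whence $\mathrm{Diff}(T,B_0)$ is a well-defined single place when $\widehat{\mathcal{Z}}(T,\mathbf{v})\neq 0$) and on the fact that the construction of $\widehat{\mathcal{Y}}(\alpha,v)$ via the global cycle datum $(\mathcal{C}_\Q,\Xi_v,\mathcal{C}_\bullet)$ is precisely designed so that (\ref{pullback degree}) reduces the arithmetic degree along $\mathcal{M}_0$ to place-by-place contributions. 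With these in hand, the theorem is a formal summation.
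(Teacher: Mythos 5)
Your proposal is correct and is essentially the paper's own proof: the theorem is deduced exactly as you say, by combining the decomposition (\ref{proper pullback decomposition}) with Lemmas \ref{Lem:proper archimedean}, \ref{Lem:proper I}, and \ref{Lem:proper II}, and using that $\widehat{\mathcal{Z}}(T,\mathbf{v})=0$ whenever $|\mathrm{Diff}(T,B_0)|>1$ so the sum over $T\in\Sigma(\alpha)$ partitions by the place singled out by $\mathrm{Diff}(T,B_0)$. Your extra remarks (nonsingularity of every $T\in\Sigma(\alpha)$ via Lemma \ref{Lem:biquadratic}, emptiness of $\mathcal{Y}_{0/\Q}$ to invoke (\ref{pullback degree})) only make explicit what the paper leaves implicit.
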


\begin{proof}
This is immediate from the decomposition (\ref{proper pullback decomposition}), Lemmas \ref{Lem:proper archimedean}, \ref{Lem:proper I}, and \ref{Lem:proper II}, and,  by definition of $\widehat{\mathcal{Z}}(T,\mathbf{v})$, the implication 
$$
|\mathrm{Diff}(T,B_0)|>1\implies \widehat{\mathcal{Z}}(T,\mathbf{v}) =0
$$
for each nonsingular $T\in\Sym_2(\Z)^\vee$.
\end{proof}


\subsection{A genus two Eisenstein series}
\label{SS:automorphic}


Let $J_n$ be the $2n\times 2n$ matrix
$$
J_n=\left(\begin{matrix}  & I_n \\ -I_n &  \end{matrix}\right).
$$
Recall our fixed basis $\{\varpi_1,\varpi_2\}$ for the $\Z$-module $\co_F$ and let $\{\varpi_1^\vee, \varpi_2^\vee\}$ be the dual basis relative to the trace form $(x,y)\mapsto \mathrm{Tr}_{F/\Q}(xy)$.
The $F$-vector space $V_F=F^2$ comes equipped with the standard $F$-valued symplectic form 
$\langle v,w\rangle_F= {}^t wJ_1  v$,  while the $\Q$-vector space $V_\Q$ underlying $V_F$ is  equipped with the symplectic form 
$$
\langle v,w\rangle_\Q=\mathrm{Tr}_{F/\Q}\langle v,w\rangle_F.
$$  
With respect to the $\Q$-basis 
$$
\left\{ \left[\begin{matrix} \varpi_1^\vee \\ 0\end{matrix}\right] ,
\left[\begin{matrix} \varpi_2^\vee \\ 0 \end{matrix}\right] ,
\left[\begin{matrix}  0 \\ \varpi_1 \end{matrix}\right] ,
\left[\begin{matrix} 0 \\ \varpi_2 \end{matrix}\right]  \right\}
$$ 
of $V_\Q$ the  symplectic form $\langle v,w\rangle_\Q$ is given by 
$\langle v,w\rangle_\Q= {}^t w J_2 v.$
In this way we obtain a homomorphism
$$
\mathrm{Sp}_1(F)\iso \mathrm{Sp}(V_F) \hookrightarrow \mathrm{Sp}(V_\Q)\iso \Sp_2(\Q)
$$
which is given explicitly by
$$
\left(\begin{matrix}
a & b\\ c& d
\end{matrix}\right)
\mapsto
\left(\begin{matrix}
R & \\ & ^t R^{-1}
\end{matrix}\right)
\left(\begin{matrix}
\underline{a} & \underline{b}\\ \underline{c}& \underline{d}
\end{matrix}\right)
\left(\begin{matrix}
R^{-1} & \\ & ^t R  
\end{matrix}\right)
$$
where $R$ is defined by (\ref{twisting matrix}) and 
$$
  \underline{x}= \left(\begin{matrix}x& \\ & x^\sigma  \end{matrix}\right)
$$
for $x\in F$.   If $\Gamma_0(M)\subset \mathrm{Sp}_2(\Z)$ is the usual  subgroup of matrices congruent modulo $M$ to $\left(\begin{matrix}  * & * \\ 0 & * \end{matrix}\right)$ then
$$
\mathrm{Sp}_1(F)\cap\Gamma_0(M)=\left\{
\left( \begin{matrix} a & b \\ c & d \end{matrix} \right)  \in\mathrm{Sp}_1(F) \Big| \ a,d\in\co_F,  b\in \mathfrak{D}^{-1}, c\in M\mathfrak{D}
\right\} .
$$

The group $\mathrm{Sp}_1(F)$ acts in the usual way on the product of two upper half planes $\mathfrak{h}_1\times\mathfrak{h}_1$ via the embedding 
$$
\mathrm{Sp}_1(F)\hookrightarrow \SL_2(\R)\times\SL_2(\R)
$$ 
defined by $A\mapsto (A, A^\sigma)$.  Similarly  the rank two symplectic group $\Sp_2(\Q)$ acts in the habitual  manner on the Siegel half-space $\mathfrak{h}_2$ of genus two.    If we let $\mathrm{Sp}_1(F)$ act on $\mathfrak{h}_2$ through the above homomorphism  $\mathrm{Sp}_1(F)\map{}\mathrm{Sp}_2(\Q)$ then the embedding of complex manifolds 
\begin{equation}\label{siegel embedding}
\mathfrak{h}_1\times\mathfrak{h}_1 \hookrightarrow \mathfrak{h}_2
\hspace{1cm}
(\tau_1,\tau_2)  \mapsto R\ \left(\begin{matrix} \tau_1 & \\ & \tau_2 \end{matrix}\right)\ ^t R
\end{equation}
 respects the actions  of $\mathrm{Sp}_1(F)$.

To the quaternionic order  $\co_{B_0}$, Kudla \cite{kudla97} attaches a  Siegel Eisenstein series of weight $3/2$
$$
\mathcal{E}_2(\tau,s,B_0)=\sum_{T\in\Sym_2(\Q)} \mathcal{E}_{2,T}(\tau,s,B_0)
$$  
where $\tau=\mathbf{u}+i \mathbf{v} \in\mathfrak{h}_2$ and $s\in\C$.  See \cite[(5.1.44)]{KRY} for the precise definition.  The Eisenstein series satisfies a function equation which forces $\mathcal{E}_{2,T}(\tau,0,B_0)=0$ for every $T$, and one of the main results of Kudla-Rapoport-Yang \cite[p.12 Theorem B]{KRY} is the equality 
$$
\frac{d}{ds}\mathcal{E}_{2,T}(\tau,s,B_0) \big|_{s=0}= \widehat{\deg} \ \widehat{\mathcal{Z}}(T,\mathbf{v})\cdot q^T
$$
for every $T\in \Sym_2(\Q)$, in which $q^T=e^{2\pi i\cdot\mathrm{Tr}(T\tau)}$ and 
$$
\widehat{\mathcal{Z}}(T,\mathbf{v})\in\widehat{\chow}^2(\mathcal{M}_0)
$$
is the arithmetic cycle class appearing in \S \ref{SS:construction} and defined in \cite[Chapter 6]{KRY}.  When $T\not\in\Sym_2(\Z)^\vee$ both sides of the stated equality are $0$.  In particular if we denote by  $\widehat{\phi}_2(\tau)$  Kudla's $q$-expansion
$$
\widehat{\phi}_2(\tau)=\sum_{T\in \Sym_2(\Z)^\vee}
\widehat{\deg}\ \widehat{\mathcal{Z}}(T,\mathbf{v})\cdot q^T
$$
then $\mathcal{E}_2'(\tau,0,B_0)=\widehat{\phi}_2(\tau)$ is a (non-holomorphic) Siegel modular form of weight $3/2$ for the congruence subgroup $\Gamma_0(4m_o)\subset \mathrm{Sp}_2(\Z)$ where
$$
m_o=  \mathrm{disc}(B_0) \cdot \left\{\begin{array}{ll}
1  & \mathrm{if\ } \mathrm{disc}(B_0) \mathrm{\ is\ odd} \\
1/2 & \mathrm{if\ } \mathrm{disc}(B_0) \mathrm{\ is \ even} .
 \end{array}\right.
$$
  It follows that the pullback of $\widehat{\phi}_2$ to $\mathfrak{h}_1\times\mathfrak{h}_1$ is a Hilbert  modular form of parallel weight $3/2$ for the congruence subgroup
\begin{equation}\label{congruence subgroup}
\left\{ \left(\begin{matrix}  a & b \\ c & d   \end{matrix}\right) \in \SL_2(F) \mid a,d\in\co_F, b\in\mathfrak{D}^{-1}, c\in 4 m_o \mathfrak{D} \right\}.
\end{equation}

\begin{Lem}\label{Lem:Siegel pullback}
 The pullback of $\widehat{\phi}_2$ to $\mathfrak{h}_1\times\mathfrak{h}_1$
has  Fourier expansion
$$
\widehat{\phi}_2(\tau_1,\tau_2)=
\sum_{\alpha\in \co_F}  \Big(
\sum_{T\in\Sigma(\alpha)}  \widehat{\mathrm{deg}}\  \widehat{\mathcal{Z}}\left( T, \mathbf{v} \right) \Big)
\cdot q^\alpha
$$
where we have set  $q^\alpha= e^{2\pi i \tau_1 \alpha} \cdot e^{2\pi i \tau_2 \alpha^\sigma}$.  The positive definite matrix  $\mathbf{v}$ defined by (\ref{bold v})  with $v_1$ and $v_2$ the imaginary parts of $\tau_1$ and $\tau_2$ respectively, is the imaginary part of the image of $(\tau_1,\tau_2)$ in $\mathfrak{h}_2$.  
 \end{Lem}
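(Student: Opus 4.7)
The plan is a direct unfolding of the twisted embedding (\ref{siegel embedding}) applied term-by-term to Kudla's generating series $\widehat{\phi}_2(\tau)=\sum_{T\in\Sym_2(\Z)^\vee}\widehat{\deg}\ \widehat{\mathcal{Z}}(T,\mathbf{v})\cdot q^T$. There are essentially two things to verify: the imaginary part identity and the exponential identity which reorganizes the sum.

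First, if we write $\tau_j=u_j+iv_j$ and apply the embedding (\ref{siegel embedding}), then the imaginary part of the image point in $\mathfrak{h}_2$ is $R\cdot\mathrm{diag}(v_1,v_2)\cdot{}^tR$, which is exactly the matrix $\mathbf{v}$ of (\ref{bold v}). So the archimedean parameter appearing on the right of the Fourier expansion matches what is plugged into each $\widehat{\mathcal{Z}}(T,\mathbf{v})$.

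Next, for $T=\left(\begin{smallmatrix}a & b/2\\ b/2 & c\end{smallmatrix}\right)\in\Sym_2(\Z)^\vee$, I compute the exponent of $q^T$ under the embedding:
\begin{equation*}
\mathrm{Tr}\Bigl(T\cdot R\ \mathrm{diag}(\tau_1,\tau_2)\ {}^tR\Bigr)
=\mathrm{Tr}\Bigl({}^tR\,T\,R\cdot \mathrm{diag}(\tau_1,\tau_2)\Bigr).
\end{equation*}
A short matrix multiplication using the definition (\ref{twisting matrix}) of $R$ shows that the diagonal entries of ${}^tR\,T\,R$ are
\begin{equation*}
a\varpi_1^2+b\varpi_1\varpi_2+c\varpi_2^2
\quad\text{and}\quad
a(\varpi_1^\sigma)^2+b\varpi_1^\sigma\varpi_2^\sigma+c(\varpi_2^\sigma)^2.
\end{equation*}
In light of the defining equation (\ref{Sigma set}) of $\Sigma(\alpha)$, these two diagonal entries are precisely $\alpha$ and $\alpha^\sigma$ for the unique $\alpha\in\co_F$ satisfying $T\in\Sigma(\alpha)$. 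Hence
\begin{equation*}
q^T\big|_{\text{embedding}}=e^{2\pi i(\alpha\tau_1+\alpha^\sigma\tau_2)}=q^\alpha.
\end{equation*}

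Finally, every $T\in\Sym_2(\Z)^\vee$ lies in $\Sigma(\alpha)$ for one and only one $\alpha\in\co_F$ (the map $(a,b,c)\mapsto a\varpi_1^2+b\varpi_1\varpi_2+c\varpi_2^2$ is the obvious bijection $\Sym_2(\Z)^\vee\to\co_F$ determined by the basis $\{\varpi_1,\varpi_2\}$). Regrouping Kudla's sum accordingly and pulling back yields
\begin{equation*}
\widehat{\phi}_2(\tau_1,\tau_2)=\sum_{\alpha\in\co_F}\Bigl(\sum_{T\in\Sigma(\alpha)}\widehat{\deg}\ \widehat{\mathcal{Z}}(T,\mathbf{v})\Bigr)\cdot q^\alpha
\end{equation*}
as claimed. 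There is no real obstacle here: the entire statement is a bookkeeping exercise, and the only calculation to be careful with is verifying that ${}^tRTR$ has the stated diagonal entries and that the sum over $T\in\Sym_2(\Z)^\vee$ partitions cleanly via $T\mapsto\alpha$.
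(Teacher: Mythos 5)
Your proposal is correct and is essentially the paper's own proof: both rest on the identity $\mathrm{Tr}(T\tau)=\alpha\tau_1+\alpha^\sigma\tau_2$ for $T\in\Sigma(\alpha)$ and $\tau=R\,\mathrm{diag}(\tau_1,\tau_2)\,{}^tR$ (your computation of the diagonal of ${}^tR\,T\,R$ is exactly this), together with the observation that $\mathbf{v}$ is the imaginary part of the image point and that $\Sym_2(\Z)^\vee$ is partitioned by the sets $\Sigma(\alpha)$. One small correction that does not affect the argument: the map $T\mapsto a\varpi_1^2+b\varpi_1\varpi_2+c\varpi_2^2$ from $\Sym_2(\Z)^\vee$ to $\co_F$ is a well-defined surjection whose fibers $\Sigma(\alpha)$ are typically infinite, not a bijection; your proof only uses that each $T$ lies in exactly one $\Sigma(\alpha)$, which this map does give.
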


\begin{proof}
If we  set $\tau=R\left(\begin{matrix} \tau_1 & \\ & \tau_2 \end{matrix}\right) \hspace{-1pt} ^tR$ then for every $\alpha\in\co_F$ and  $T\in\Sigma(\alpha)$ we have
$$
\mathrm{Tr}(T\tau)= \alpha \tau_1+ \alpha^\sigma \tau_2.
$$
It follows now from $\Sym_2(\Z)^\vee=\bigcup_{\alpha\in\co_F}\Sigma(\alpha)$ that
$$
\widehat{\phi}_2(\tau_1,\tau_2) = \sum_{\alpha\in \co_F} \sum_{T\in\Sigma(\alpha)}   \widehat{\mathrm{deg}}\  \widehat{\mathcal{Z}}\left( T, \mathbf{v} \right)   \cdot e^{2\pi i \alpha \tau_1  } e^{2\pi i \alpha^\sigma \tau_2  } ,
$$
proving the claim.
\end{proof}

\begin{Cor}
Suppose $\alpha\in\co_F$ is totally positive and that $F(\sqrt{-\alpha})/\Q$ is not biquadratic.  Then the $\alpha^\mathrm{th}$ Fourier coefficient of $\widehat{\phi}_2(\tau_1,\tau_2)$ is 
$$
\widehat{\deg}_{\mathcal{M}_0} \widehat{\mathcal{Y}}(\alpha, v) = \sum_{T\in\Sigma(\alpha)}  \widehat{\mathrm{deg}}\  \widehat{\mathcal{Z}}\left( T, \mathbf{v} \right) 
$$
where $v=(v_1,v_2)\in \R\times \R$ is the imaginary part of $(\tau_1,\tau_2)$ and $\mathbf{v}$ is defined by (\ref{bold v}).
\end{Cor}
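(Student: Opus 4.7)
The plan is to observe that the corollary is essentially a direct consequence of two results already established in the paper: Lemma \ref{Lem:Siegel pullback}, which computes the Fourier expansion of the pullback $\widehat{\phi}_2(\tau_1,\tau_2)$ of the Siegel modular form to $\mathfrak{h}_1\times\mathfrak{h}_1$, and Theorem \ref{Thm:arithmetic decomposition}, which is the main arithmetic decomposition.

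First, I would read off the $\alpha^{\mathrm{th}}$ Fourier coefficient of $\widehat{\phi}_2(\tau_1,\tau_2)$ directly from Lemma \ref{Lem:Siegel pullback}, which exhibits the expansion
$$
\widehat{\phi}_2(\tau_1,\tau_2) = \sum_{\alpha \in \co_F} \Big( \sum_{T \in \Sigma(\alpha)} \widehat{\deg}\ \widehat{\mathcal{Z}}(T,\mathbf{v}) \Big)\cdot q^\alpha,
$$
where $\mathbf{v}$ is the matrix determined by the imaginary parts $(v_1,v_2)$ via the relation (\ref{bold v}). Thus the $\alpha^{\mathrm{th}}$ coefficient is $\sum_{T\in\Sigma(\alpha)} \widehat{\deg}\ \widehat{\mathcal{Z}}(T,\mathbf{v})$, giving immediately the right-hand equality of the displayed formula in the statement.

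Next, under the hypotheses that $\alpha$ is totally positive and $F(\sqrt{-\alpha})/\Q$ is not biquadratic, Theorem \ref{Thm:arithmetic decomposition} applies and yields
$$
\widehat{\deg}_{\mathcal{M}_0}\widehat{\mathcal{Y}}(\alpha,v) = \sum_{T\in\Sigma(\alpha)} \widehat{\deg}\ \widehat{\mathcal{Z}}(T,\mathbf{v}),
$$
establishing the left-hand equality. Chaining the two identities completes the proof.

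There is no genuine obstacle here; all the hard work sits in Theorem \ref{Thm:arithmetic decomposition} (and hence in the bad-reduction cycle construction of \S \ref{S:bad components} and the pullback calculations of \S \ref{S:formal pullback calculation}). The only thing to be mildly careful about is the compatibility of the parameter conventions: that the $\mathbf{v}$ appearing in Lemma \ref{Lem:Siegel pullback} and the $\mathbf{v}$ appearing in Theorem \ref{Thm:arithmetic decomposition} are defined by the same formula (\ref{bold v}) in terms of $(v_1,v_2)$, which is indeed the case by construction.
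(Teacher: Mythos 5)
Your proposal is correct and is exactly the paper's argument: the corollary is obtained by combining Lemma \ref{Lem:Siegel pullback} (which identifies the $\alpha^{\mathrm{th}}$ Fourier coefficient of the pullback as $\sum_{T\in\Sigma(\alpha)}\widehat{\deg}\ \widehat{\mathcal{Z}}(T,\mathbf{v})$) with Theorem \ref{Thm:arithmetic decomposition}. Your remark about checking that both results use the same convention (\ref{bold v}) for $\mathbf{v}$ is the right point of care, and nothing further is needed.
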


\begin{proof}
This is immediate from Lemma \ref{Lem:Siegel pullback} and Theorem \ref{Thm:arithmetic decomposition}.
\end{proof}

\bibliographystyle{plain}

\begin{thebibliography}{10}

\bibitem{goren03}
F.~Andreatta and E.~Z. Goren.
\newblock Geometry of {H}ilbert modular varieties over totally ramified primes.
\newblock {\em Int. Math. Res. Not.}, 33:1786--1835, 2003.

\bibitem{goren04}
F.~Andreatta and E.~Z. Goren.
\newblock Hilbert modular varieties of low dimension.
\newblock In {\em Geometric aspects of Dwork theory. Vol. I, II}, pages
  113--175. Walter de Gruyter GmbH \& Co. KG, Berlin, 2004.

\bibitem{goren05}
F.~Andreatta and E.~Z. Goren.
\newblock Hilbert modular forms: mod {$p$} and {$p$}-adic aspects.
\newblock {\em Mem. Amer. Math. Soc.}, 173(819):vi+100, 2005.

\bibitem{BLR}
S.~Bosch, W.~L{\"u}tkebohmert, and M.~Raynaud.
\newblock {\em N\'eron models}, volume~21 of {\em Ergebnisse der Mathematik und
  ihrer Grenzgebiete (3) [Results in Mathematics and Related Areas (3)]}.
\newblock Springer-Verlag, Berlin, 1990.

\bibitem{boutot97}
J.-F. Boutot.
\newblock Uniformisation {$p$}-adique des vari\'et\'es de {S}himura.
\newblock {\em Ast\'erisque}, 245:307--322, 1997.

\bibitem{breen-labesse}
J.-F. Boutot, L.~Breen, P.~G{\'e}rardin, J.~Giraud, J.-P. Labesse, J.~Milne,
  and C.~Soul{\'e}.
\newblock {\em Vari\'et\'es de {S}himura et fonctions {$L$}}, volume~6 of {\em
  Publications Math\'ematiques de l'Universit\'e Paris VII [Mathematical
  Publications of the University of Paris VII]}.
\newblock Universit\'e de Paris VII U.E.R. de Math\'ematiques, Paris, 1979.

\bibitem{boutot-carayol}
J.-F. Boutot and H.~Carayol.
\newblock Uniformisation {$p$}-adique des courbes de {S}himura: les
  th\'eor\`emes de \v {C}erednik et de {D}rinfel\cprime d.
\newblock {\em Ast\'erisque}, 196-197:45--158, 1991.
\newblock Courbes modulaires et courbes de Shimura (Orsay, 1987/1988).

\bibitem{boutot-zink}
J.-F. Boutot and T.~Zink.
\newblock The {$p$}-adic uniformization of {S}himura curves.
\newblock Preprint.

\bibitem{bruinier-burgos-kuhn}
Jan~H. Bruinier, J.~I. Burgos~Gil, and U.~K{\"u}hn.
\newblock Borcherds products and arithmetic intersection theory on {H}ilbert
  modular surfaces.
\newblock {\em Duke Math. J.}, 139(1):1--88, 2007.

\bibitem{BKK}
J.~I. Burgos~Gil, J.~Kramer, and U.~K{\"u}hn.
\newblock Cohomological arithmetic {C}how rings.
\newblock {\em J. Inst. Math. Jussieu}, 6(1):1--172, 2007.

\bibitem{buzzard}
K.~Buzzard.
\newblock Integral models of certain {S}himura curves.
\newblock {\em Duke Math. J.}, 87(3):591--612, 1997.

\bibitem{conrad04}
B.~Conrad.
\newblock Gross-{Z}agier revisited.
\newblock In {\em Heegner points and Rankin $L$-series}, volume~49 of {\em
  Math. Sci. Res. Inst. Publ.}, pages 67--163. Cambridge Univ. Press,
  Cambridge, 2004.
\newblock With an appendix by W. R. Mann.

\bibitem{deligne-pappas}
P.~Deligne and G.~Pappas.
\newblock Singularit\'es des espaces de modules de {H}ilbert, en les
  caract\'eristiques divisant le discriminant.
\newblock {\em Compositio Math.}, 90(1):59--79, 1994.

\bibitem{FGA}
B.~Fantechi, L.~G{\"o}ttsche, L.~Illusie, S.~L. Kleiman, N.~Nitsure, and
  A.~Vistoli.
\newblock {\em Fundamental algebraic geometry: Grothendieck's FGA explained},
  volume 123 of {\em Mathematical Surveys and Monographs}.
\newblock American Mathematical Society, Providence, RI, 2005.

\bibitem{fulton}
W.~Fulton.
\newblock {\em Intersection theory}, volume~2 of {\em Ergebnisse der Mathematik
  und ihrer Grenzgebiete. 3. Folge. A Series of Modern Surveys in Mathematics
  [Results in Mathematics and Related Areas. 3rd Series. A Series of Modern
  Surveys in Mathematics]}.
\newblock Springer-Verlag, Berlin, second edition, 1998.

\bibitem{gillet84}
H.~Gillet.
\newblock Intersection theory on algebraic stacks and {$Q$}-varieties.
\newblock {\em J. Pure Appl. Algebra}, 34(2-3):193--240, 1984.

\bibitem{gillet-soule87}
H.~Gillet and C.~Soul{\'e}.
\newblock Intersection theory using {A}dams operations.
\newblock {\em Invent. Math.}, 90(2):243--277, 1987.

\bibitem{gillet-soule90}
H.~Gillet and C.~Soul{\'e}.
\newblock Arithmetic intersection theory.
\newblock {\em Inst. Hautes \'Etudes Sci. Publ. Math.}, 72:93--174, 1990.

\bibitem{goren}
E.~Z. Goren.
\newblock {\em Lectures on {H}ilbert modular varieties and modular forms},
  volume~14 of {\em CRM Monograph Series}.
\newblock American Mathematical Society, Providence, RI, 2002.

\bibitem{goren00}
E.~Z. Goren and F.~Oort.
\newblock Stratifications of {H}ilbert modular varieties.
\newblock {\em J. Algebraic Geom.}, 9(1):111--154, 2000.

\bibitem{gubler}
W.~Gubler.
\newblock Moving lemma for {$K\sb 1$}-chains.
\newblock {\em J. Reine Angew. Math.}, 548:1--19, 2002.

\bibitem{hida04}
H.~Hida.
\newblock {\em {$p$}-adic automorphic forms on {S}himura varieties}.
\newblock Springer Monographs in Mathematics. Springer-Verlag, New York, 2004.

\bibitem{hida06}
H.~Hida.
\newblock {\em Hilbert modular forms and {I}wasawa theory}.
\newblock Oxford Mathematical Monographs. The Clarendon Press Oxford University
  Press, Oxford, 2006.

\bibitem{howardC}
B.~Howard.
\newblock Intersection theory on {S}himura surfaces {II}.
\newblock {\em Preprint}.

\bibitem{kottwitz92}
R.~Kottwitz.
\newblock Points on some {S}himura varieties over finite fields.
\newblock {\em J. Amer. Math. Soc.}, 5(2):373--444, 1992.

\bibitem{kudla97}
S.~Kudla.
\newblock Central derivatives of {E}isenstein series and height pairings.
\newblock {\em Ann. of Math. (2)}, 146(3):545--646, 1997.

\bibitem{kudla04b}
S.~Kudla.
\newblock Special cycles and derivatives of {E}isenstein series.
\newblock In {\em Heegner points and Rankin $L$-series}, volume~49 of {\em
  Math. Sci. Res. Inst. Publ.}, pages 243--270. Cambridge Univ. Press,
  Cambridge, 2004.

\bibitem{kudla99}
S.~Kudla and M.~Rapoport.
\newblock Arithmetic {H}irzebruch-{Z}agier cycles.
\newblock {\em J. Reine Angew. Math.}, 515:155--244, 1999.

\bibitem{kudla00}
S.~Kudla and M.~Rapoport.
\newblock Height pairings on {S}himura curves and {$p$}-adic uniformization.
\newblock {\em Invent. Math.}, 142(1):153--223, 2000.

\bibitem{kudla04a}
S.~Kudla, M.~Rapoport, and T.~Yang.
\newblock Derivatives of {E}isenstein series and {F}altings heights.
\newblock {\em Compos. Math.}, 140(4):887--951, 2004.

\bibitem{KRY}
S.~Kudla, M.~Rapoport, and T.~Yang.
\newblock {\em Modular forms and special cycles on {S}himura curves}, volume
  161 of {\em Annals of Mathematics Studies}.
\newblock Princeton University Press, Princeton, NJ, 2006.

\bibitem{lam}
T.~Y. Lam.
\newblock {\em Introduction to quadratic forms over fields}, volume~67 of {\em
  Graduate Studies in Mathematics}.
\newblock American Mathematical Society, Providence, RI, 2005.

\bibitem{moret}
G.~Laumon and L.~Moret-Bailly.
\newblock {\em Champs alg\'ebriques}, volume~39 of {\em Ergebnisse der
  Mathematik und ihrer Grenzgebiete. 3. Folge. A Series of Modern Surveys in
  Mathematics [Results in Mathematics and Related Areas. 3rd Series. A Series
  of Modern Surveys in Mathematics]}.
\newblock Springer-Verlag, Berlin, 2000.

\bibitem{liu}
Q.~Liu.
\newblock {\em Algebraic geometry and arithmetic curves}, volume~6 of {\em
  Oxford Graduate Texts in Mathematics}.
\newblock Oxford University Press, Oxford, 2002.
\newblock Translated from the French by Reinie Ern\'e, Oxford Science
  Publications.

\bibitem{matsumura}
H.~Matsumura.
\newblock {\em Commutative ring theory}, volume~8 of {\em Cambridge Studies in
  Advanced Mathematics}.
\newblock Cambridge University Press, Cambridge, second edition, 1989.
\newblock Translated from the Japanese by M. Reid.

\bibitem{milne90}
J.~Milne.
\newblock Canonical models of (mixed) {S}himura varieties and automorphic
  vector bundles.
\newblock In {\em Automorphic forms, Shimura varieties, and $L$-functions,
  Vol.\ I (Ann Arbor, MI, 1988)}, volume~10 of {\em Perspect. Math.}, pages
  283--414. Academic Press, Boston, MA, 1990.

\bibitem{milne05}
J.~Milne.
\newblock Introduction to {S}himura varieties.
\newblock In {\em Harmonic analysis, the trace formula, and Shimura varieties},
  volume~4 of {\em Clay Math. Proc.}, pages 265--378. Amer. Math. Soc.,
  Providence, RI, 2005.

\bibitem{milne79}
J.~S. Milne.
\newblock Points on {S}himura varieties mod {$p$}.
\newblock In {\em Automorphic forms, representations and $L$-functions (Proc.
  Sympos. Pure Math., Oregon State Univ., Corvallis, Ore., 1977), Part 2},
  Proc. Sympos. Pure Math., XXXIII, pages 165--184. Amer. Math. Soc.,
  Providence, R.I., 1979.

\bibitem{mumford65}
D.~Mumford.
\newblock {\em Geometric invariant theory}.
\newblock Ergebnisse der Mathematik und ihrer Grenzgebiete, Neue Folge, Band
  34. Springer-Verlag, Berlin, 1965.

\bibitem{pappas95}
G.~Pappas.
\newblock Arithmetic models for {H}ilbert modular varieties.
\newblock {\em Compositio Math.}, 98(1):43--76, 1995.

\bibitem{rapoport78}
M.~Rapoport.
\newblock Compactifications de l'espace de modules de {H}ilbert-{B}lumenthal.
\newblock {\em Compositio Math.}, 36(3):255--335, 1978.

\bibitem{rapoport-zink82}
M.~Rapoport and T.~Zink.
\newblock \"{U}ber die lokale {Z}etafunktion von {S}himuravariet\"aten.
  {M}onodromiefiltration und verschwindende {Z}yklen in ungleicher
  {C}harakteristik.
\newblock {\em Invent. Math.}, 68(1):21--101, 1982.

\bibitem{rapoport96}
M.~Rapoport and Th. Zink.
\newblock {\em Period spaces for {$p$}-divisible groups}, volume 141 of {\em
  Annals of Mathematics Studies}.
\newblock Princeton University Press, Princeton, NJ, 1996.

\bibitem{serre00}
J.-P. Serre.
\newblock {\em Local algebra}.
\newblock Springer Monographs in Mathematics. Springer-Verlag, Berlin, 2000.
\newblock Translated from the French by CheeWhye Chin and revised by the
  author.

\bibitem{soule92}
C.~Soul{\'e}.
\newblock {\em Lectures on {A}rakelov geometry}, volume~33 of {\em Cambridge
  Studies in Advanced Mathematics}.
\newblock Cambridge University Press, Cambridge, 1992.
\newblock With the collaboration of D. Abramovich, J.-F.\ Burnol and J. Kramer.

\bibitem{stamm}
H.~Stamm.
\newblock On the reduction of the {H}ilbert-{B}lumenthal-moduli scheme with
  {$\Gamma\sb 0(p)$}-level structure.
\newblock {\em Forum Math.}, 9(4):405--455, 1997.

\bibitem{vanderGeer}
G.~van~der Geer.
\newblock {\em Hilbert modular surfaces}, volume~16 of {\em Ergebnisse der
  Mathematik und ihrer Grenzgebiete (3) [Results in Mathematics and Related
  Areas (3)]}.
\newblock Springer-Verlag, Berlin, 1988.

\bibitem{vistoli}
A.~Vistoli.
\newblock Intersection theory on algebraic stacks and on their moduli spaces.
\newblock {\em Invent. Math.}, 97(3):613--670, 1989.

\bibitem{vollaard}
I.~Vollaard.
\newblock On the {H}ilbert-{B}lumenthal moduli problem.
\newblock {\em J. Inst. Math. Jussieu}, 4(4):653--683, 2005.

\bibitem{ARGOS-8}
S.~Wewers.
\newblock Canonical and quasi-canonical liftings.
\newblock {\em Ast\'erisque}, 312:67--86, 2007.

\bibitem{zink02}
T.~Zink.
\newblock The display of a formal {$p$}-divisible group.
\newblock {\em Ast\'erisque}, 278:127--248, 2002.

\end{thebibliography}
\def\cprime{$'$}

\end{document}